\renewcommand{\ge}{\geqslant}
\renewcommand{\le}{\leqslant}
\newtheorem{theorem}{Theorem}[section]
\newtheorem*{theorem-nn}{Theorem}
\newtheorem*{theorem-main}{Main Theorem}
\newtheorem{corollary}[theorem]{Corollary}
\newtheorem{lemma}[theorem]{Lemma}
\newtheorem{proposition}[theorem]{Proposition}
\theoremstyle{definition}
\newtheorem{definition}[theorem]{Definition}
\theoremstyle{remark}
\newtheorem{remark}[theorem]{Remark}
\newlength{\bulletRaiseLen}
\renewcommand{\emph}[1]{{\bf #1}}
\newcommand{\rfin}{\bigl[\mathbb{R}^{>0}\bigr]^{\mathrm{fin}}}
\newcommand{\fix}{\mathrm{Fix}}
\newcommand{\per}{\mathrm{Per}}
\newcommand{\period}{\mathrm{per}}
\newcommand{\free}{\mathrm{Free}}
\newcommand{\Lem}{\mathrm{Lem}\,}
\newcommand{\dist}{\mathrm{dist}}
\newcommand{\freqa}{\mathrm{fr}_\alpha}
\newcommand{\tileable}{\mathcal{T}}
\newcommand{\tiled}{\mathfrak{T}}
\newcommand{\dom}{\mathrm{dom}}
\newcommand{\es}{\varnothing}
\newcommand{\fgr}[1]{[\![#1]\!]}
\newcommand{\acts}{\curvearrowright}
\newcommand{\mff}{\mathfrak{F}}
\newcommand{\invm}{\mathcal{\mathcal{E}}}
\newcommand{\omsp}{\Omega_{\mathit{sp}}}
\newsavebox\boxFormula
\newsavebox\boxOverline
\newlength\lenFormula
\newlength\lenOverline
\newlength\lskip
\def\closure{\@ifnextchar[{\closure@i}{\closure@i[0]}}
\def\closure@i[#1]{\@ifnextchar[{\closure@ii[#1]}{\closure@ii[#1][0]}}
\def\closure@ii[#1][#2]#3{%
  \setlength\lskip{#1pt}%
  \StrLeft{#3}{1}[\fsym]%
  \StrRight{#3}{1}[\lsym]%
  \sbox{\boxFormula}{$\m@th#3$}%
  \setbox\boxOverline\null%
  \ht\boxOverline=\ht\boxFormula%
  \dp\boxOverline=\dp\boxFormula%
  \setlength\lenOverline{\the\wd\boxFormula}%
  \advance\lenOverline by -#1pt%
  \advance\lenOverline by -#2pt%
  \ifthenelse{\equal{\fsym}{L}}{%
    \advance\lenOverline by -1.8pt%
    \advance\lskip by 1pt%
  }{}%
  \ifthenelse{\equal{\fsym}{X}}{%
    \advance\lenOverline by -2.5pt%
    \advance\lskip by 2pt%
  }{}%
  \ifthenelse{\equal{\fsym}{\eta}}{%
    \advance\lenOverline by -1pt%
    \advance\lskip by 1pt%
  }{}%
  \wd\boxOverline=\lenOverline%
  \sbox\boxOverline{$\m@th\overline{\copy\boxOverline}$}%
  \setlength\lenFormula{\the\wd\boxFormula}%
  \addtolength\lenFormula{-\the\wd\boxOverline}%
  \rlap{\hskip \lskip  \usebox\boxOverline}{\usebox\boxFormula}%
}
\newcommand{\cl}{\closure}
\def\orbit{\@ifnextchar[{\orbit@i}{\orbit@i[ ]}}
\def\orbit@i[#1]{\mathrm{Orb}_{#1}}
\def\oer{\@ifnextchar[{\oer@i}{\oer@i[]}}
\def\oer@i[#1]#2{\mathsf{E}^{#1}_{#2}}
\def\rgap{\@ifnextchar[{\rgap@i}{\rgap@i[ ]}}
\def\rgap@i[#1]{\mathrm{ga}\vec{\mathrm{p}}_{#1}}
\begin{document}
\title{Regular cross sections of Borel flows}
\keywords{Borel flow, flow under a function, suspension flow}

\author{Konstantin Slutsky}
\address{Department of Mathematics, Statistics, and Computer Science\\
University of Illinois at Chicago\\
322 Science and Engineering Offices (M/C 249)\\
851 S. Morgan Street\\
Chicago, IL 60607-7045}
\email{kslutsky@gmail.com}
\thanks{Research has been partially supported by Denmark's Council for Independent Research (Natural
  Sciences  Division), grant no. 10-082689/FNU}
\begin{abstract}
  Any free Borel flow is shown to admit a cross section with only two possible distances between
  adjacent points.  Non smooth flows are proved to be Lebesgue orbit equivalent if and only if they
  admit the same number of invariant ergodic probability measures.
\end{abstract}

\maketitle

\section{Introduction}
\label{sec:introduction}

This paper is a contribution to the theory of Borel flows, that is Borel actions of the real line on
a standard Borel space.  If this level of abstraction makes the reader uncomfortable, it is safe to
think about continuous actions of the real line on a Polish space, i.e., on a separable completely
metrizable topological space.  No generality is lost this way.  An important tool in studying Borel
flows is the concept of a cross section of the action.  A cross section for a flow
\( \mathbb{R} \acts X \) is a Borel set \( \mathcal{C} \subseteq X \) which intersects every orbit in
a non-empty countable set.

\begin{wrapfigure}[11]{R}{5.5cm}
  \centering
  \begin{tikzpicture}
    \draw[thick] plot [smooth, tension=1] coordinates { (0,2) (1.5,2.5) (3,2) (4.5,2.2)};
    \draw[thick] (0,0) -- (4.5,0); \filldraw (1.5,1) circle (1pt); \draw [->,>=stealth] (1.5,1) --
    (1.5,2.5); \draw [dotted] (1.5,1) -- (1.5,0); \filldraw (1.5,0) circle (1pt); \filldraw (3.5,0)
    circle (1pt); \draw [->,>=stealth] (3.5,0) -- (3.5,0.6); \draw (0.2,-0.26) node
    {\( \mathcal{C} \)}; \draw (1.5,-0.3) node {\( x \)}; \draw (3.5,-0.3) node
    {\( \phi_{\mathcal{C}}(x) \)}; \draw (4, 2.35) node {\( f \)}; \draw (0.5, 1) node
    {\( \mathcal{C} \times f \)};
  \end{tikzpicture}
  \caption{}
  \label{fig:flow-under-function-intro}
\end{wrapfigure}

A ``sufficiently nice''\footnote{Meaning that each point should have a successor and a predecessor.}
cross section \( \mathcal{C} \) of a flow can be endowed with an induced automorphism
\( \phi_{\mathcal{C}} : \mathcal{C} \to \mathcal{C} \) which sends a point in \( \mathcal{C} \) to
the next one within the same orbit.  This gives a representation of the flow
\( \mathbb{R} \acts X \) as a flow under a function depicted in Figure
\ref{fig:flow-under-function-intro} (We refer reader to Subsection \ref{sec:flow-under-function} for
a formal treatment).  It has been known since the work of V.~M.~Wagh \cite{wagh_descriptive_1988}
that any Borel flow admits a cross section.  The representation as in Figure
\ref{fig:flow-under-function-intro} has two parameters \textemdash{}\, the ``gap'' function \( f \),
and the base automorphism \( \phi_{\mathcal{C}} \).  Since there are many ways of presenting a given
flow as such under a function, one naturally wants to understand the flexibility of these
parameters.  We concentrate here on the gap function.  Given a flow \( \mathbb{R} \acts X \) we aim
at finding the simplest function \( f \) such that \( \mathbb{R} \acts X \) can be represented as a
flow under \( f \).  But before we state our main results, let us make a short detour and describe
this problem from the point of view of ergodic theory.

Borel dynamics and ergodic theory are mathematical siblings \textemdash{} they study dynamical systems
from similar perspectives and share a large portion of methods.  Yet they also have some fundamental
differences.  A slightly non-standard (but equivalent to the classical) way of setting up an ergodic
theoretical system is to consider a Borel action of a locally compact group\footnote{The relevant
  case here is \( G = \mathbb{R} \).} \( G \) on a standard Borel space \( X \) which is moreover
equipped with a probability measure \( \mu \).  We assume that the action \( G \acts X \) preserves
\( \mu \).  So one difference from Borel dynamics is that we consider only actions which admit
invariant measure.  But much more importantly we disregard sets of measure zero, in the sense that
we are satisfied if the result holds true {\it almost} everywhere with respect to \( \mu \), and may
throw away a set of points of zero measure if necessary.  The latter is a significant luxury, which
is not available in the context of Borel dynamics.  We shall elaborate on this below, but first let
us describe the relevant theorems that have been proved in ergodic theory.

\subsection{Known results}
\label{sec:known-results}

The idea of constructing cross sections and reducing analysis of the flow to the analysis of the
induced automorphism goes back to H.~Poincar\'e, and has been very useful in many areas of dynamical
systems.  In the generality of measure preserving flows on standard Lebesgue spaces, existence of
cross sections was proved by W.~Ambrose and S.~Kakutani
\cite{ambrose_representation_1941, ambrose_structure_1942}.

\begin{theorem-nn}[Ambrose--Kakutani]
  \label{thm:ambrose}
  Any measure preserving flow \( \mathbb{R} \acts X \) on a standard Lebesgue space admits a cross
  section on an invariant subset of full measure.
\end{theorem-nn}

Ambrose also established a criterion for a flow to admit a cross section with constant gaps, i.e., to
admit a representation under a constant function.  Flows admitting such cross sections turn out to
be very special, and a typical flow is not a flow under a constant function.  It came as a surprise
that, as proved by D.~Rudolph \cite{rudolph_two-valued_1976}, any flow can be represented, at least
as far as an ergodic theorist is concerned, as a flow under a two-valued function.

\begin{theorem-nn}[Rudolph]
  \label{thm:rudolph-two-values}
  Let \( \alpha \) and \( \beta \) be positive rationally independent reals.  When restricted to an
  invariant subset of full measure, any measure preserving flow \( \mathbb{R} \acts X \) on a
  standard Lebesgue space admits a cross section with \( \alpha \) and \( \beta \) being the only
  possible distances between adjacent points; i.e., after throwing away a set of zero measure, any
  flow can be represented as flow under a two-valued function.
\end{theorem-nn}

\begin{wrapfigure}{L}{4.5cm}
  \centering
  \begin{tikzpicture}
    \draw[thick] (0,1.5) -- (1.5,1.5);
    \draw[thick] (1.5,1) -- (3.5,1);
    \draw (0.75,1.7) node {\( \alpha \)};
    \draw (2.5,1.2) node {\( \beta \)};
    \draw[thick] (0,0) -- (3.5,0);
    \draw (1.5,0.1) -- (1.5,-0.1);
    \draw (0.75,-0.26) node {\( \mathcal{C}_{\alpha} \)};
    \draw (2.5,-0.26) node {\( \mathcal{C}_{\beta} \)};
  \end{tikzpicture}
  \vspace{-0.3mm}
  \caption{}
  \label{fig:flow-under-two-valued-function}
\end{wrapfigure}
So given \( \alpha \) and \( \beta \), for any measure preserving flow \( \mathbb{R} \acts X \), one
may find a Borel invariant subset \( Y \subseteq X \) and a cross section
\( \mathcal{C} \subseteq Y \) for the restriction \( \mathbb{R} \acts Y \) such that
\( \dist\bigl(x, \phi_{\mathcal{C}}(x)\bigr) \in \{\alpha, \beta\} \) for all
\( x \in \mathcal{C} \) (see Figure \ref{fig:flow-under-two-valued-function}).  Let
\[\hspace{4.5cm}
\mathcal{C}_{\alpha} = \bigl\{ x \in \mathcal{C} : \dist\bigl(x, \phi_{\mathcal{C}}(x)\bigr) =
\alpha\bigr\}. \]
As proved by Ambrose \cite{ambrose_representation_1941}, the measure \( \mu \) on \( X \) can be
disintegrated into \( \nu \times \lambda \), where \( \nu \) is a \( \phi_{\mathcal{C}} \)-invariant
finite measure on \( \mathcal{C} \) and \( \lambda \) is the restriction of the Lebesgue measure.
In fact, there is a one-to-one correspondence between finite measures on \( X \) invariant with
respect to the flow \( \mathbb{R} \acts X \) and finite measures on \( \mathcal{C} \) invariant
under the induced automorphism \( \phi_{\mathcal{C}} \).  U.~Krengel \cite{krengel_rudolphs_1976}
strengthened Rudolph's result by showing that for any \( \rho \in (0, 1) \), one may always find a
cross section \( \mathcal{C} \) as in Rudolph's Theorem such that moreover
\( \nu(\mathcal{C}_{\alpha}) = \rho \nu(\mathcal{C}) \); i.e., the proportion of \( \alpha \)-points
is exactly \( \rho \).

This summarizes the relevant ergodic theoretical results.  All the arguments
employed in the aforementioned works require throwing certain sets (of zero measure) away.  The
first result in the purely descriptive set theoretical context was obtained by Wagh
\cite{wagh_descriptive_1988}, where he proved the analog of Ambrose's result for Borel flows.

\begin{theorem-nn}[Wagh]
  \label{thm:wagh}
  Any Borel flow \( \mathbb{R} \acts X \) on a standard Borel space admits a Borel cross section.
\end{theorem-nn}

\subsection{Main Theorem}
\label{sec:main-theorem}

The question of finding the simplest possible gap function for a flow, and in particular whether the
analogs of Rudolph's Theorem and its refinement due to Krengel hold true in the Borel context
remained open, and was explicitly posed by M.~G.~Nadkarni \cite[Remark 2 after Theorem
12.37]{nadkarni_basic_1998}.  Borel theoretic version of Rudolph's original method was worked out by
Nadkarni and Wagh in \cite{nadkarni_wagh_????}.  They have constructed cross sections with only two
possible gaps for flows which satisfy a certain technical condition, which, in particular, implies
that the flow must be sparse (a flow is sparse if it admits a cross section with arbitrarily
large gaps within each orbit, see Section \ref{sec:sparse-flows} below).  Though presented in a
different way, their method is essentially equivalent to the argument outlined at the beginning of
Subsection \ref{sec:sparse-case}.

The property of being sparse is a significant restriction (see, for example, Proposition
\ref{prop:minimal-homeo-are-not-sparse}), and the main result of this work is the affirmative answer
to Nadkarni's question in full generality: Every Borel flow admits a cross section with only two
gaps between adjacent points.  Moreover, we also provide a Borel strengthening of Krengel's
result.    

\begin{theorem-main}[see Theorem \ref{thm:tiling-general-flows}]
  \label{thm:main-theorm}
  Let \( \alpha \) and \( \beta \) be positive rationally independent reals and let \( \rho \in
  (0,1) \).  Any free Borel flow \( \mathbb{R} \acts X \) on a standard Borel space \( X \) admits a
  cross section \( \mathcal{C} \subseteq X \) such that 
  \[ \dist\bigl(x, \phi_{\mathcal{C}}(x)\bigr) \in \{\alpha, \beta\} \quad \textrm{for all } x \in
  \mathcal{C},\]
  and moreover for any \( \eta > 0 \) there is \( N \in \mathbb{N} \) such that for all \( n \ge N
  \) and all \( x \in \mathcal{C} \) one has
  \[ \Bigl| \rho - \frac{1}{n}\sum_{i=0}^{n-1}
  \chi_{\mathcal{C}_{\alpha}}\bigl(\phi^{i}_{\mathcal{C}}(x)\bigr) \Bigr| < \eta, \]
  where \( \chi_{\mathcal{C}_{\alpha}} \) is the characteristic function of
  \( \mathcal{C}_{\alpha} \).
\end{theorem-main}

As an application of the main theorem, we derive a classification of Borel flows up to Lebesgue
orbit equivalence.
\begin{theorem-nn}[see Theorem \ref{thm:djk-for-flows}]
  Non-smooth free Borel flows are Lebesgue orbit equivalent if and only if they have the same number
  of invariant ergodic probability measures.
\end{theorem-nn}

\subsection{Borel Dynamics versus Ergodic Theory}
\label{sec:borel-dynamics-vs}

A reader with background in ergodic theory and little experience in Borel dynamics may be puzzled by
the following question.  How big is the difference between proving a certain statement almost
everywhere and achieving the same result on literally every orbit?  We would like to take an
opportunity and address this question now.

The short answer is that the difference between an everywhere and an
almost everywhere argument is often huge, with the former one posing more difficulties.  Of course,
it is perfectly possible for a certain property to hold almost everywhere but to fail on some
orbits.  For instance, as shown in Section \ref{sec:sparse-flows}, every flow admits a sparse cross
section on an invariant subset of (uniformly) full measure, i.e., every flow is sparse as far as an
ergodic theorist would care, but any cross section for a free minimal continuous flow on a compact
metrizable space has bounded gaps on a comeager set, Proposition
\ref{prop:minimal-homeo-are-not-sparse}.  So from the topological notion of largeness such a flow
is the opposite of being sparse.

When an ergodic theoretical result happens to be true everywhere (as opposed to just almost
everywhere) it usually does so for a non-trivial reason and the proof frequently relies upon a
different set of ideas.  Let us give some examples.  One of the high points in ergodic theory is a
theorem of D.~S.~Ornstein and B.~Weiss \cite{ornstein_entropy_1987} which shows that any action of
an amenable group is necessarily hyperfinite.  In the Borel context current state of the art is the
work of S.~Schneider and B.~Seward \cite{schneider_locally_2013} based on the methods developed by
S.~Gao and S.~Jackson \cite{gao_countable_2015}.  Schneider and Seward proved that all Borel actions
of countable locally nilpotent groups are hyperfinite.  What happens beyond this class of groups is
widely open.

Arguments used in the works of Gao--Jackson and Schneider--Seward do not use in a direct way the
fact that the corresponding statements are known to be true almost everywhere, instead a general
ingenious construction witnessing hyperfiniteness everywhere is provided.  But sometimes a different
approach is more successful.  Quite often an ergodic theoretical argument that works with respect to
a given invariant measure can be run with mild additional effort uniformly over all invariant
measures simultaneously.  This makes it possible to reduce the problem to an action with no finite
invariant measures.  The latter has a ``positive'' reformulation discovered by Nadkarni
\cite{nadkarni_existence_1990} for the actions of \( \mathbb{Z} \) and generalized to arbitrary
countable equivalence relations by H.~Becker and A.~S.~Kechris \cite[Theorem
4.3.1]{becker_descriptive_1996}.  It turns out that not having a finite invariant measure is
equivalent to being compressible (see Section \ref{sec:lebesg-orbit-equiv}).  This suggests a
different strategy for a proof in the Borel world: First run a uniform version of the ergodic
theoretical argument and then complete the proof by giving a different argument for the compressible
case.  The pivotal example of the power of this approach is the classification of hyperfinite
equivalence relations by R.~Dougherty, S.~Jackson, and A.~S.~Kechris
\cite{dougherty_structure_1994}.  A baby version of this idea is also used in the proof of Theorem
\ref{thm:frequency-rho-can-map} in Section \ref{sec:lebesg-orbit-equiv}.  When the proof follows
this ambivalent path, one effectively provides two reasons for the statement to be true
\textemdash{} an ergodic theoretical reason and another one based on compressibility.  The second part
exists in the Borel context only and is usually the main reason for the step up in the complexity of
proof.

The proof of our main theorem also splits in two parts, but the interaction between ergodic
theoretical and ``compressible'' cases is more intricate, and unlike the previous examples we start
with a ``compressible'' argument which is then complemented by an ``ergodic theoretical'' method.
We first run a construction which aims at a weaker goal than the one prescribed in the Main Theorem:
instead of constructing a regular cross section we construct a cross section with arbitrarily large
regular blocks within each orbit.  Despite the a priory weaker goal, the algorithm may accidentally
achieve the result of Main Theorem on certain parts of the space.  On the complementary part, on the
other hand, its failure will manifest existence of a sparse cross section which will be enough for
another construction, inspired by the ergodic theoretical technique of Rudolph
\cite{rudolph_two-valued_1976}, to succeed.  We believe that the general approach of a
sparse/co-sparse decomposition used in this paper can be of value for attacking other problems in
Borel dynamics.

We hope that the reader has been convinced by now that an everywhere case may be noticeably
different from an almost everywhere one.  Let us now offer some reasons for working in the Borel
context.  One reason comes from topological dynamics where it may be unnatural to disregard sets of
measure zero even in the presence of invariant measure.  And as shows, for instance, a simple
argument in Section \ref{sec:sparse-flows}, topological and measurable genericities may be completely
different.  Another motivation comes from the theory of Borel equivalence relations.  An important
subclass of Borel equivalence relations consists of equivalence relations coming from group actions.
Those coming from the actions of \( \mathbb{R} \) are the orbit equivalence relations of Borel
flows.  And the main theorem of this paper implies a classification of Borel flows up to Lebesgue
orbit equivalence.  This is the content of Section \ref{sec:lebesg-orbit-equiv}.  In fact, most of the
results which prove existence of regular cross sections in ergodic theory and Borel dynamics were
tools developed to solve some concrete problems.  Rudolph \cite{rudolph_two-valued_1976}
used his construction of regular cross sections to settle in the case of finite entropy a problem of
Sinai about equivalence between two definition of \( K \)-flows.  Krengel applied his strengthening
of Rudolph's construction to prove a version of Dye's Theorem for flows.  The
classification up to Lebesgue orbit equivalence given in Theorem \ref{thm:djk-for-flows} is
analogous to the Dougherty--Jackson--Kechris classification of hyperfinite equivalence relation.

\medskip

The paper is structured as follows.  Section \ref{sec:basic-concepts} covers the basics of the
theory of Borel flows.  Section \ref{sec:sparse-flows} introduces the family of sparse flows, which
form the right class of flows for which ergodic theoretical methods can be applied.  In
Section \ref{sec:overview-proof} we give an overview of the proof of the main theorem motivating
some of the further analysis.  Sections \ref{sec:tools}, \ref{sec:propagation-freedom}, and
\ref{sec:flex-part} form a technical core of the paper.  In Section \ref{sec:rudolphs-method} we
prove the main theorem under an  additional assumption that the flow is sparse, and Section
\ref{sec:tiling-general-flows} provides the complementary argument proving the general case.
Finally, Section \ref{sec:lebesg-orbit-equiv} gives an application of the
tiling result to the classification problem of Borel flows up to Lebesgue orbit equivalence.

\section{Basic concepts of Borel flows}
\label{sec:basic-concepts}
This section will serve as a foundation for our study.  We recall some well-known results and
establish notation to be used throughout the paper.

A \emph{Borel flow\/} is a Borel measurable action \( \mff : \mathbb{R} \acts \Omega \) of
the group of reals on a standard Borel space \( (\Omega, \mathcal{B}_{\Omega}) \).  For
\( r \in \mathbb{R} \) and \( \omega \in \Omega \) we use \( \omega + r \) as a shortcut for a more
formal \( \mff(r,\omega) \), provided the flow \( \mff \) is unambiguous from the
context.  The orbit of a point \( \omega \in \Omega \) is denoted by
\( \orbit[\mff](\omega) \) or just by \( \orbit(\omega) \).  With any flow
\( \mff \) we associate an \emph{orbit equivalence relation} \( \oer[\mff]{\Omega} \) (or just \(
\oer{\Omega} \) when the flow in understood)
on \( \Omega \) defined by \( \omega_{1}\, \oer{\Omega}\, \omega_{2} \) whenever
\( \orbit(\omega_{1}) = \orbit(\omega_{2}) \).  This equivalence relation is Borel as a subset of \(
\Omega \times \Omega \).  A flow is
\emph{free\/} if \( \omega + r_{1} \ne \omega + r_{2} \) for any \( r_{1} \ne r_{2} \) and all
\( \omega \in \Omega \).

For subsets \( S \subseteq \mathbb{R} \) and \( \mathcal{C} \subseteq \Omega \) expression
\( \mathcal{C} + S \) denotes the union of translates
\[ \mathcal{C} + S = \bigcup_{r \in S} \mathcal{C} + r. \]
We shall frequently use the following fact: If \( \mathcal{C} \) is Borel and its intersection with
any orbit is countable, then \( \mathcal{C} + S \) is Borel for any Borel
\( S \subseteq \mathbb{R} \).  In essence, this follows from Luzin-Novikov's Theorem (see, for
example, \cite[18.10]{kechris_classical_1995}).  Here is a detailed explanation. Recall that by
Miller's Theorem \cite[9.17]{kechris_classical_1995} stabilizers of Borel actions of Polish groups
are necessarily closed subgroups.  In the case of flows, a stabilizer of \( \omega \in \Omega \) can
therefore be either the whole real line (whenever \( \omega \) is a fixed point) or a subgroup
\( \lambda \mathbb{Z} \) for some \( \lambda \ge 0 \).  In particular, the set of fixed points
\[ \fix(\mff) = \{\, \omega \in \Omega \mid \omega + r = \omega \textrm{ for all } r \in \mathbb{R}
\,\} \] is Borel, since it is enough to quantify over the rationals:
\[ \fix(\mff) = \{\, \omega \in \Omega \mid \omega + q = \omega \textrm{ for all } q \in
\mathbb{Q}\,\}. \]
If \( \mathcal{C} \subseteq \Omega \) has countable intersection with any orbit of \( \mff \), and
if \( S \subseteq \mathbb{R}\) is Borel, then the map
\[ \mathcal{C} \times S \ni (\omega, r) \mapsto \omega + r \in \Omega \]
is countable-to-one for \( \omega \in \mathcal{C} \setminus \fix(\mff) \) and \( r \in S \).  By
Luzin-Novikov's Theorem its image is Borel, hence so is \( \mathcal{C} + S \).  Borelness of sets of
this form will be routinely used throughout the paper.

We established that the set of fixed points \( \fix(\mff) \) is Borel.  In fact, so is the set of
periodic points
\[ \per(\mff) = \{\, \omega \in \Omega \mid \omega + r = \omega \textrm{ for some } r \in
\mathbb{R}^{>0} \,\}. \]
It is immediate to see that this set is analytic.  Borelness is established using the following
fairly general trick: Pick a discrete cross section \( \mathcal{C} \), enlarge it by adding small
intervals around each point, \( \mathcal{C} + [0,\epsilon] \), and then express \( \per(\mff) \) by
quantifying over the rationals.  Here is a more formal argument.  First of all, it is enough to show
that \( \per_{>0}(\mff) = \per(\mff) \setminus \fix(\mff) \) is Borel.  We may therefore restrict
our flow to \( \Omega \setminus \fix(\mff) \) and assume that it has no fixed points.  Under this
assumption, Wagh's Theorem \cite{wagh_descriptive_1988} claims existence of a Borel set
\( \mathcal{C} \) intersecting every orbit, and such that
\( \{\, r \in \mathbb{R} \mid \omega + r \in \mathcal{C}\,\} \) is a separated\footnote{A subset
  \( S \subseteq \mathbb{R} \) is \emph{separated} if there is \( \epsilon > 0\) such that
  \( |x - y| > \epsilon \) for all distinct \( x, y \in S \).}  subset of \( \mathbb{R} \) for any
\( \omega \in \mathcal{C} \).  In particular, for any \( \omega \in \mathcal{C} \) the set
\( \{\, y \in \mathcal{C} \mid \omega\, \oer{\Omega}\, y \,\} \) is countable, and if furthermore
\( \omega \in \per_{>0}(\mff) \), then it is necessarily finite.  Orbit equivalence
\( \oer{\Omega} \) induces a Borel relation
\[ \oer{\mathcal{C}} = \oer{\Omega} \cap \mathcal{C} \times \mathcal{C} \]
on \( \mathcal{C} \), and the subset \( \mathcal{C}' \subseteq \mathcal{C} \) consisting of points
with finite \( \oer{\mathcal{C}} \)-equivalence classes is Borel.  We saw that
\( \mathcal{C} \cap \per_{>0}(\mff) \subseteq \mathcal{C}' \), but \( \mathcal{C}' \) may also
include some points from the free part \( \free(\mff) = \Omega \setminus \per(\mff) \).  Since each
\( \oer{\mathcal{C}} \) class in \( \mathcal{C}' \) is finite, it admits a \emph{Borel transversal}
\textemdash{} a Borel \( \mathcal{D} \subseteq \mathcal{C}' \) which picks a representative from each class.
Note that the \emph{saturation} 
\[ [\mathcal{D}]_{\mff} = [\mathcal{D}]_{\oer[\mff]{\Omega}} := \mathcal{D} + \mathbb{R} \quad
\textrm{is Borel}\]
Finally, Borelness of \( \per_{>0}(\mff) \) is witnessed by the following equality:
\[ \per_{>0}(\mff) = \bigl[ \bigl\{\, \omega \in \mathcal{D} \bigm| \forall \epsilon \in \mathbb{Q}
\cap (0,1)\ \exists r \in \mathbb{Q}^{>1} \textrm{ such that } \omega + r \in \mathcal{D} +
[0,\epsilon] \,\bigr\} \bigr]_{\mff}.\]

To summarize, the decomposition of any flow into a fixed part, periodic part, and free part is Borel:
\[ \Omega = \fix(\mff) \sqcup \per_{>0}(\mff) \sqcup \free(\mff), \]
It may also be convenient to know that the function
\( \period : \per_{>0}(\mff) \to \mathbb{R}^{>0} \), which assigns to a point
\( \omega \in \per_{>0}(\mff) \) its period, i.e., the unique \( \lambda > 0 \) such that
\( [0,\lambda) \ni r \mapsto \omega +r \in \orbit(\omega) \) is a bijection, is Borel.  Indeed, in
the notation above let \( \mathcal{D}' = \mathcal{D} \cap \per_{>0}(\mff) \), and let
\( s : \per_{>0}(\mff) \to \mathcal{D}' \) be a selector function: \( s(\omega) = x \) if and only
if \(\omega\, \oer{\Omega}\, x \) and \( x \in \mathcal{D}' \).  The graph of \( \period \) is
Borel, since it can be written as
\[ \bigl\{\, (\omega,\lambda) \bigm| \omega + \lambda = \omega \textrm{ and } \forall \epsilon \in
\mathbb{Q}^{>0}\ \forall \delta \in (\epsilon, \lambda) \cap \mathbb{Q}\quad s(\omega) + \delta \not
\in \mathcal{D}' + [0, \epsilon]\,\bigr\},\]
and thus \( \period : \per_{>0}(\mff) \to \mathbb{R}^{>0} \) is Borel.

\medskip
{\it From now on all the flows are assumed to be free unless stated otherwise.}
\medskip

\subsection{Cross sections}
\label{sec:cross sections-flows}

When a flow \( \mff \) is free, any orbit can be ``identified'' with a copy of the real line.
Concrete identification cannot be done in a Borel way throughout all orbits, unless the flow is
smooth.  But we may nonetheless unambiguously transfer translation invariant notions from
\( \mathbb{R} \) to each orbit.  In particular, we shall talk about:
\begin{itemize}
\item Distances between points within an orbit: \(
  \dist(\omega_{1}, \omega_{2}) = r \in \mathbb{R}^{\ge 0} \) if \( \omega_{1} + r = \omega_{2} \)
  or \( \omega_{1} - r = \omega_{2} \).
\item Lebesgue measure on an orbit: for a Borel \( A \subseteq \Omega \) we set
  \[ \lambda_{x}(A) = \lambda\bigl(\{r \in \mathbb{R} : x + r \in A\}\bigr), \]
  where \( \lambda \) is the Lebesgue measure on \( \mathbb{R} \).  Note that
  \( \lambda_{x} = \lambda_{y} \) whenever \( x\, \oer{\Omega} \, y \) and \( \lambda_{x} \) is
  supported on \( \orbit(x) \).
\item Linear order within orbits: \( \omega_{1} < \omega_{2} \) if
  \( \omega_{1} + r = \omega_{2} \) for some \( r > 0 \).
\end{itemize}

A \emph{countable cross section}, or just \emph{cross section}, for a flow \( \mff \) on
\( \Omega \) is a Borel set \( \mathcal{C} \subseteq \Omega \) which intersects every orbit of
\( \mff \) in a non-empty countable set.  A cross section \( \mathcal{C} \) is \emph{lacunary} if
the gaps in \( \mathcal{C} \) are bounded away from zero: there exists \( \epsilon > 0 \) such that
\( \dist(x, y) \ge \epsilon \) for all \( x, y \in \mathcal{C} \) with
\( x\, \oer{\mathcal{C}}\, y \).  A lacunary cross section is \emph{bi-infinite} if for any
\( \omega \in \Omega \) there are \( y_{1}, y_{2} \in \mathcal{C} \) such that
\( y_{1} < \omega < y_{2 }\).

\medskip
{\it Unless stated otherwise all cross sections are assumed to be lacunary and bi-infinite.}
\medskip

With such a cross section \( \mathcal{C} \) one can associate an
\emph{induced automorphism} \( \phi_{\mathcal{C}} : \mathcal{C} \to \mathcal{C} \) that sends
\( x \in \mathcal{C} \) to the next point from \( \mathcal{C} \) within the same orbit:
\( \phi_{\mathcal{C}}(x) = y \) whenever \( x, y \in \mathcal{C} \), \( x < y \) and for no
\( z \in \mathcal{C} \) one has \( x < z < y \).  Finally, with any cross section we also associate a
\emph{gap function} \( \rgap[\mathcal{C}] : \mathcal{C} \to \mathbb{R}^{>0} \) which measures
distance to the next point: \( \rgap[\mathcal{C}](x) = \dist\bigl(x, \phi_{\mathcal{C}}(x)\bigr)\).
The gap function \( \rgap[\mathcal{C}] \) and the induced automorphism \( \phi_{\mathcal{C}} \) are
Borel.

The concept of a cross section also makes sense for the actions of \( \mathbb{Z} \), i.e., for Borel
automorphisms.  In this case distance between any two points within an orbit is a natural number and
condition of lacunarity is therefore automatic.
\subsection{Flow under a function}
\label{sec:flow-under-function}
An important concept in the theory of flows is the notion of a \emph{flow under a function} (also
known as a \emph{suspension flow}).  Given a standard Borel space \( \mathcal{C} \), a free Borel
automorphism \( \phi : \mathcal{C} \to \mathcal{C} \) and a bounded away from zero Borel function
\( f : \mathcal{C} \to \mathbb{R}^{>0} \), \( f(x) \ge \epsilon > 0 \) for all
\( x \in \mathcal{C} \), one defines the space \( \mathcal{C} \times f \) of points ``under the
graph of \( f \):''
\[ \mathcal{C} \times f = \bigl\{\, (x,s) \in \mathcal{C} \times \mathbb{R}^{\ge 0} \bigm| 0 \le s <
f(x) \,\bigr\}, \]
and a flow on \( \mathcal{C} \times f \) by letting points flow upward until they reach the graph of
\( f \) and then jump to the next fiber as determined by \( \phi \), (see Figure
\ref{fig:flow-under-function}).  In symbols, for \( \omega = (x, s) \in \mathcal{C} \times f \) and
\( r \in \mathbb{R}^{\ge 0} \)
\[ \omega + r = \Bigl(\phi^{n}(x), s + r - \sum_{i=0}^{n-1}f\bigl(\phi^{i}(x)\bigr)\Bigr)
\]
for the unique \( n \in \mathbb{N} \) such that
\( 0 \le s + r - \sum_{i=0}^{n-1}f\bigl(\phi^{i}(x)\bigr) < f\bigl(\phi^{n}(x)\bigr) \); and for \(
r \in \mathbb{R}^{<0} \)
\[ \omega + r = \Bigl(\phi^{-n}(x), s + r + \sum_{i=1}^{n}f\bigl(\phi^{-i}(x)\bigr)\Bigr)
\]
for the unique \( n \in \mathbb{N} \) such that
\( 0 \le s + r + \sum_{i=1}^{n}f\bigl(\phi^{-i}(x)\bigr) < f\bigl(\phi^{-n}(x)\bigr) \).

\begin{wrapfigure}[10]{L}{5.5cm}
  \centering
  \begin{tikzpicture}
    \draw[thick] plot [smooth, tension=1] coordinates { (0,2) (1.5,2.5) (3,2) (4.5,2.2)};
    \draw[thick] (0,0) -- (4.5,0);
    \filldraw (1.5,1) circle (1pt);
    \draw [->,>=stealth] (1.5,1) -- (1.5,2.5);
    \draw [dotted] (1.5,1) -- (1.5,0);
    \filldraw (1.5,0) circle (1pt);
    \filldraw (3.5,0) circle (1pt);
    \draw [->,>=stealth] (3.5,0) -- (3.5,0.6);
    \draw (0.2,-0.26) node {\( \mathcal{C} \)};
    \draw (1.5,-0.3) node {\( x \)};
    \draw (3.5,-0.3) node {\( \phi(x) \)};
    \draw (4, 2.35) node {\( f \)};
    \draw (0.5, 1) node {\( \mathcal{C} \times f \)};
  \end{tikzpicture}
  \vspace*{-2mm}
  \caption{Suspension flow}
  \label{fig:flow-under-function}
\end{wrapfigure}
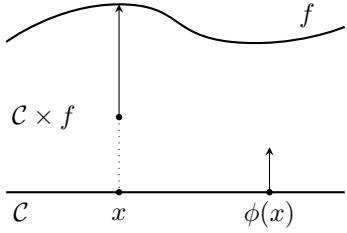

\vspace{1mm}
Note that \( \mathcal{C} \), when identified with the subset
\( \{\, (x,0) \mid x \in \mathcal{C} \,\} \) of \( \mathcal{C} \times f \), is a (lacunary
bi-infinite) cross section, the function \( f \) is the gap function \( \rgap[\mathcal{C}] \) of
this cross section and \( \phi \) coincides with the induced automorphism \( \phi_{\mathcal{C}} \).
Conversely, if \( \mff \) is a free Borel flow on \( \Omega \), and
\( \mathcal{C} \subseteq \Omega \) is a (lacunary bi-infinite) cross section, then the flow under a
function \( \rgap[\mathcal{C}] \) and the induced automorphism
\( \phi_{\mathcal{C}} : \mathcal{C} \to \mathcal{C} \) is naturally isomorphic to \( \mff \).  Our
point here is that {\it realizing a free \( \mff \) as a flow under a (bounded away from zero)
  function is the same as finding a (lacunary bi-infinite) cross section.}  In this terminology
Wagh's Theorem \cite{wagh_descriptive_1988} implies the following.

\begin{theorem}[Wagh]
  \label{thm:Wagh-under-a-function-version}
  Any free Borel flow is isomorphic to a flow
  under a (bounded away from zero) function, which one may moreover assume to be bounded
  from above.
\end{theorem}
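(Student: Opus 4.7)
The plan is to produce a lacunary bi-infinite cross section whose gap function is additionally bounded from above, and then invoke the correspondence between such cross sections and suspension flows (with gap function bounded away from zero and from above) recorded just above the theorem. Wagh's Theorem provides a Borel cross section $\mathcal{C}_0 \subseteq \Omega$ that is $\epsilon$-separated for some $\epsilon > 0$; without loss of generality, $\epsilon \le 1$. What remains is to arrange bi-infiniteness in every orbit and to bound the gap function from above.

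For bi-infiniteness, I would let $A^+ = \mathcal{C}_0 \setminus (\mathcal{C}_0 + \mathbb{R}^{<0})$ be the set of orbit-maximal points of $\mathcal{C}_0$ and $A^-$ the analogous set of orbit-minimal points. Both are Borel, since $\mathcal{C}_0 + \mathbb{R}^{<0}$ is Borel by the Luzin--Novikov argument already used in this section (the map $\mathcal{C}_0 \times \mathbb{R}^{<0} \to \Omega$ is countable-to-one because $\mathcal{C}_0$ meets each orbit countably). Then set
\[ \mathcal{C}_1 = \mathcal{C}_0 \cup (A^+ + \mathbb{N}^{>0}) \cup (A^- - \mathbb{N}^{>0}). \]
A case analysis on whether $\mathcal{C}_0 \cap \orbit(\omega)$ is bounded above, bounded below, both, or neither shows that $\mathcal{C}_1 \cap \orbit(\omega)$ is unbounded in both directions for every $\omega$. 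Because $\epsilon \le 1$, every added point stays at distance at least $1$ from every other point of $\mathcal{C}_1$, so $\mathcal{C}_1$ is still $\epsilon$-separated.

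To bound the gap function from above, fix $M = 2$; for each $x \in \mathcal{C}_1$ with $g(x) := \rgap[\mathcal{C}_1](x)$, put $n(x) = \lceil g(x)/M \rceil$ and insert the equally spaced points $x + k g(x)/n(x)$, $1 \le k < n(x)$, into the open interval from $x$ to $\phi_{\mathcal{C}_1}(x)$. Set
\[ \mathcal{C}_2 = \mathcal{C}_1 \cup \bigl\{\, x + k g(x)/n(x) : x \in \mathcal{C}_1,\ 1 \le k < n(x)\,\bigr\}. \]
Since $g$ and $n$ are Borel and the augmenting image is countable-to-one over $\mathcal{C}_1$, $\mathcal{C}_2$ is Borel. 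Each gap of $\mathcal{C}_2$ equals either $g(x)$ (when $n(x) = 1$, in which case $g(x) \le M$) or $g(x)/n(x) \in (M/2, M]$ (when $n(x) \ge 2$), so all gaps lie in $[\epsilon, M]$.

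Finally, the translation between lacunary bi-infinite cross sections and suspension flows from the preceding subsection identifies $\mff$ with the flow under $\rgap[\mathcal{C}_2]$ over the induced automorphism $\phi_{\mathcal{C}_2}$, a function bounded both above and below. The only delicacy I anticipate is verifying Borelness at each step, which in every instance reduces to the standard Luzin--Novikov technique already used in this section; everything else is a direct construction.
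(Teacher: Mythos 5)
Your proof is correct and complete. It is worth noting that the paper does not actually give a proof of Theorem~\ref{thm:Wagh-under-a-function-version}: it treats the statement as a reformulation of Wagh's theorem in the suspension-flow terminology developed just above, and defers the only non-trivial refinement (arbitrary two-sided gap bounds) to Corollary~\ref{cor:wagh-gap-bounds}. That corollary is proved via the marker lemma, Proposition~\ref{prop:marker-lemma-Z-actions}, which is needed there because one must first enlarge gaps above an arbitrary threshold $N$ before subdividing. Your argument is a cleaner and more elementary route for the claim as stated: you only need an upper bound, so you can skip the marker lemma entirely and just halve oversized gaps until they fall below $2$. You also make explicit a point the paper elides, namely that Wagh's cross section need not be bi-infinite on every orbit, and you repair this cleanly by grafting $a^{\pm} \pm \mathbb{N}^{>0}$ onto the orbit-extremal points; the Borelness and separation bookkeeping you carry out are both sound. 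One could shorten the bi-infinitization slightly by observing that the set of orbits on which $\mathcal{C}_0$ is bounded above or below is smooth and handling it by a transversal, but your direct construction is equally valid and arguably more self-contained.
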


While the notion of a cross section makes sense for actions of any Polish group, its simple
geometric interpretation as in Figure \ref{fig:flow-under-function} seems to be specific to actions
of the real line.  In the language of cross sections Theorem \ref{thm:Wagh-under-a-function-version}
is valid for all locally compact groups as showed by Kechris in \cite{kechris_countable_1992}.

One can refine the formulation of Theorem \ref{thm:Wagh-under-a-function-version} by specifying
bounds on the gap function, Corollary \ref{cor:wagh-gap-bounds}.  First of all we recall a simple
marker lemma for aperiodic Borel automorphisms.  This proposition follows easily from the
observation that for any natural \( d \ge 1 \) any sufficiently large integer is of the form
\( md + n(d+1)\) for some \( m, n \in \mathbb{N} \).  This is a particular case of
\cite[Lemma 2.9]{gao_countable_2015}.

\begin{proposition}
  \label{prop:marker-lemma-Z-actions}
  For any free Borel automorphism \( T : \mathcal{C} \to \mathcal{C} \) on a standard Borel
  space\footnote{This proposition will typically be applied to the induced automorphism associated
    with a cross section of a flow, hence the notation \( \mathcal{C} \) for the phase space.}
  \( \mathcal{C} \) and for any natural \( d \ge 1 \) there exists a bi-infinite Borel cross section
  \( \mathcal{D} \subseteq \mathcal{C} \) with gaps of size \( d \) or \( d + 1 \): for all
  \( x \in \mathcal{D} \)
  \[ \min \{ i \ge 1 : T^{i}(x) \in \mathcal{D} \} \in \{d, d+1\}. \]
\end{proposition}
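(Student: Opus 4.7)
The plan is first to produce a very sparse bi-infinite Borel cross section \( \mathcal{M} \subseteq \mathcal{C} \) for \( T \), and then to subdivide each orbit segment between consecutive points of \( \mathcal{M} \) into subintervals of length \( d \) or \( d+1 \). The only arithmetic input is the Sylvester--Frobenius fact: since \( \gcd(d,d+1)=1 \), the Frobenius number of \( \{d,d+1\} \) is \( d^{2}-d-1 \), so every integer \( k \ge N_{0}:= d^{2}-d \) admits a representation \( k=md+n(d+1) \) with \( m,n \in \mathbb{N} \).

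The first ingredient is the standard marker lemma for free Borel \( \mathbb{Z} \)-actions: for any \( N \ge N_{0} \) there exists a bi-infinite Borel cross section \( \mathcal{M} \subseteq \mathcal{C} \) whose consecutive \( T \)-gaps are all at least \( N \). This is a classical construction: start from \( \mathcal{C} \) and peel off successively sparser Borel sets, using a Borel linear ordering of \( \mathcal{C} \) to break ties, until the surviving set has all gaps \( \ge N \); bi-infiniteness within each orbit is guaranteed by freeness of \( T \) (see, e.g., the argument around Lemma 2.9 in \cite{gao_countable_2015}). Let \( k : \mathcal{M} \to \mathbb{N} \), \( k(x) = \min\{\, i \ge 1 \mid T^{i}(x) \in \mathcal{M}\,\} \), be the resulting Borel gap function; by construction \( k(x) \ge N \ge N_{0} \) everywhere on \( \mathcal{M} \).

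For each \( x \in \mathcal{M} \) I select in a Borel way a decomposition \( k(x) = m(x)d + n(x)(d+1) \) with \( m(x), n(x) \in \mathbb{N} \) \textemdash{} concretely, set \( n(x) := k(x) \bmod d \) and \( m(x) := (k(x)-n(x)(d+1))/d \); writing \( k(x) = qd + r \) with \( 0 \le r < d \), the assumption \( k(x) \ge d^{2}-d \) forces \( q \ge d-1 \ge r \), so \( m(x) = q - r \ge 0 \). Between \( x \) and \( T^{k(x)}(x) \) I then insert the \( m(x)+n(x)-1 \) intermediate markers
\[ T^{d}(x),\ T^{2d}(x),\ \ldots,\ T^{m(x)d}(x),\ T^{m(x)d+(d+1)}(x),\ \ldots,\ T^{k(x)-(d+1)}(x), \]
i.e., \( m(x) \) steps of size \( d \) followed by \( n(x) \) steps of size \( d+1 \). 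Define \( \mathcal{D} \) to be the union of \( \mathcal{M} \) with all inserted markers as \( x \) ranges over \( \mathcal{M} \). Since \( \mathcal{M} \), \( k \), \( m \), \( n \) are Borel and \( (x,i) \mapsto T^{i}(x) \) is Borel, \( \mathcal{D} \) is Borel; by construction it meets every \( T \)-orbit in a bi-infinite set whose consecutive gaps all lie in \( \{d, d+1\} \).

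The only genuinely non-trivial step is the marker lemma producing \( \mathcal{M} \); once sufficiently sparse markers are in hand the Frobenius decomposition and the insertion of intermediate points are pure combinatorial bookkeeping.
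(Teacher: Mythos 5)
Your proof is correct and follows essentially the same route as the paper's: pass to a sufficiently sparse sub cross section, apply the Frobenius/Chicken McNugget decomposition for \( \{d,d+1\} \), and insert intermediate markers. The only cosmetic difference is the slightly sharper threshold \( d^{2}-d \) versus the paper's \( d^{2} \), which is immaterial to the argument.
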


\begin{proof}
  To begin with, note that for any \( d \ge 1 \) and any \( N \ge d^{2} \) there are \( m, n \in
  \mathbb{N} \) such that \( N = md + n(d+1) \).  Indeed, let \( N = qd + r \), where \( q,r \in
  \mathbb{N} \) and \( r \le N-1 \).  Since \( N \ge d^{2} \), \( q \ge d - 1 \), hence 
  \[ N = qd + r = (q - r)d + r(d+1). \]

  We may select a sub cross section \( \mathcal{C}' \subseteq \mathcal{C} \) such that \(
  \rgap[\mathcal{C}'](x) \ge d^{2} \) (see, for instance, \cite[7.25]{nadkarni_basic_1998}) for all
  \( x \in \mathcal{C}' \).  For each \( N \ge d^{2} \) fix a decomposition \( N = m_{N}d +
  n_{N}(d+1) \).  The cross section \( \mathcal{D} \) is given by
  \[ \mathcal{D} = \bigl\{\, T^{id}(x) : 0\le i \le m_{N},\ N = \rgap[\mathcal{C}'](x)\,\bigr\} \cup
  \bigl\{\, T^{m_{N}d + i(d+1)}(x) : 0\le i \le n_{N},\ N = \rgap[\mathcal{C}'](x)\,\bigr\}. \qedhere \]
\end{proof}

\begin{corollary}
  \label{cor:wagh-gap-bounds}
  Let \( k, K \in \mathbb{R}^{>0} \) be positive reals, \( k < K \).  For any Borel flow \(
  \mathbb{R} \acts \Omega \) there exists a cross section \( \mathcal{C} \subset \Omega \) such that
  \( \rgap[\mathcal{C}](x) \in [k, K] \) for all \( x \in \mathcal{C} \).
\end{corollary}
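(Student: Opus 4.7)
My plan is to combine three ingredients already in hand: Wagh's theorem (Theorem \ref{thm:Wagh-under-a-function-version}) to secure a cross section with bounded gaps, the marker lemma for free Borel $\mathbb{Z}$-actions (Proposition \ref{prop:marker-lemma-Z-actions}) to thin it out so the Euclidean gaps become large, and a uniform subdivision to cut each such large gap into several equal pieces all of length in $[k,K]$. The delicate point is coordinating the parameters: the subdivision step only produces pieces in $[k,K]$ when each gap is at least $kK/(K-k)$, so the marker-lemma parameter $d$ must be chosen to secure that lower bound uniformly across the cross section.

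First I would invoke Theorem \ref{thm:Wagh-under-a-function-version} to fix a cross section $\mathcal{C}_0$ with $\rgap[\mathcal{C}_0](x) \in [\epsilon_0, M_0]$ for some $0<\epsilon_0<M_0<\infty$. Next I pick an integer $d$ with $d\epsilon_0 \ge kK/(K-k)$ and apply Proposition \ref{prop:marker-lemma-Z-actions} to the induced (free Borel) automorphism $\phi_{\mathcal{C}_0}$, obtaining a sub-cross-section $\mathcal{C}_1 \subseteq \mathcal{C}_0$ whose consecutive $\phi_{\mathcal{C}_0}$-gaps all lie in $\{d, d+1\}$. Since the Euclidean gap $g(y) := \rgap[\mathcal{C}_1](y)$ is then a sum of $d$ or $d+1$ consecutive values of $\rgap[\mathcal{C}_0]$, we have $g(y) \ge d\epsilon_0 \ge kK/(K-k)$ for all $y \in \mathcal{C}_1$.

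Finally, for each $y \in \mathcal{C}_1$ set $n(y) = \lceil g(y)/K \rceil$ and define
\[ \mathcal{C} = \bigl\{\, y + j\cdot g(y)/n(y) : y \in \mathcal{C}_1,\ 0 \le j < n(y)\,\bigr\}. \]
This is a Borel, bi-infinite, lacunary cross section of the flow, with constant gap $g(y)/n(y)$ inside each slot $[y, \phi_{\mathcal{C}_1}(y))$ and the matching gap $g(y)/n(y)$ between the last subdivision point of one slot and the first of the next. By definition of $n(y)$ one has $g(y)/n(y) \le K$ automatically, while the bound $n(y) \le g(y)/K + 1 \le g(y)/k$ (which uses exactly $g(y) \ge kK/(K-k)$) gives $g(y)/n(y) \ge k$. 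Hence $\rgap[\mathcal{C}](z) \in [k, K]$ for every $z \in \mathcal{C}$. I do not expect a genuinely hard step here---the only thing to be careful about is ensuring that rounding in the choice of $n(y)$ never pushes the resulting gaps outside $[k,K]$, which is the role of the lower bound secured in the thinning step.
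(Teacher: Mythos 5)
Your proposal is correct and follows essentially the same route as the paper's proof: Wagh's theorem to get a bounded-gap cross section, the marker lemma (Proposition \ref{prop:marker-lemma-Z-actions}) applied to the induced automorphism to thin out so that Euclidean gaps exceed a threshold, and then subdivision of each large gap into pieces in $[k,K]$. The only cosmetic difference is that the paper appeals to an unspecified Borel subdivision map $\zeta$ defined on reals above some $N$, whereas you give the canonical choice (equal pieces, $n(y) = \lceil g(y)/K \rceil$) and compute the exact threshold $kK/(K-k)$ for it to work; both computations are sound.
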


\begin{proof}
  First of all note that any sufficiently large real \( x \in \mathbb{R} \), \( x \ge N \), can be
  partitioned into pieces \( x = \sum_{i} z_{i} \) such that \( z_{i} \in [k, K] \).  Fix such an
  \( N \in \mathbb{R} \) and a Borel map\footnote{We use \( \rfin \) to denote the standard Borel
    space of finite subsets of \( \mathbb{R}^{>0} \).}
  \( \zeta : \mathbb{R}^{\ge N} \to \rfin \) such that
  \( \zeta(x) = \{z_{0}, z_{1}, z_{2}, \ldots, z_{n}\} \) where \( z_{0} = 0 \), \( z_{n} = x \) and
  \( z_{i} - z_{i-1} \in [k, K] \) for all \( 1 \le i \le n \).

  By Theorem \ref{thm:Wagh-under-a-function-version} we may pick a cross section
  \( \widetilde{\mathcal{C}} \subset \Omega \) and let \( c \in \mathbb{R}^{> 0} \) be so small that
  \( \rgap[\widetilde{\mathcal{C}}](x) \ge c \) for all \( x \in \widetilde{\mathcal{C}} \).  Set
  \( d = \lceil  N/c \rceil \) and apply Proposition \ref{prop:marker-lemma-Z-actions} to the
  induced flow
  \( \phi_{\widetilde{\mathcal{C}}} : \widetilde{\mathcal{C}} \to \widetilde{\mathcal{C}} \).  This
  results in a cross section \( \mathcal{D} \) of \( \phi_{\widetilde{\mathcal{C}}} \) which when
  viewed as a cross section of the flow \( \mathbb{R} \acts \Omega \) has gaps
  bounded from below by \( d c \ge N \).  The
  cross section
  \[ \mathcal{C} = \bigl\{x + z : x \in \mathcal{D} \textrm{ and } z \in \zeta\bigl(
  \rgap[\mathcal{D}](x) \bigr) \bigr\}\] has all its gaps in \( [k,K] \).
\end{proof}

\begin{remark}
  \label{rem:sub-cross-section-with-large-gaps}
  Similarly to the proof of Proposition \ref{prop:marker-lemma-Z-actions} one can show that given a
  cross section \( \mathcal{C} \) and a real \( N \in \mathbb{R}^{>0} \) one may always find a sub
  cross section \( \mathcal{C}' \subseteq \mathcal{C} \) with gaps of size above \( N \): \(
  \rgap[\mathcal{C}'](x) > N \) for all \( x \in \mathcal{C}' \).
\end{remark}

\subsection{Flows under constant functions}
\label{sec:flows-under-constant}
In this paper we are concerned with the following question: How simple the function in the Wagh's
Theorem \ref{thm:Wagh-under-a-function-version} can be?  The simplest case would be to have a
constant function.  This turns out not to be possible in many examples.  A criterion for a flow to
admit such a cross section was found by Ambrose \cite[Theorem 3]{ambrose_representation_1941}.
While Ambrose's original argument is carried in the measure-theoretical context, it immediately
adapts to our setting as well.  For reader's convenience we include the short proof of the
characterization.

\begin{proposition}[Ambrose]
  \label{prop:constant-gap-function}
  A Borel flow \( \mff \) on \( \Omega \) can be written as a flow under a constant function
  \[ \Omega = \mathcal{C} \times \lambda, \quad \lambda \in \mathbb{R}^{>0}, \]
  if and only if \( \mff \) has a nowhere zero eigenfunction with
  eigenvalue \( 2\pi/\lambda \), i.e., if and only if there is a Borel function
  \( h : \Omega \to \mathbb{C}\setminus\{0\} \) such that
  \[ h(\omega + r) = e^{\frac{2 \pi i r}{\lambda}} h(\omega) \]
  for all \( \omega \in \Omega \) and \( r \in \mathbb{R} \).
\end{proposition}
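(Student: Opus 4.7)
The plan is to prove both directions quite explicitly, using that the nowhere-zero eigenfunction lets us read off a ``phase'' along each orbit that advances uniformly with time.

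For the forward direction, assume \( \Omega = \mathcal{C} \times \lambda \) with the constant gap function. Every \( \omega \in \Omega \) has a unique representation \( \omega = (x,s) \) with \( x \in \mathcal{C} \) and \( s \in [0, \lambda) \); I would simply set \( h(\omega) = e^{2 \pi i s / \lambda} \). This is Borel (the assignment \( \omega \mapsto s \) is Borel, being the distance from \( \omega \) to the preceding point of \( \mathcal{C} \)), and nowhere zero. The equivariance \( h(\omega + r) = e^{2\pi i r/\lambda} h(\omega) \) reduces to checking what happens when the flow crosses a copy of \( \mathcal{C} \): the coordinate \( s \) drops by \( \lambda \), but \( e^{-2\pi i} = 1 \), so the jump is invisible in \( h \). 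This direction is straightforward and should take only a few lines.

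For the converse, suppose a Borel \( h : \Omega \to \mathbb{C} \setminus \{0\} \) satisfying the eigenfunction identity is given. The natural candidate for the cross section is
\[ \mathcal{C} = \bigl\{\, \omega \in \Omega \bigm| h(\omega) \in \mathbb{R}^{>0} \,\bigr\},\]
which is Borel. I would then analyze, for a fixed \( \omega \in \Omega \), the set
\[ S_\omega = \{\, r \in \mathbb{R} \mid \omega + r \in \mathcal{C} \,\}. \]
Writing \( h(\omega) = |h(\omega)| e^{i \theta(\omega)} \), the defining condition \( h(\omega + r) > 0 \) becomes \( 2\pi r / \lambda + \theta(\omega) \equiv 0 \pmod{2 \pi} \), so \( S_\omega \) is a coset of \( \lambda \mathbb{Z} \) in \( \mathbb{R} \). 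This shows that \( \mathcal{C} \) meets every orbit in a bi-infinite set with consecutive gaps exactly equal to \( \lambda \), and that \( \Omega = \mathcal{C} + [0, \lambda) \) disjointly. Identifying \( \omega \mapsto (\phi_{\mathcal{C}}^{-\text{floor}}, \text{offset}) \) in the obvious way then exhibits \( \mathfrak{F} \) as the flow under the constant function \( \lambda \) over the base automorphism \( \phi_{\mathcal{C}} \).

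There is essentially no hard step: the main point is the \emph{nowhere-zero} hypothesis, which ensures that the phase \( \theta(\omega) \) is defined on all of \( \Omega \) and lets \( \mathcal{C} \) meet \emph{every} orbit (not merely a conull or comeager set). The only item requiring mild care is Borelness of \( \mathcal{C} \) and of the resulting isomorphism with the suspension, but both follow from Borelness of \( h \) and the standard Borel structure on cross sections already set up earlier in the section. Freeness of the flow (which is in force throughout this section) rules out any pathology with orbits of period smaller than \( \lambda \) that could otherwise obstruct the representation.
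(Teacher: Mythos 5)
Your proof is correct and takes essentially the same approach as the paper: the forward direction defines \( h \) as the phase \( e^{2\pi i s/\lambda} \), and the converse takes the level set where the phase of \( h \) vanishes (your \( \{h \in \mathbb{R}^{>0}\} \) coincides with the paper's \( h^{-1}(1) \) after normalizing \( |h|=1 \)). The extra verification that \( S_\omega \) is a coset of \( \lambda\mathbb{Z} \) is a more explicit rendering of the paper's closing observation, but not a different argument.
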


\begin{proof}
  \( \Rightarrow \) If \( \Omega = \mathcal{C} \times \lambda \), we may set
  \( h(\omega) = e^{\frac{2\pi i r}{\lambda}} \), where \( \omega = (x,r) \in \mathcal{C} \times
  \lambda \).

  \( \Leftarrow \) Let \( h : \Omega \to \mathbb{C} \setminus \{0\} \) be a nowhere zero
  eigenfunction with an eigenvalue \( 2\pi/\lambda \).  By considering
  \( \frac{h(\omega)}{|h(\omega)|} \) instead of \( h(\omega) \) we may assume without loss of
  generality that \( |h(\omega)| = 1 \) for all \( \omega \in \Omega \).  Let
  \( \mathcal{C} = h^{-1}(1) \).  The identity
  \[ h(\omega + r) = e^{\frac{2 \pi i r}{\lambda}} h(\omega) \]
  together with \( |h(\omega)| = 1 \) imply that \( \mathcal{C} \) is a cross section and
  \( \rgap[\mathcal{C}](x) = \lambda \) for all \( x \in \mathcal{C} \).
\end{proof}

This proposition has a more natural interpretation in the measure theoretical context.  If the flow
\( \mff \) preserves a finite measure \( \mu \) on \( \Omega \), we may associate a one-parameter
subgroup of unitary operators on \( L^{2}(\Omega, \mu) \) via the Koopman representation:
\( \bigl(U_{r}h\bigr)(\omega) = h(\omega + r) \) for \( r \in \mathbb{R} \),
\( h \in L^{2}(\Omega, \mu) \), and \( \omega \in \Omega \).  In this framework Proposition
\ref{prop:constant-gap-function} asserts equivalence between admitting a cross section with constant
gaps and having a nowhere vanishing eigenfunction for the associated one-parameter subgroup of
unitary operators.

\subsection{Regular cross sections}
\label{sec:regul-cross-sect}

Let \( \mathcal{C} \) be a cross section.  For a subset \( S \subseteq \mathbb{R}^{>0} \) we say
that \( \mathcal{C} \) is \( S \)-\emph{regular} if \( \rgap[\mathcal{C}](x) \in S \) for all \( x
\in  \mathcal{C} \).  For \( r \in \mathbb{R} \) we may say that \( x \in \mathcal{C} \) is an \( r
\)-point if \( \rgap[\mathcal{C}](x) = r \).

We shall need to consider equivalence relations on \( \mathcal{C} \) that are finer than
\( \oer{\mathcal{C}} \).  One example is the \( K \)-\emph{chain equivalence relation}
\( \oer[\le K]{\mathcal{C}} \), where \( K \in \mathbb{R}^{> 0} \), defined as follows.  Two points
\( x, y \in \mathcal{C} \) are \( \oer[\le K]{\mathcal{C}} \) related if one can get from one of the
points to the other via jumps of size at most \( K \): there exists \( n \in \mathbb{N} \) such that
\( \phi^{n}_{\mathcal{C}}(x) = y \) and
\( \rgap[\mathcal{C}]\bigl(\phi^{i}(x)\bigr) \le K \) for all \( 0 \le i < n \), or the same
condition holds with roles of \( x \) and \( y \) interchanged.  Evidently,
\( \oer[\le K]{\mathcal{C}} \) is Borel and is finer than \( \oer{\mathcal{C}} \), i.e.,
\( \oer[\le K]{\mathcal{C}} \subseteq \oer{\mathcal{C}}\).  We also note that
\( \oer[\le K]{\mathcal{C}} \subseteq \oer[\le L]{\mathcal{C}}\) whenever \( K \le L \).  When
\( \mathcal{C} \) is sparse, that is when \( \mathcal{C} \) has ``bi-infinitely'' unbounded gaps on
each orbit (see the next section for a rigorous definition), then \( \oer[\le K]{\mathcal{C}} \) is
a finite equivalence relation.

More generally, given a set \( S \subseteq \mathbb{R}^{>0} \) we let \( \oer[S]{\mathcal{C}} \) to
relate points connected by jumps of sizes in \( S \): \( x\, \oer[S]{\mathcal{C}}\, y \) if and only
if there exists \( n \in \mathbb{N} \) such that \( \phi^{n}_{\mathcal{C}}(x) = y \) and
\( \rgap[\mathcal{C}]\bigl(\phi^{i}(x)\bigr) \in S \) for all \( 0 \le i < n \) or the same
condition with roles of \( x \) and \( y \) interchanged.  In this notation \( \oer[\le
K]{\mathcal{C}} = \oer[(0,K{]}]{\mathcal{C}} \).  Note that a cross section is
\( S \)-regular if and only if \( \oer{\mathcal{C}} = \oer[S]{\mathcal{C}} \).

Of primary importance for us will be the relation \( \oer[S]{\mathcal{C}} \) when
\( S = \{\alpha, \beta\} \) is a pair of rationally independent positive reals
\( \alpha, \beta \in \mathbb{R}^{>0} \).  We shall abuse the notation slightly and denote it by
\( \oer[\alpha,\beta]{\mathcal{C}} \) omitting the curly brackets.

\subsection{Invariant measures}
\label{sec:invariant-measures}

An important invariant of a flow is its set of invariant measures.  Recall that a probability
measure \( \mu \) on the phase space \( \Omega \) is said to be \emph{ergodic} if for any Borel
invariant set \( Z \subseteq \Omega \) either \( \mu(Z) = 0 \) or \( \mu(Z) = 1 \).  Given a flow
\( \mathfrak{F} \) its set of ergodic invariant probability measures is denoted by
\( \invm(\mathfrak{F}) \).

Let \( \mathcal{C} \subset \Omega \) be a cross section of \( \mff \).  Ambrose \cite[Theorem
1]{ambrose_representation_1941} showed that for any finite \( \mff \)-invariant measure \( \mu \) on
\( \Omega \) there exists a \( \phi_{\mathcal{C}} \)-invariant measure \( \nu_{\mu} \) on
\( \mathcal{C} \) such that \( \mu \) is the product of \( \nu_{\mu} \) with the Lebesgue measure
\( \lambda \) on \( \mathbb{R} \).  More formally, when \( \Omega \) is viewed as subset of
\( \mathcal{C} \times \mathbb{R} \) via the identification with
\[ \bigl\{(x,r) \in \mathcal{C} \times \mathbb{R} : 0 \le r < \rgap[\mathcal{C}](x)\bigr\}, \]
then \( \mu = (\nu_{\mu} \times \lambda)|_{\Omega} \).

The definition of \( \nu_{\mu} \) is simple.  If \( c \in \mathbb{R}^{>0} \) is such that
\( \rgap[\mathcal{C}](x) \ge c \) for all \( x \in \mathcal{C} \), then for any Borel
\( A \subseteq \mathcal{C} \)
\[ \nu_{\mu}(A) = \frac{\mu(A \times [0,c])}{c}. \]
The definition is independent of the choice of \( c \).

The above construction of \( \nu_{\mu} \) is valid for any cross section \( \mathcal{C} \).  When
\( \mathcal{C} \) moreover admits an upper bound on its gap function, we also have a map in the
other direction.  For any \( \phi_{\mathcal{C}} \)-invariant finite measure \( \nu \) we define an
\( \mff \)-invariant \( \mu_{\nu} \) on \( \Omega \) by setting for \( A \subseteq \Omega \)
\[ \mu_{\nu}(A) = \int_{\mathcal{C}}\! \tilde{\lambda}_{x}(A)\, d\nu(x), \]
where
\[ \tilde{\lambda}_{x}(A) = \lambda\bigl(\{r \in \mathbb{R} : 0 \le r \le \rgap[\mathcal{C}](x)
\textrm{ and }x + r \in A \}\bigr). \]
Boundedness of gaps from above is needed to ensure that the integral is finite.

The maps \( \mu \mapsto \nu_{\mu} \) and \( \nu \mapsto \mu_{\nu} \) are inverses of each other and
provide a bijection between finite \( \mff \)-invariant measure on \( \Omega \) and finite \(
\phi_{\mathcal{C}} \)-invariant measures on a cross section \( \mathcal{C} \subset \Omega \) with
bounded gaps.  These maps preserve ergodicity, but do not, in general, preserve normalization: \(
\mu(\Omega) \) may not in general be equal to \( \nu_{\mu}(\mathcal{C}) \).  When normalized
manually, \( \mu \mapsto \nu_{\mu}/ \nu_{\mu}(\mathcal{C}) \) is a bijection between \( \invm(\mff)
\) and \( \invm(\phi_{\mathcal{C}}) \).  

\begin{theorem}
  \label{thm:correspondence-between-invariant-measures}
  Let \( \mff : \mathbb{R} \acts \Omega \) be a Borel flow and \( \mathcal{C} \subset \Omega \) be a
  cross section with bounded gaps.  Sets of ergodic invariant probability measures \( \invm(\mff) \)
  and \( \invm(\phi_{\mathcal{C}}) \) have the same cardinalities.
\end{theorem}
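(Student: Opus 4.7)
The plan is to upgrade the bijection $\mu \leftrightarrow \nu_\mu$ between finite $\mff$-invariant and finite $\phi_{\mathcal{C}}$-invariant measures, already constructed in the preceding discussion, to a bijection between the probability subsets $\invm(\mff)$ and $\invm(\phi_{\mathcal{C}})$ via normalization. Concretely, I will show that $\mu \mapsto \nu_\mu/\nu_\mu(\mathcal{C})$ sends $\invm(\mff)$ into $\invm(\phi_{\mathcal{C}})$ and that the inverse is given by $\nu \mapsto \mu_\nu/\mu_\nu(\Omega)$; the cardinality equality then follows.

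First I would check well-definedness of the normalizations. Fix constants $c, K > 0$ with $c \le \rgap[\mathcal{C}](x) \le K$ for all $x \in \mathcal{C}$. The inequality $\nu_\mu(\mathcal{C}) = \mu(\mathcal{C} + [0,c])/c \le 1/c$ gives finiteness, while the identity $\Omega = \mathcal{C} + [0,K]$ lets me cover $\Omega$ by $N := \lceil K/c \rceil$ many $\mff$-translates of $\mathcal{C} + [0,c]$, forcing $\mu(\mathcal{C} + [0,c]) \ge 1/N$ by $\mff$-invariance and hence $\nu_\mu(\mathcal{C}) > 0$. Symmetrically, $\mu_\nu(\Omega) = \int_{\mathcal{C}} \rgap[\mathcal{C}]\, d\nu$ lies in $[c\, \nu(\mathcal{C}),\, K\, \nu(\mathcal{C})]$, so it is finite and strictly positive for every $\nu \in \invm(\phi_{\mathcal{C}})$.

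It remains to transfer ergodicity. Borel $\phi_{\mathcal{C}}$-invariant subsets $A \subseteq \mathcal{C}$ correspond bijectively to Borel $\mff$-invariant subsets of $\Omega$ via $A \mapsto [A]_{\mff}$, with inverse given by intersection with $\mathcal{C}$, since two points of a cross section share an $\mff$-orbit iff they share a $\phi_{\mathcal{C}}$-orbit. The inclusions $A + [0,c] \subseteq [A]_{\mff}$ and $(\mathcal{C} \setminus A) + [0,c] \subseteq \Omega \setminus [A]_{\mff}$ show that $\mu([A]_{\mff}) = 0$ forces $\nu_\mu(A) = 0$, and $\mu([A]_{\mff}) = 1$ forces $\nu_\mu(A) = \nu_\mu(\mathcal{C})$; conversely, covering $[A]_{\mff}$ by the countable family of $\mff$-translates $A + [jc, (j+1)c]$, $j \in \mathbb{Z}$, each of measure $c\,\nu_\mu(A)$, lets one run the implication in the opposite direction. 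Combined with the fact that $\mu \mapsto \nu_\mu$ and $\nu \mapsto \mu_\nu$ are mutually inverse and commute up to positive scaling with normalization, this yields inverse bijections $\invm(\mff) \leftrightarrow \invm(\phi_{\mathcal{C}})$. There is no serious obstacle: the entire argument is essentially bookkeeping around the bi-boundedness of $\rgap[\mathcal{C}]$, which is exactly the hypothesis on $\mathcal{C}$.
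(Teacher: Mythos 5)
Your proof follows exactly the route the paper indicates: the paper introduces the Ambrose correspondence $\mu \leftrightarrow \nu_\mu$, asserts it preserves ergodicity, and notes that manual normalization $\mu \mapsto \nu_\mu/\nu_\mu(\mathcal{C})$ gives the bijection between $\invm(\mff)$ and $\invm(\phi_{\mathcal{C}})$, leaving the theorem as a summary of that discussion (with a pointer to a more general reference). You have simply filled in the bookkeeping — finiteness and positivity of the normalizing constants, and the transfer of ergodicity via the bijection $A \leftrightarrow [A]_{\mff}$ between invariant Borel sets — all of which is correct.
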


This correspondence is valid, in fact, for all unimodular locally compact groups,
\cite[Proposition 4.4]{slutsky_lebesgue_2015}.

\section{Sparse flows}
\label{sec:sparse-flows}

\begin{definition}
  \label{def:sparse-flows}
  Let \( \mff : \mathbb{R} \acts \Omega \) be a free flow.  A cross section
  \( \mathcal{C} \subseteq \Omega \) is said to be \emph{sparse} if it has gaps bi-infinitely
  unbounded on each orbit: for each \( N \in \mathbb{R} \) and \( x \in \mathcal{C} \) there are
  integers \( n_{1} \ge 0 \) and \( n_{2} < 0 \) such that
  \( \rgap[\mathcal{C}]\bigl(\phi^{n_{1}}_{\mathcal{C}}(x)\bigr) \ge N \) and
  \( \rgap[\mathcal{C}]\bigl(\phi^{n_{2}}_{\mathcal{C}}(x)\bigr) \ge N \).  We say that a flow is
  sparse if it admits a sparse cross section.  The definition of a sparse cross section for a Borel
  automorphism is analogous.
\end{definition}

The requirement of having unbounded gaps in a bi-infinite fashion is a matter of convenience only.
For if \( \mathcal{C} \) is any cross section, the set of orbits where gaps are unbounded, but not
bi-infinitely unbounded is smooth.  Indeed, for any real \( N \) the set
\[ \mathcal{D}_{N}^{l} = \{ x \in \mathcal{C} : \rgap[\mathcal{C}](x) \ge N \textrm{ and }
\rgap[\mathcal{C}]\bigl(\phi^{n}_{\mathcal{C}}(x)\bigr) < N \textrm{ for all } n \in
\mathbb{Z}^{<0}\} \]
selects the minimal point with gap of size at least \( N \) whenever such a point exists.  Similarly,
\[ \mathcal{D}_{N}^{r} = \{ x \in \mathcal{C} : \rgap[\mathcal{C}](x) \ge N \textrm{ and }
\rgap[\mathcal{C}]\bigl(\phi^{n}_{\mathcal{C}}(x)\bigr) < N \textrm{ for all } n \in
\mathbb{Z}^{>0}\} \]
selects the maximal such point.
Therefore, for any \( \mathcal{C} \) the set of orbits where \( \mathcal{C} \) has unbounded but not
bi-infinitely unbounded gaps is a saturation of the set
\[ \bigcup_{N \in \mathbb{N}} (\mathcal{D}_{N}^{l} \cup \mathcal{D}_{N}^{r}) \]
and is therefore smooth.  We shall thus always assume that our sparse cross sections have
bi-infinitely unbounded gaps and this implies that in the notation of Subsection
\ref{sec:regul-cross-sect} each \( \oer[\le K]{\mathcal{C}} \)-class is finite for any
\( K \in \mathbb{R} \).

The notion of a sparse automorphism appeared for the first time (under the name of gapped
automorphisms) in the lecture notes of B.~D.~Miller \cite{miller_dynamics_2007}.  This class of
automorphisms and flows seems to isolate a purely Borel property for which some of the ergodic
theoretical methods can be applied.

In general, a Borel flow may not admit a sparse cross section.  We start by exhibiting an important
class of flows which never admit sparse cross sections.  Recall that a continuous flow is said to be
\emph{minimal} if every orbit is dense.

\begin{proposition}
  \label{prop:minimal-homeo-are-not-sparse}
  Free continuous flows on compact metrizable spaces do not admit sparse cross sections.  If the
  flow is moreover minimal, then any cross section has bounded gaps on a Borel invariant comeager set.
\end{proposition}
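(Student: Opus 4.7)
The plan is to prove the second statement first and then derive the first as a short corollary via minimal subsets. Fixing a minimal continuous flow $\mathbb{R} \acts X$ on a compact metric space and an arbitrary cross section $\mathcal{C}$, for each $N > 0$ I would introduce the Borel set
\[
B_N = \bigl\{\omega \in X : \text{the } \mathcal{C}\text{-gap containing } \omega \text{ has length } \le N\bigr\}.
\]
Because every $\omega$ lies in a finite gap, $X = \bigcup_N B_N$. The Baire category theorem then forces some $B_N$ to be non-meager, and the Baire property (inherited from Borelness) promotes this to comeagerness on some non-empty open $U \subseteq X$.

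The engine is uniform syndetic recurrence, classical for minimal flows on compact metric spaces: there is $T > 0$ (which I may take to be at least $N$) such that the return-time set $\{t : \omega + t \in U\}$ meets every interval of length $T$ uniformly in $\omega$. Consequently $X = \bigcup_{t \in [-T, 0]}(U + t) = \bigcup_{t \in [0, T]}(U + t)$, and second-countability yields countable subcovers with indexing sets $\{t_i\} \subseteq [-T, 0]$ and $\{t_i'\} \subseteq [0, T]$. Since $+t$ is a homeomorphism and therefore preserves meagerness, $(U \setminus B_N) + t$ is meager for every $t$, and a routine calculation shows that $\bigcup_i (B_N + t_i)$ and $\bigcup_i (B_N + t_i')$ are both comeager in $X$. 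For $\omega$ in the first set, some $\omega - t_i$ (a point within distance $T$ to the right of $\omega$ on $\orbit(\omega)$) lies in a gap of length $\le N$; using $T \ge N$, this forces the next $\mathcal{C}$-point to the right of $\omega$ to be within distance $T$. Symmetrically on the left, and intersecting I deduce that $B_{2T}$ is comeager in $X$.

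To promote this to an invariant Borel comeager set I would form
\[
\Omega^{*} = \bigcap_{s \in \mathbb{Q}}(B_{2T} - s),
\]
a countable intersection of comeager Borel sets, hence itself comeager Borel. Because each $\mathcal{C}$-gap on any orbit is a non-empty open arc, it contains some rational translate of any given point of its orbit, so $\Omega^{*}$ coincides with the invariant Borel set of $\omega$ whose entire orbit has all $\mathcal{C}$-gaps bounded by $2T$, proving the second claim.

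For the first statement, suppose toward contradiction that $\mathcal{C}$ is a sparse cross section of a continuous free flow on a compact metric space $X$, not assumed minimal. By Zorn's lemma there exists a non-empty closed invariant minimal subset $M \subseteq X$; since $\mathcal{C} \cap M$ inherits lacunarity and bi-infiniteness, it is a cross section of the minimal flow on $M$, and sparsity being an orbit property survives on every orbit of $M$. Applying the second claim to $M$ produces a non-empty comeager invariant subset of $M$ with bounded gaps --- directly contradicting sparsity. The principal obstacle throughout is the passage from ``$B_N$ comeager on an open set $U$'' to ``$B_{2T}$ comeager on $X$''; uniform syndetic recurrence under minimality is exactly what drives this step, and without minimality no Baire-category argument can propagate a local bound on gap sizes to orbit-uniform control, which is precisely why the general case must be routed through the minimal one.
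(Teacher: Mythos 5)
Your proof is correct and essentially mirrors the paper's: find a non-meager Borel set (you take $B_N$ from the decomposition $X = \bigcup_N B_N$ by gap length; the paper takes $\mathcal{C} + (-1,1)$), localize it to a non-empty open $U$ via the Baire property, cover $X$ by translates of $U$ using minimality and compactness, and intersect over rational translates to manufacture an invariant comeager set with bounded gaps. Your invocation of uniform syndetic recurrence and the paper's finite subcover of $X = U + \mathbb{Q}$ are the same compactness-plus-minimality fact in two guises, and the Zorn's-lemma reduction of the non-minimal case to the minimal one is identical.
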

\begin{proof}
  Let \( \mff \) be a minimal flow on a compact metrizable space \( \Omega \) and let
  \( \mathcal{C} \) be a Borel cross section; put \( \mathcal{I} = \mathcal{C} + (-1,1) \).  Note
  that \( \mathcal{I} \) has unbounded gaps between intervals within every orbit of \( \mff \) if
  and only if \( \mathcal{C} \) is sparse.  Since
  \[ \bigcup_{n \in \mathbb{Z}} \bigl(\mathcal{I} + n\bigr) = \Omega, \]
  it follows that \( \mathcal{I} \) is non meager, therefore it is comeager in a non-empty open
  subset \( U \subseteq \Omega \).  By minimality of the action we get
  \( U + \mathbb{Q} = \Omega \), and therefore compactness ensures existence of
  \( q_{1}, \ldots, q_{N} \in \mathbb{Q} \) such that \( \Omega = \bigcup_{n = 1}^{N}(U + q_{n}) \).
  Set \( \mathcal{J} = \bigcup_{n=1}^{N} (\mathcal{I} + q_{n}) \) and note that gaps between
  intervals in \( \mathcal{J} \) are bounded within an orbit if and only if they are bounded within
  the same orbit for \( \mathcal{I} \).  Note also that \( \mathcal{J} \) is comeager in
  \( \Omega \), hence so is \( \bigcap_{q \in \mathbb{Q}} (\mathcal{J} + q) \), which is moreover
  invariant under the flow.  Since each \( \omega \in \mathcal{J} \) is a bounded distance away from
  a point in \( \mathcal{C} \), we conclude that gaps in
  \[ \mathcal{C} \cap \bigcap_{q \in \mathbb{Q}} (\mathcal{J} + q) \]
  are bounded by \( 2 \max\bigl\{ |q_{i}| + 2 : 1 \le i \le N \bigr\} \). This proves the
  proposition for minimal flows.

  The general case follows from the minimal one, since any flow has a minimal subflow: by Zorn's
  lemma the family of non-empty invariant closed subsets of \( \Omega \) ordered by inclusion has a
  minimal element \( M \subseteq \Omega \).  The restriction \( \mff|_{M} \) is minimal, and
  therefore does not admit a sparse cross section by the above argument.
\end{proof}

A similar argument shows that free homeomorphisms of compact metrizable spaces never admit sparse
cross sections.  We note also that if \( \mathcal{C} \) is a cross section with bounded gaps, then
the flow \( \mff \) is sparse if and only if the induced automorphism \( \phi_{\mathcal{C}} \) is
sparse.  Indeed if \( \mathcal{D} \) is a sparse cross section for \( \phi_{\mathcal{C}} \), then it
is also a sparse cross section for the flow \( \mff \) when viewed as a subset of \( \Omega \).  For
the other direction, if \( \mathcal{D} \subseteq \Omega = \mathcal{C} \times \rgap[\mathcal{C}] \)
is sparse for \( \mff \), then \( \mathrm{proj}_{\mathcal{C}}(\mathcal{D}) \) is sparse for
\( \phi_{\mathcal{C}} \).

While from the topological point of view minimal flows on compact spaces are far from sparse, any
flow is sparse as far as an ergodic theorists would care.

\begin{theorem}
  \label{thm:sparse-full-set}
  Let \( \mff \) be a free Borel flow on a standard Borel space \( \Omega \).  There exists a Borel
  invariant subset \( \omsp \subseteq \Omega \) such that \( \mff|_{\omsp} \) is sparse
  and \( \mu(\omsp) = 1 \) for any \( \mff \)-invariant probability measure \( \mu \).
\end{theorem}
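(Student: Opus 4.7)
My plan is to reduce the problem to a free Borel $\mathbb{Z}$-action via a bounded-gap cross section, and then apply a marker-lemma construction. By Corollary \ref{cor:wagh-gap-bounds} fix a cross section $\mathcal{C} \subseteq \Omega$ with $\rgap[\mathcal{C}](x) \in [k,K]$ for some $0 < k \le K$, and let $T = \phi_{\mathcal{C}}$ be the induced free Borel automorphism. By Theorem \ref{thm:correspondence-between-invariant-measures}, invariant probability measures of $\mff$ correspond (after normalization) to invariant probability measures of $T$, so it suffices to construct a $T$-invariant Borel $\mathcal{C}_{sp} \subseteq \mathcal{C}$ that admits a sparse $T$-cross section and has $\nu(\mathcal{C}_{sp}) = 1$ for every $T$-invariant probability $\nu$; the boundedness of $\rgap[\mathcal{C}]$ will then give $\mu(\omsp) = 1$ for $\omsp := \mathcal{C}_{sp} + \mathbb{R}$ and any $\mff$-invariant probability $\mu$, and any sparse cross section of $T|_{\mathcal{C}_{sp}}$ also serves as one for $\mff|_{\omsp}$.

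I would then iterate Proposition \ref{prop:marker-lemma-Z-actions} (combined with Remark \ref{rem:sub-cross-section-with-large-gaps}) to obtain a decreasing sequence $\mathcal{C} = A_{0} \supseteq A_{1} \supseteq A_{2} \supseteq \cdots$ of Borel $T$-cross sections with $\rgap[A_{n}](x) \ge 2^{n}$ pointwise and $\bigcap_{n}A_{n} = \es$, and define the level function $\ell(x) = \max\{n : x \in A_{n}\}$. Since each $A_{n}$ meets every $T$-orbit bi-infinitely, $\ell$ is bi-infinitely unbounded along every orbit: otherwise some $A_{M+1}$ would miss an entire half-orbit, contradicting its being a cross section. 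Moreover, the pointwise $T$-disjointness of $A_{n}, T A_{n}, \ldots, T^{2^{n}-1}A_{n}$ forces $\nu(A_{n}) \le 2^{-n}$ for every $T$-invariant probability $\nu$.

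The crux of the argument is to convert the level structure into a sparse Borel $T$-cross section $\mathcal{D}$. The idea is a greedy record-selection: retain $x$ in $\mathcal{D}$ exactly when $\ell(x)$ strictly exceeds the level of all other $A_{1}$-points lying within a window of $T$-size determined by $\ell(x)$, e.g., of size $2^{2\ell(x)}$, so that consecutive $\mathcal{D}$-points of level $n$ are separated by at least $2^{n}$ $T$-steps. The main obstacle is verifying that such $\mathcal{D}$ has bi-infinitely unbounded gaps on a Borel $T$-invariant set of full $\nu$-measure for every invariant probability $\nu$, without secretly selecting a basepoint on each orbit. For this, one uses the density bound $\nu(A_{n}) \le 2^{-n}$ together with the pointwise ergodic theorem applied to $\chi_{A_{n}}$: on $\nu$-a.e.\ orbit the upper density of level-$\ge n$ points is at most $2^{-n}$, so a Borel--Cantelli argument shows that the selected records have levels tending to infinity in both directions, yielding bi-infinitely unbounded $\rgap[\mathcal{D}]$. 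Letting $\mathcal{C}_{sp}$ be the Borel $T$-invariant set where this record structure succeeds gives $\nu(\mathcal{C}_{sp}) = 1$ for every $T$-invariant probability $\nu$, and $\omsp := \mathcal{C}_{sp} + \mathbb{R}$ is the required set.
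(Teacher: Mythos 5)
Your reduction to a bounded-gap cross section $\mathcal{C}$ and its induced automorphism $T = \phi_{\mathcal{C}}$ is sound: by the measure correspondence of Theorem \ref{thm:correspondence-between-invariant-measures} (together with the observation in Section \ref{sec:sparse-flows} that sparsity of $\mff$ is equivalent to sparsity of $\phi_\mathcal{C}$), it does suffice to produce the appropriate $T$-invariant co-null set admitting a sparse $T$-cross section. The setup with vanishing markers $A_n$, the level function $\ell$, the bi-infinite unboundedness of $\ell$ along each orbit, and the estimate $\nu(A_n) \le 2^{-n}$ are all correct. This is a genuinely different route from the paper, which instead invokes the descriptive Rokhlin lemma of \cite[Theorem~6.3]{slutsky_lebesgue_2015} to build a nested sequence of interval cross sections $\mathcal{C}_n + [-l_n, l_n)$ directly in $\Omega$ and shows $\mathcal{C}_1$ is automatically sparse.

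The gap is in the record-selection step, which is where all the real difficulty lives, and the construction as stated does not work. With the rule ``retain $x$ when $\ell(x)$ strictly exceeds the level of every other $A_1$-point within $2^{2\ell(x)}$ $T$-steps,'' the window for a low-level point is tiny, so low-level points are generically records: any $x$ with $\ell(x)=1$ whose nearest $A_1$-neighbors happen to lie outside a window of size $4$ is a record, and such points will typically form a cross section with bounded gaps. Thus $\mathcal{D}$ need not be sparse at all. The appeal to the ergodic theorem and Borel--Cantelli does not repair this: the density bound $\nu(A_n)\le 2^{-n}$ controls how often \emph{high}-level points occur, but the obstruction here is an excess of \emph{low}-level records, which the density bound says nothing about. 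Moreover, the ``bad event'' whose probabilities are supposed to be summable is never stated, so there is no actual Borel--Cantelli argument to check. You also assert that on the good set the record levels tend to infinity in both directions along each orbit, but this is simply false for the selection rule as written (there will be infinitely many level-$1$ records on almost every orbit). Finally, the Borelness of the set $\mathcal{C}_{sp}$ where ``the record structure succeeds'' is asserted but not argued, and until the success condition is made precise it cannot be verified. To salvage this approach you would need a selection rule whose window size does not shrink with $\ell(x)$ — but then you must separately argue that records still exist bi-infinitely on a.e.\ orbit, and that the failure set is Borel and invariant; that is the genuine technical content the sketch leaves unaddressed.
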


\begin{proof}
  The theorem follows from a sequence of applications of descriptive Rokhlin's Lemma.  More
  formally, we may use \cite[Theorem 6.3]{slutsky_lebesgue_2015} to construct a Borel \( \mff
  \)-invariant subset \( \omsp \subseteq \Omega \) of full measure for any \( \mff
  \)-invariant finite measure on \( \Omega \), a sequence \( (l_{n})_{n=1}^{\infty} \), \( l_{n} \ge
  n\), and sets \( \mathcal{C}_{n} \subseteq \omsp \)
  satisfying the following properties fro all \( n \).
  \begin{enumerate}[(i)]
  \item\label{item:increase}
    \( \bigl( c + [-l_{n+1}, l_{n+1}) \bigr) \cap \mathcal{C}_{n} \ne \es \) for all
    \( c \in \mathcal{C}_{n+1} \);
  \item\label{item:exhaust-Omega-sp}
    \( \omsp = \bigcup_{n}\bigl( \mathcal{C}_{n} + [-l_{n}, l_{n})\bigr) \);
  \item\label{item:lacunary}
    \( \bigl( c + [-l_{n}, l_{n}) \bigr) \cap \bigl( c' + [-l_{n}, l_{n}) \bigr) = \es \) for all
    distinct \( c, c' \in \mathcal{C}_{n} \);
  \item\label{item:far-from-boundary}
    \( \mathcal{C}_{n} + [-l_{n}, l_{n}) \subseteq \mathcal{C}_{n+1} + [-l_{n+1} + n + 1, l_{n+1} -
    n -1) \).
  \end{enumerate}
  \begin{figure}[htb]
    \centering
    \begin{tikzpicture}
      \draw[dotted] (-0.7,0) -- (0, 0);
      \draw[dotted] (1,0) -- (1.5, 0);
      \draw[dotted] (2.5,0) -- (3.2, 0);
      \draw[dotted] (4.5,0) -- (5.1, 0);
      \draw[dotted] (6.1,0) -- (6.6, 0);
      \draw[dotted] (7.6,0) -- (8.4, 0);
      \draw (0, 0) -- (1,0);
      \draw (1.5, 0) -- (2.5, 0);
      \draw (5.1, 0) -- (6.1, 0);
      \draw (6.6,0) -- (7.6, 0);
      \foreach \x in {0, 1.5, 5.1, 6.6} {
        \draw (\x, 0.07) -- (\x, -0.07);
        \draw (\x+1, 0.07) -- (\x+1, -0.07);
      }
      \foreach \x in {-0.7, 4.5} {
        \draw (\x, 0.2) -- (\x, -0.2);
        \draw (\x+3.9, 0.2) -- (\x+3.9, -0.2);
      }
      \draw (1.25,-0.45) node {\( 2l_{2} \)};
      \draw (0.5,0.23) node {\( 2l_{1} \)};
      \draw (2.0,0.23) node {\( 2l_{1} \)};
      \draw[->] (0.95,-0.45) -- (-0.68,-0.45);
      \draw[->] (1.55,-0.45) -- (3.18,-0.45);
    \end{tikzpicture}
    \caption{Intervals in \( \mathcal{C}_{1} + [-l_{1}, l_{1}) \) are inside intervals of
      \( \mathcal{C}_{2} + [-l_{2}, l_{2}) \), and are far from their boundary.}
    \label{fig:increasing-interval-cross-sections}
  \end{figure}
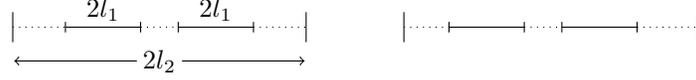
  By item \eqref{item:lacunary}, \( \mathcal{C}_{n} \) is lacunary, and by \eqref{item:increase} and
  \eqref{item:exhaust-Omega-sp} each \( \mathcal{C}_{n} \) intersects every orbit of \( \mff \) in
  \( \omsp \).  Using also \eqref{item:far-from-boundary}, it is not hard to see that each of
  \( \mathcal{C}_{n} \) must be a (lacunary bi-infinite) cross section for the restriction of
  \( \mff \) on \( \omsp \).  One can think of \( \mathcal{C}_{n} + [-l_{n}, l_{n}) \) as an
  ``interval cross section,'' where each point has been fattened into an interval of length \( 2
  l_{n} \).  The main property here is that each
  interval in \( \mathcal{C}_{n} + [-l_{n}, l_{n}) \) lies inside an interval of \( \mathcal{C}_{n+1}
  + [-l_{n+1}, l_{n+1}) \) and, moreover, it lies at distance at least \( n+1 \) from its boundary.
  Figure \ref{fig:increasing-interval-cross-sections} shows how intervals in \( \mathcal{C}_{1} +
  [-l_{1}, l_{1}) \) (solid lines) may lie relative to intervals of \( \mathcal{C}_{2} +[-l_{2},
  l_{2}) \) (dotted lines).

  The proof is completed by showing that \( \mathcal{C}_{1} \) is sparse.  Pick some
  \( x \in \mathcal{C}_{1} \), we show that \( (x + \mathbb{R}) \cap \mathcal{C}_{1} \) has
  arbitrarily large gaps.  Items \eqref{item:increase} and \eqref{item:far-from-boundary} guarantee
  that for any \( c \in \mathcal{C}_{n} \) the set
  \( \bigl(c + [-l_{n}, l_{n})\bigr) \cap \mathcal{C}_{1} \) is non-empty, and we set
  \begin{displaymath}
    \begin{aligned}
      d &= \max \bigl(c + [-l_{n}, l_{n})\bigr) \cap \mathcal{C}_{1}, \\
      d' &= \min \bigl(c' + [-l_{n}, l_{n})\bigr) \cap \mathcal{C}_{1}, \\
    \end{aligned}
  \end{displaymath}
  where \( c, c' \in \mathcal{C}_{n} \cap (x + \mathbb{R}) \) are any adjacent points, \( c < c' \).
  Elements \( d \) and \( d' \) must be adjacent in \( \mathcal{\mathcal{C}_{1}} \), and
  \( \dist(d, d') \ge 2n \) by \eqref{item:far-from-boundary}.  Since \( n \) was arbitrary,
  \( \mathcal{C}_{1} \) is sparse.
\end{proof}

\section{Overview of the Proof}
\label{sec:overview-proof}

This section contains a rough sketch of the proof of the Main Theorem.  During the first reading we
suggest to skim over and return to the relevant parts as the reader goes through the next chapters.

\subsection{The big picture}
\label{sec:big-picture}

The main argument of Theorem \ref{thm:tiling-general-flows} requires certain amount of technical
preparation, and we would like to take this opportunity and outline the big picture of the proof.
The argument splits into two parts.  First, we construct regular cross sections under the additional
assumption that the flow is sparse.  This is done in Theorem \ref{thm:tiling-sparse-flows}.  Now
suppose \( \mathcal{C} \) is a cross section which has arbitrarily large
\( \{\alpha, \beta\} \)-regular blocks within each orbit in the sense that for any
\( x \in \mathcal{C} \) and any \( N \in \mathbb{N} \) there exists \( y \in \mathcal{C} \) coming
from the same orbit, \( x\, \oer{\mathcal{C}}\, y \), such that the cardinality of the
\( \oer[\alpha,\beta]{\mathcal{C}} \)-class of \( y \) is at least \( N \):
\( \bigl| [y]_{\oer[\alpha,\beta]{\mathcal{C}}} \bigr| \ge N \).  Orbits in such a cross section
\( \mathcal{C} \) necessarily fall into three categories:
\begin{itemize}
\item Some orbits may be tiled completely, i.e., the whole orbit may constitute a single
  \( \oer[\alpha,\beta]{\mathcal{C}} \)-class.  On the part of the space which consists of these
  orbits the cross section \( \mathcal{C} \) is \( \{\alpha, \beta\} \)-regular.
\item It is possible that ``half'' of an orbit is tiled, meaning that
  \( \oer[\alpha,\beta]{\mathcal{C}} \) may have at least two equivalence classes
  within the orbit, one of which is infinite.  Restriction of the flow onto the set of such orbits
  is smooth for we may pick finite endpoints of infinite classes to get a Borel transversal.
  Constructing a regular cross section on this part of the space will therefore be trivial.
\item Finally, in a typical orbit each \( \oer[\alpha,\beta]{\mathcal{C}} \)-class will be finite.
  Consider the sub cross section that consists of endpoints of \( \oer[\alpha, \beta]{\mathcal{C}}
  \)-classes:
  \[ \mathcal{C}' = \bigl\{x \in \mathcal{C} : x = \min [x]_{\oer[\alpha,\beta]{\mathcal{C}}}
  \textrm{ or } x = \max [x]_{\oer[\alpha,\beta]{\mathcal{C}}} \bigr\}. \]
  A simple but crucial observation is that the assumption of having arbitrarily large \(
  \oer[\alpha, \beta]{\mathcal{C}} \)-classes within each orbit implies that the cross section \(
  \mathcal{C}' \) \textit{will necessarily be sparse!}
\end{itemize}
To summarize, given a cross section with unbounded \( \oer[\alpha, \beta]{\mathcal{C}} \)-classes
within each orbit, the phase space splits into three invariant Borel parts: a sparse piece where
Theorem \ref{thm:tiling-sparse-flows} applies, a smooth piece where constructing any kind of cross
section is a triviality, and a piece where \( \mathcal{C} \) is already an
\( \{\alpha, \beta\} \)-regular cross section.  Once Theorem \ref{thm:tiling-sparse-flows} is
proved, the problem of constructing a regular cross section for a general Borel flow is therefore
reduced to a problem of constructing a cross section with arbitrarily large \( \oer[\alpha, \beta]{}
\)-classes.  Theorem \ref{thm:tiling-general-flows} achieves just that.

\subsection{Sparse case}
\label{sec:sparse-case}

Let us now explain how sparsity of the flow is helpful in constructing a regular cross section.  If
\( \alpha \) and \( \beta \) are positive rationally independent reals, then the set \( \tileable \)
of reals of the form \( p\alpha + q\beta \), \( p,q \in \mathbb{N} \), (we call such reals
\emph{tileable}) is \emph{asymptotically dense} in \( \mathbb{R} \) in the sense that for any
\( \epsilon > 0 \) this set is \( \epsilon \)-dense in \( [K, \infty) \) for sufficiently large
\( K \) (see Definition \ref{def:asymptotic-density}).  We may therefore pick a sequence \( K_{n} \)
growing so fast that \( \tileable \) is \( \epsilon_{n+1} \)-dense in\footnote{In fact, we need
  \( \tileable \) to be \( \epsilon_{n+1} \)-dense in \( [K_{n}-4, \infty) \), because during our
  construction points will be shifted a bit, but the total shift of each point will never exceed
  \( 2 \), so if we start with two points which are distance \( K_{n} \) apart, then during the
  whole process they will remain at least \( K_{n} - 4 \) apart.  In this sketch we ignore minor
  changes in distance because of point shifts.}  \( [K_{n},\infty) \), where
\( \epsilon_{n} = 2^{-n} \).  Given a sparse cross section \( \mathcal{C} \) we may now argue as
follows.

By passing to a sub cross section if necessary we may suppose without loss of generality that
\( \rgap[\mathcal{C}](x) \ge K_{0} \) for all \( x \in \mathcal{C} \).  Consider the relation
\( \oer[\le K_{1}]{\mathcal{C}} \) on \( \mathcal{C} \).  Sparsity ensures that each
\( \oer[\le K_{1}]{\mathcal{C}} \)-class is necessarily finite.  Take a single
\( \oer[\le K_{1}]{\mathcal{C}} \)-class.  Let us say it consists of points
\( x_{0}, x_{1}, \ldots, x_{N} \) listed in the increasing order (Figure \ref{fig:tiling-sparse}).
By construction gaps between adjacent points within this class are between \( K_{0} \) and
\( K_{1} \).  By the choice of \( K_{0} \) we may shift \( x_{1} \) by at most
\( \epsilon_{1} = 1/2 \) to a new position \( x_{1}' \) such that
\( \dist(x_{0}, x_{1}') \in \tileable \).  Since \( \dist(x_{0}, x_{1}') \) is of the form
\( p\alpha + q\beta \), where \( p \) and \( q \) are non-negative integers, we may add
\( p + q - 1 \) points to the interval \( [x_{0}, x_{1}'] \) to make every gap between adjacent
points be either \( \alpha \) or \( \beta \).  We call the processing of adding such points
\emph{tiling the gap}.
\begin{figure}[ht]
  \centering
  \begin{tikzpicture}
    \foreach \p/\x in {0/0, 1/1.1, 2/2.3, 3/3.6} {
      \filldraw (\x, 0) circle (1pt);
      \draw (\x, 0.2) node {\( x_{\p} \)};
    }
    \draw (1.8,-1) node[text width=5cm] {
        Moving each point by at most \( \epsilon_{1} = 1/2 \)
        and tiling the gaps.
    };
    \filldraw (0, -2) circle (1pt);
    \draw (0, -2.3) node {\( x_{0} \)};
    \foreach \p/\x in {1/1.0, 2/2.4, 3/3.5} {
      \filldraw (\x, -2) circle (1pt);
      \draw (\x, -2.3) node {\( x'_{\p} \)};
    }
    \foreach \x in {0.0, 0.1, ..., 3.5} {
      \filldraw (\x,-2) circle (0.5pt);
    }
  \end{tikzpicture}
  \caption{Tiling sparse cross section}
  \label{fig:tiling-sparse}
\end{figure}
One now proceeds in the same fashion with \( x_{2} \) \textemdash{} the distance from \( x_{1}' \) to \( x_{2} \)
differs from \( \dist(x_{1}, x_{2}) \) by no more than \( \epsilon_{1} \), so is still large enough,
and we may therefore shift \( x_{2} \) by at most \( \epsilon_{1} \) to a new position \( x_{2}' \)
in such a way that \( \dist(x_{1}', x_{2}') \in \mathcal{T} \).  Once \( x_{2} \) has been shifted
we tile the gap by adding extra points to \( [x_{1}', x_{2}'] \).  This process is continued until we
reach the maximal point \( x_{N} \) within the given \( \oer[\le K_{1}]{\mathcal{C}} \)-class.
Note that the amount of shift does not grow \textemdash{} each point is shifted by at most \(
\epsilon_{1} \), and the distance \( \dist(x_{k}', x_{k+1}) \) is always at least \( K_{0} -
\epsilon_{1} \) for all \( k \), so it will always be possible to shift \( x_{k+1} \) to \( x_{k+1}'
\) which makes \( \dist(x_{k}', x_{k+1}') \in \mathcal{T} \) by the choice of \( K_{0} \).  This
procedure is applied to all \( \oer[\le K_{1}]{\mathcal{C}} \)-classes.

After the first step of the construction we arrive at a cross section \( \mathcal{C}_{1} \) which
differs from the cross section \( \mathcal{C} \) in two aspects.  Some of the points in
\( \mathcal{C}_{1} \) correspond to points from \( \mathcal{C} \) shifted by at most
\( \epsilon_{1} \), and other points have been added to tile the gaps.  Note that
\( \mathcal{C}_{1} \) is still sparse.  Consider now a \( \oer[\le K_{2}]{\mathcal{C}_{1}} \)-class.
It will consist of a number of \( \oer[\alpha, \beta]{\mathcal{C}_{1}} \)-classes separated by gaps
of size at least\footnote{In fact, gaps can be as small as \( K_{1} - \epsilon_{1} \), but we agreed
  to ignore shifts for now.} \( K_{1} \) (Figure \ref{fig:tiling-sparse-2}).
\begin{figure}[ht]
  \centering
  \begin{tikzpicture}
    \foreach \x in {0, 1.6, 3.3, 5.0, 6.6, 7.6} { \filldraw (\x, 0) circle (1pt); }
    \foreach \x in {0.0, 0.1, ..., 1.6,3.3, 3.4, ...,5.0,6.6,6.7, ...,7.6} {
      \filldraw (\x, 0) circle (0.5pt);
    }
    \draw (2.45, 0) node {\( \ge K_{1} \)};
    \draw (5.8, 0) node {\( \ge K_{1} \)};
    \draw (3.8, -1) node[text width=6cm] {
        Moving each \( \oer[\alpha,\beta]{\mathcal{C}_{1}} \)-class by at most \( \epsilon_{2} \)
        and tiling the gaps between them.
    };
    \foreach \x in {0, 1.6, 3.2, 4.9, 6.8, 7.8} { \filldraw (\x, -2) circle (1pt); }
    \foreach \x in {0.0, 0.1, ..., 7.8} {\filldraw (\x, -2) circle (0.5pt);}
  \end{tikzpicture}

  \caption{Tiling sparse cross section, step 2.}
  \label{fig:tiling-sparse-2}
\end{figure}
We may now run the same process as before, but now with smaller shifts applied to \( \oer[\alpha,
\beta]{\mathcal{C}_{1}} \)-blocks.  By the choice of \( K_{1}
\), the whole second  \( \oer[\alpha, \beta]{\mathcal{C}_{1}} \)-class can be shifted by at most \(
\epsilon_{2} \) in such a way that the distance between the first and the second class becomes
a real in \( \tileable \).  Doing this with each \( \oer[\alpha, \beta]{\mathcal{C}_{1}} \)-class
one by one and tiling the gaps in the midst results in a cross section \( \mathcal{C}_{2} \) in which
\( \oer[\alpha, \beta]{\mathcal{C}_{2}} \)-classes correspond to \( \oer[\le K_{1}]{\mathcal{C}_{1}}
\)-classes in \( \mathcal{C}_{1} \).  Moreover, \( \mathcal{C}_{2} \) is sparse, and the distance
between distinct \( \oer[\alpha, \beta]{\mathcal{C}_{2}} \)-classes is at least \( K_{2} \).  The
construction continues.

To summarize, at step \( n+1 \) we shift points in \( \mathcal{C}_{n} \) by at most
\( \epsilon_{n+1} \) and add a handful of new points between
\( \oer[\alpha, \beta]{\mathcal{C}_{n}} \)-classes.  Since \( \sum \epsilon_{n}\) converges, each
point ``converges'' to a limit position, and the limit tiling is \( \{\alpha, \beta\} \)-regular.

This sketch can easily be turned into a rigorous argument which is just a different presentation of
the one given in Section 2 of \cite{rudolph_two-valued_1976}.  Additional care seems to be necessary
if one wants to control the frequency of \( \oer[\alpha, \beta]{} \)-blocks during the construction.
More precisely, let \( x \) and \( y \) be respectively the minimal and the maximal point of an
\( \oer[\alpha, \beta]{\mathcal{C}_{n}} \)-class.  The distance \( \dist(x, y) \) is of the form
\( p\alpha + q\beta \) for some \( p, q \in \mathbb{N} \), and we consider the quantity
\( p/(p + q) \) which represents the frequency of gaps of size \( \alpha \) in this class.  This
quantity is called an \( \alpha \)-\emph{frequency} of \( \dist(x,y) \).  We need to run our
construction in a way ensuring this frequency converges to \( \rho \) as \( n \to \infty \).  This
runs into the following problem.  First of all, we need \( K_{n} \) to be so large that reals of the
form \( p\alpha + q\beta \), where
\[ \Bigl| \frac{p}{p+q} - \rho \Bigr| < \frac{1}{n}, \]
are \( \epsilon_{n+1} \)-dense in \( [K_{n}, \infty) \).  More importantly, we need to improve the
frequency, i.e., make it closer to \( \rho \), when passing from \( n^{\textrm{th}} \) to the
\( n+1^{\textrm{st}} \) step.  Difficulty lies in the fact that we have no control on the length of
\( \oer[\alpha, \beta]{\mathcal{C}_{n}} \)-blocks (they can be arbitrarily large) nor do we have any
control on the number of \( \oer[\alpha,\beta]{\mathcal{C}_{n}} \)-classes inside a single
\( \oer[\le K_{n+1}]{\mathcal{C}_{n}} \)-class.  To overcome these obstacles we run our construction
is such a way as to have an absolute control on the proximity of \( \alpha \)-frequencies of
\( \oer[\alpha,\beta]{\mathcal{C}_{n}} \)-classes to \( \rho \) which is achieved by introducing the
concept of a real \( p\alpha + q\beta \in \tileable \) being \( N \)-near \( \rho \) (see Definition
\ref{def:near-rho}).  Here is what it means.  We shall have a natural number \( N_{n} \) such that
given any \( \oer[\alpha,\beta]{\mathcal{C}_{n}} \)-class \( p\alpha + q\beta \) with the frequency
of \( \alpha \)-intervals being less than \( \rho \), i.e., \( p/(p+q) < \rho \), adding
\( N_{n} \)-many \( \alpha \)-intervals will push the frequency above \( \rho \):
\[ \frac{ p + N_{n}}{p + q + N_{n}} \ge \rho. \]
And also the other way around: for any \( \oer[\alpha,\beta]{\mathcal{C}_{n}} \)-class with the
frequency of \( \alpha \)-intervals above \( \rho \), adding \( N_{n} \)-many \( \beta \)-intervals
will bring the frequency below \( \rho \).  The construction will be run in such a way that
\( K_{n+1} \) is selected after \( N_{n} \) has been specified.  And this will make it possible to
ensure that after an \( \epsilon_{n+1} \)-shift we can make each gap between
\( \oer[\alpha,\beta]{\mathcal{C}_{n}} \)-classes to be a tileable real with \( \alpha \)-frequency
very close to \( \rho \) and with an ``excess'' of \( N_{n} \)-many \( \alpha \)- or
\( \beta \)-intervals at our choice.  This will allow us to have a better approximation of
\( \rho \) by \( \alpha \)-frequencies of \( \oer[\alpha,\beta]{\mathcal{C}_{n+1}} \)-classes.

 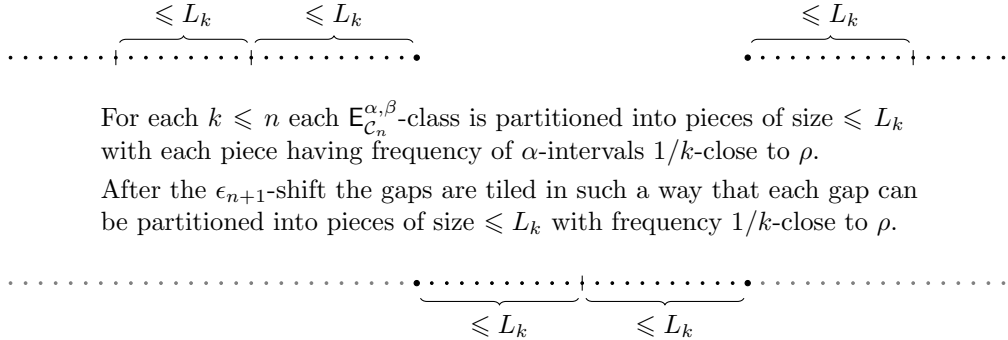
\begin{figure}[hbt]
   \centering
   \begin{tikzpicture}
     \filldraw (6,0) circle (1pt);
     \foreach \x in {0.6, 0.8, ..., 6.0} {
       \filldraw (\x, 0) circle (0.5pt);
     }
     \foreach \x in {2.0, 3.8} {
       \draw (\x,0.1) -- (\x,-0.1);
     }
     \draw[decoration={
      brace,
      raise=2mm
    }, decorate] (2.05,0) -- (3.75,0) node[pos=0.5, anchor=south, yshift=3mm]
    {\( \le L_{k} \)};
    \draw[decoration={
      brace,
      raise=2mm
    }, decorate] (3.85,0) -- (5.95,0) node[pos=0.5, anchor=south, yshift=3mm]
    {\( \le L_{k} \)};
    \filldraw (10.4,0) circle (1pt);
    \foreach \x in {10.4, 10.6, ..., 14.0} {
       \filldraw (\x, 0) circle (0.5pt);
     }
     \draw (12.6,0.1) -- (12.6,-0.1);
     \draw[decoration={
      brace,
      raise=2mm
    }, decorate] (10.45,0) -- (12.55,0) node[pos=0.5, anchor=south, yshift=3mm]
    {\( \le L_{k} \)};
    \draw (7.3,-1.0) node[text width=11cm] {
        For each \( k \le n \) each \( \oer[\alpha,\beta]{\mathcal{C}_{n}}
        \)-class is partitioned into pieces of size \( \le L_{k} \)
        with each piece having frequency
        of \( \alpha \)-intervals \( 1/k \)-close to \( \rho \).
    };
    \draw (7.3,-2.0) node[text width=11cm] {
        After the \( \epsilon_{n+1} \)-shift the gaps are tiled in such a way that
        each gap can be partitioned into pieces of size \( \le L_{k} \) with frequency \( 1/k
        \)-close to \( \rho \).
    };
    \foreach \x in {6.0, 6.2, ..., 10.4} {
       \filldraw (\x, -3) circle (0.5pt);
     }
     \filldraw (6,-3) circle (1pt);
     \filldraw (10.4,-3) circle (1pt);
     \draw (8.2,-2.9) -- (8.2,-3.1);
     \draw[decoration={
       brace,
       mirror,
      raise=2mm
    }, decorate] (6.05,-3) -- (8.15,-3) node[pos=0.5, anchor=north, yshift=-3mm]
    {\( \le L_{k} \)};
    \draw[decoration={
       brace,
       mirror,
      raise=2mm
    }, decorate] (8.25,-3) -- (10.35,-3) node[pos=0.5, anchor=north, yshift=-3mm]
    {\( \le L_{k} \)};
    \foreach \x in {0.6,0.8,...,5.8,10.6,10.8,...,13.8} {
      \filldraw[gray] (\x,-3) circle (0.5pt);
    }
  \end{tikzpicture}
  \caption{Witnessing uniform convergence of frequencies to \( \rho \).}
  \label{fig:uniform-to-rho}
\end{figure}

But we shall need more.  We want the convergence to \( \rho \) in the Main Theorem to be uniform
over all \( x \in \mathcal{C} \).  Lemma \ref{lem:equivalent-formulation-of-regularity} offers the
following witness for such a uniformity.  By step \( n \) we shall pick real numbers \( L_{k} \),
\( k \le n \), and for each \( k \le n \) we shall have a partition if each
\( \oer[\alpha,\beta]{\mathcal{C}_{n}} \)-class into pieces of length at most \( L_{k} \) each piece
having frequency of \( \alpha \)-intervals \( 1/k \)-close to \( \rho \).  Figure
\ref{fig:uniform-to-rho} illustrates this situation.  What we need is to shift
\( \oer[\alpha,\beta]{\mathcal{C}_{n}} \)-classes by at most \( \epsilon_{n+1} \) and tile the gaps,
but the tiling is supposed to be special: for each \( k \le n \) we want to be able to partition
the gap into pieces of size \( \le L_{k} \) each having frequency \( 1/k \)-close to \( \rho \).
Pieces from \( \oer[\alpha,\beta]{\mathcal{C}_{n}} \)-classes together with pieces from tiled gaps
will constitute the partition of \( \oer[\alpha,\beta]{\mathcal{C}_{n+1}} \)-class.  Lemma
\ref{lem:regular-tilings} computes how large given \( L_{k} \), \( k \le n \), we need to pick
\( K_{n+1} \) so as to make the above construction possible.

To finish the inductive step it remains to pick \( L_{n+1} \).  Remember that we have no control on
how many \( \oer[\alpha,\beta]{\mathcal{C}_{n}} \)-classes fall into a single
\( \oer[\le K_{n+1}]{\mathcal{C}_{n}} \)-class, so the resulting
\( \oer[\alpha,\beta]{\mathcal{C}_{n+1}} \) may be arbitrarily long, but we need to argue that there
exists a single \( L_{n+1} \) such that when \( \oer[\alpha, \beta]{\mathcal{C}_{n+1}} \) is built
according to the described procedure it can always be partitioned into blocks of size
\( \le L_{n+1} \) each having frequency \( 1/(n+1) \)-close to \( \rho \).  This is the content of
Lemmata \ref{lem:tiled-sum} and \ref{lem:sparse-induction-step} with the latter one encompassing
precisely the set up of the induction step in the sparse case.

This finishes a rough summary of the content of Section \ref{sec:flex-part}.  We have mentioned on a
number of occasions that each \( \oer[\le K_{n+1}]{\mathcal{C}_{n}} \)-class may consists of many
\( \oer[\le K_{n}]{\mathcal{C}_{n}} \)-classes, but for technical reasons it is desirable to know
that it always consists of at least two such classes.  Construction of such a cross section is the
content of Section \ref{sec:tools} and Lemma \ref{lem:at-least-two-classes-Kn} specifically.
Moreover, the cross section constructed therein has that property in a stable way: even if each
point in \( \mathcal{C} \) is perturbed by at most \( \sum \epsilon_{k} \), each
\( \oer[\le K_{n+1}]{} \)-class will still consist of at least two \( \oer[\le K_{n}]{} \)-classes.
This allows us not to worry about the minor change is gaps' sizes as the inductive construction
unfolds.

\subsection{Co-sparse case}
\label{sec:co-sparse-case}

As we have speculated in Subsection \ref{sec:big-picture} above, to boost the sparse argument to
the general situation it is enough to show how given a Borel flow to construct a cross section \(
\mathcal{C} \) with arbitrarily large \( \oer[\alpha,\beta]{\mathcal{C}} \)-classes within each
orbit.  Our basic approach is similar to the sparse case: we
construct a sequence of cross sections \( \mathcal{C}_{n} \), which ``converge'' to a
limit cross section \( \mathcal{C}_{\infty} \).  
The next cross section in the sequence is obtained from the previous one by shifting some
of its elements and adding extra points to tile a few gaps.  Convergence of \( \mathcal{C}_{n} \)
relies upon moving points by smaller and smaller amounts, and here lies the first important
difference.  In the sparse argument, there was the following
uniformity in the size of points' jumps: when constructing \( \mathcal{C}_{n+1} \) out of
\( \mathcal{C}_{n} \) any point was moved by no more than \( \epsilon_{n} \).  This time there will
be no uniformity of that sort.  But it will still be the case that the first jump of any point is
at most \( \epsilon_{1} \), the second one will be bounded by \( \epsilon_{2} \), etc., but each
point will have its own sequence \( (n_{k}) \) of indices when jumps are made.  When passing from
\( \mathcal{C}_{n} \) to \( \mathcal{C}_{n+1} \), some of the points will make their first jump,
which can be as large as \( \epsilon_{1} \), others will jump for the second time, etc., and there will
be points that were jumping all the time from \( \mathcal{C}_{0} \) to \( \mathcal{C}_{n+1} \).

The construction starts with a cross section \( \mathcal{C}_{0} \) with bounded gaps.  When building
\( \mathcal{C}_{1} \), we shall take sufficiently distant pairs of adjacent points in
\( \mathcal{C}_{0} \), within each pair we shall move the right point making the gap to the left one
tileable, and finally we shall add points tiling these gaps.  Figure
\ref{fig:constructing-C1-sketch} tries to illustrate the process.

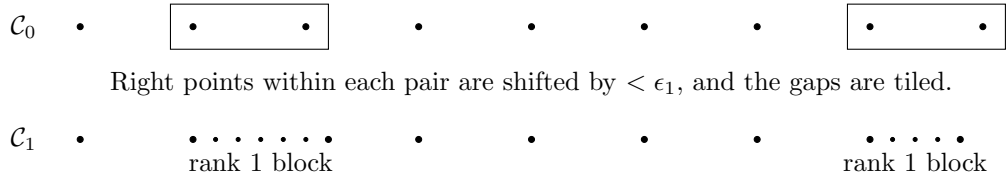
\begin{figure}[ht]
  \centering
  \begin{tikzpicture}[scale=1.5]
    \draw (-0.5,0) node {\( \mathcal{C}_{0} \)};
    \draw (-0.5,-1) node {\( \mathcal{C}_{1} \)};
    \foreach \x in {0,1,...,8}{
      \filldraw (\x,0) circle (0.75pt);
    }
    \draw (0.8,-0.2) rectangle (2.2,0.2);
    \draw (6.8,-0.2) rectangle (8.2,0.2);
    \draw (4,-0.5) node {Right points within each pair are shifted by \( < \epsilon_{1} \), and the
      gaps are tiled.};
    \foreach \x in {0,1,3,4,5,6,7}{
      \filldraw (\x,-1) circle (0.75pt);
    }
    \filldraw (2.2,-1) circle (0.75pt);
    \filldraw (7.8,-1) circle (0.75pt);
    \foreach \x in {1.2,1.4,...,2.0} {
      \filldraw (\x, -1) circle (0.5pt);
    }
    \foreach \x in {7.2,7.4,7.6} {
      \filldraw (\x, -1) circle (0.5pt);
    }
    \foreach \x in {1.6,7.4} {
      \draw (\x,-1.2) node {rank \( 1 \) block};
    }
  \end{tikzpicture}
  \caption{Constructing \( \mathcal{C}_{1} \).}
  \label{fig:constructing-C1-sketch}
\end{figure}

The relation \( \oer[\alpha,\beta]{\mathcal{C}_{1}} \) will have classes of two sorts.  Some of them
will be just isolated points, we call them rank \( 0 \) classes, while others will be given by the
selected pairs together with the new points added in the midst.  These are rank \( 1 \) blocks.

What we would like to do at the second step is to take sufficiently distant pairs of adjacent rank
\( 1 \) blocks, move the right block by at most \( \epsilon_{2} \), move rank \( 0 \) points inside
by no more than \( \epsilon_{1} \) making all the gaps tileable, and tile these gaps creating thus a
rank \( 2 \) block.  See Figure \ref{fig:Creating-rank-2}.

\begin{figure}[ht]
  \centering
  \begin{tikzpicture}
    \draw (-0.8,2.6) node {\( \mathcal{C}_{1} \)};
    \draw (-0.8,0) node {\( \mathcal{C}_{2} \)};
    \foreach \x in {0,0.5,...,15} {
      \filldraw[black] (\x, 2.6) circle (1pt);
    }
    \foreach \x in {0,2,...,14} {
      \foreach \y in {0.1,0.2,...,0.4}{
        \filldraw (\x+\y,2.6) circle (0.5pt);
      }
      \draw (\x+0.25,2.6) node[above] {\( 1 \)} coordinate(x axis);
    }
    \draw (1.8,2.4) rectangle (4.7,3.1);
    \draw (11.8,2.4) rectangle (14.7,3.1);
    \draw (7.5,1.3) node[text width=9cm] {Within each rectangle, isolated points are moved by
      \( < \epsilon_{1} \), the right rank \( 1 \) block is shifted by at most \( \epsilon_{2} \),
      and gaps are tiled.};
    \foreach \x in {0,0.5,...,15} {
      \filldraw[black] (\x, 0) circle (1pt);
    }
    \foreach \x in {0,2,2.5,3,3.5,4,6,8,10,12,12.5,13,13.5,14} {
      \foreach \y in {0.1,0.2,...,0.4}{
         \filldraw (\x+\y,0) circle (0.5pt);
      }

    }
    \foreach \x in {0,6,8,10} {
      \draw (\x+0.25,0) node[below] {\( 1 \)} coordinate(x axis);
    }
    \foreach \x in {2,12} {
      \draw (\x+1.25,0) node[below] {\( 2 \)} coordinate(x axis);
    }
  \end{tikzpicture}
  \caption{Constructing \( \mathcal{C}_{2} \)}
  \label{fig:Creating-rank-2}
\end{figure}
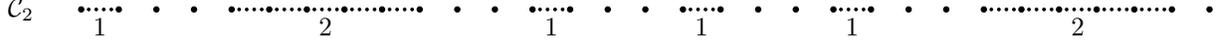

The key technicality lies in ensuring that each rank \( 1 \) block can be moved by at most
\( \epsilon_{2} \) rather than \( \epsilon_{1} \).  We don't have any problems in constructing
\( \mathcal{C}_{1} \) \textemdash{} we may choose \( \mathcal{C}_{0} \) to have all the gaps sufficiently
large as to allow for turning each gap into a tileable real after an \( \epsilon_{1} \)
perturbation.  But imagine now that in \( \mathcal{C}_{0} \) we always have several ways of moving
points by at most \( \epsilon_{1} \) to make gaps tileable; i.e., in the notation of Figure
\ref{fig:tiling-sparse}, we have several ways of moving \( x_{1} \).  For {\it each} such shift we
have {\it several} ways to shift \( x_{2} \), and for each shift of \( x_{2} \) we have several
possibilities for \( x_{3} \), etc.  While some of the arrangements may coincide, it is nonetheless
natural to expect that the number of possible terminal positions for \( x_{N} \), as each
\( x_{k} \), \( k \le N \), is shifted by \( \le \epsilon_{1} \), will grow with \( N \), and in
fact, will eventually densely pack \( \epsilon_{1} \)-neighborhood of \( x_{N} \).  Formalization of
this intuition is called ``propagation of freedom''.  In a short of form in can be summarized as
follows.  Let \( \epsilon > 0 \), let \( N \in \mathbb{N} \), and let
\( x_{0}, x_{1}, \ldots, x_{N} \) be a family of points such that for each \( i < N \) there are
``sufficiently many'' ways to shift \( x_{i+1} \) to \( x_{i+1}' \) by at most \( \epsilon \) so as
to make \( \dist(x_{i}, x_{i+1}') \) tileable.  Let \( \mathcal{A}_{N} \) denote the set of all
possible\footnote{\( \mathcal{U}_{\epsilon}(x_{N}) \) denote the \( \epsilon \)-neighborhood of
  \( x_{N} \), i.e., \( \mathcal{U}_{\epsilon}(x_{N}) = (x_{N} - \epsilon, x_{N} + \epsilon) \).}
\( \tilde{z} \in \mathcal{U}_{\epsilon}(x_{N}) \) for which there exists
\( x_{i}' \in \mathcal{U}_{\epsilon}(x_{i}) \) satisfying
\( \dist(x_{i}',x_{i+1}' ) \in \tileable \) for all \( i \le N \), \( x_{0}' = x_{0} \), and
\( x_{N}' = \tilde{z} \).  In the notation of Figure \ref{fig:tiling-sparse} this is the set of all
possible terminal positions for \( x_{N}' \).  The ``propagation of freedom'' principle states that
\( \mathcal{A}_{N} \) is \( \delta \)-dense in \( \mathcal{U}_{\epsilon}(x_{N}) \), where
\( \delta \to 0 \) as \( N \to \infty \).  In other words, the cumulative shift of \( x_{N} \) can
be made arbitrarily small by picking \( N \) sufficiently large.

The second step of the inductive construction corresponds to picking \( \delta = \epsilon_{2} \) in
the propagation of freedom principle.  In fact, to run the construction further we shall need to
take \( \delta \) so small that we have ``sufficiently many'' shifts of \( x_{N} \) by at most
\( \epsilon_{2} \).  Section \ref{sec:propagation-freedom} formalizes this approach and Lemma
\ref{lem:main-induction-lemma} encapsulates the inductive step of the construction.  There are
certain degenerate ways of picking admissible shifts for points \( x_{i} \), where the ``freedom''
does not ``propagate'', so to avoid these exceptional cases we need to control two things about sets
\( \mathcal{A}_{N} \).  First, we need to know how dense they are in
\( \mathcal{U}_{\epsilon}(x_{N}) \), but also we need to control the variety of
\( \alpha \)-frequencies of elements in \( \mathcal{A}_{N} \) (i.e., the \( \alpha \)-frequencies of
\( \dist(x_{0}, x_{N}') \)).

It is worth mentioning the following difference from the sparse argument here.  As was explained at
the beginning of Subsection \ref{sec:sparse-case}, constructing an \( \{\alpha,\beta\} \)-regular
cross section in the sparse case without the additional control on the distribution of
\( \alpha \)-intervals is much easier.  This does not seem to be the case in the co-sparse piece of
the argument.  Controlling frequencies is a natural way of avoiding the aforementioned degenerate
choices of admissible shifts, and we are not aware of any substantial simplifications in the
argument if one is not interested in the distribution of \( \alpha \)-intervals in the Main Theorem.

Once the ``propagation of freedom'' lemma is available, the inductive construction continues in a
way no different from the described step.  We shall continue by selecting sufficiently distant pairs
of adjacent rank \( 2 \) intervals and moving all the rank \( 2 \) blocks by at most
\( \epsilon_{3} \) in an a admissible way, i.e., in a way that results in moving each rank \( 1 \)
block in between by at most \( \epsilon_{2} \) and each rank \( 0 \) point by no more
than \( \epsilon_{1} \) and tiling all the resulting gaps, creating in this fashion rank \( 3 \)
blocks.  Since at each step of the construction we increase the size of the maximal block within any
orbit, it is evident that in the limit the cross section will have arbitrarily large
\( \oer[\alpha, \beta]{} \)-blocks within each orbit.

\section{Modifying cross sections}
\label{sec:tools}

Let \( \mathcal{C} \) be a sparse cross section for a flow \( \mff \).  For any real \( K \) each
\( \oer[\le K]{\mathcal{C}} \)-class is finite, and if \( (K_{n})_{n=0}^{\infty} \) is an increasing
sequence of reals, \( K_{n} \to \infty \), then
\( \oer{\mathcal{C}} = \bigcup_{n} \oer[\le K_{n}]{\mathcal{C}} \).  In this section we present
a construction of a cross section \( \mathcal{C} \) such that for a given \( (K_{n}) \) each
\( \oer[\le K_{n+1}]{\mathcal{C}} \)-class consists of at least two
\( \oer[\le K_{n}]{\mathcal{C}} \)-classes.

We start with subsets \( B(K_{0}, \ldots, K_{n}) \subset \mathbb{R} \) of the real line where each
\( \oer[\le K_{n+1}]{} \)-class consists of precisely two \( \oer[\le K_{n}]{} \)-classes.  For
example, consider a subset \( B(K_{0},K_{1},K_{2},K_{3}) \) with distances between adjacent points
of the form \( \frac{K_{n+1} + K_{n}}{2} \) depicted if Figure \ref{fig:constructing-BK1K2-Kn}.
\begin{figure}[ht]
  \centering
  \begin{tikzpicture}
    \foreach \x in {0, 1, 3.2, 4.2, 8.7, 9.7, 11.9, 12.9}{
      \filldraw (\x,0) circle (1pt);
    }
    \draw (0.5,0.3) node {\( \frac{K_{0} + K_{1}}{2} \)}
    (2.1,0.3) node {\( \frac{K_{1} + K_{2}}{2} \)}
    (3.7,0.3) node {\( \frac{K_{0} + K_{1}}{2} \)}
    (6.45,0.3) node {\( \frac{K_{2} + K_{3}}{2} \)}
    (9.2,0.3) node {\( \frac{K_{0} + K_{1}}{2} \)}
    (10.8,0.3) node {\( \frac{K_{1} + K_{2}}{2} \)}
    (12.4,0.3) node {\( \frac{K_{0} + K_{1}}{2} \)};
  \end{tikzpicture}
  \caption{The set \( B(K_{0},K_{1},K_{2},K_{3}) \).  The unique \( \mathsf{E}^{\le K_{3}} \)-class has two
    \( \mathsf{E}^{\le K_{2}} \)-classes, each having two \( \mathsf{E}^{\le K_{1}} \)-classes.}
  \label{fig:constructing-BK1K2-Kn}
\end{figure}
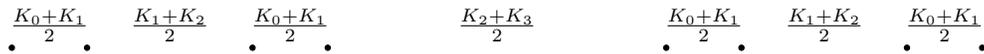
It is easy to see that each \( \oer[\le K_{n}]{} \)-class consists of two
\( \oer[\le K_{n-1}]{} \)-classes for \( n \) up to three.  We shall modify a sparse cross section
by adding sets of the form \( B(K_{0}, \ldots, K_{n}) \) into sufficiently large gaps.

Now, in a more formal language, given a finite or infinite increasing sequence
\( (K_{n})_{n=0}^{\infty} \) we define inductively subsets
\( B(K_{0}, \ldots, K_{n}) \subset \mathbb{R}^{\ge 0} \) and reals
\( b(K_{0}, \ldots, K_{n}) \in \mathbb{R}^{\ge 0} \) for \( n \in \mathbb{N} \) as follows.  For
\( n = 0 \) let \( B(K_{0}) = \{0\} \) and \( b(K_{0}) = 0 \), and set
\begin{displaymath}
  \begin{aligned}
 B(K_{0}, \ldots, K_{n+1}) &= B(K_{0}, \ldots, K_{n}) \cup \Bigl(B(K_{0},\ldots, K_{n}) +
\frac{K_{n+1} + K_{n}}{2}\Bigr), \\
 b(K_{0}, \ldots, K_{n+1}) &= 2b(K_{0}, \ldots, K_{n}) + \frac{K_{n+1}+K_{n}}{2}.
  \end{aligned}
\end{displaymath}
The number \( b(K_{0}, \ldots, K_{n}) \) is just the length of the block
\( B(K_{0}, \ldots, K_{n}) \).  Blocks \( B(K_{0}, \ldots, K_{n}) \) also have the property that
gaps between adjacent \( \oer[\le K_{i}]{} \)-classes within a \( \oer[\le K_{i+1}]{} \)-class are
in the interior of intervals \( [K_{i}, K_{i+1}] \).  This will imply stability of
\( \oer[\le K_{i}]{} \)-classes under small perturbations of points.

\begin{lemma}
  \label{lem:arbitrarily-large-sparse-two-classes}
  Let \( \mff \) be a sparse flow on a standard Borel space \( \Omega \).  For any sequence
  \( (M_{n})_{n=0}^{\infty} \) there is an increasing unbounded sequence \( (N_{n})_{n=0}^{\infty} \) and
  a sparse cross section \( \mathcal{C} \subset \Omega \) such that
  \begin{enumerate}[(i)]
  \item \( \rgap[\mathcal{C}](x) > M_{0} \) for all \( x \in \mathcal{C} \).
  \item \( N_{n} \ge M_{n} \) for all \( n \) and \( N_{0} = M_{0} \).
  \item Each \( \oer[\le N_{n+1}]{\mathcal{C}} \)-class consists of at least two \(
    \oer[\le N_{n}]{\mathcal{C}} \)-classes.
  \end{enumerate}
\end{lemma}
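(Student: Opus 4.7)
The plan is to construct $\mathcal{C}$ by inserting translated copies of the explicit blocks $B(N_0,\ldots,N_n)$ into large gaps of an initial sparse cross section, defining the sequence $(N_n)$ inductively along the way. The utility of $B(N_0,\ldots,N_n)$ is that its internal gap sizes $(N_{k-1}+N_k)/2\in(N_{k-1},N_k)$ for each $k\le n$ make its nested $\oer[\le N_k]{}$-structure precisely the required ``two sub-classes at every level'' pattern. Begin by applying Remark~\ref{rem:sub-cross-section-with-large-gaps} to the given sparse cross section of $\mff$ to obtain a sparse sub-cross-section $\mathcal{D}$ with $\rgap[\mathcal{D}](x)>M_0$ for all $x$; set $\mathcal{C}_0:=\mathcal{D}$ and $N_0:=M_0$. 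This settles condition (i) and the $N_0=M_0$ clause of (ii).

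For the inductive step, suppose $N_0<\cdots<N_n$ with $N_k\ge M_k$ have been chosen and a sparse cross section $\mathcal{C}_n\supseteq\mathcal{D}$ has been built in which, for every $k<n$, each $\oer[\le N_{k+1}]{\mathcal{C}_n}$-class consists of at least two $\oer[\le N_k]{\mathcal{C}_n}$-classes. Pick $N_{n+1}$ to be any real exceeding $\max\{M_{n+1},\,N_n+1,\,b(N_0,\ldots,N_n)\}$. Applying Proposition~\ref{prop:marker-lemma-Z-actions} to the induced automorphism $\phi_{\mathcal{C}_n}$ (together with Remark~\ref{rem:sub-cross-section-with-large-gaps} invoked on a sub-cross-section guaranteed by sparsity) yields a Borel partition of each orbit's $\oer[\le N_n]{\mathcal{C}_n}$-classes into consecutive groups of size $2$ or $3$, with all gaps \emph{between} consecutive groups exceeding $b(N_0,\ldots,N_n)+2N_{n+1}$. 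Inside each group, into each gap separating two adjacent $\oer[\le N_n]{\mathcal{C}_n}$-classes, insert a translate of $B(N_0,\ldots,N_n)$ positioned so that the two new flanking gaps (from the class endpoints to the block's endpoints) both lie in $(N_n,N_{n+1}]$, while the outer gaps separating the group from its neighbours remain $>N_{n+1}$. This defines $\mathcal{C}_{n+1}$.

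Verification of (iii) at level $n+1$ is then direct. Each augmented group becomes a single $\oer[\le N_{n+1}]{\mathcal{C}_{n+1}}$-class (all internal gaps are $\le N_{n+1}$), and the merging gaps $>N_n$ split it into at least two (in fact two or three) $\oer[\le N_n]{\mathcal{C}_{n+1}}$-sub-classes; the structure at levels $k\le n$ is preserved because the inserted block's own internal gaps all lie in the correct subintervals $(N_{k-1},N_k)$ and because the block is separated from non-group structure by gaps $>N_{n+1}$. Taking $\mathcal{C}=\bigcup_n\mathcal{C}_n$ gives a sparse Borel cross section satisfying (i)–(iii); sparsity is inherited from $\mathcal{D}$ since at each step only bounded-length blocks are introduced, while the bi-infinite family of unused large gaps of $\mathcal{D}$ is preserved.

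The main obstacle is the delicate Borel bookkeeping in the inductive step: simultaneously arranging that the inserted block's internal gaps land in the right $(N_{k-1},N_k)$ intervals, that each merging gap falls precisely inside $(N_n,N_{n+1}]$, and that each outer gap remains strictly above $N_{n+1}$, all while preserving property (iii) at lower levels. These constraints are compatible because $N_{n+1}$ has been chosen strictly greater than $b(N_0,\ldots,N_n)$ (leaving room to place the block together with the two flanking gap widths inside the available large gap), and because the marker-lemma-based grouping ensures every $\oer[\le N_n]{\mathcal{C}_n}$-class ends up merged with at least one neighbour — so no class is left as its own $\oer[\le N_{n+1}]{\mathcal{C}_{n+1}}$-class in violation of (iii).
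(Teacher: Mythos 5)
Your overall idea — enlarge a sparse cross section by inserting translated copies of the blocks $B(N_{0},\ldots,N_{n})$, so that the blocks' internal gap pattern enforces the ``two sub-classes at every level'' property — is the same as the paper's. But the inductive step of your construction is broken, because you are asking for control that neither the marker lemma nor sparsity can deliver.

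Specifically, you claim that Proposition~\ref{prop:marker-lemma-Z-actions} together with Remark~\ref{rem:sub-cross-section-with-large-gaps} yields a partition of the $\oer[\le N_{n}]{\mathcal{C}_{n}}$-classes into consecutive groups of size $2$ or $3$ whose inter-group gaps all exceed $b(N_{0},\ldots,N_{n})+2N_{n+1}$, and you then want to insert a $B(N_{0},\ldots,N_{n})$ block into \emph{every} gap inside a group with both flanking gaps lying in $(N_{n},N_{n+1}]$. This cannot be arranged. The marker lemma gives control over how many classes are in each group; it gives no control over the \emph{sizes} of the gaps between adjacent classes, which are fixed by $\mathcal{C}_{n}$. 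In a sparse cross section one routinely sees long runs of $\oer[\le N_{n}]{}$-classes separated by gaps only slightly above $N_{n}$ between occasional very large gaps. Consequently: (a) you cannot choose group boundaries to make all inter-group gaps large while keeping groups of size $2$ or $3$; and (b) a gap $g$ between two adjacent classes inside a group may lie anywhere in $(N_{n},\infty)$, whereas your insertion plan requires $g\ge b(N_{0},\ldots,N_{n})+2N_{n}$ (to fit the block plus two flanking gaps each $>N_{n}$). When $g$ is just above $N_{n}$, there is simply no room. Your lower bound $N_{n+1}>\max\{M_{n+1},N_{n}+1,b(N_{0},\ldots,N_{n})\}$ is also too weak even for the cases where insertion is possible: with $N_{n+1}$ only slightly above $b(N_{0},\ldots,N_{n})$, a gap just above $N_{n+1}$ cannot accommodate the block together with two flanking gaps exceeding $N_{n}$.

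The paper sidesteps all of this by inserting a $B(N_{0},\ldots,N_{n})$ block \emph{only} when an $\oer[\le N_{n}]{\mathcal{D}_{n}}$-class is isolated at level $N_{n+1}$ (i.e., its $\oer[\le N_{n}]{}$-class equals its $\oer[\le N_{n+1}]{}$-class, equivalently the next gap exceeds $N_{n+1}$), choosing $N_{n+1}=\max\{M_{n+1},\,b(N_{0},\ldots,N_{n})+2N_{n}+2\}$ up front so that such a gap always has room for one block with both flanking gaps $\ge N_{n}+1$. When a class is not isolated, it already merges with a neighbour at level $N_{n+1}$, so the required two-sub-class property holds automatically and nothing needs to be inserted. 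That selective strategy, not the ``group and fill every internal gap'' strategy, is what makes the construction go through.
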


\begin{proof}
  Set \( N_{0} = M_{0} \) and
  \[ N_{n+1} = \max \bigl\{ M_{n+1}, b(N_{0}, \ldots, N_{n}) + 2N_{n} + 2 \bigr\}. \]
  Let \( \mathcal{D}_{0} \) be any sparse section such
  that \( \rgap[\mathcal{D}_{0}](x) > M_{0} \) for all \( x \in \mathcal{D}_{0} \) (it exists by
  Remark \ref{rem:sub-cross-section-with-large-gaps}) and define \( \mathcal{D}_{n} \) inductively by
  \[ \mathcal{D}_{n+1} = \mathcal{D}_{n} \cup \bigl\{\, x + N_{n} + 1 +B(N_{0}, \ldots, N_{n})
  \bigm| x \in \mathcal{D}_{n},\ x = \max [x]_{\oer[\le N_{n}]{\mathcal{D}_{n}}} \textrm{ and }
  [x]_{\oer[\le N_{n}]{\mathcal{D}_{n}}} = [x]_{\oer[\le N_{n+1}]{\mathcal{D}_{n}}} \bigr\}. \]
  Here is a less formal definition.  If an \( \oer[\le N_{n}]{\mathcal{D}_{n}} \)-class happens to
  coincide with the \( \oer[\le N_{n+1}]{\mathcal{D}_{n}} \)-class, then the gap to the next
  \( \oer[\le N_{n}]{\mathcal{D}_{n}} \)-class is more than \( N_{n+1} \) and in this gap we insert
  a \( B(N_{0}, \ldots, N_{n}) \) block at the distance \( N_{n} + 1 \) from the
  \( \oer[\le N_{n}]{\mathcal{D}_{n}} \)-class, Figure \ref{fig:constructing-Dn+1}.
  \begin{figure}[ht]
    \centering
    \begin{tikzpicture}
      \foreach \x in {0,0.5,...,4,13,13.5,14}{
        \filldraw (\x,0) circle (1pt);
      }
      \draw[decoration={
        brace,
        mirror,
        raise=3mm
      },
      decorate] (-0.1,0) -- (4.1,0) node[pos=0.5, anchor=north, yshift=-4mm]
      {\( \mathsf{E}^{\le N_{n}}_{\mathcal{D}_{n}} \)-class};
      \foreach \x in {7, 7.5,8.5, 9} {
        \filldraw (\x,0) circle (1pt);
      }
      \draw[decoration={
        brace,
        mirror,
        raise=3mm
      },
      decorate] (6.9,0) -- (9.1,0) node[pos=0.5, anchor=north, yshift=-4mm]
      {\( B(N_{0}, \ldots, N_{n}) \)};
      \draw (5.5,0.5) node {gap \( =   N_{n} +1 \)};
      \draw (11,0.5) node {gap \( \ge N_{n} +1 \)};
    \end{tikzpicture}
    \caption{Construction of \( \mathcal{D}_{n+1} \).  Adding a \( B(N_{0}, \ldots, N_{n}) \) block
      between two \( \mathsf{E}^{\le N_{n}}_{\mathcal{D}_{n}}\)-classes.}
    \label{fig:constructing-Dn+1}
  \end{figure}

  Cross sections \( \mathcal{D}_{n} \) have the following properties.
  \begin{itemize}
  \item \( \rgap[\mathcal{D}_{n}](x) > M_{0} \) for all \( x \in \mathcal{D}_{0} \).
  \item \( \mathcal{D}_{n} \subseteq \mathcal{D}_{n+1} \).
  \item \( [x]_{\oer[\le N_{k}]{\mathcal{D}_{n}}} = [x]_{\oer[\le N_{k}]{\mathcal{D}_{n+1}}}  \) for any
    \( x \in \mathcal{D}_{n} \) and \( k \le n \).  This is so, because the new blocks \( B(N_{0},
    \ldots, N_{n}) \) that we added are at least \( N_{n} + 1 \) far from any point in \(
    \mathcal{D}_{n} \), and therefore do not change \( \oer[N_{k}]{} \)-classes for \( k \le n \).
  \item Each block \( B(N_{0}, \ldots, N_{n}) \) added to \( \mathcal{D}_{n} \) constitutes a single
    \( \oer[\le N_{n}]{\mathcal{D}_{n+1}} \)-class.
  \item Each \( \oer[\le N_{n+1}]{\mathcal{D}_{n+1}} \)-class consists of at least two
    \( \oer[\le N_{n}]{\mathcal{D}_{n+1}} \)-classes.  Indeed, consider an equivalence class
    \( [x]_{\oer[\le N_{n+1}]{\mathcal{D}_{n+1}}} \), \( x \in \mathcal{D}_{n+1} \).  If \( x \)
    belongs to some block \( B(N_{0}, \ldots, N_{n}) \) added during the construction of
    \( \mathcal{D}_{n+1} \), then \( [x]_{\oer[\le N_{n+1}]{\mathcal{D}_{n+1}}} \) contains a block
    \( B(N_{0}, \ldots, N_{n}) \), which gives one \( \oer[\le N_{n}]{\mathcal{D}_{n+1}} \)-class, and
    it also contains at least one \( \oer[\le N_{n}]{\mathcal{D}_{n+1}} \)-class corresponding to a
    \( \oer[\le N_{n}]{\mathcal{D}_{n}} \)-class.  The same holds true if
    \( x \in \mathcal{D}_{n+1} \) comes from \( \mathcal{D}_{n} \) and
    \( [x]_{\oer[\le N_{n}]{\mathcal{D}_{n}}} = [x]_{\oer[\le N_{n+1}]{\mathcal{D}_{n}}} \).  Finally,
    if \( x \in \mathcal{D}_{n} \), but
    \( [x]_{\oer[\le N_{n}]{\mathcal{D}_{n}}} \ne [x]_{\oer[\le N_{n+1}]{\mathcal{D}_{n}}} \), then
    already \( [x]_{\oer[\le N_{n+1}]{\mathcal{D}_{n}}} \) consists of at least two
    \( {\oer[\le N_{n}]{\mathcal{D}_{n}}} \)-classes, and therefore so does
    \( [x]_{\oer[\le N_{n+1}]{\mathcal{D}_{n+1}}} \), since \( \oer[\le N_{n+1}]{\mathcal{D}_{n+1}} \)
    restricted onto \( \mathcal{D}_{n} \) is coarser than \( \oer[\le N_{n+1}]{\mathcal{D}_{n}} \).
  \end{itemize}

  One concludes that any \( \oer[\le N_{k+1}]{\mathcal{D}_{n+1}} \)-class consists of at least two
  \( \oer[\le N_{k}]{\mathcal{D}_{n+1}} \)-classes for \( k \le n \).  Note also that all
  \( \mathcal{D}_{n} \) are sparse cross sections.  We claim that
  \( \mathcal{C} = \bigcup_{n} \mathcal{D}_{n} \) is the desired cross section.  To begin with, it
  is sparse, since blocks \( B(N_{0}, \ldots, N_{n}) \) have gaps of size at least \( N_{n-1} \).
  Finally, any \( \oer[\le N_{n+1}]{\mathcal{C}} \)-class consists of two or more
  \( \oer[\le N_{n}]{\mathcal{C}} \)-classes.  Indeed, take \( x \in \mathcal{C} \) and let
  \( m \ge n+1 \) be such that \( x \in \mathcal{D}_{m} \).  We know that
  \( [x]_{\oer[\le N_{n+1}]{\mathcal{D}_{m}}} \) consists of at least two
  \( \oer[\le N_{n}]{\mathcal{D}_{m}} \)-classes.  Since
  \( [x]_{\oer[\le N_{n+1}]{\mathcal{D}_{m}}} = [x]_{\oer[\le N_{n+1}]{\mathcal{D}_{m'}}} \) for any
  \( m' \ge m \), it follows that
  \( [x]_{\oer[\le N_{n+1}]{\mathcal{D}_{m}}} = [x]_{\oer[\le N_{n+1}]{\mathcal{C}}} \), and
  therefore \( \mathcal{C} \) satisfies the conclusion of the lemma.
\end{proof}

Recall that for a real \( x \in \mathbb{R} \) and \( \epsilon > 0 \) we let the
\( \epsilon \)-neighborhood \( (x-\epsilon, x +\epsilon) \) of \( x \) to be denoted by
\( \mathcal{U}_{\epsilon}(x) \).

\begin{lemma}
  \label{lem:at-least-two-classes-Kn}
  Let \( \mff \) be a sparse flow on a standard Borel space.  Given an \( \epsilon > 0 \) and a
  sequence of reals \( (K_{n})_{n=0}^{\infty} \), \( K_{n+1} \ge K_{n} + 2\epsilon \), there exists
  a sparse cross section \( \mathcal{C} \) such that
  \begin{enumerate}[(i)]
  \item\label{item:lessK0-is-trivial} \( \oer[\le K_{0}]{\mathcal{C}} \) is the trivial equivalence
    relation: \( x \oer[\le K_{0}]{\mathcal{C}} y \) if and only if \( x= y \).  In other words \(
    \rgap[\mathcal{C}](x) > K_{0} \) for all \( x \in \mathcal{C} \).
  \item\label{item:class-consists-of-two-subclasses} Any \( \oer[\le K_{n+1}]{\mathcal{C}} \)-class
    consists of at least two \( \oer[\le K_{n}]{\mathcal{C}} \)-classes.
  \item\label{item:gaps-are-close-to-average} Gaps between adjacent \( \oer[\le K_{n}]{\mathcal{C}}
    \)-classes within a \( \oer[\le K_{n+1}]{\mathcal{C}} \)-class are in
    \( \mathcal{U}_{\epsilon}\bigl( \frac{K_{n} + K_{n+1}}{2}\bigr) \):
    \[ \textrm{if } \rgap[\mathcal{C}](x) \in (K_{n}, K_{n+1}], \textrm{ then }
    \rgap[\mathcal{C}](x) \in \mathcal{U}_{\epsilon}(\frac{K_{n}+K_{n+1}}{2}). \]
  \end{enumerate}
\end{lemma}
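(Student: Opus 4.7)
My plan is to adapt the inductive construction of Lemma~\ref{lem:arbitrarily-large-sparse-two-classes}, using the preassigned sequence $(K_n)$ in place of the freely chosen $(N_n)$, and inserting blocks $B(K_0,\ldots,K_n)$ whose level-$k$ separating gaps equal $\ell_{k+1} := (K_k+K_{k+1})/2$. Because $K_{k+1} \ge K_k+2\epsilon$, the value $\ell_{k+1}$ sits in $[K_k+\epsilon,\,K_{k+1}-\epsilon]$; it is therefore strictly between $K_k$ and $K_{k+1}$ (so the gap falls at the correct level), and it lies in $\mathcal{U}_\epsilon((K_k+K_{k+1})/2)$ automatically (with deviation $0$), which yields condition (iii).

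The single obstruction to a direct transcription of the proof of Lemma~\ref{lem:arbitrarily-large-sparse-two-classes} is that there $N_{n+1}$ is chosen large enough to fit $B(N_0,\ldots,N_n)$ inside a gap of size just exceeding $N_{n+1}$, whereas here $K_{n+1}$ is given and may be much too small. I would overcome this by preprocessing: apply Lemma~\ref{lem:arbitrarily-large-sparse-two-classes} first with the sequence $M_n := b(K_0,\ldots,K_n) + K_{n+1} + 2K_n + 2$, obtaining a sparse cross section $\mathcal{D}_0$ and an increasing sequence $(\widetilde{N}_n)$ with $\widetilde{N}_n \ge M_n$ and each $\oer[\le \widetilde{N}_{n+1}]{\mathcal{D}_0}$-class having at least two $\oer[\le \widetilde{N}_n]{\mathcal{D}_0}$-subclasses. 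The consequence is that every separating gap of $\mathcal{D}_0$ at $\widetilde{N}$-level $n+1$ exceeds $\widetilde{N}_{n+1} \ge b(K_0,\ldots,K_n) + K_{n+1}$, which is precisely the clearance required to attach a $B(K_0,\ldots,K_n)$-block at distance $\ell_{n+1}$ while still leaving more than $K_{n+1}$ beyond it.

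From this $\mathcal{D}_0$ the construction proceeds exactly as in Lemma~\ref{lem:arbitrarily-large-sparse-two-classes}: for each $\oer[\le K_n]{\mathcal{D}_n}$-class $C$ alone in its $\oer[\le K_{n+1}]{\mathcal{D}_n}$-class, attach a copy of $B(K_0,\ldots,K_n)$ at distance $\ell_{n+1}$ to the right of $\max C$, and set $\mathcal{C} := \bigcup_n \mathcal{D}_n$. The bookkeeping of that lemma transfers directly: newly inserted blocks sit at distance $\ell_{n+1} > K_n$ from previously placed points, so $\oer[\le K_k]{\mathcal{D}_n}$ agrees with $\oer[\le K_k]{\mathcal{D}_{n'}}$ on $\mathcal{D}_n$ for all $n' \ge n \ge k$; each freshly formed $\oer[\le K_{n+1}]{\mathcal{D}_{n+1}}$-class acquires at least two $\oer[\le K_n]{\mathcal{D}_{n+1}}$-subclasses by the choice $\ell_{n+1} \in (K_n,K_{n+1})$; and (iii) holds because each inserted gap equals some $\ell_{k+1}$.

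The main obstacle is checking that every lonely $\oer[\le K_n]{\mathcal{D}_m}$-class actually sits inside a sufficiently large separating gap supplied by the preprocessing. This reduces to a routine comparison: since $\widetilde{N}_j \ge M_j \ge K_{j+1}$, a $\oer[\le K_n]{\mathcal{D}_m}$-class that is alone in its $\oer[\le K_{n+1}]{\mathcal{D}_m}$-class is contained in some $\oer[\le \widetilde{N}_j]{\mathcal{D}_0}$-class with $j$ minimal such that $\widetilde{N}_j > K_{n+1}$, and by the nested block structure of $\mathcal{D}_0$ such a class is flanked by a separating gap of size at least $\widetilde{N}_j \ge M_j \ge b(K_0,\ldots,K_n) + K_{n+1}$, as required.
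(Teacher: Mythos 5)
There is a genuine gap in the verification of condition~(iii), and it is a structural one, not just a missing detail.

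You transplant the one-block-per-step insertion of Lemma~\ref{lem:arbitrarily-large-sparse-two-classes}: whenever an $\oer[\le K_n]{\mathcal{D}_n}$-class $C$ is alone in its $\oer[\le K_{n+1}]{\mathcal{D}_n}$-class, you attach a \emph{single} block $B(K_0,\ldots,K_n)$ at the fixed distance $\ell_{n+1} = \tfrac{K_n+K_{n+1}}{2}$ to the right of $\max C$, and you then assert that~(iii) ``holds because each inserted gap equals some $\ell_{k+1}$.'' But this only accounts for the gap between $\max C$ and the block, and for the gaps internal to the block. It says nothing about the \emph{residual} gap between the right end of the newly inserted block and whatever lies beyond it. If the original separating gap has length $G$, after the insertion the residual is $G - \ell_{n+1} - b(K_0,\ldots,K_n)$, and since $G$ ranges over an unbounded set (your $\mathcal{D}_0$ is sparse), that residual can be an arbitrary positive real. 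In particular it can land in some interval $(K_m, K_{m+1}]$ while being far from $\tfrac{K_m + K_{m+1}}{2}$, violating~(iii). The later iterations do not repair it: once a residual gap becomes $\le K_{m+1}$ it merges the adjacent classes at level $m{+}1$, the loneliness condition stops triggering, and the offending gap is frozen into the final $\mathcal{C}$. (A concrete instance: $K_0 = 1$, $K_1 = 100$, $\epsilon = 0.1$, and a $\mathcal{D}_0$-gap of size $105$; after attaching $B(K_0)=\{0\}$ at distance $\ell_1 = 50.5$ the residual is $54.5 \in (K_0,K_1]$, which is nowhere near $\mathcal{U}_{0.1}(50.5)$.)

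The paper sidesteps this by not iterating one block at a time. Instead, for a gap of size in $(N_n, N_{n+1}]$, a Borel map $\xi_n$ places \emph{several} copies of $B(K_0,\ldots,K_n)$ throughout the gap in one pass, with \emph{all} of the separating distances — including the two at the ends, adjacent to the original $\mathcal{D}$-points — lying in $\mathcal{U}_\epsilon\bigl(\tfrac{K_n+K_{n+1}}{2}\bigr)$. The $\epsilon$ of wiggle room is precisely what makes the count and positions of the inserted blocks solvable for any gap length above a threshold $M_n$, and it is this global arrangement that delivers~(iii). Your fixed distance $\ell_{n+1}$ with no wiggle and a single block cannot do the same job; you would need to choose, for each gap size, how many blocks to insert and at what (approximately, not exactly, $\ell_{n+1}$-sized) spacings, which is exactly what the paper's $\xi_n$ encodes.
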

\begin{proof}
  Let \( M_{n} \) be so large that into any gap of length at least \( M_{n} \) once can insert
  blocks \( B(K_{0}, \ldots, K_{n}) \) in such a way that the distance between two adjacent blocks
  is \( \epsilon \)-close to \( \frac{K_{n+1} + K_{n}}{2} \), see Figure
  \ref{fig:inserting-BK-blocks}.  In other words, \( M_{n} \) is so large that one can find a
  Borel function \( \xi_{n} : [M_{n}, \infty) \to \rfin \) such that \( \xi_{n}(x) \)
  corresponds to the set of left endpoint of blocks \( B(K_{0}, \ldots, K_{n}) \) in Figure
  \ref{fig:inserting-BK-blocks}, i.e., if \( \xi_{n}(x) = \{y_{1}, \ldots, y_{m}\} \) with \( y_{1}
  < y_{2} < \ldots < y_{m} \), then
  \begin{itemize}
  \item \( y_{1} \in \mathcal{U}_{\epsilon}(\frac{K_{n} + K_{n+1}}{2}) \);
  \item \( y_{k+1} - y_{k} - b(K_{0}, \ldots, K_{n}) \in \mathcal{U}_{\epsilon}(\frac{K_{n} +
      K_{n+1}}{2}) \) for all \( k < m \);
  \item  \( x - y_{m} - b(K_{0}, \ldots, K_{n}) \in \mathcal{U}_{\epsilon}(\frac{K_{n} +
      K_{n+1}}{2}) \).
  \end{itemize}

  \newsavebox{\boxBlock}
  \savebox{\boxBlock}{\vbox{\hbox to 4cm{\hfil block \hfil}%
\hbox to 4cm{\hfil\( B(K_{0}, \ldots, K_{n}) \)\hfil}}}

  \begin{figure}[ht]
    \centering
    \begin{tikzpicture}
      \filldraw (0,0) circle (1.3pt);
      \draw (0,-0.3) node {\( x \)};
      \draw (9.9,-0.3) node {\( \phi_{\mathcal{D}}(x) \)};
      \filldraw (9.9,0) circle (1.3pt);
      \foreach \x in {2, 2.4, 3.3, 3.7, 6, 6.4, 7.3, 7.7}{
        \filldraw (\x,0) circle (1pt);
      }
      \draw[decoration={
        brace,
        mirror,
        raise=2mm
      },decorate] (1.95,0) -- (3.75,0) node[pos=0.5, anchor=north, yshift=-4mm]
      {\usebox{\boxBlock}};
      \draw[decoration={
        brace,
        mirror,
        raise=2mm
      },decorate] (5.95,0) -- (7.75,0) node[pos=0.5, anchor=north, yshift=-4mm]
      {\usebox{\boxBlock}};
      \draw (4.95,1.5) node {Gaps of size in \(
        \mathcal{U}_{\epsilon}\bigl( \frac{K_{n}+K_{n+1}}{2} \bigr) \)};
      \draw[decoration={
        brace,
        raise=2mm},decorate] (0.05,0) -- (1.95,0);
      \draw[decoration={
        brace,
        raise=2mm},decorate] (3.75,0) -- (5.95,0);
      \draw[decoration={
        brace,
        raise=2mm},decorate] (7.75,0) -- (9.85,0);
      \node[anchor=east] at (1.1,0.4) (left) {};
      \node[anchor=west] at (2.8,1.5) (gapl) {};
      \node[anchor=west] at (8.7,0.4) (right) {};
      \node[anchor=east] at (7.1,1.5) (gapr) {};
      \draw[very thin,->] (4.85,1.2) -- (4.85,0.55);
      \draw[very thin,->] (3.85,1.2) -- (1.85,0.55);
      \draw[very thin,->] (5.85,1.2) -- (7.85,0.55);
    \end{tikzpicture}
    \caption{Inserting blocks \( B(K_{0}, \ldots, K_{n}) \) into the  interval \( [0,x] \).}
    \label{fig:inserting-BK-blocks}
  \end{figure}
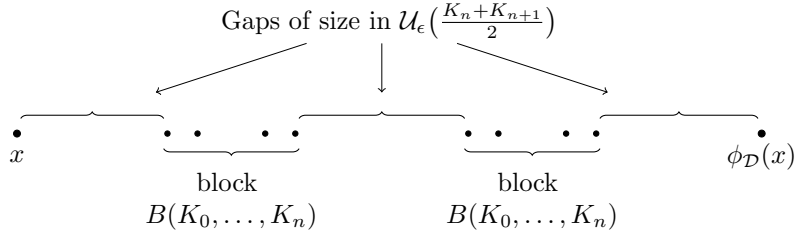

  A moment of thought will convince the reader that such an \( M_{n} \) exists.

  By Lemma \ref{lem:arbitrarily-large-sparse-two-classes}, we may find a sparse cross section
  \( \mathcal{D} \), \( \rgap[\mathcal{D}](x) > M_{0} \), and an increasing sequence
  \( (N_{n})_{n=0}^{\infty} \) such that \( N_{n} \ge M_{n} \), \( N_{0} = M_{0} \), and each
  \( \oer[\le N_{n+1}]{\mathcal{D}} \)-class has at least two
  \( \oer[\le N_{n}]{\mathcal{D}} \)-classes.  The cross section \( \mathcal{C} \) is defined by
  \[ \mathcal{C} = \mathcal{D} \cup \Bigl\{\, x + r + B(K_{0}, \ldots, K_{n}) \Bigm| x \in
  \mathcal{D},\ \rgap[\mathcal{D}](x) \in (N_{n}, N_{n+1}],\ r \in
  \xi_{n}\bigl(\rgap[\mathcal{D}](x)\bigr) \,\Bigr\}. \]
  For \( x \in \mathcal{C} \) we need to show that \( [x]_{\oer[\le K_{n+1}]{\mathcal{C}}} \)
  consists of at least two \( \oer[\le K_{n}]{\mathcal{C}} \)-classes.  There are two cases.

  \textbf{Case I:} \( [x]_{\oer[\le K_{n+1}]{\mathcal{C}}} \cap \mathcal{D} \ne \es \).  Let
  \( y_{1} = \min \bigl([x]_{\oer[\le K_{n+1}]{\mathcal{C}}} \cap \mathcal{D} \bigr) \) and \( y_{2} =
  \max \bigl([x]_{\oer[\le K_{n+1}]{\mathcal{C}}} \cap \mathcal{D} \bigr)\). Notice that:
  \begin{itemize}
  \item \( \rgap[\mathcal{D}](y_{2}) > N_{n+1} \).  Indeed, if \( \rgap[\mathcal{D}](y_{2}) \le
    N_{n+1} \), then by construction \( y_{2}\, \oer[\le K_{n+1}]{\mathcal{C}}\, \phi_{\mathcal{D}}(y_{2})
    \), contradicting the choice of \( y_{2} \).  For a similar reason one also has
  \item \( \rgap[\mathcal{D}]\bigl(\phi_{\mathcal{D}}^{-1}(y_{1})\bigr) > N_{n+1} \).
  \item \( y_{1} \oer[\le N_{n+1}]{\mathcal{D}} y_{2} \).  Suppose not.  Let \( y_{1} \le z < y_{2} \)
    be such that \( \rgap[\mathcal{D}](z) > N_{n+1} \).  By construction, 
    \[ \rgap[\mathcal{C}](z) \in \mathcal{U}_{\epsilon}(\frac{K_{n+1}+K_{n+2}}{2}). \]
    Since \( K_{n+2} \ge K_{n+1} + 2\epsilon \),
    \( \rgap[\mathcal{C}](z) > \frac{K_{n+1} + K_{n+1}+2\epsilon}{2} - \epsilon = K_{n+1} \), whence
    \( \neg \bigl( y_{1}\, \oer[\le K_{n+1}]{\mathcal{C}}\, y_{2} \bigr) \), contradicting the choice of
    \( y_{1} \) and \( y_{2} \).
  \end{itemize}

  The items above prove that 
  \[ [x]_{\oer[\le K_{n+1}]{\mathcal{C}}} \cap \mathcal{D} = [x]_{\oer[\le
    N_{n+1}]{\mathcal{D}}} \textrm{ for all } n \in \mathbb{N}. \]
  By the choice of \( \mathcal{D} \), there are some
  \( x_{1}, x_{2} \in [x]_{\oer[\le N_{n+1}]{\mathcal{D}}} \) such that
  \( [x_{i}]_{\oer[\le N_{n}]{\mathcal{D}}} \subseteq [x]_{\oer[\le N_{n+1}]{\mathcal{D}}} \) and
  \( \neg \bigl( x_{1} \oer[\le N_{n}]{\mathcal{D}} x_{2}\bigr) \).  But
  \( [x_{i}]_{\oer[\le N_{n}]{\mathcal{D}}} = [x_{i}]_{\oer[\le K_{n}]{\mathcal{C}}} \cap
  \mathcal{D} \), therefore \( [x_{i}]_{\oer[\le K_{n}]{\mathcal{C}}} \), \( i=1,2 \), are two
  distinct \( \oer[\le K_{n}]{\mathcal{C}} \)-classes inside
  \( [x]_{\oer[\le K_{n+1}]{\mathcal{C}}} \).

  \textbf{Case II:} \( [x]_{\oer[\le K_{n+1}]{\mathcal{C}}} \cap \mathcal{D} = \es \).  In this case
  \( x \) belongs to some block \( B(K_{0}, \ldots, K_{m}) \) for some \( m \ge n+1 \) (if
  \( m < n+1 \), \( [x]_{\oer[\le K_{n+1}]{\mathcal{C}}} \cap \mathcal{D} \ne \es \)).  Therefore
  \( [x]_{\oer[\le K_{n+1}]{\mathcal{C}}} \) contains at least two \( \oer[\le K_{n}]{\mathcal{C}}
  \) classes by properties of \( B(K_{0}, \ldots, K_{m}) \).

  To see \eqref{item:gaps-are-close-to-average}  we need to show that for any \( x \in \mathcal{C} \),
  \( \rgap[\mathcal{C}](x) \in \mathcal{U}_{\epsilon}(\frac{K_{n} + K_{n+1}}{2}) \) for some \( n
  \).  If there is a block \( B(K_{0}, \ldots, K_{m}) \) such that both \( x \) and \(
  \phi_{\mathcal{C}}(x) \) belong to this block, then this is true by the properties of such a
  block.  Otherwise \( \rgap[\mathcal{C}](x) \) corresponds to a gap between blocks, or
  between a point from \( \mathcal{D} \) and a block.  In both cases it is in \(
  \mathcal{U}_{\epsilon}(\frac{K_{n} + K_{n+1}}{2}) \)  for some \( n \) by the choice of \( \xi_{n} \).
\end{proof}

\section{Propagation of freedom}
\label{sec:propagation-freedom}

{\it For the rest of the paper we fix two rationally independent reals \( \alpha, \beta \in
\mathbb{R}^{>0} \), \( \alpha < \beta \), and a real \( \rho \in (0,1) \).}

\subsection{Tileable reals}
\label{sec:tileable-reals}

\begin{definition}
  \label{def:tileable}
  A positive real number \( x \in \mathbb{R}^{>0} \) is said to be \emph{tileable} if there are
  \( p, q \in \mathbb{N} \) such that \( x = p\alpha + q\beta \).  The term tileable signifies that
  an interval of length \( x \) can be tiled by intervals of length \( \alpha \) and \( \beta \).
  The set of tileable reals is denoted by \( \tileable \).  Note that \( \tileable \) is just the
  subsemigroup of \( \mathbb{R} \) generated by \( \alpha \) and \( \beta \).
\end{definition}

\begin{definition}
  \label{def:asymptotic-density}
  Let \( I \subseteq \mathbb{R}^{\ge 0} \) be a finite or infinite interval.  Given an
  \( \epsilon > 0\), we say that a set \( S \subseteq \mathbb{R} \) is \emph{\( \epsilon \)-dense}
  in \( I \) if it intersects any subinterval of \( I \) of length \( \epsilon \):
  \( S \cap \mathcal{U}_{\epsilon/2}(x) \ne \es \) for any \( x \in I \) such that
  \( \mathcal{U}_{\epsilon/2}(x) \subseteq I \).

  A set \( S \subseteq \mathbb{R}^{\ge 0} \) is \emph{asymptotically dense} in \( \mathbb{R} \) if
  for any \( \epsilon > 0 \) there exists \( N \in \mathbb{N} \) such that \( S \) is \( \epsilon
  \)-dense in \( [N,\infty) \).
\end{definition}

\begin{remark}
  \label{rem:unsual-eps-density}
  This definition of \( \epsilon \)-density is equivalent up to a multiple of \( \epsilon \) to an
  arguably more familiar 
  \[ \forall x \in I\ \exists s \in S\ |x - s| < \epsilon. \]
\end{remark}

Given a tileable \( x \in \tileable \), \( x = p\alpha + q\beta \), we define the
\emph{\( \alpha \)-frequency} of \( x \), denoted by \( \freqa(x) \), to be the quotient
\( p/(p+q) \).

Our first lemma quantifies the following rather obvious fact: if \( x \) and \( y  \) are tileable
reals, and if \( x \) is minuscule compared to \( y \), then the \( \alpha \)-frequency of \( x+y \)
is very close to the \( \alpha \)-frequency of \( y \).

\begin{lemma}
  \label{lem:frequency-length-estimate}
  Let \( x, y \in \tileable \) be two tileable reals and let \( \epsilon > 0 \).  If
  \(\displaystyle \frac{x}{y} < \frac{\alpha \epsilon}{2\beta} \), then
  \( \bigl| \freqa(x + y) - \freqa(y)\bigr| < \epsilon \).
\end{lemma}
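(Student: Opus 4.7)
The plan is to parametrize the tileable reals by their $\alpha,\beta$-decompositions and turn the hypothesis $x/y < \alpha\epsilon/(2\beta)$ into a bound on the ratio of total block counts. Write $x = p_1\alpha + q_1\beta$ and $y = p_2\alpha + q_2\beta$ with $p_i,q_i \in \mathbb{N}$, and set $M = p_1+q_1$, $N = p_2+q_2$. Then $\freqa(y) = p_2/N$ and $\freqa(x+y) = (p_1+p_2)/(M+N)$, since $x+y = (p_1+p_2)\alpha + (q_1+q_2)\beta$.

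The first step is the key reduction: bound $M/N$ using the sandwich $\alpha M \le x \le \beta M$ (and similarly for $y$). From $x \ge \alpha M$ and $y \le \beta N$ one obtains
\[ \frac{M}{N} \le \frac{\beta}{\alpha}\cdot\frac{x}{y} < \frac{\beta}{\alpha}\cdot\frac{\alpha \epsilon}{2\beta} = \frac{\epsilon}{2}. \]
So the hypothesis forces $M$ to be much smaller than $N$.

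Next, compute the difference of frequencies by putting them over a common denominator:
\[ \freqa(x+y) - \freqa(y) \;=\; \frac{p_1+p_2}{M+N} - \frac{p_2}{N} \;=\; \frac{Np_1 - Mp_2}{N(M+N)}. \]
Since $0 \le p_1 \le M$ and $0 \le p_2 \le N$, both $Np_1$ and $Mp_2$ lie in $[0, MN]$, so $|Np_1 - Mp_2| \le MN$. Therefore
\[ \bigl|\freqa(x+y) - \freqa(y)\bigr| \;\le\; \frac{MN}{N(M+N)} \;=\; \frac{M}{M+N} \;\le\; \frac{M}{N} \;<\; \frac{\epsilon}{2} \;<\; \epsilon. \]

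There is no real obstacle here: the statement is essentially quantitative linear algebra. The only mildly subtle point is remembering to use \emph{both} sides of $\alpha(p+q) \le p\alpha+q\beta \le \beta(p+q)$ — the lower bound on $x$ and the upper bound on $y$ — to convert the hypothesis into control of $M/N$; the rest is the elementary telescoping identity for $p_1/(M+N) - $ (weighted average) and the trivial bound $|Np_1 - Mp_2| \le MN$.
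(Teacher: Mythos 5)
Your proof is correct and uses the same core reduction as the paper: decompose $x$, $y$ in the $\alpha,\beta$ basis, then use $\alpha(p+q) \le p\alpha+q\beta \le \beta(p+q)$ to convert the hypothesis into $M/N < \epsilon/2$. Your finish is a slight improvement on the paper's: rather than splitting $\freqa(x+y)$ into two fractions and invoking the triangle inequality, you write the difference over a common denominator as $(Np_1 - Mp_2)/(N(M+N))$ and bound the numerator by $MN$, which is both cleaner and yields the stronger bound $\epsilon/2$.
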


\begin{proof}
  Let \( x = p_{x} \alpha + q_{x} \beta \) and \( y = p_{y}\alpha + q_{y}\beta \).  Recall that \(
  \alpha < \beta \) and therefore
  \[ \frac{\alpha \epsilon}{2\beta} > \frac{p_{x} \alpha + q_{x} \beta}{p_{y} \alpha + q_{y}
    \beta} \ge \frac{\alpha}{\beta} \cdot \frac{p_{x} + q_{x}}{p_{y} + q_{y}} \implies \frac{p_{x} +
    q_{x}}{p_{y} + q_{y}} < \frac{\epsilon}{2}. \]
  Therefore also
  \[ \frac{p_{x}}{p_{x} + q_{x} + p_{y} + q_{y}} \le \frac{p_{x} + q_{x}}{p_{x} + q_{x} + p_{y} +
    q_{y}} \le \frac{p_{x} + q_{x}}{p_{y} + q_{y}} < \frac{\epsilon}{2}. \]
  Since
  \[ \freqa(x + y) = \frac{p_{x}}{p_{x} + q_{x} + p_{y} + q_{y}} + \frac{p_{y}}{p_{x} + q_{x} +
    p_{y} + q_{y}},\]
  we get
  \begin{displaymath}
  \begin{aligned}
    \bigl| \freqa(y) - \freqa(x +y) \bigr| &< \Bigl| \frac{p_{y}}{p_{y} + q_{y}} -
    \frac{p_{y}}{p_{x} + q_{x} + p_{y} + q_{y}} \Bigr| + \epsilon/2 \\&=
    \frac{p_{y}}{p_{y} + q_{y}} \cdot \frac{p_{x} + q_{x}}{p_{x} + q_{x} + p_{y} + q_{y}} +
    \frac{\epsilon}{2}\\
    &\le \frac{p_{x} + q_{x}}{p_{y} + q_{y}} + \frac{\epsilon}{2} < \epsilon.
  \end{aligned}
\end{displaymath}
The lemma follows.
\end{proof}

Given an \( \{\alpha,\beta\} \)-regular cross section \( \mathcal{C} \), we shall be interested in
the frequency of occurrence of \( \alpha \)-points within orbits.  More precisely, we
shall consider sums of the form
\( \frac{1}{n}\sum_{k=0}^{n-1}\chi_{\mathcal{C}_{\alpha}}\bigl(\phi_{\mathcal{C}}^{k}(x)\bigr) \),
where \( x \in \mathcal{C} \) and \( \chi_{\mathcal{C}_{\alpha}} \) is the characteristic function of
\( \mathcal{C}_{\alpha} \).  Note that
\[ \frac{1}{n}\sum_{k=0}^{n-1}\chi_{\mathcal{C}_{\alpha}}\bigl(\phi_{\mathcal{C}}^{k}(x)\bigr) =
\freqa\Bigl(\dist\bigl(x, \phi_{\mathcal{C}}^{n}(x)\bigr)\Bigr). \]
Our next lemma gives a criterion of uniform convergence of this sums to \( \rho \in (0,1) \).

\begin{lemma}
  \label{lem:equivalent-formulation-of-regularity}
  Let \( \mathcal{C} \) be an \( \{\alpha,\beta\} \)-regular cross section.  The following are
  equivalent.
  \begin{enumerate}[(i)]
  \item\label{item:freq-uniformly-rho} For any \( \eta > 0 \) there is \( N(\eta) \) such that
    \[ \bigl| \rho - \frac{1}{n}\sum_{k=0}^{n-1} \chi_{\mathcal{C}_{\alpha}}\bigl(\phi^{k}(x)\bigr)
    \bigr| < \eta \]
    for all \( x \in \mathcal{C} \) and all \( n \ge N(\eta) \).
  \item\label{item:sub-cross-rho-gaps} For any \( \eta > 0 \) there is a bounded sub cross section
    \( \mathcal{D}_{\eta} \subseteq \mathcal{C} \) with \( \alpha \)-frequencies of gaps \( \eta
    \)-close to \( \rho \):  there exists
    \( M(\eta) \in \mathbb{R} \) such that
    \[ \rgap[\mathcal{D}_{\eta}](x) \le M(\eta) \textrm{ and }\bigl|\rho -
    \freqa\bigl(\rgap[\mathcal{D}_{\eta}](x)\bigr) \bigr| < \eta \quad \textrm{holds for all } x \in
    \mathcal{D}_{\eta}. \]
  \end{enumerate}
\end{lemma}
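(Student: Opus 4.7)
The plan is to prove the two implications separately. Direction $(i) \Rightarrow (ii)$ is a direct application of the marker lemma for $\mathbb{Z}$-actions (Proposition \ref{prop:marker-lemma-Z-actions}) to the induced automorphism $\phi_\mathcal{C}$, while $(ii) \Rightarrow (i)$ requires decomposing each orbit arc into a bulk piece aligned with $\mathcal{D}_\eta$ plus a bounded residual at each end.

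For $(i) \Rightarrow (ii)$, fix $\eta > 0$ and let $N = N(\eta)$ be as in (i). I would apply Proposition \ref{prop:marker-lemma-Z-actions} to the free Borel automorphism $\phi_\mathcal{C} : \mathcal{C} \to \mathcal{C}$ with parameter $d = N$, producing a sub cross section $\mathcal{D}_\eta \subseteq \mathcal{C}$ whose consecutive elements are exactly $N$ or $N+1$ iterations of $\phi_\mathcal{C}$ apart. Since every $\mathcal{C}$-gap is either $\alpha$ or $\beta$, the real-distance gap $\rgap[\mathcal{D}_\eta](x)$ is uniformly bounded by $M(\eta) := (N+1)\beta$. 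Writing the successor of $x \in \mathcal{D}_\eta$ inside $\mathcal{D}_\eta$ as $\phi_\mathcal{C}^n(x)$ for some $n \in \{N, N+1\}$, the $\alpha$-frequency of this gap equals $\frac{1}{n}\sum_{k=0}^{n-1}\chi_{\mathcal{C}_\alpha}\bigl(\phi_\mathcal{C}^k(x)\bigr)$, which by (i) lies within $\eta$ of $\rho$ because $n \ge N$.

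For the converse, given $\eta > 0$, set $\eta' = \eta/2$ and apply (ii) to obtain a sub cross section $\mathcal{D}_{\eta'}$ with gap bound $M := M(\eta')$. Fix $x \in \mathcal{C}$ and take $n$ large enough that the interval $[x, \phi_\mathcal{C}^n(x)]$ contains at least two points of $\mathcal{D}_{\eta'}$; this holds whenever $n\alpha \ge 3M$, since the $\{\alpha,\beta\}$-regularity of $\mathcal{C}$ forces $\dist(x, \phi_\mathcal{C}^n(x)) \ge n\alpha$ and $\mathcal{D}_{\eta'}$ has gaps at most $M$. Let $y_0 < y_1 < \cdots < y_m$ enumerate $\mathcal{D}_{\eta'} \cap [x, \phi_\mathcal{C}^n(x)]$. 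Let $T$ be the number of $\mathcal{C}$-steps from $y_0$ to $y_m$, let $P$ be the number of those that are $\alpha$-steps, and let $n' = n - T$ be the residual count coming from $[x, y_0]$ and $[y_m, \phi_\mathcal{C}^n(x)]$. Each residual segment has real length at most $M$ and each $\mathcal{C}$-step has length at least $\alpha$, so $n' \le 2M/\alpha$. Summing the hypothesis $|\freqa(\dist(y_i, y_{i+1})) - \rho| < \eta'$ over $i < m$ gives $|P - \rho T| < \eta' T$, and hence
\[ \Bigl| \frac{1}{n}\sum_{k=0}^{n-1}\chi_{\mathcal{C}_\alpha}\bigl(\phi_\mathcal{C}^k(x)\bigr) - \rho \Bigr| \le \frac{|P - \rho T| + n'}{n} \le \eta' + \frac{n'}{n}, \]
which falls below $2\eta' = \eta$ as soon as $n \ge 2M/(\alpha\eta')$.

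The main subtlety is the uniformity in $x \in \mathcal{C}$: the threshold beyond which the Cesàro averages stay within $\eta$ of $\rho$ must not depend on $x$. This is precisely what the uniform gap bound $M(\eta')$ in (ii) provides; an orbit-wise version of (ii) would fail to close the argument, which is why the definition explicitly demands a single $M(\eta)$.
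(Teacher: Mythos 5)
Your proof is correct and follows essentially the same strategy as the paper: for $(i)\Rightarrow(ii)$ you apply Proposition~\ref{prop:marker-lemma-Z-actions} exactly as the paper suggests, and for $(ii)\Rightarrow(i)$ you pass to $\mathcal{D}_{\eta/2}$ and split each orbit segment into a bulk between two $\mathcal{D}$-points plus two boundary pieces of real length $\le M(\eta/2)$. The one cosmetic difference is in how the error is controlled: the paper invokes Lemma~\ref{lem:frequency-length-estimate} to compare $\freqa(z+y)$ with $\freqa(z)$, whereas you reprove what is needed by directly counting $\alpha$-steps ($P$ out of $T$) and bounding the residual step count $n' \le 2M/\alpha$; this sidesteps Lemma~\ref{lem:frequency-length-estimate} and gives a cleaner threshold $N(\eta)$, but it is the same decomposition and the same idea, not a genuinely different route.
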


\begin{proof}
  It is clear that \ref{item:freq-uniformly-rho} \( \implies \) \ref{item:sub-cross-rho-gaps}, since
  it is enough to take for \( \mathcal{D}_{\eta} \) a bounded sub cross section with at least
  \( N(\eta) \)-many points from \( \mathcal{C} \) between any two adjacent points in
  \( \mathcal{D}_{\eta} \) (see Proposition \ref{prop:marker-lemma-Z-actions}).  For instance, one
  can take \( M(\eta) = \beta \bigl( N(\eta) + 2 \bigr) \).

  We need to show \ref{item:sub-cross-rho-gaps} \( \implies \) \ref{item:freq-uniformly-rho}.  Fix
  \( \eta > 0 \) and consider \( \mathcal{D}_{\eta/2} \) and \( M(\eta/2) \) guaranteed by
  \ref{item:sub-cross-rho-gaps}.  In view of Lemma \ref{lem:frequency-length-estimate}, there is
  \( \widetilde{N} \in \mathbb{R} \) such that \( |\freqa(z+x) - \freqa(z)| < \eta/2 \) for all
  tileable \( z \ge \widetilde{N} - 2M(\eta/2) \) and all \( x \le 2M(\eta/2) \),
  \( x \in \tileable \).  We claim that \( N(\eta) = \widetilde{N}/\alpha \) works.  Pick
  \( x \in \mathcal{C} \) and some natural \( n \ge N(\eta) \).  Consider a block of
  \( n+1 \)-points \( x, \phi_{\mathcal{C}}(x), \ldots, \phi_{\mathcal{C}}^{n}(x) \).  Let
  \( k_{1} < \cdots < k_{m} \) be all the indices for which
  \( \phi^{k_{i}}_{\mathcal{C}}(x) \in \mathcal{D}_{\eta/2} \):
  \[ \{ \phi^{i}_{\mathcal{C}}(x)\}_{i=0}^{n} \cap \mathcal{D}_{\eta/2} =
  \{\phi^{k_{i}}_{\mathcal{C}}(x)\}_{i=1}^{m}. \] Set \( x_{i} = \phi_{\mathcal{C}}^{k_{i}}(x) \).

  \begin{figure}[ht]
    \centering
    \begin{tikzpicture}
      \foreach \x in {0, 12}{
        \filldraw (\x,0) circle (2pt);
      }
      \foreach \x in {-0.6, 12.4}{
        \draw (\x,0) circle (1.5pt);
      }
      \foreach \x in {1.8,3.8,6,8.2,10}{
        \filldraw (\x,0) circle (1.5pt);
      }
      \foreach \x in {0.0, 0.2,...,12}{
        \filldraw (\x,0) circle (0.5pt);
      }
      \draw (0,-0.3) node {\( x \)};
      \draw (12,-0.3) node {\( \phi_{\mathcal{C}}^{n}(x) \)};
      \draw (2.1,0.3) node {\( x_{1} \)};
      \draw (3.8,0.3) node {\( x_{2} \)};
      \draw (6,0.3) node {\( \cdots \)};
      \draw (8.2,0.3) node {\( x_{m-1} \)};
      \draw (9.7,0.3) node {\( x_{m} \)};
      \draw[decoration={
        brace,
        mirror,
        raise=3mm
      },decorate] (1.85,0) -- (3.75,0) node[pos=0.5, anchor=north] {};
            \draw[decoration={
        brace,
        mirror,
        raise=3mm
      },decorate] (3.85,0) -- (5.95,0) node[pos=0.5, anchor=north] {};
            \draw[decoration={
        brace,
        mirror,
        raise=3mm
      },decorate] (6.05,0) -- (8.15,0) node[pos=0.5, anchor=north] {};
      \draw[decoration={
        brace,
        mirror,
        raise=3mm
      },decorate] (8.25,0) -- (9.95,0) node[pos=0.5, anchor=north] {};
      \draw (6,-1.3) node[text width=9cm]{%
          Blocks between \( \mathcal{D}_{\eta/2} \)-points.  
          Each block has length at most \( M(\eta/2) \) 
          and \( \alpha \)-frequency \( \eta/2 \)-close to \( \rho \).
      };
      \draw[decoration={
        brace,
        raise=3mm
      },decorate] (0.1,0) -- (1.7,0) node[pos=0.5, anchor=north,yshift=9mm] {error term};
      \draw[decoration={
        brace,
        raise=3mm
      },decorate] (10.1,0) -- (11.9,0) node[pos=0.5, anchor=north,yshift=9mm] {error term};
    \end{tikzpicture}
    \caption{\( \mathcal{D}_{\eta/2} \)-blocks within a segment of \( N(\eta) \)-many points.}
    \label{fig:unifromly-reg-gaps}
  \end{figure}

  Since \( \rgap[\mathcal{D}_{\eta/2}] \le
  M(\eta/2) \), it follows that \( \dist(x,x_{1}) \le M(\eta/2) \) and \(
  \dist\bigl(x_{m}, \phi^{n}_{\mathcal{C}}(x) \bigr) \le M(\eta/2) \).   Let \( z =
  \dist(x_{1}, x_{m}) \).  One has
  \[ z \ge \dist\bigl(x, \phi^{n}_{\mathcal{C}}(x)\bigr) - 2M(\eta/2) \ge n\alpha - 2M(\eta/2)
  \ge \widetilde{N} - 2M(\eta/2). \]
  Let \( y = \dist(x,x_{1}) + \dist\bigl(x_{m}, \phi^{n}_{\mathcal{C}}(x)\bigr) \) be the
  error term.  By the choice of \( \widetilde{N} \) we conclude that
  \[ |\freqa(z + y) - \freqa(z)| < \eta/2. \]
  Since \( |\freqa(z)-\rho| < \eta/2 \), it follows that
  \[ \bigl| \freqa\bigl( \dist\bigl( x, \phi^{n}_{\mathcal{C}}(x) \bigr) \bigr) -\rho \bigr| <
  \eta/2 + \eta/2 = \eta.\qedhere \]
\end{proof}

\subsection{Propagation of Freedom}
\label{sec:propagation-freedom-subsec}

Let \( (d_{k})_{k=1}^{n} \) be a sequence of positive reals, and let \( \epsilon > 0 \).  Suppose
that for each \( 1 \le k \le n \) we are given a set
\( R_{k} \subseteq \mathcal{U}_{\epsilon}(d_{k}) \).  Define
\[ \mathcal{A}_{n} = \mathcal{A}_{n}\bigl( \epsilon, (d_{k})_{k=1}^{n}, (R_{k})_{k=1}^{n} \bigr) \]
to consists of all \( x \in \mathcal{U}_{\epsilon}\bigl( \sum_{k=1}^{n} d_{k} \bigr) \) for which
there exist \( y_{k} \in R_{k} \), \( k \le n \), such that
\( \bigl| \sum_{k=1}^{r}(d_{k}-y_{k}) \bigr| < \epsilon \) for all \( r \le n \), and
\( x = \sum_{k=1}^{n} y_{k} \).  Properties of sets \( \mathcal{A}_{n} \) are in the core of this
section, so let us try to give a more geometrical explanation of the definition.  Imagine a family of
points \( z_{0}, \ldots, z_{n} \) with \( \dist(z_{k-1},z_{k}) = d_{k} \), see Figure
\ref{fig:def-of-An}.
\begin{figure}[ht]
  \centering
  \begin{tikzpicture}
    \foreach \p in {0.0, 2.6, 4.6, 8.1, 10.5} {
      \filldraw (\p,0) circle (1pt);
    }
    \draw(0,-0.3) node {\( z_{0} \)};
    \draw(2.6,-0.3) node {\( z_{1} \)};
    \draw(4.6,-0.3) node {\( z_{2} \)};
    \draw(8.1,-0.3) node {\( z_{n-1} \)};
    \draw(10.5,-0.3) node {\( z_{n} \)};
    \draw(6.35,-0) node {\( \cdots \)};
    \draw[decoration={
      brace,
      raise=2mm},
    decorate] (0.1, 0) -- (2.5,0) node[pos=0.5, anchor=north, yshift=9mm] {\( d_{1} \)};
    \draw[decoration={
      brace,
      raise=2mm},
    decorate] (2.7, 0) -- (4.5,0) node[pos=0.5, anchor=north, yshift=9mm] {\( d_{2} \)};
    \draw[decoration={
      brace,
      raise=2mm},
    decorate] (8.2, 0) -- (10.4,0) node[pos=0.5, anchor=north, yshift=9mm] {\( d_{n} \)};
    \draw (5.2,-1) node {Moving each \( z_{k} \) by at most \( \epsilon \).};
    \foreach \p in {0.0, 2.8, 4.4, 8.3, 10.4} {
      \filldraw (\p,-2) circle (1pt);
    }
    \draw(0,-1.7) node {\( z_{0} \)};
    \draw(2.8,-1.7) node {\( z_{1}' \)};
    \draw(4.4,-1.7) node {\( z_{2}' \)};
    \draw(8.3,-1.7) node {\( z_{n-1}' \)};
    \draw(10.4,-1.7) node {\( z_{n}' \)};
    \draw(6.35,-2) node {\( \cdots \)};
    \draw[decoration={
      brace,
      mirror,
      raise=2mm},
    decorate] (0.1, -2) -- (2.7,-2) node[pos=0.5, anchor=south, yshift=-9mm] {in \( R_{1} \)};
    \draw[decoration={
      brace,
      mirror,
      raise=2mm},
    decorate] (2.9, -2) -- (4.3,-2) node[pos=0.5, anchor=south, yshift=-9mm] {in \( R_{2} \)};
    \draw[decoration={
      brace,
      mirror,
      raise=2mm},
    decorate] (8.4, -2) -- (10.3,-2) node[pos=0.5, anchor=south, yshift=-9mm] {in \( R_{n} \)};
    \foreach \p in {0.0, 2.8, 4.4, 8.3, 10.4} {
      \filldraw (\p,-2) circle (1pt);
    }
  \end{tikzpicture}
  \caption{The set \( \mathcal{A}_{n}\bigl(\epsilon, (d_{k}), (R_{k})\bigr) \) consists of all
    possible \( x = \dist(z_{0}, z_{n}') \).}
  \label{fig:def-of-An}
\end{figure}
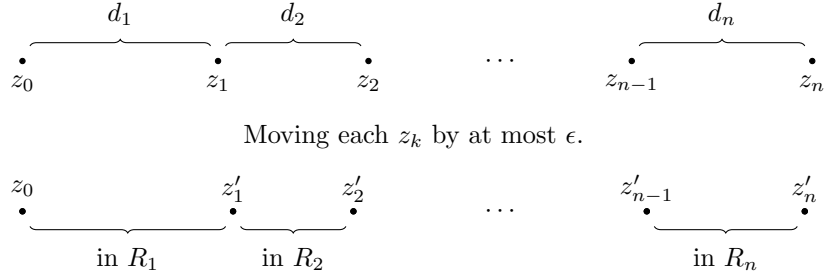
We would like to move each \( z_{k} \) by at most \( \epsilon \) to some new point \( z_{k}' \) so
as to make distances from \( z_{k-1}' \) to \( z_{k}' \) lie in \( R_{k} \).  Let us say that a
distance \( \dist(z_{k-1}',z_{k}') \) is \emph{admissible} if
\( \dist(z_{k-1}', z_{k}') \in R_{k} \).  The leftmost point \( z_{0} \) stays fixed,
\( z_{0}' = z_{0} \).  Given any \( y_{1} \in R_{1} \), we may take \( z_{1}' = z_{0} + y_{1} \).
We pick some \( y_{2} \in R_{2} \).  In general, \( z_{2}' = z_{1}' + y_{2} \) may not work,
since such a \( z_{2}' \) may not be in the \( \epsilon \)-neighborhood of \( z_{2} \) anymore, but
if, for example, \( R_{2} \) contains elements both smaller and larger than
\( \dist(z_{1}, z_{2}) \), then one can always pick \( y_{2} \in R_{2} \) such that
\( z_{2}' = z_{1}' + y_{2} \) is within \( \epsilon \) from \( z_{2} \).  Depending on the sets
\( R_{k} \), there may or may not be any ways to move points in the described way, but when
\( R_{k} \) are diverse enough, there will necessarily be many such arrangements.  We are
specifically concerned with the set of possible positions for the last point \( z_{n}' \).  The set
\( \mathcal{A}_{n} \) is the collection of possible distances from \( z_{0} \) to \( z_{n}' \).

Sets of admissible distances \( R_{k} \) will consist of tileable reals.  All the elements in
\( \mathcal{A}_{n} \), being sums of elements of \( R_{k} \), will therefore also be tileable.  The
main technical ingredient of our argument is quantification of the richness of
\( \mathcal{A}_{n} \), and in particular we expect the cardinality of \( \mathcal{A}_{n} \) to grow
with \( n \) and to pack densely the \( \epsilon \)-neighborhood of \( z_{n} \).  The details are
subtle.  There are some degenerate ways to pick \( R_{k} \) and \( d_{k} \) resulting in the failure
of the above expectations, with sets \( \mathcal{A}_{n} \) not growing with \( n \).  The section
thus concentrates on finding sufficient conditions to avoid such pathologies.  As a matter of fact,
we need to control two aspects of \( \mathcal{A}_{n} \).  First of all, we need to measure the
density of \( \mathcal{A}_{n} \) in \( \mathcal{U}_{\epsilon}(z_{n}) \).  Second, we need
information about the diversity of \( \alpha \)-frequencies of elements in \( \mathcal{A}_{n} \).
We are unable to do both items at once, but fortunately once \( \mathcal{A}_{n} \) is known to be
\( \delta \)-dense it will be easy to increase \( n \) improving \( \alpha \)-frequencies while
keeping \( \delta \)-density because of the following Frequency Boost Lemma.

\begin{lemma}[Frequency Boost]
  \label{lem:frequency-boost}
  For any \( D \in \mathbb{R}^{\ge 0} \) and \( \zeta \in \mathbb{R}^{>0} \) there is
  \( M = M_{\Lem\ref{lem:frequency-boost}}(D, \zeta) \) such that for any \( n \ge M \), any
  \( \eta \in \mathbb{R}^{>0} \), any \( \gamma \in (\rho - \eta, \rho + \eta) \), any
  \( 0 < \epsilon \le 1 \), any family of reals \( (d_{k})_{k=1}^{n} \) satisfying
  \( 2\epsilon + \alpha \le d_{k} \le D \), and any sequence of sets
  \( R_{k} \subseteq \mathcal{U}_{\epsilon}(d_{k}) \cap \tileable \) such that
  \( R_{k} \cap \freqa^{-1}[0, \rho-\eta] \) and \( R_{k} \cap \freqa^{-1}[\rho + \eta, 1] \) are
  \( \epsilon \)-dense in \( \mathcal{U}_{\epsilon}(d_{k}) \),
  there is \( x \in \mathcal{A}_{n} \) such that \( |\freqa(x) - \gamma| < \zeta \).
\end{lemma}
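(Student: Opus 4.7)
The plan is to construct $y_1,\dots,y_n\in R_k$ one step at a time by a greedy rule that alternates between the low-frequency sub-family $R_k\cap\freqa^{-1}[0,\rho-\eta]$ and the high-frequency sub-family $R_k\cap\freqa^{-1}[\rho+\eta,1]$, according to which side of $\gamma$ the current running $\alpha$-frequency falls on. Set $W=(D+1)/\alpha$ and let $M=M_{\Lem\ref{lem:frequency-boost}}(D,\zeta):=\lfloor W/\zeta\rfloor+1$; I will show this choice works. Writing each $y_i=p_i\alpha+q_i\beta$ with $w_i=p_i+q_i$, I track the four running quantities
\[
e_k=\sum_{i=1}^{k}(y_i-d_i),\qquad F_k=\sum_{i=1}^{k}p_i,\qquad N_k=\sum_{i=1}^{k}w_i,\qquad T_k=F_k-\gamma N_k,
\]
so that $\freqa(y_1+\cdots+y_k)-\gamma=T_k/N_k$.

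The inductive rule is: pick $y_k\in R_k\cap\mathcal{U}_{\epsilon/2}(d_k-e_{k-1})$ with $\freqa(y_k)\in[0,\rho-\eta]$ if $T_{k-1}\ge 0$, and with $\freqa(y_k)\in[\rho+\eta,1]$ otherwise. Such a $y_k$ exists: the invariant $|e_{k-1}|<\epsilon/2$ (starting with $e_0=0$) implies $\mathcal{U}_{\epsilon/2}(d_k-e_{k-1})\subseteq\mathcal{U}_\epsilon(d_k)$, so the $\epsilon$-density assumption on the appropriate sub-family of $R_k$ yields the required point. The invariant propagates, since $|e_k|=|y_k-(d_k-e_{k-1})|<\epsilon/2$. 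Hence $|\sum_{i=1}^r(d_i-y_i)|<\epsilon/2<\epsilon$ for every $r\le n$, and $x=\sum_{k=1}^{n}y_k$ lies in $\mathcal{A}_n$.

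The key estimate is the uniform bound $|T_k|\le W$ for every $k$. From $y_k<d_k+\epsilon\le D+1$ together with $y_k\ge w_k\alpha$ one obtains $1\le w_k\le W$. The recursion $T_k-T_{k-1}=w_k\bigl(\freqa(y_k)-\gamma\bigr)$ gives $|T_k-T_{k-1}|\le w_k\le W$ because $|\freqa(y_k)-\gamma|\le 1$. The greedy choice forces the sign: when $T_{k-1}\ge 0$ the choice $\freqa(y_k)\le\rho-\eta<\gamma$ gives $T_k\le T_{k-1}$, and when $T_{k-1}<0$ the choice $\freqa(y_k)\ge\rho+\eta>\gamma$ gives $T_k\ge T_{k-1}$. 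Thus if $|T_{k-1}|\le W$ then either $T_{k-1}\in[0,W]$ and $T_k\in[T_{k-1}-W,T_{k-1}]\subseteq[-W,W]$, or $T_{k-1}\in[-W,0)$ and $T_k\in[T_{k-1},T_{k-1}+W]\subseteq[-W,W]$. Starting from $T_0=0$, induction yields $|T_k|\le W$ for every $k$. Finally, $N_n\ge n\ge M>W/\zeta$, so
\[
\bigl|\freqa(x)-\gamma\bigr|=\frac{|T_n|}{N_n}\le\frac{W}{N_n}<\zeta,
\]
completing the proof.

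The main subtlety is realizing that no fine rate of convergence of $T_k$ to zero is needed; a quantitative contraction would require a lower bound on $|\freqa(y_k)-\gamma|$, which collapses to $0$ as $\gamma$ approaches the endpoints of $(\rho-\eta,\rho+\eta)$ and would make $M$ depend on $\eta$. Instead, the mere \emph{uniform} bound $|T_k|\le W$, combined with the automatic growth $N_k\ge k$, supplies the $1/n$ decay, which is why $M$ depends only on $D$ and $\zeta$ as the statement requires.
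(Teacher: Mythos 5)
Your proof is correct and takes a genuinely different route from the paper's. The greedy construction rule is essentially the same in both (the side of $\gamma$ on which the running frequency lies determines whether the next $y_k$ is drawn from the low- or the high-frequency subfamily of $R_k$), but the analysis differs substantially. The paper works directly with the running $\alpha$-\emph{frequency}, splitting $M = M_1 + M_2$: $M_1$ is chosen via Lemma \ref{lem:frequency-length-estimate} so that once $k \ge M_1$ each additional $y_{k+1}$ perturbs the running frequency by less than $\zeta$, and $M_2$ is chosen so that the tail $\sum_{i=M_1+1}^{M} y_i$ dominates the whole sum in the sense of Lemma \ref{lem:frequency-length-estimate}. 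The argument then cases on whether the running frequency crosses $\gamma$ somewhere in $[M_1, M)$ (the small-step bound gives $\zeta$-closeness at the crossing) or stays on one side (then the tail's frequency lies on the opposite side of $\gamma$, and the paper squeezes). You instead linearize: rather than controlling a ratio, you control the unnormalized discrepancy $T_k = F_k - \gamma N_k$, show the greedy rule makes the increments sign-alternating relative to the direction of $T_{k-1}$, and observe that $|T_k - T_{k-1}| \le w_k \le W = (D+1)/\alpha$, so $T_k$ is trapped in $[-W,W]$ forever. The $1/n$ decay of $|\freqa(x)-\gamma| = |T_n|/N_n$ then comes for free from $N_n \ge n$. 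This avoids the two-phase structure entirely, yields a cleaner and much sharper bound $M = O(D/(\alpha\zeta))$ versus the paper's $O\bigl(D^2\beta^2/(\alpha^4\zeta^2)\bigr)$, and makes transparent exactly why $M$ can be chosen independently of $\eta$ and $\gamma$ (as you note, one only needs a uniform bound on the discrepancy, never a rate). One small point worth making explicit: the estimate $|\freqa(y_k)-\gamma| \le 1$ uses that $\gamma \in (0,1)$, which holds because the density hypotheses on $R_k$ are non-vacuous only when $\eta \le \min\{\rho, 1-\rho\}$.
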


Here is a less formal statement of the lemma.  In the context of Figure \ref{fig:def-of-An}, we want
to move points \( z_{k} \) in such a way so that the distance \( \dist(z_{0},z_{n}') \) will have
\( \alpha \)-frequency \( \zeta \)-close to \( \gamma \), where \( \gamma \) is somewhere in
\( (\rho-\eta, \rho+\eta) \).  Suppose sets \( R_{k} \) allow us to move \( z_{k} \) at our choice
to the left or to the right in a way that, again at our choice, adds a real of
\( \alpha \)-frequency at least \( \rho+\eta \) or at most \( \rho - \eta \).  The lemma claims that
under this assumptions one can always find such an \( x \in \mathcal{A}_{n} \) provided that \( n \)
is sufficiently large in terms of the bound on gaps \( D \) and precision \( \zeta \).  The proof of
is similar to the proof of Riemann Rearrangement Theorem.
\begin{proof}
  We construct \( M \) as \( M = M_{1} + M_{2} \).  Integer \( M_{1} \) will be picked so large that
  after \( M_{1} \)-steps the frequency does not change by more than \( \zeta \), and \( M_{2} \)
  will ensure that we have enough intervals to change the frequency by at least \( \eta - \zeta \).
  We take
  \[ M_{1} > \frac{D + 2}{\zeta} \cdot \frac{2\beta}{\alpha^{2}} \quad \textrm{and} \quad M_{2}
  > \frac{M_{1}D + 2}{\zeta} \cdot \frac{2 \beta}{\alpha^{2}}. \]
  Note that if \( (y_{k})_{k=1}^{n} \) is any sequence such that
  \( \bigl|\sum_{k=1}^{r}(d_{k}-y_{k}) \bigr| < \epsilon \) for all \( r \le n \), and
  \( y_{k} \in \tileable \), \( k \le n \), where \( n \ge M_{1} \), then by Lemma
  \ref{lem:frequency-length-estimate} and the choice of \( M_{1} \), for any \( m \in \mathbb{N} \)
  satisfying \( M_{1} \le m < n \), the contribution of \( y_{m+1} \) to the frequency of
  \( \sum_{k=1}^{m+1} y_{k} \) is small:
  \begin{equation}
    \label{eq:small-freq-change}
    \Bigl| \freqa \Bigl( \sum_{k=1}^{m+1} y_{k} \Bigr) - \freqa\Bigl( \sum_{k=1}^{m} y_{k} \Bigr)
    \Bigr| < \zeta.
  \end{equation}

  We construct the required \( x \in \mathcal{A}_{n} \) as follows.  For \( y_{1} \) pick any
  element of \( R_{1} \).  Suppose \( y_{k} \) has been constructed.  Take
  \( y_{k+1} \in R_{k+1} \) satisfying the following two conditions:
  \begin{itemize}
  \item if \( \sum_{i=1}^{k}y_{i} \le \sum_{i=1}^{k}d_{i} \), then \( y_{k+1} \ge d_{k+1} \); if
    \( \sum_{i=1}^{k}y_{i} > \sum_{i=1}^{k}d_{i} \), then \( y_{k+1} \le d_{k+1} \);
  \item if \( \freqa(\sum_{i=1}^{k}y_{i}) \le \gamma \), then \( \freqa(y_{k+1}) \ge \rho + \eta \); if
    \( \freqa(\sum_{i=1}^{k}y_{i}) > \gamma \), then \( \freqa(y_{k+1}) \le \rho - \eta \).
  \end{itemize}
  The possibility to choose such an \( y_{k+1} \) is guaranteed by the assumption that
  \[ R_{k} \cap \freqa^{-1}[0, \rho-\eta] \textrm{ and } R_{k} \cap \freqa^{-1}[\rho + \eta, 1]
  \textrm{ are \( \epsilon \)-dense in } \mathcal{U}_{\epsilon}(d_{k}). \]
  It is obvious that \( \sum_{k=1}^{r}y_{k} \in \mathcal{A}_{r} \) for all \( r \).  We claim that
  \( \bigl| \freqa\bigl(\sum_{i=1}^{n}y_{i}\bigr) -\gamma \bigr| \le \zeta \) for all \( n \ge M \).

  Consider \( \sum_{i=1}^{M_{1}} y_{i} \).  We have two cases: either
  \( \freqa(\sum_{i=1}^{M_{1}} y_{i}) \le \gamma \), or
  \( \freqa(\sum_{i=1}^{M_{1}} y_{i}) > \gamma \).  Suppose for definiteness that
  \( \freqa(\sum_{i=1}^{M_{1}} y_{i}) \le \gamma \).  If there is \( k \), \( M_{1} \le k < M \),
  such that \( \freqa(\sum_{i=1}^{k} y_{i}) \le \gamma \) but
  \( \freqa(\sum_{i=1}^{k+1} y_{i}) > \gamma \), then the claim follows by the above estimate
  \eqref{eq:small-freq-change}.  We may therefore suppose that for any \( k \le M \) we have
  \( \freqa(\sum_{i=1}^{k} y_{i}) \le \gamma \).  In particular,
  \( \freqa(\sum_{i=1}^{M} y_{i}) \le \gamma \).  By the construction,
  \( \freqa(\sum_{i=M_{1}+1}^{M} y_{i}) \ge \rho + \eta \), therefore in view of Lemma
  \ref{lem:frequency-length-estimate} and the choice of \( M_{2} \), we have
  \( \bigl| \freqa(\sum_{i=1}^{M}y_{i}) - \freqa(\sum_{i=M_{1}+1}^{M}y_{i}) \bigr| < \zeta. \)
  Since
  \[ \freqa\bigl(\sum_{i=1}^{M} y_{i}\bigr) \le \gamma \le \rho + \eta \le \freqa\bigl(\mkern-10mu
  \sum_{i=M_{1}+1}^{M} \mkern-10mu y_{i}\bigr),
  \]
  we conclude that \( \bigl| \freqa(\sum_{i=1}^{M}y_{i}) - \gamma \bigr| \le \zeta \).
\end{proof}

\subsection{Moving points: Constant case.}
\label{sec:moving-points-regul}

In this subsection we study the behaviour of sets
\[ \mathcal{A}_{n}\bigl(\epsilon, (d_{k})_{k=1}^{n}, (R_{k})_{k=1}^{n}\bigr) \]
with constant parameters: \( d_{i} = d_{j} \) and \( R_{i} = R_{j} \) for all \( 1 \le i,j \le n \).
We let \( d = d_{k} \) and \( R = R_{k} \) and
\( \mathcal{A}_{n}\bigl(\epsilon, (d)_{k=1}^{n}, (R)_{k=1}^{n}\bigr) \) will be denoted simply by
\( \mathcal{A}_{n} \).  

\begin{lemma} Sets \( \mathcal{A}_{n} \) have the following additivity properties.
  \label{lem:additive-property}
  \begin{enumerate}[(i)]
  \item\label{item:additivity-one-dim} If \( y_{i} \in R \), \( 1 \le i \le n \), are such that
    \[ \Bigl| nd - \sum_{i=1}^{n} y_{i} \Bigr| < \epsilon, \]
    then \( \sum_{i=1}^{n} y_{i} \in \mathcal{A}_{n} \).
  \item\label{item:additivity-prop} If \( x_{i} \in \mathcal{A}_{n_{i}} \), \( 1 \le i \le k
    \), are such that
    \[ \Bigl| \sum_{i=1}^{k}(x_{i} - n_{i}d) \Bigr| < \epsilon, \]
    then \( \sum_{i=1}^{k} x_{i} \in \mathcal{A}_{\sum_{i=1}^{k}n_{i}}\).
  \item\label{item:inclusion} If \( d \in R \) and \( m \le n \), then
    \( \mathcal{A}_{m} + (n-m)d \subseteq \mathcal{A}_{n} \).
  \end{enumerate}
\end{lemma}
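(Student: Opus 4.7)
The plan is to handle part~(i) by a greedy reordering, then derive part~(ii) as an immediate consequence, and verify part~(iii) by explicit padding.

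For (i), write $z_i = y_i - d$, so that $|z_i| < \epsilon$ (since $R \subseteq \mathcal{U}_\epsilon(d)$) and $\bigl|\sum_{i=1}^n z_i\bigr| < \epsilon$ by hypothesis. I will produce a permutation $\sigma$ of $\{1,\ldots,n\}$ such that $|S_r| < \epsilon$ for every $r \le n$, where $S_r = \sum_{i=1}^r z_{\sigma(i)}$; then $y_{\sigma(1)}, \ldots, y_{\sigma(n)} \in R$ witness $\sum_{i=1}^n y_i = \sum_{i=1}^n y_{\sigma(i)} \in \mathcal{A}_n$. The permutation is built greedily: given the current partial sum $S_r$ with $|S_r| < \epsilon$, if $S_r \ge 0$ and some unused index $j$ has $z_j < 0$, set $\sigma(r+1) = j$; symmetrically if $S_r < 0$ pick an unused $z_j > 0$. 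If no index of the opposite sign remains, append the unused indices in any order. In the sign-flip step, $S_{r+1} = S_r + z_j$ stays in $(-\epsilon,\epsilon)$ because $|z_j| < \epsilon$ moves us toward zero without overshooting by more than $\epsilon$. In the terminal case where all unused $z_j$'s have the same sign as $S_r$, every subsequent partial sum lies between $S_r$ and $S_n$, both of absolute value less than $\epsilon$, so the interval condition is preserved throughout.

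Part (ii) is then an immediate consequence of (i): each $x_i \in \mathcal{A}_{n_i}$ comes with witnesses $y_{i,1}, \ldots, y_{i,n_i} \in R$ summing to $x_i$, and concatenating all of them produces a sequence of $N = \sum_{i=1}^k n_i$ elements of $R$ summing to $\sum_{i=1}^k x_i$. The proximity hypothesis rewrites as $\bigl|Nd - \sum_{i,j} y_{i,j}\bigr| = \bigl|\sum_{i=1}^k(n_i d - x_i)\bigr| < \epsilon$, and (i) applied to the concatenated sequence yields $\sum x_i \in \mathcal{A}_N$. The point is that (i) does not demand any control on intermediate partial sums, only on the total, which is exactly the hypothesis available at the level of blocks.

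For (iii), take $x = \sum_{k=1}^m y_k$ with $y_k \in R$ witnessing $x \in \mathcal{A}_m$, and extend by $y_{m+1} = \cdots = y_n = d$, which is legitimate since $d \in R$. For $r \le m$ the partial sum bound $\bigl|rd - \sum_{k=1}^r y_k\bigr| < \epsilon$ is inherited, while for $r > m$ the new terms contribute zero defect, so the bound reduces to $|md - x| < \epsilon$. The total $x + (n-m)d$ lies in $\mathcal{U}_\epsilon(nd)$ because $|x - md| < \epsilon$, giving $x + (n-m)d \in \mathcal{A}_n$. The only non-routine ingredient is the balancing argument in (i); parts (ii) and (iii) are formal once (i) is in hand.
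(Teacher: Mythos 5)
Your proposal is correct and uses essentially the same approach as the paper: a greedy rearrangement for (i), followed by concatenation of witnesses for (ii). The only cosmetic differences are that you phrase the terminal case of (i) via monotonicity between two in-bounds values where the paper argues by contradiction, and you prove (iii) by padding with $d$'s directly rather than deducing it from (ii) as the paper does.
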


\begin{proof}
  \eqref{item:additivity-one-dim} We need to show that there exists a permutation 
  \[ \pi : \{1, \ldots, n \} \to \{1, \ldots, n\} \]
  such that
  \[ \Bigl| kd - \sum_{i=1}^{k}y_{\pi(i)} \Bigr| < \epsilon\quad \textrm{for all } k \le n. \]
  Set \( \pi(1) = 1 \), and a define \( \pi(k+1) \) inductively as follows.  Let 
  \[ P = \{y_{i} : i \ne \pi(j) \textrm{ for any } 1 \le j \le k \} \]
  to denote the set of \( y_{i} \)'s that we haven't used yet.  If
  \( \sum_{i=1}^{k}y_{\pi(i)} \le kd \), we search for the smallest \( i \in P \) such that
  \( y_{i} \ge d \) and set \( \pi(k+1) = i \); if no such \( i \in P \) exists, we set
  \( \pi(k+1) = \min P \).  Similarly, if \( \sum_{i=1}^{k}y_{\pi(i)} > kd \), we search for the
  smallest \( i \in P \) such that \( y_{i} \le d \) and set \( \pi(k+1) = i \); if no such
  \( i \in P \) exists, we set \( \pi(k+1) = \min P \).  We claim that such a permutation \( \pi \)
  satisfies
  \[ \Bigl| kd - \sum_{i=1}^{k}y_{\pi(i)} \Bigr| < \epsilon\quad \textrm{for all } k \le n. \]
  For \( k = 1 \) the inequality holds trivially.  Suppose it holds for \( k \).  We check it for
  \( k+1 \), and let us assume for definiteness that \( \sum_{i=1}^{k} y_{\pi(i)} \le kd \).  We
  have two cases depending on how \( \pi(k+1) \) was selected.  If \( y_{\pi(k+1)} \ge d \), then
  \[ \Bigl| (k+1)d - \sum_{i=1}^{k+1} y_{\pi(i)} \Bigr| = \Bigl|  d - y_{\pi(k+1)} + kd -
  \sum_{i=1}^{k} y_{\pi(i)} \Bigr| < \epsilon,\]
  because
  \[ - \epsilon < d - y_{\pi(k+1)} \le 0 \quad \textrm{ and } \quad 0 \le kd - \sum_{i=1}^{k}
  y_{\pi(i)} < \epsilon. \]
  So, it remains to consider the case when \( y_{i} < d\) for all \( i \in P \).  In this case
  \[ (k+1)d - \sum_{i=1}^{k+1}y_{\pi(i)} > 0. \]
  Let
  \( P' = P \setminus \{\pi(k+1)\} = P \setminus \min P \).  We have
  \[ \sum_{i=1}^{n} y_{i} = \sum_{i=1}^{n}y_{\pi(i)} = \sum_{i=1}^{k+1} y_{\pi(i)} + \sum_{j \in P'}
  y_{j} \]
  and \( \sum_{j \in P'} y_{j} < |P'| d = (n-k-1)d  \).  If
  \[ (k+1)d - \sum_{i=1}^{k+1}y_{\pi(i)} \ge \epsilon, \]
  then 
  \[ nd - \sum_{i=1}^{n} y_{i} = (k+1)d - \sum_{i=1}^{k+1} y_{\pi(i)} + (n-k-1)d - \sum_{j \in P'}
  y_{j} \ge \epsilon, \]
  contradicting the assumption \( \bigl| nd - \sum_{i=1}^{n}y_{i} \bigr| < \epsilon \), so we
  have to have
  \[ \Bigl| (k+1)d - \sum_{i=1}^{k+1} y_{\pi(i)} \Bigr| < \epsilon \]
  as claimed.
  
  \eqref{item:additivity-prop} Since \( x_{i} \in \mathcal{A}_{n_{i}} \), there are \( y_{j}^{i} \),
  \( 1 \le j \le n_{i} \), such that
  \begin{itemize}
  \item \( y_{j}^{i} \in R \);
  \item \( \bigl| rd - \sum_{j=1}^{r} y_{j}^{i} \bigr| < \epsilon \) for all \( r \le n_{i} \);
  \item \( \sum_{j=1}^{n_{i}} y_{j}^{i} = x_{i} \).
  \end{itemize}
  By \eqref{item:additivity-one-dim}, \( \sum_{i,j} y_{j}^{i} \in \mathcal{A}_{\sum_{i=1}^{k}n_{i}}
  \), and the item follows.

  \eqref{item:inclusion} Follows from item \eqref{item:additivity-prop}, since \( d \in R \) and
  thus \( (n-m)d \in \mathcal{A}_{n-m} \) for \( n > m \).
\end{proof}

Additivity of sets \( \mathcal{A}_{n} \) is a luxury we have in the case of constant parameters.
Given \( x, y \in \mathcal{A}_{m} \) such that \( x < md < y \) we may therefore start forming sums
of \( x \) and \( y \), e.g.,
\[ x + y + y + x + y + \cdots, \]
as long as any initial segment of this sum is \( \epsilon \)-close to the multiple of \( d \).  By
the above lemma, all such sums will lie in \( \mathcal{A}_{n} \) for the relevant \( n \).  The next
lemma quantifies the density of \( \mathcal{A}_{n} \) in \( \mathcal{U}_{\epsilon}(nd) \) that can
be achieved by forming such combinations.  Note that if \( d \in \mathbb{R} \), then \(
\mathcal{A}_{n} + d \subseteq \mathcal{A}_{n+1} \), and therefore the degree of density cannot
decrease: if \( \mathcal{A}_{n} \) is \( \delta \)-dense in \( \mathcal{U}_{\epsilon}(nd) \), then
\( \mathcal{A}_{n+1} \) is \( \delta \)-dense in \( \mathcal{U}_{\epsilon}(nd + d) \).

For two reals \( a, b \in \mathbb{R} \) we let \( \gcd(a,b) \) denote the greatest positive real \(
c \) such that both \( a \) and \( b \) are integer multiples of \( c \).  If \( a \) and \( b \) are
rationally independent, we take \( \gcd(a,b) = 0 \).

\begin{lemma}
  \label{lem:generating-elements-of-An}
  Let \( \epsilon > 0 \), let \( 0 < \delta \le \epsilon \), and let \( x, y \in \mathcal{A}_{m} \),
  \( m \ge 1 \), be given.  Set \( a = x - md \) and \( b = y - md \).  Suppose that \( d \in R \),
  and \( a < 0 < b \).
  There exists \( N = N_{\Lem \ref{lem:generating-elements-of-An}}(R, m, \epsilon, \delta, d, x, y) \)
  such that for all \( n \ge N \)
  \begin{itemize}
  \item if \( \delta > \gcd(a,b) \), then the set \( \mathcal{A}_{n} \) is \( \delta \)-dense in \(
  \mathcal{U}_{\epsilon}(nd) \);
  \item if \( \delta \le \gcd(a,b) \), then the set \( \mathcal{A}_{n} \) is \( \kappa \)-dense in \(
    \mathcal{U}_{\epsilon}(nd) \) for any \( \kappa > \gcd(a,b) \) and
    moreover 
    \[ nd + k\gcd(a,b) \in \mathcal{A}_{n}\ \textrm{ for all integers } k \textrm{ such that } nd +
    k\gcd(a,b) \in \mathcal{U}_{\epsilon}(nd). \]
  \end{itemize}
\end{lemma}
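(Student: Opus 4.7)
The plan is to reduce the statement to a number-theoretic question about the subsemigroup $\mathbb{N}a+\mathbb{N}b$ near $0$, and then lift arithmetic facts into statements about $\mathcal{A}_n$ via the additivity in Lemma~\ref{lem:additive-property}\eqref{item:additivity-prop}. The key building block I would establish first is the following: for any $p,q,r\in\mathbb{N}$ with $|pa+qb|<\epsilon$ and any $n\ge (p+q)m$, we have
\[ nd + pa + qb \;\in\; \mathcal{A}_n. \]
This follows by combining $p$ copies of $x\in\mathcal{A}_m$, $q$ copies of $y\in\mathcal{A}_m$, $r$ copies of $md$, and $n-(p+q+r)m$ copies of $d$ (using $d\in R$, which gives $d\in\mathcal{A}_1$ and $md\in\mathcal{A}_m$); the total deviation from $nd$ is exactly $pa+qb\in(-\epsilon,\epsilon)$, which is what Lemma~\ref{lem:additive-property}\eqref{item:additivity-prop} demands.

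It then suffices to analyse the set of reachable offsets $T_\epsilon := \{pa+qb : p,q\in\mathbb{N},\ |pa+qb|<\epsilon\}$, splitting into two cases. If $c := \gcd(a,b)>0$, write $a=-\alpha c$ and $b=\beta c$ with coprime positive integers $\alpha,\beta$; then $pa+qb = (q\beta - p\alpha)c$, and Bezout produces integers $p_0,q_0$ realizing any target $k\in\mathbb{Z}$ as $q_0\beta - p_0\alpha = k$, after which adding a sufficiently large multiple of $(\alpha,\beta)$ to $(q_0,p_0)$ makes both coordinates non-negative. Hence $T_\epsilon = c\mathbb{Z}\cap(-\epsilon,\epsilon)$. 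This set is $\kappa$-dense for every $\kappa>c$ (interior gaps and boundary gaps are all of length $\le c$), which covers the $\delta>c$ subcase directly and, in the $\delta\le c$ subcase, yields the exact formula $nd+kc\in\mathcal{A}_n$ claimed in the second bullet. If instead $a/b$ is irrational so $c=0$, Weyl equidistribution applied to $\{kb\bmod|a|\}_{k\ge 1}$ gives, for every $t\in(-\epsilon,\epsilon)$ and every $\delta>0$, a pair $p,q\ge 0$ with $|pa+qb-t|<\delta$; the opposite signs of $a$ and $b$ make the non-negativity constraint automatic, so $T_\epsilon$ is $\delta$-dense in $(-\epsilon,\epsilon)$.

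Finally, the density conclusion about $T_\epsilon$ in either case is witnessed by a \emph{finite} collection of pairs $(p_i,q_i)_{i\le K}$, and setting $N := m\cdot\max_i(p_i+q_i)+m$ guarantees that for every $n\ge N$ each witness yields a point $nd+p_ia+q_ib\in\mathcal{A}_n$ in the corresponding sub-interval of $\mathcal{U}_\epsilon(nd)$, giving the desired density. The main obstacle I expect is the number-theoretic step in the irrational case, namely producing \emph{non-negative} representations $pa+qb$ that densely fill a neighbourhood of $0$; the crucial input is that $a$ and $b$ have opposite signs, which permits a Kronecker/Weyl-style argument to be executed as a bounded random walk around $0$ while keeping $p,q\in\mathbb{N}$.
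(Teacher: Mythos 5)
Your proof is correct in its essentials, but it takes a genuinely different route from the paper's. Your first step cleanly reduces the problem to understanding the set $T_\epsilon = \{pa+qb : p,q\in\mathbb{N},\ |pa+qb|<\epsilon\}$ of reachable offsets, and then you handle the arithmetic by Bezout in the commensurable case and Kronecker/Weyl density in the incommensurable case. The paper instead runs a Euclidean-algorithm recursion $a_0=a$, $b_0=b$, $a_k = a_{k-1}+l_kb_{k-1}$, $b_k = b_{k-1}+l_k'a_k$, which is a constructive continued-fraction analogue of your Kronecker argument: it produces explicit small non-negative combinations $a_K, b_K$ of $a$ and $b$ together with explicit counts $(p,q)$, and these counts directly feed the bound $\tilde{N}$ and thence $N$. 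What your approach buys is conceptual clarity (a single reduction lemma, then pure arithmetic of $T_\epsilon$); what the paper's buys is that the bound $N$ falls out of the recursion without an additional pigeonhole or ``finitely many witnesses'' argument. Both correctly exploit Lemma~\ref{lem:additive-property}\eqref{item:additivity-prop} and the inclusion $\mathcal{A}_m + (n-m)d \subseteq \mathcal{A}_n$.

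One place where you should be more careful is the incommensurable case. Equidistribution of $\{qb \bmod |a|\}_{q\ge 1}$ immediately gives approximation of targets $t\in[0,|a|)$, not of all $t\in(-\epsilon,\epsilon)$ (and recall $|a|<\epsilon$). The phrase ``for every $t\in(-\epsilon,\epsilon)$'' as a direct consequence of Weyl is therefore a jump. The fix is short and you clearly have the right picture, but it deserves a sentence: first use Kronecker to produce a single pair $(p_0,q_0)\in\mathbb{N}^2$ with $0 < p_0a + q_0b < \delta/2$, then take multiples $\ell(p_0a+q_0b)$ for $\ell=0,1,\ldots$ to obtain a $(\delta/2)$-fine arithmetic progression in $[0,\epsilon)$ entirely inside $T_\epsilon$; do the symmetric thing with a small negative combination to cover $(-\epsilon,0]$. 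This also makes the ``finitely many witnesses $(p_i,q_i)$'' at the end concrete (you use $\ell(p_0,q_0)$ and $\ell(p_1,q_1)$ for $\ell$ up to some explicit bound), so the definition of $N$ is fully effective. With that sentence added, the argument is complete. Finally, the auxiliary parameter $r$ in your first building block is superfluous: you can always take $r=0$ since the remaining $n-(p+q)m$ slots are filled by copies of $d\in R\subseteq\mathcal{A}_1$.
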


\begin{proof}
  Starting with \( a_{0} = a \) and \( b_{0} = b  \) we define for \( k \ge 1 \)
  \begin{displaymath}
    \label{eq:ab-formulas}
    \begin{aligned}
      a_{k} &= a_{k-1} + l_{k} b_{k-1},\\
      b_{k} &= b_{k-1} + l_{k}' a_{k},\\
    \end{aligned}
  \end{displaymath}
  where \( l_{k} \) is the largest natural number such that \( a_{k-1} + l_{k} b_{k-1} \le 0 \), and
  \( l_{k}' \) is the largest natural satisfying \( b_{k-1} + l_{k}' a_{k} \ge 0 \).  If
  \( a_{k} = 0 \), we take \( l_{k}' = 0 \).  If either \( a_{k} = 0 \) or \( b_{k} = 0 \), we stop
  constructing the sequence.  Here are a few things to note.
  \begin{itemize}
  \item \( a_{k} \le 0 \) and \( b_{k} \ge 0\) for
    all \( k \).
  \item If \( a_{k} = 0 \), but \( b_{k-1} \ne 0 \), then \( b_{k-1} = \gcd(a,b) \).  Similarly, if
    \( b_{k} = 0 \), but \( a_{k} \ne 0 \), then \( a_{k} = -\gcd(a,b) \).
  \item If \( a_{k} \ne 0 \), then \( b_{k} \le 1/2 \cdot b_{k-1} \) and
    \( | a_{k} | \le b_{k-1} \); in particular sequences \( (a_{k}) \) and \( (b_{k}) \) converge to
    \( 0 \) exponentially fast.
  \end{itemize}
  We take \( K = K(\delta,a,b) \) to be the minimal natural such that one of \( a_{K} \), \( b_{K} \) is
  \( 0 \) or
  \( \max\{|a_{K}|,b_{K}\} < \delta \).

  \medskip

  \textbf{Claim.}  There is \( \tilde{N} \in \mathbb{N} \) such that
  \( a_{K} + \tilde{N}d \) and \( b_{K} + \tilde{N}d \in \mathcal{A}_{\tilde{N}} \).

  \textit{Proof of claim.}  Unraveling the formulas for \( a_{k} \) and \( b_{k} \) one checks that
  there are naturals
  \( q, q', p, p' \in \mathbb{N} \)  such that
  \begin{equation}
    \begin{aligned}
      a_{K} &= pa + qb,\\
      b_{K} &= p'a + q'b.\\
    \end{aligned}
  \end{equation}
  From item
  \eqref{item:additivity-prop} of Lemma \ref{lem:additive-property} we know that
  \begin{displaymath}
    \begin{aligned}
      a_{K} + (p+q)md &= p(a + md) + q(b + md) = px
      + qy \in \mathcal{A}_{(p + q)m},\\
      b_{K} + (p'+q')md &= p'x + q'y \in \mathcal{A}_{(p' + q')m}.\\
    \end{aligned}
  \end{displaymath}
  Therefore by Lemma \ref{lem:additive-property}\eqref{item:inclusion} we may take \( \tilde{N}
  = m\cdot\max\{p + q,p'+q'\} \).  \hfill\( \square_{\textrm{claim}} \)

  \medskip

  The proof now splits into three cases.
  
  {\bf Case I:} \( a_{K} < 0 \) and \( b_{K} > 0 \).  In this case \( \delta > \gcd(a,b) \).  Let
  \( N_{a} \) and \( N_{b} \) be such that
  \[ -\epsilon < N_{a}a_{K} \le -\epsilon + a_{K} \textrm{ and } \epsilon - b_{K}
  \le N_{b}b_{K} < \epsilon,  \]
  and set \( N = \max\{N_{a}, N_{b}\} \cdot \tilde{N} \).  Again, by Lemma
  \ref{lem:additive-property} and Claim for any \( n \ge N \)
  \begin{itemize}
  \item \( ka_{K} + nd \in \mathcal{A}_{n} \) for all \( 0 \le k \le N_{a} \);
  \item \( kb_{K} + nd \in \mathcal{A}_{n} \) for all \( 0 \le k \le N_{b} \).
  \end{itemize}
  Therefore \( \mathcal{A}_{n} \) is \( \delta \)-dense in \( \mathcal{U}_{\epsilon}(nd) \), since
  \( |a_{K}| < \delta \) and \( b_{K} < \delta \), see Figure \ref{fig:An-dense-item-one}.
  \begin{figure}[ht]
    \centering
    \begin{tikzpicture}
      \draw (-5.8,0) -- (4.8,0);
      \draw (-6.3,0) node {\( \ldots \)};
      \draw (5.3,0) node {\( \ldots \)};
      \foreach \x in {-5.4,-3.6,-1.8,1.5,3,4.5}{
        \draw (\x,0.1) -- (\x,-0.1);
      }
      \foreach \xind/\x in {3/-5.4,2/-3.6,/-1.8}{
        \draw (\x,-0.3) node {\( \scriptstyle nd+\xind a_{K} \)};
      }
      \filldraw (0,0) circle (1.5pt);
      \draw (0,-0.3) node {\( \scriptstyle nd \)};
      \foreach \xind/\x in {/1.5,2/3,3/4.5}{
        \draw (\x,-0.3) node {\( \scriptstyle nd+\xind b_{K} \)};
      }
      \draw[
      decoration={
        brace,
        raise=0.2cm
      },
      decorate
      ] (1.5,0) -- (3,0) node [pos=0.5,anchor=south,yshift=0.3cm]{\( < \delta \)};
            \draw[
      decoration={
        brace,
        raise=0.2cm
      },
      decorate
      ] (-3.6,0) -- (-1.8,0) node [pos=0.5,anchor=south,yshift=0.3cm]{\( < \delta \)};
    \end{tikzpicture}
    \caption{Some elements of \( \mathcal{A}_n \) witnessing its \( \delta \)-density in
      \( \mathcal{U}_{\epsilon}(nd) \).}
    \label{fig:An-dense-item-one}
  \end{figure}
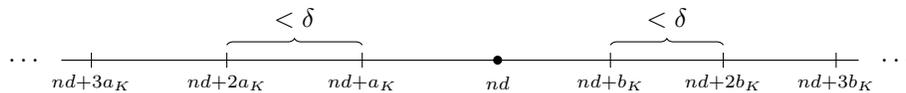

  {\bf Case II:} \( a_{K} = 0 \).  Note that by the choice of \( K \), \( b_{K-1} \ne 0 \).  We let
  \( c = b_{K-1} = \gcd(a,b) \).  Since \( b_{K-1} = b_{K} = c \), we have
  \( c + \tilde{N}d \in \mathcal{A}_{\tilde{N}} \), where \( \tilde{N} \) is given by Claim.  Thus
  also \( c + l_{K}\tilde{N}d \in \mathcal{A}_{l_{K} \tilde{N}} \).  By increasing \( \tilde{N} \)
  if necessary, we also have \( a_{K-1} + \tilde{N}d \in \mathcal{A}_{\tilde{N}} \) and
  \( b_{K-1} + \tilde{N}d \in \mathcal{A}_{\tilde{N}}\), so by Lemma \ref{lem:additive-property}
  \[ -c + l_{K}\tilde{N}d = a_{K} - b_{K-1} + l_{K}\tilde{N}d = a_{K-1} + (l_{K}-1)b_{K-1} + l_{K}
  \tilde{N} d \in \mathcal{A}_{l_{K}\tilde{N}}. \]
  Therefore both \( l_{K}\tilde{N}d + c \) and \( l_{K}\tilde{N}d - c \) are elements of
  \( \mathcal{A}_{l_{K}\tilde{N}} \).

  Finally, let \( N_{c} \) be such that
  \( \epsilon - c \le N_{c} c < \epsilon \) and take \( N = N_{c}l_{K} \tilde{N} \).  Then
  \[ nd + kc \in \mathcal{A}_{n} \textrm{ for all } n \ge N_{c} l_{K} \tilde{N} \textrm{ and all }
  -N_{c} \le k \le N_{c}. \]
  The natural \( N = N_{c} l_{K} \tilde{N} \) satisfies the conclusion of the lemma.

  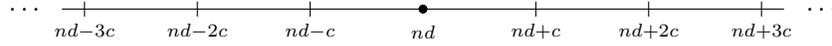
\begin{figure}[ht]
    \centering
    \begin{tikzpicture}
      \draw (-4.8,0) -- (4.8,0);
      \draw (-5.3,0) node {\( \ldots \)};
      \draw (5.3,0) node {\( \ldots \)};
      \foreach \x in {-4.5,-3.0,-1.5,1.5,3,4.5}{
        \draw (\x,0.1) -- (\x,-0.1);
      }
      \foreach \xind/\x in {3/-4.5,2/-3.0,/-1.5}{
        \draw (\x,-0.3) node {\( \scriptstyle nd-\xind c \)};
      }
      \filldraw (0,0) circle (1.5pt);
      \draw (0,-0.3) node {\( \scriptstyle nd \)};
      \foreach \xind/\x in {/1.5,2/3,3/4.5}{
        \draw (\x,-0.3) node {\( \scriptstyle nd+\xind c \)};
      }
    \end{tikzpicture}
    \caption{Some elements of \( \mathcal{A}_n \) witnessing its \( \kappa \)-density in
      \( \mathcal{U}_{\epsilon}(nd) \) for \( \kappa > \gcd(a,b) = c \).}
    \label{fig:An-dense-item-two}
  \end{figure}
  
  {\bf Case III:} \( a_{K} \ne 0 \), but \( b_{K} = 0 \) is treated in a similar way by taking \( c =
  -a_{K} \).
\end{proof}

From now on the set \( R \subseteq \mathcal{U}_{\epsilon}(d) \) is assumed to consist of tileable reals
\( R \subseteq \tileable \).  Note that given \( d \in \mathbb{R}^{>0} \) the set of possible
choices for \( R \subseteq \mathcal{U}_{\epsilon}(d) \cap \mathcal{T} \) is finite.

\begin{lemma}
  \label{lem:approximating-specified-frequency}
  Let \( \eta > 0 \).  For any \( N \) there exists
  \( M = M_{\Lem \ref{lem:approximating-specified-frequency}}(d, \eta) \) such that for any
  \( R \subseteq \mathcal{U}_{\epsilon}(d) \cap \tileable \) for which
  \( R \cap \freqa^{-1}[0, \rho - \eta] \) and \( R \cap \freqa^{-1}[\rho + \eta,1] \) are
  \( \epsilon \)-dense in \( \mathcal{U}_{\epsilon}(d) \) and for all \( n \ge M \) sets
  \( \mathcal{A}_{n} \) have at least \( N \)-many elements: \( |\mathcal{A}_{n}| \ge N \).
\end{lemma}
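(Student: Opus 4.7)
The plan is to reduce the statement to the Frequency Boost Lemma \ref{lem:frequency-boost} applied in the constant-parameter setting $d_k \equiv d$ and $R_k \equiv R$. The key observation is that elements of $\mathcal{A}_n$ with different $\alpha$-frequencies are automatically distinct, so producing many elements with widely separated frequencies produces many distinct elements. The Frequency Boost Lemma, in this constant setting, guarantees precisely the ability to manufacture elements of prescribed frequency to arbitrary precision, which is exactly what is needed.

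Concretely, given $N$, I would set $\zeta = \eta/(2N)$ and pick $N$ target frequencies
\[ \gamma_i = \rho - \eta + (2i-1)\zeta, \qquad i = 1, \ldots, N, \]
all lying in $(\rho - \eta, \rho + \eta)$ and pairwise $2\zeta$-separated. Define $M = M_{\Lem\ref{lem:frequency-boost}}(d, \zeta)$. For any $n \ge M$ and any $R \subseteq \mathcal{U}_{\epsilon}(d) \cap \tileable$ satisfying the density hypothesis, I apply Lemma \ref{lem:frequency-boost} with $D = d$, the constant data $d_k \equiv d$, $R_k \equiv R$, and target $\gamma = \gamma_i$ to obtain some $x_i \in \mathcal{A}_n$ with $|\freqa(x_i) - \gamma_i| < \zeta$. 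The $2\zeta$-separation of the $\gamma_i$ then forces $\freqa(x_i) \ne \freqa(x_j)$ for $i \ne j$, hence $x_i \ne x_j$, and therefore $|\mathcal{A}_n| \ge N$.

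There is essentially no obstacle to this argument; the only auxiliary requirement, inherited from Lemma \ref{lem:frequency-boost}, is the mild inequality $d \ge 2\epsilon + \alpha$, which is innocuous in the intended applications where $d$ will be chosen much larger than $\epsilon$. The most delicate point is purely notational: although the statement writes $M_{\Lem\ref{lem:approximating-specified-frequency}}(d,\eta)$, the constant $M$ produced above also depends on $N$ through the choice $\zeta = \eta/(2N)$, and this dependence is silently absorbed into the notation.
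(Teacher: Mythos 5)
Your proposal is correct and matches the paper's own proof almost verbatim: both choose $\zeta = \eta/(2N)$, take a collection of $2\zeta$-separated targets $\gamma_i \in (\rho-\eta,\rho+\eta)$, and apply the Frequency Boost Lemma \ref{lem:frequency-boost} in the constant setting to produce elements of $\mathcal{A}_n$ with pairwise distinct $\alpha$-frequencies. Your side remarks about the hidden dependence of $M$ on $N$ and the implicit hypothesis $d \ge 2\epsilon + \alpha$ inherited from Lemma \ref{lem:frequency-boost} are accurate observations about the paper's notation and are not a defect of your argument.
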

\begin{proof}
  Set \( \gamma_{i} = i\eta/ N \) for \( -N < i < N \) and let \( \zeta = \eta/2N \).  By Lemma
  \ref{lem:frequency-boost} there exists \( M = M_{\Lem \ref{lem:frequency-boost}}(d,\zeta) \) so
  large that for any \( n \ge M \) and any \( -N < i < N \) there is \( x_{i} \in \mathcal{A}_{n} \)
  such that \( \bigl| \freqa(x_{i}) - \gamma_{i} \bigr| < \zeta \).  Since
  \( |\gamma_{i} - \gamma_{j}| \ge 2\zeta \) for \( i \ne j \), elements \( x_{i} \) must be
  pairwise distinct, implying \( |\mathcal{A}_{n}| \ge 2N-1 \ge N \).
\end{proof}

\begin{lemma}
  \label{lem:delta-density}
  For any tileable \( d > 0 \), any \( 0 < \eta \le 1 \), \( 0 < \epsilon \le 1 \), and
  \( 0 < \delta \le \epsilon \) there exists
  \( M = M_{\Lem \ref{lem:delta-density}}(d,\epsilon, \delta,\eta) \) such that for any
  \( n \ge M \) and any tileable family \( R \subseteq \mathcal{U}_{\epsilon}(d) \cap \tileable \)
  which satisfies
  \begin{itemize}
  \item \( d \in R \);
  \item \( R \cap \freqa^{-1}[0,\rho - \eta] \) and
    \( R \cap \freqa^{-1}[\rho + \eta,1] \) are \( \epsilon \)-dense in
    \( \mathcal{U}_{\epsilon}(d) \);
  \end{itemize}
  the set \( \mathcal{A}_{n} = \mathcal{A}_{n}\bigl(\epsilon, (d)_{k=1}^{n}, (R)_{k=1}^{n}\bigr) \)
  is \( \delta \)-dense in \( \mathcal{U}_{\epsilon}(nd) \).
\end{lemma}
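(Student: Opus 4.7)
The plan is to reduce the claim to Lemma \ref{lem:generating-elements-of-An}. For each $R$ satisfying the hypothesis I will produce $x, y \in \mathcal{A}_m$ with $a := x - md < 0 < b := y - md$ and $\gcd(a, b) < \delta$; the cited lemma then furnishes a threshold $M_R$ past which $\mathcal{A}_n$ is $\delta$-dense in $\mathcal{U}_{\epsilon}(nd)$. Since $\alpha, \beta > 0$ are rationally independent, the map $(p, q) \mapsto p\alpha + q\beta$ has discrete image, so $\mathcal{U}_{\epsilon}(d) \cap \tileable$ is finite and only finitely many admissible $R$ exist; taking $M = \max_R M_R$ will finish the proof. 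The only nontrivial step, and the main obstacle, is producing the pair $(x, y)$ for each individual $R$.

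To that end I would first argue that the subgroup $G \subseteq \alpha\mathbb{Z} + \beta\mathbb{Z}$ generated by $\{r - d : r \in R\}$ must have rank two. Suppose instead $G \subseteq c\mathbb{Z}$ for a single $c = s\alpha + t\beta$, so that $R$ lies in the arithmetic progression $d + c\mathbb{Z}$. Writing $d = p_d\alpha + q_d\beta$, a direct computation gives
\[
\freqa(d + kc) = \frac{p_d + ks}{p_d + q_d + k(s + t)},
\]
a rational function of $k$ whose derivative has constant sign $\operatorname{sgn}(sq_d - tp_d)$. If this sign vanishes the frequency is identically $\freqa(d)$, forcing one of $R \cap \freqa^{-1}[0, \rho - \eta]$, $R \cap \freqa^{-1}[\rho + \eta, 1]$ to be empty and contradicting $\epsilon$-density. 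Otherwise $\freqa$ is strictly monotonic along the AP, say increasing; then the high-frequency elements of $R$ occupy AP-indices $k \ge k^{*}_{\mathrm{high}}$ and the low-frequency ones $k \le k^{*}_{\mathrm{low}}$, with $k^{*}_{\mathrm{low}} < k^{*}_{\mathrm{high}}$. The $\epsilon$-density of the high-frequency part forces a high-frequency element in the left half-neighborhood $(d - \epsilon, d)$, whence $k^{*}_{\mathrm{high}} \le 0$; symmetrically $k^{*}_{\mathrm{low}} \ge 0$, contradicting $k^{*}_{\mathrm{low}} < k^{*}_{\mathrm{high}}$.

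Given rank two, among the generators $\delta_i := r_i - d$ at least two, say $\delta_1$ and $\delta_2$, are rationally independent. The $\epsilon$-density hypothesis applied at the points $d \pm \epsilon/2$ further exhibits elements of $R$ strictly above and strictly below $d$, so both positive and negative generators occur. If $\delta_1, \delta_2$ already have opposite signs, take them as $\delta_a < 0 < \delta_b$; otherwise they share a sign, and any generator $\delta_{*}$ of the opposite sign must be rationally independent from at least one of $\delta_1, \delta_2$ (else $\delta_1, \delta_2 \in \mathbb{Q}\delta_{*}$, contradicting their own independence), so pair $\delta_{*}$ with that one. Setting $m = 1$, $x = d + \delta_a$, $y = d + \delta_b$ yields $x, y \in \mathcal{A}_1 = R$ with $a < 0 < b$ and $\gcd(a, b) = 0 < \delta$. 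Lemma \ref{lem:generating-elements-of-An} now delivers the desired $M_R$, and taking the maximum over the finitely many $R$'s concludes the proof.
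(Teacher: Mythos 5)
The proposal is correct, but it takes a genuinely different and more economical route than the paper. The paper's proof first invokes Lemma \ref{lem:approximating-specified-frequency} (which rests on the Frequency Boost Lemma \ref{lem:frequency-boost}) to drive the cardinality of $\mathcal{A}_{M_1}$ above $2 + 4\epsilon/\delta$, then applies a pigeonhole argument to locate two elements $y_1, y_2$ on the same side of $M_1 d$ within $\delta/2$ of each other, and finally applies Lemma \ref{lem:generating-elements-of-An} twice: once to produce a possibly coarse $\gcd$, and a second time, using the closeness of $y_1$ and $y_2$, to beat that $\gcd$ down below $\delta$. Your argument sidesteps all of this by noting that the $\epsilon$-density of both $R \cap \freqa^{-1}[0,\rho-\eta]$ and $R \cap \freqa^{-1}[\rho+\eta,1]$ in $\mathcal{U}_{\epsilon}(d)$ forces the subgroup generated by $R - d$ in $\alpha\mathbb{Z} + \beta\mathbb{Z}$ to have rank two: along a rank-one arithmetic progression the $\alpha$-frequency is either constant or strictly monotone, and in either case one cannot have high-frequency elements on both sides of $d$ \emph{and} low-frequency elements on both sides of $d$ simultaneously. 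Rank two supplies rationally independent $\delta_a < 0 < \delta_b$ among the shifts $r - d$, so $\gcd(\delta_a, \delta_b) = 0 < \delta$ and a single application of Lemma \ref{lem:generating-elements-of-An} with $m = 1$, $x = d + \delta_a$, $y = d + \delta_b$ (noting $\mathcal{A}_1 = R$) already delivers $\delta$-density; uniformity in $R$ then comes from maxing over the finitely many admissible families, exactly as in the paper. The net effect is that Lemma \ref{lem:approximating-specified-frequency} becomes unnecessary for this step, and only one pass through Lemma \ref{lem:generating-elements-of-An} is needed. One cosmetic caution: the rank-one contradiction should be phrased so as to cover both signs of the generator $c$ and both directions of monotonicity, but in each of the four cases the same clash between a high-frequency element left of $d$ and a low-frequency element right of $d$ (or vice versa) occurs, so the argument goes through.
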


\begin{proof}
  By Lemma \ref{lem:approximating-specified-frequency} we may find \( M_{1} \) such that
  \[ \Bigl|\mathcal{A}_{M_{1}}\bigl(\epsilon, (d)_{1}^{M_{1}}, (R)_{1}^{M_{1}} \bigr)\Bigr| \ge 2 +
  4\epsilon/\delta \]
  for all \( R \) satisfying the assumptions.
  We may therefore pick non-zero \( y_{1}, y_{2} \in \mathcal{A}_{M_{1}} \) such that
  \( |y_{1}-y_{2}| < \delta/2 \), and \( y_{1} - M_{1}d \), \( y_{2} - M_{1}d \) have same signs.
  Let \( z \in \mathcal{A}_{M_{1}}\) be any element such that \( z - M_{1}d \) has the opposite
  sign.  Applying Lemma \ref{lem:generating-elements-of-An} to \( y_{1} \) and \( z \) we can find
  \( M_{2} \ge M_{1} \) such that
  \begin{itemize}
  \item either
    \( \mathcal{A}_{n} \) is \( \delta \)-dense in \( \mathcal{U}_{\epsilon}(nd) \) for any
    \( n \ge M_{2} \), or
  \item 
    \( \mathcal{A}_{n} \) is \( 2c \)-dense in \( \mathcal{U}_{\epsilon}(nd) \) for any
    \( n \ge M_{2} \), where 
    \[ c = \gcd(y_{1} - M_{1}d, z - M_{1}d), \]
    and moreover \( nd - c, nd + c \in \mathcal{A}_{n} \).
  \end{itemize}
  Note that there are only finitely many possibilities for the choice of
  \( y_{1}, y_{2}, z \in \mathcal{A}_{M_{1}} \), which let us choose \( M_{2} \) so
  large as to work for all of them at the same time.  In the first case we are done.  Suppose the
  latter holds.  Note that \( y_{2} + (M_{2}-M_{1})d \in \mathcal{A}_{M_{2}} \), and depending on
  whether \( y_{2} - M_{1}d < 0 \) or \( y_{2} - M_{1}d > 0 \), apply Lemma
  \ref{lem:generating-elements-of-An} to \( y_{2} + (M_{2}-M_{1})d \) and \( M_{2}d + c \) or to
  \( y_{2} + (M_{2}-M_{1})d \) and \( M_{2}d - c \).  Again, \( M_{3} \) can be picked so large as
  to work for all possible \( y_{2} \), \( c \), and \( R \) at the same time. We claim that the
  resulting \( M_{3} \) works, because \( c' = \gcd(y_{2} - M_{1}d, c) < \delta \), which guarantees
  hat \( \mathcal{A}_{n} \) is \( \delta \)-dense in \( \mathcal{U}_{\epsilon}(nd) \) for all \( n
  \ge M_{3} \).  Indeed, since \( y_{1} - M_{1}d \) is a multiple of \( c \), both
  \( y_{1} - M_{1}d \) and \( y_{2} - M_{1}d \) are integer multiples of \( c' \), whence
  \( 0 < |y_{1} - y_{2}| < \delta \) implies \( c' < \delta \).
\end{proof}

\subsection{Moving points: General case.}
\label{sec:moving-interv-gener}

We are now back to the situation of general sequences \( (d_{k}) \), \( (R_{k}) \) with possibly
distinct \( d_{k} \) and \( R_{k} \).  Our first lemma is a close analog of Lemma
\ref{lem:delta-density} where \( (d_{k}) \) and \( (R_{k}) \) are not necessarily constant.

\begin{lemma}
  \label{lem:propagation-of-freedom-pigeon-hole}
  For any \( D \in \mathbb{R}^{\ge 0} \), any \( 0 < \epsilon \le 1 \) , \(0 < \delta \le \epsilon,
  \) \( 0 < \eta \le 1\), there is
  \( M = M_{\Lem \ref{lem:propagation-of-freedom-pigeon-hole}}(D,\epsilon, \delta,\eta) \) such that for any
  \( n \ge M \), any sequence \( (d_{k})_{k=1}^{n} \) of positive
  reals, \( 2\epsilon + \alpha \le d_{k} \le D \), any sequence
  \( R_{k} \subseteq \mathcal{U}_{\epsilon}(d_{k}) \cap \tileable \) satisfying
  \begin{itemize}
  \item \( d_{k} \in R_{k} \);
  \item \( R_{k} \cap \freqa^{-1}[0,\rho - \eta] \) and
    \( R_{k} \cap \freqa^{-1}[\rho + \eta,1] \) are \( \epsilon \)-dense in
    \( \mathcal{U}_{\epsilon}(d_{k}) \);
  \end{itemize}
  the set \( \mathcal{A}_{n} \) is \( \delta \)-dense in
  \( \mathcal{U}_{\epsilon}(\sum_{k=1}^{n}d_{k}) \).
\end{lemma}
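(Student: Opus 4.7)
The plan is to reduce the general case to the constant case treated in Lemma \ref{lem:delta-density} via a pigeonhole argument on the finitely many possible ``types'' $(d, R)$ that can appear as $(d_k, R_k)$. Indeed, since $\alpha, \beta > 0$ generate $\tileable$ as a discrete subsemigroup of $\mathbb{R}$, the set $\tileable \cap [2\epsilon + \alpha, D]$ is finite; for each such $d$, the set $\mathcal{U}_{\epsilon}(d) \cap \tileable$ is finite, hence has only finitely many subsets, and in particular only finitely many subsets $R$ satisfying the hypotheses ($d \in R$, with $R \cap \freqa^{-1}[0, \rho - \eta]$ and $R \cap \freqa^{-1}[\rho + \eta, 1]$ each $\epsilon$-dense in $\mathcal{U}_{\epsilon}(d)$). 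Let $\mathcal{V}$ denote this finite family of admissible pairs $(d, R)$, put $T = |\mathcal{V}|$, and set $M_1 = \max_{(d^*, R^*) \in \mathcal{V}} M_{\Lem\ref{lem:delta-density}}(d^*, \epsilon, \delta, \eta)$. I claim that $M = T \cdot M_1$ works.

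Fix $n \ge M$ and sequences $(d_k), (R_k)$ as in the statement. By pigeonhole, there exist a pair $(d^*, R^*) \in \mathcal{V}$ and a subset $I \subseteq \{1, \ldots, n\}$ with $m := |I| \ge M_1$ such that $(d_k, R_k) = (d^*, R^*)$ for every $k \in I$. Enumerate $I = \{k_1 < \cdots < k_m\}$. The key observation is that the constant-case set $\mathcal{A}_m\bigl(\epsilon, (d^*)_{j=1}^m, (R^*)_{j=1}^m\bigr)$ embeds into $\mathcal{A}_n$ after a fixed shift. Explicitly, given any $(y_{k_j})_{j=1}^m$ in $R^*$ with $\bigl|\sum_{j=1}^s (d^* - y_{k_j})\bigr| < \epsilon$ for all $s \le m$, set $y_k = d_k$ for $k \notin I$. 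Then $d_k - y_k = 0$ off $I$, so for every $r \le n$
\[ \sum_{k=1}^r (d_k - y_k) = \sum_{j\,:\, k_j \le r}(d^* - y_{k_j}), \]
which is a prefix of the constant-case partial sum and therefore bounded in absolute value by $\epsilon$. Writing $S = \sum_{k \notin I} d_k$, this yields the inclusion
\[ S + \mathcal{A}_m\bigl(\epsilon, (d^*)_{j=1}^m, (R^*)_{j=1}^m\bigr) \subseteq \mathcal{A}_n. \]

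Since $m \ge M_1 \ge M_{\Lem\ref{lem:delta-density}}(d^*, \epsilon, \delta, \eta)$, Lemma \ref{lem:delta-density} asserts that the constant-case set is $\delta$-dense in $\mathcal{U}_{\epsilon}(md^*)$; translating by $S$ and using $S + md^* = \sum_{k=1}^n d_k$, we conclude that $\mathcal{A}_n$ contains a $\delta$-dense subset of $\mathcal{U}_{\epsilon}\bigl(\sum_{k=1}^n d_k\bigr)$, hence is itself $\delta$-dense there. The only point requiring care is the finiteness of $\mathcal{V}$, which in turn rests on $\tileable$ being discrete inside any bounded interval (a consequence of $\alpha, \beta > 0$ generating $\tileable$ as a semigroup); every other step follows directly from the constant-case lemma combined with pigeonhole.
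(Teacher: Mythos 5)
Your proof is correct and follows essentially the same route as the paper's: both reduce the variable-parameter case to Lemma \ref{lem:delta-density} by a pigeonhole argument on the finitely many admissible pairs $(d,R)$ (using that $d_k \in R_k \subseteq \tileable$ forces $d_k$ into a finite set), then embed the constant-case set $\mathcal{A}_m$ into $\mathcal{A}_n$ by filling the complementary indices with $y_k = d_k$. The only cosmetic difference is that you count the admissible pairs exactly (your $T = |\mathcal{V}|$) while the paper uses the cruder upper bound $|\tileable \cap [0,D]| \cdot 2^{|\tileable \cap [0,D+1]|}$.
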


\begin{proof}
  The set \( \tileable \cap (0, D] \) is finite and thus the set of possible choices of
  \( R_{k} \subseteq \mathcal{U}_{\epsilon}(d_{k})\cap \tileable \) for each \( k \) is bounded
  above by \( 2^{|\tileable \cap [0, D + 1]|} \).  Set
  \[ M = \bigl| \mathcal{T} \cap [0,D] \bigr| \cdot 2^{| \tileable \cap [0, D +1] |} \cdot
  \max_{\genfrac{}{}{0pt}{}{d \in \mathcal{T}}{d \le D}}M_{\Lem
    \ref{lem:delta-density}}(d,\epsilon,\delta,\eta).\]
  Let \( n \ge M \), and let \( d_{k} \), \(k \le n \), be given.  By the choice of \( M \) and the
  pigeon-hole principle there are indices \( k_{1} < k_{2} < \cdots < k_{N} \) such that for all
  \( i,j \le N \)
  \[ d_{k_{j}} = d_{k_{i}} =: d, \qquad R_{k_{j}} = R_{k_{i}} =: R, \]
  and \( N \ge M_{\Lem \ref{lem:delta-density}}(d,\epsilon,\delta,\eta) \).

  It is helpful to go back to Figure \ref{fig:def-of-An} for a moment.  Our situation now is special
  in the following aspects.  First of all distances \( d_{k} \in R_{k} \), so once \( z_{k-1}' \)
  has been picked we may always set \( z_{k}' = z_{k-1}' + d_{k} \), which will never violate the
  condition \( \dist(z_{k}, z_{k}') < \epsilon \).  Also, we have a large collection of indices
  \( k_{i} \), \( i \le N \), where all the gaps \( d_{k_{i}} \) and sets of admissible distances
  \( R_{k_{i}} \) are the same.  The natural \( N \) is so large that Lemma \ref{lem:delta-density}
  applies and the set \( \mathcal{A}_{N}\bigl(\epsilon, (d)_{i=1}^{N}, (R)_{i=1}^{N}\bigr) \) is \(
  \delta \)-dense in \( \mathcal{U}_{\epsilon}(Nd) \).  Recall that \( x \in
  \mathcal{A}_{N}\bigl(\epsilon, (d)_{i=1}^{N}, (R)_{i=1}^{N}\bigr) \) means that there exist \(
  y_{i} \in R \), \( i \le N \), such that \( x = \sum_{i=1}^{N}y_{i} \) and \( \bigl|
  \sum_{i=1}^{r}(d-y_{i}) \bigr| < \epsilon \) for all \( r \le N \).   Any such \( x \) naturally
  corresponds to an element \( \tilde{x} \in \mathcal{A}_{n}\bigl(\epsilon, (d_{k})_{k=1}^{n},
  (R_{k})_{k=1}^{n}\bigr) \) defined by \( \tilde{x} = \sum_{k=1}^{n} \tilde{y}_{k} \) where
  \begin{displaymath}
    \tilde{y}_{k} =
    \begin{cases}
      y_{i} & \textrm{if } k = k_{i},\\
      d_{k} & \textrm{otherwise.}
    \end{cases}
  \end{displaymath}
  Since
  \[ \tilde{x} - \sum_{k=1}^{n}\tilde{y}_{k} = x - \sum_{i=1}^{N}y_{i}, \]
  \( \mathcal{A}_{N}\bigl(\epsilon,(d)_{i=1}^{N}, (R)_{i=1}^{N}\bigr) \) is \( \delta \)-dense in
  \( \mathcal{U}_{\epsilon}(Nd) \) if and only if
  \( \mathcal{A}_{n}\bigl(\epsilon, (d_{k})_{k=1}^{n}, (R_{k})_{k=1}^{n}\bigr) \) is
  \( \delta \)-dense in \( \mathcal{U}_{\epsilon}\bigl(\sum_{k=1}^{n} d_{k} \bigr) \).  The lemma follows.
\end{proof}

We are finally ready to prove the main technical result of this section. The following lemma will
supply the step of induction in the proof of Theorem \ref{thm:tiling-general-flows}.  Lemma
\ref{lem:main-induction-lemma} is a formalization of the ``Propagation of Freedom'' principle to
which we alluded earlier in this section.  There are two differences from Lemma
\ref{lem:propagation-of-freedom-pigeon-hole}.  First, we no longer assume that \( d_{k} \in R_{k}
\).  And second, the conclusion is stronger as subsets of \( \mathcal{A}_{n} \) of elements having
prescribed \( \alpha \)-frequencies are claimed to be \( \delta \)-dense.

\begin{lemma}[Co-sparse Induction Step]
  \label{lem:main-induction-lemma}
  For any \( D \in \mathbb{R}^{\ge 0} \), any \( \epsilon, \delta, \eta, \nu, \nu' > 0 \), where
  \( 0 < \delta \le \epsilon \) and \( 0 \le \nu' < \nu \le \eta \), there exists
  \( M = M_{\Lem\ref{lem:main-induction-lemma}}(D,\epsilon, \delta,\eta,\nu,\nu') \in \mathbb{N} \)
  such that for any \( n \ge M \), any sequence of reals \( (d_{k})_{k=1}^{n} \), and any sequence
  of tileable families \( R_{k} \subseteq \mathcal{U}_{\epsilon}(d_{k}) \cap \tileable \),
  \( 1 \le k \le n \), satisfying
  \begin{enumerate}[(i)]
  \item \( 2\epsilon + \alpha \le d_{k} \le D \);
  \item \( R_{k} \cap \freqa^{-1}[0, \rho - \eta] \) and
    \( R_{k} \cap \freqa^{-1}[\rho + \eta, 1] \) are \( \epsilon/6 \)-dense in
    \( \mathcal{U}_{\epsilon}(d_{k}) \);
  \end{enumerate}
  sets \( \mathcal{A}_{n} \cap \freqa^{-1}[\rho - \nu, \rho - \nu'] \) and
  \( \mathcal{A}_{n} \cap \freqa^{-1}[\rho + \nu',\rho + \nu] \) are \( \delta \)-dense in
  \( \mathcal{U}_{\epsilon/2}(\sum_{k=1}^{n}d_{k}) \).
\end{lemma}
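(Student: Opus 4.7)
The plan is to combine propagation of freedom (density of $\mathcal{A}_n$, via Lemma~\ref{lem:propagation-of-freedom-pigeon-hole}) with Frequency Boost (Lemma~\ref{lem:frequency-boost}) to obtain density of frequency-restricted subsets simultaneously. Decompose $\{1,\dots,n\}$ into a density block $I_A=\{1,\dots,n_1\}$ and a (much larger) frequency block $I_B=\{n_1+1,\dots,n\}$; apply an adaptation of Lemma~\ref{lem:propagation-of-freedom-pigeon-hole} on $I_A$ to obtain a $\delta/2$-dense subset $\mathcal{A}^{(A)}\subseteq\mathcal{A}_{n_1}$ in a shrunken neighborhood of $\sum_{I_A}d_k$; apply Lemma~\ref{lem:frequency-boost} on $I_B$ to obtain an element $x_B\in\mathcal{A}^{(B)}$ of prescribed frequency; and assemble the two by the additivity analogue of Lemma~\ref{lem:additive-property}\eqref{item:additivity-prop}. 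The combined $y=x_A+x_B$ lies in $\mathcal{A}_n(\epsilon,(d_k),(R_k))$, falls $\delta$-close to any prescribed target $x^*\in\mathcal{U}_{\epsilon/2}(\sum_k d_k)$, and has $\alpha$-frequency in $[\rho+\nu',\rho+\nu]$; the case $[\rho-\nu,\rho-\nu']$ is symmetric.

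The first technical point is the adaptation of Lemma~\ref{lem:propagation-of-freedom-pigeon-hole}, which requires $d_k\in R_k$. Using the $\epsilon/6$-density of $R_k\cap\freqa^{-1}[0,\rho-\eta]$ and of $R_k\cap\freqa^{-1}[\rho+\eta,1]$ inside $\mathcal{U}_\epsilon(d_k)$, the subintervals $[d_k-\epsilon/6,d_k]$ and $[d_k,d_k+\epsilon/6]$ each meet $R_k$, so for each $k\in I_A$ one can pick $\tilde d_k\in R_k$ within $\epsilon/6$ of $d_k$; an alternating greedy choice keeps $\bigl|\sum_{k\le r,\,k\in I_A}(\tilde d_k-d_k)\bigr|\le\epsilon/6$ for every $r$. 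Feeding $(\tilde d_k)_{I_A},(R_k)_{I_A}$ into Lemma~\ref{lem:propagation-of-freedom-pigeon-hole} — which now satisfies $\tilde d_k\in R_k$ — and translating the output back to the original $(d_k)_{I_A}$ gives the required $\delta/2$-density of $\mathcal{A}^{(A)}$, at the cost of shrinking the usable density radius by a constant multiple of $\epsilon$.

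On $I_B$, Lemma~\ref{lem:frequency-boost} applies directly (it makes no $d_k\in R_k$ assumption), with deviation $\epsilon_B\ge\epsilon/6$ — the smallest value at which the $\epsilon/6$-density hypothesis still certifies Frequency Boost's density hypothesis on $\mathcal{U}_{\epsilon_B}(d_k)$. With $\zeta=(\nu-\nu')/8$ and any target $\gamma_B\in(\rho-\eta+\zeta,\rho+\eta-\zeta)$ one obtains $x_B\in\mathcal{A}^{(B)}$ with $|\freqa(x_B)-\gamma_B|<\zeta$. The $\alpha$-frequency of the combined $y=x_A+x_B$ is the weighted average of $f_A=\freqa(x_A)$ and $f_B=\freqa(x_B)$, weighted by the numbers of $\{\alpha,\beta\}$-intervals in each summand, which lie in $[n_i,n_i(D+\epsilon)/\alpha]$. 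For any target $\gamma\in[\rho+\nu'+\zeta,\rho+\nu-\zeta]$, solving the weighted-average equation for $f_B$ and bounding $|\gamma-f_A|\le1$ yields the constraint $n_2/n_1>\tfrac{2(D+\epsilon)}{\alpha(\eta-\nu-3\zeta)}$, which forces the required $f_B$ into the achievable range and makes Lemma~\ref{lem:frequency-boost} applicable.

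The main obstacle of the argument is to reconcile three different kinds of accounting simultaneously: the partial-sum accounting (so that the combined $y$ lies in $\mathcal{A}_n(\epsilon,(d_k),(R_k))$, i.e., every partial sum $\sum_{k\le r}(d_k-y_k)$ stays within $\epsilon$ across both the $\tilde d_k$-shift on $I_A$ and the Phase~B contribution), the positional accounting (so that $\mathcal{A}^{(A)}$ is $\delta/2$-dense inside a neighborhood large enough to cover the shifted target $x^*-x_B$ for every admissible $x_B$), and the frequency accounting (so that the weighted-average frequency lands in $[\rho+\nu',\rho+\nu]$ for every $x_A$ produced by the density step). These tight interconnected inequalities jointly pin down $\epsilon_A,\epsilon_B,n_1,n_2,\zeta$, and — via the WLOG reduction that $\delta$ is a small fraction of $\epsilon$ — determine $M$ as an explicit function of $D,\epsilon,\delta,\eta,\nu,\nu'$; careful handling of the boundary case $\nu\to\eta$ is the last remaining subtlety.
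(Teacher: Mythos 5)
Your high-level plan matches the paper's exactly: split $\{1,\dots,n\}$ into a short density block and a long frequency block, shift $d_k$ to $\tilde d_k\in R_k$ on the density block so that Lemma~\ref{lem:propagation-of-freedom-pigeon-hole} applies (the paper makes the identical $\tilde d_k$ adjustment with $|d_k-\tilde d_k|<\epsilon/6$ and partial sums bounded by $\epsilon/6$), apply Lemma~\ref{lem:frequency-boost} on the long block with $\bar R_k=R_k\cap\mathcal{U}_{\epsilon/6}(d_k)$, and combine by additivity so that $\mathcal{A}_{M_1}+y_j\subseteq\mathcal{A}_n$. The bookkeeping of shrinking neighborhoods ($5\epsilon/6$ on Phase~A, $\epsilon/6$ on Phase~B, yielding $\epsilon/2$ density radius) is also what the paper does.

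Where you diverge is in the frequency accounting, and it creates a genuine gap. You propose to pick, for a target combined frequency $\gamma$, a value $f_B$ that \emph{solves} the weighted-average equation $\gamma=w_Af_A+w_Bf_B$, and you derive the constraint $n_2/n_1>\frac{2(D+\epsilon)}{\alpha(\eta-\nu-3\zeta)}$. This constraint degenerates as $\nu\to\eta$; with $\zeta=(\nu-\nu')/8$ the denominator is negative at $\nu=\eta$, yet the lemma explicitly allows $\nu\le\eta$. You flag this as a ``last remaining subtlety'' but leave it open. The paper's proof avoids the issue entirely by \emph{not} trying to compensate: it aims $\freqa(y_j)$ at the midpoint $\gamma_j=\rho\pm\frac{\nu+\nu'}{2}$ (which lies strictly inside $(\rho-\eta,\rho+\eta)$ whenever $\nu'<\nu\le\eta$, so Lemma~\ref{lem:frequency-boost} applies), and then requires $M_2\ge M_2''$ large enough, via Lemma~\ref{lem:frequency-length-estimate}, that the Phase~A contribution perturbs the overall $\alpha$-frequency by less than $\zeta=(\nu-\nu')/6$. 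That bound on the perturbation is a pure dominance estimate; it depends only on $\nu-\nu'$, $D$, $\alpha$, $\beta$ and never on $\eta-\nu$, so no boundary case arises. In short: same architecture, but replace the ``solve for $f_B$'' step with ``aim $f_B$ at the midpoint and make the frequency block dominate'' to avoid the spurious $\eta-\nu$ dependence.
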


\begin{proof}
  We construct \( M = M_{1} + M_{2} \) as a sum of two numbers: using the first \( M_{1} \) segments
  we achieve the \( \delta \)-density, and then use \( M_{2} \) intervals to push frequencies to \(
  [\rho - \nu, \rho - \nu'] \) and \( [\rho + \nu', \rho + \nu] \) intervals.
  Let \( M_{1} = M_{\Lem \ref{lem:propagation-of-freedom-pigeon-hole}}(D+2,\epsilon, \delta,\eta) \), \( \zeta =
  \frac{\nu - \nu'}{6} \), and
  \begin{displaymath}
    \begin{aligned}
      M_{2}' &= M_{\Lem \ref{lem:frequency-boost}}(D+1,\zeta)\\
      M_{2}'' &= \Bigl\lceil \frac{2\beta}{\alpha} \cdot \frac{M_{1}D +1}{\zeta} \Bigr\rceil\\
      M_{2} &= \max\{M_{2}', M_{2}''\}.
    \end{aligned}
  \end{displaymath}
  We claim that \( M = M_{1} + M_{2} \) works.  Let \( n \ge M \) be given.  For a sequence
  \( d_{k} \) and \( R_{k} \) as in the assumptions we argue as follows.  Similarly to the proof of Lemma
  \ref{lem:frequency-boost}, construct inductively \( \tilde{d}_{k} \) for \( k \le M_{1} \) such that
  \begin{enumerate}[(a)]
  \item \( |d_{k} - \tilde{d}_{k}| < \epsilon/6 \);
  \item \( |\sum_{k=1}^{r}(d_{k} - \tilde{d}_{k})| < \epsilon/6 \) for all \( r \le M_{1} \).
  \end{enumerate}
  Set \( \widetilde{R}_{k} = R_{k} \cap \mathcal{U}_{5\epsilon/6}(\tilde{d}_{k}) \) and note that
  \[ \mathcal{A}_{M_{1}}\bigl( 5\epsilon/6, (\tilde{d}_{k})_{k=1}^{M_{1}},
  (\widetilde{R}_{k})_{k=1}^{M_{1}}\bigr) \subseteq \mathcal{A}_{M_{1}}\bigl(\epsilon,
  (d_{k})_{k=1}^{M_{1}}, (R_{k})_{k=1}^{M_{1}}\bigr) \]
  because for any element in
  \( \mathcal{A}_{M_{1}}\bigl( 5\epsilon/6, (\tilde{d}_{k})_{k=1}^{M_{1}},
  (\widetilde{R}_{k})_{k=1}^{M_{1}}\bigr) \) each shift made by \( \tilde{d}_{k} \) is also an
  admissible shift for \( d_{k} \).  Whence
  \( \mathcal{A}_{M_{1}}\bigl( 5\epsilon/6, (\tilde{d}_{k})_{k=1}^{M_{1}},
  (\widetilde{R}_{k})_{k=1}^{M_{1}}\bigr) \) is \( \delta \)-dense in
  \( \mathcal{U}_{5\epsilon/6}\bigl(\sum_{k=1}^{M_{1}} \tilde{d}_{k}\bigr) \) by the choice of
  \( M_{1} \).  It follows that
  \[ \mathcal{A}_{M_{1}}\bigl(\epsilon, (d_{k})_{k=1}^{M_{1}}, (R_{k})_{k=1}^{M_{1}}\bigr) \textrm{
    is \( \delta \)-dense in } \mathcal{U}_{4\epsilon/6}\bigl(\sum_{k=1}^{M_{1}}d_{k}\bigr). \]

  We now improve the frequency while keeping the \( \delta \)-density by constructing elements \( y_{1},
  y_{2}\) such that for  \( \mathcal{A}_{M_{1}} \) constructed above each element in \(
  \mathcal{A}_{M_{1}} + y_{1} \) will have \( \alpha \)-frequency in \( [\rho - \nu, \rho - \nu'] \)
  and each one in \( \mathcal{A}_{M_{1}} + y_{2} \) will have frequency from \( [\rho + \nu', \rho +
  \nu] \).  Moreover, both \( \mathcal{A}_{M_{1}} +y_{1} \) and \( \mathcal{A}_{M_{1}} + y_{2} \)
  will be subsets of \( \mathcal{A}_{n}\bigl(\epsilon, (d_{k})_{k=1}^{n}, (R_{k})_{k=1}^{n}\bigr)
  \), and the lemma will therefore be proved.  Here are the details of the construction of \( y_{1}
  \) and \( y_{2} \).

  Since \( M_{2} \ge M_{2}' \), we can apply the conclusion of Lemma \ref{lem:frequency-boost} to
  the sequence \( (d_{k})_{k=M_{1}+1}^{n} \),
  \[ \bar{R}_{k} = R_{k} \cap \mathcal{U}_{\epsilon/6}(d_{k}), \]
  \( \gamma_{1} = \rho + \frac{\nu + \nu'}{2} \), \( \gamma_{2} = \rho - \frac{\nu + \nu'}{2} \),
  and \( \zeta = \frac{ \nu - \nu'}{6} \).  This yields elements
  \( y_{1}, y_{2} \in \mathcal{A}_{n}\bigl(\epsilon/6, (d_{k})_{k=M_{1}+1}^{n},
  (\bar{R}_{k})_{k=M_{1}+1}^{n}\bigr) \) such that
  \[ \bigl|\freqa(y_{j}) - \gamma_{j}\bigr| < \zeta \quad \textrm{for \( j = 1,2 \)}. \]
  Observe that
  \begin{displaymath}
    \mathcal{A}_{M_{1}}\bigl(\epsilon, (d_{k})_{k=1}^{M_{1}}, (R_{k})_{k=1}^{M_{1}}\bigr) \cap
    \mathcal{U}_{5\epsilon/6}\bigl(\sum_{k=1}^{M_{1}}d_{k}\bigr) + y_{j} \subset \mathcal{A}_{n}\bigl(\epsilon,
    (d_{k})_{k=1}^{n}, (R_{k})_{k=1}^{n}\bigr) \quad \textrm{for \( j = 1,2 \)}
  \end{displaymath}
  and these sets are \( \delta \)-dense in
  \( \mathcal{U}_{\epsilon/2}\bigl(\sum_{k=1}^{n}d_{k}\bigr) \).  Finally, since
  \( M_{2} \ge M_{2}'' \) from Lemma \ref{lem:frequency-length-estimate} it follows that
  \[ \bigl|\freqa(z) - \freqa(y_{j}) \bigr| < \zeta\quad \textrm{for any } z \in
  \mathcal{A}_{M_{1}}\bigl(\epsilon, (d_{k})_{k=1}^{M_{1}}, (R_{k})_{k=1}^{M_{1}}\bigr) \cap
  \mathcal{U}_{5\epsilon/6}\bigl(\sum_{k=1}^{M_{1}}d_{k}\bigr) + y_{j}, \]
  which implies that \( \freqa(z) \in [\rho - \nu, \rho - \nu'] \) for \( j = 1 \) and \( \freqa(z)
  \in [\rho + \nu', \rho + \nu] \) when \( j =2 \).
  \end{proof}

\section{Tiled reals}
\label{sec:flex-part}

Throughout this section \( \cl{\eta} = (\eta_{k})_{k=0}^{\infty} \) denotes a strictly decreasing
sequence of positive reals converging to zero;  we assume that \( \eta_{0} = 1 \) and \( \eta_{1}
\le \min\{\rho, 1-\rho\} \).
Symbol \( \cl{L} = (L_{k})_{k=0}^{\infty} \), with possible superscripts, will denote a
strictly increasing unbounded sequence \( (L_{k})_{k=0}^{\infty} \) with the agreement that \( L_{0}
= \beta \).

A \emph{tiled} real is a number \( z \in \tileable \cup \{0\} \) together with a sequence
\( (\sigma_{i})_{i=1}^{n} \), called the \emph{decomposition of \( z \)}, such that
\( \sigma_{i} \in \{\alpha, \beta\} \) and \( \sum_{i=1}^{n} \sigma_{i} = z \).  One should think of
a tiled real as an interval which has been cut into pieces of length \( \alpha \) and \( \beta \).
The set of tiled reals is denoted by \( \tiled \).  By convention, \( 0 \in \tiled \).  Given two
tiled reals \( z_{1}, z_{2} \in \tiled \), we define their sum \( z_{1} + z_{2} \in \tiled \) in the
natural way: if \( (\sigma_{i}^{(1)})_{i=1}^{n_{1}} \) is the decomposition of \( z_{1} \) and
\( (\sigma_{i}^{(2)})_{i=1}^{n_{2}} \) decomposes \( z_{2} \), then \( z_{1} + z_{2} \) is
partitioned by \( (\sigma_{i})_{i=1}^{n_{1} + n_{2}} \)
\begin{displaymath}
  \sigma_{k} =
  \begin{cases}
    \sigma_{k}^{(1)} & \textrm{ if \( k \le n_{1} \)}\\
    \sigma_{k - n_{1}}^{(2)} & \textrm{ if \( n_{1} < k \le n_{1} + n_{2} \)}.
  \end{cases}
\end{displaymath}
Note that the addition of tiled reals is associative, but not commutative.  If \( \mathcal{C} \) is
an \( \{\alpha,\beta\} \)-regular cross section, then any of its segments between a pair of points
within an orbit naturally corresponds to a tiled real.

We shall need to control two parameters of a tiled real \textemdash{} its length and its
\( \alpha \)-frequency.  Note that any tiled real can be viewed as a tileable one by forgetting the
decomposition, and so each tiled real has an \( \alpha \)-frequency associated to it. 
We introduce the following sets: given \( \eta > 0 \) and real \( L > 0 \) 
\begin{displaymath}
  \begin{aligned}
    B_{\eta}[L] &= \bigl\{ z \in \tiled \bigm| z \le L,\ |\freqa(z) - \rho| \le
    \eta \bigr\},\\
    SB_{\eta}[L] &= \bigl\{ z \in \tiled \bigm| z = \sum_{i=1}^{r}y_{i},\
    y_{i} \in B_{\eta}[L],\ r \in \mathbb{Z}^{> 0} \bigr\},\\
  \end{aligned}
\end{displaymath}
The frequency of zero, \( \freqa(0) \), is undefined, but by our convention \( 0 \in B_{\eta}[L] \)
for all \( L \) and all \( \eta \).  Note that \( SB_{\eta}[L] \) is just the semigroup generated by
\( B_{\eta}[L] \).  The following properties are immediate from the definitions.
\begin{proposition}
  \label{prop:properties-of-SB-sets}
  Sets of the form \( B_{\eta}[L] \) and \( SB_{\eta}[L] \) satisfy the following:
  \begin{enumerate}[(i)]
  \item \( B_{\eta}[L] \subseteq SB_{\eta}[L] \).
  \item \( | \freqa(z) - \rho | \le \eta \) for all \( z \in SB_{\eta}[L] \setminus \{0\} \).
  \item\label{item:SB-closed-sum} If \( z_{1}, z_{2} \in SB_{\eta}[L] \), then also
    \( z_{1} + z_{2} \in SB_{\eta}[L] \).
  \item If \( \eta' \ge \eta \) and \( L' \ge L \), then \( B_{\eta}[L] \subseteq
    B_{\eta'}[L'] \) and \( SB_{\eta}[L]  \subseteq SB_{\eta'}[L'] \).
  \end{enumerate}
\end{proposition}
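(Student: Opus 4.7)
The statement asserts four properties of the sets $B_{\eta}[L]$ and $SB_{\eta}[L]$, each of which is essentially bookkeeping on the definitions, so my plan is to unpack the definitions and to isolate one small averaging fact for item (ii). I expect (i), (iii), and (iv) to be instant, and (ii) to be the only item that requires a non-trivial remark.

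For (i), I would simply take $r = 1$ and $y_1 = z$ in the definition of $SB_{\eta}[L]$; then any $z \in B_{\eta}[L]$ qualifies (the convention $0 \in B_{\eta}[L]$ handles the degenerate case). For (iii), given $z_1 = \sum_{i=1}^{r_1} y_i^{(1)}$ and $z_2 = \sum_{j=1}^{r_2} y_j^{(2)}$ with $y_i^{(1)}, y_j^{(2)} \in B_{\eta}[L]$, the tiled real $z_1 + z_2$ is exhibited as a sum of $r_1 + r_2$ elements of $B_{\eta}[L]$ by concatenating the two decompositions, using associativity of the (non-commutative) addition on $\tiled$. For (iv), the implications are monotone in both parameters: enlarging $L$ and $\eta$ only enlarges the condition $z \le L$ and the condition $|\freqa(z) - \rho| \le \eta$, which immediately gives $B_{\eta}[L] \subseteq B_{\eta'}[L']$; the inclusion of the generated semigroups then follows because every generator of $SB_{\eta}[L]$ is already a generator of $SB_{\eta'}[L']$.

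The main content is item (ii). I would prove it by the following averaging observation. Suppose $z = \sum_{i=1}^{r} y_i$ with $y_i \in B_{\eta}[L] \setminus \{0\}$ (zero summands can be dropped without changing anything) and write $y_i = p_i \alpha + q_i \beta$. Then $z = P\alpha + Q\beta$ with $P = \sum p_i$ and $Q = \sum q_i$, hence
\[
\freqa(z) \;=\; \frac{P}{P+Q} \;=\; \sum_{i=1}^{r} \frac{p_i + q_i}{P + Q} \cdot \frac{p_i}{p_i + q_i} \;=\; \sum_{i=1}^{r} \lambda_i \, \freqa(y_i),
\]
where $\lambda_i = (p_i + q_i)/(P+Q) \ge 0$ and $\sum_i \lambda_i = 1$. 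Thus $\freqa(z)$ is a convex combination of the values $\freqa(y_i) \in [\rho - \eta, \rho + \eta]$, and so lies in that interval as well. If the only nonzero summand leaves $z = 0$, there is nothing to prove, and otherwise at least one $y_i$ is nonzero, which makes the convex combination well defined. This completes the verification and confirms the author's remark that the four properties are immediate from the definitions.
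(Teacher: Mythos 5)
Your proof is correct, and since the paper offers no proof beyond the remark that the properties are ``immediate from the definitions,'' you are simply supplying the details the author left implicit. Items (i), (iii), and (iv) are indeed pure bookkeeping, and the convexity observation you isolate for (ii) — that if $z=\sum_i y_i$ with each $y_i = p_i\alpha+q_i\beta$ nonzero, then $\freqa(z)=\sum_i \lambda_i\,\freqa(y_i)$ with weights $\lambda_i=(p_i+q_i)/\sum_j(p_j+q_j)$ forming a convex combination — is exactly the right way to see that the frequency of a sum stays in $[\rho-\eta,\rho+\eta]$; your handling of zero summands (drop them, noting $z\ne 0$ guarantees at least one survives) is also correct.
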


\begin{definition}
  \label{def:flexible-sequence}
  A sequence \( \cl{L} \) is said to be \( \cl{\eta} \)-flexible if for any \( n \in \mathbb{N} \), any
  \( 0 \le \nu' < \nu \le \eta_{n+1} \), the sets\footnote{Recall that we assume that \( \eta_{1}
    \le \min\{\rho, 1-\rho\} \), which ensures that both \( [\rho-\nu, \rho-\nu'] \) and \( [\rho +
    \nu', \rho + \nu] \) are subintervals of \( [0,1] \).}
  \[ \bigcap_{k=0}^{n} SB_{\eta_{k}}[L_{k}] \cap \freqa^{-1}[\rho-\nu,\rho-\nu']  \textrm{ and }
   \bigcap_{k=0}^{n} SB_{\eta_{k}}[L_{k}] \cap \freqa^{-1}[\rho+\nu',\rho+\nu] \] are
  asymptotically dense in \( \mathbb{R}^{>0} \).
\end{definition}

The definition of an \( \cl{\eta} \)-flexible sequence says that a large real can be shifted
slightly to a real which for any \( 0 \le k \le n \) can be cut into pieces from
\( B_{\eta_{k}}[L_{k}] \) and moreover the \( \alpha \)-frequency of the whole interval is very
close to any given number in \( [\rho - \eta_{k+1}, \rho + \eta_{k+1}] \).  In Lemma
\ref{lem:regular-tilings} below, we show that any \( \cl{\eta} \) admits an \( \cl{\eta} \)-flexible
sequence \( \cl{L} \).  The construction of \( \cl{L} \) is inductive, and the base step amounts to
checking that
\[ SB_{\eta_{0}}[L_{0}] \cap \freqa^{-1}[\rho-\nu,\rho-\nu'] \textrm{ and } SB_{\eta_{0}}[L_{0}]
\cap \freqa^{-1}[\rho+\nu',\rho+\nu] \]
are asymptotically dense in \( \mathbb{R}^{>0} \).  Since we always assume that \( \eta_{0} = 1 \),
and \( L_{0} = \beta \), one has \( \{\alpha,\beta\} \subseteq B_{\eta_{0}}[L_{0}] \), and therefore
\( SB_{\eta_{0}}[L_{0}] = \tiled \).  The base case is thus covered by the following lemma.

\begin{lemma}
  \label{lem:flexibitility-of-tileable-reals}
  Let \( 0 \le \nu' < \nu  \) be given.
  \begin{enumerate}[(i)]
  \item\label{item:plus-one-minus-rho} If \( \nu \le 1- \rho \), then the set
    \( \freqa^{-1}[\rho + \nu', \rho + \nu] \) is asymptotically dense in \( \mathbb{R} \).
  \item\label{item:minus-rho} If \( \nu \le \rho \), then the set
    \( \freqa^{-1}[\rho - \nu, \rho - \nu'] \) is asymptotically dense in \( \mathbb{R} \).
  \end{enumerate}
\end{lemma}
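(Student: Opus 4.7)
Parts (i) and (ii) are symmetric, so I focus on (i); part (ii) follows by the same argument with a target frequency below $\rho$ rather than above. The strategy is: fix a target frequency $\gamma$ strictly inside $(\rho + \nu', \rho + \nu)$ and pick a tileable ``bulk unit'' $u = p_0\alpha + q_0\beta$ whose $\alpha$-frequency is very close to $\gamma$. Any sufficiently large $x$ will be approximated by a tileable real of the form $y = nu + t'$, where $n$ is chosen so that $x - nu$ lies in a fixed bounded interval, and $t' \in \tileable$ is a bounded correction making $|y - x| < \epsilon$. Since $t'$ stays absolutely bounded while $nu$ grows linearly with $x$, Lemma~\ref{lem:frequency-length-estimate} will keep $\mathrm{fr}_\alpha(y)$ close to $\mathrm{fr}_\alpha(u) \approx \gamma$, hence inside the target interval.

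Concretely, fix $\epsilon > 0$ and pick $\gamma \in (\rho + \nu', \rho + \nu)$ together with $\delta > 0$ satisfying $(\gamma - \delta, \gamma + \delta) \subseteq (\rho + \nu', \rho + \nu)$; note that $\gamma \in (0,1)$ because $\gamma > \rho + \nu' \ge \rho > 0$ and $\gamma < \rho + \nu \le 1$. Since the rationals $p/(p+q)$ with $p, q \in \mathbb{N}^{>0}$ are dense in $(0,1)$, choose $p_0, q_0 \ge 1$ with $|p_0/(p_0 + q_0) - \gamma| < \delta/2$ and set $u = p_0\alpha + q_0\beta$. Rational independence of $\alpha$ and $\beta$ makes $\tileable$ itself asymptotically dense in $\mathbb{R}^{>0}$ (a consequence of Weyl equidistribution, as used repeatedly in Subsection~\ref{sec:sparse-case}), so fix $M_0$ so that $\tileable$ is $\epsilon$-dense in $[M_0, \infty)$. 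For any $x \ge u + M_0 + \epsilon/2$, take the unique integer $n \ge 1$ with $t := x - nu \in [M_0 + \epsilon/2,\, M_0 + u + \epsilon/2)$, use $\epsilon$-density to pick $t' \in \tileable$ with $|t - t'| < \epsilon/2$, and set $y = nu + t' \in \tileable$, which satisfies $|y - x| < \epsilon/2$.

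It remains to control the frequency. Since $t' \le M_0 + u + \epsilon$ is bounded independently of $x$, whereas $n \to \infty$ as $x \to \infty$, there is a threshold $N = N(\epsilon)$ beyond which $t'/(nu) < \alpha\delta/(4\beta)$; Lemma~\ref{lem:frequency-length-estimate} then gives $|\mathrm{fr}_\alpha(y) - \mathrm{fr}_\alpha(u)| < \delta/2$, and combining with $|\mathrm{fr}_\alpha(u) - \gamma| < \delta/2$ yields $\mathrm{fr}_\alpha(y) \in (\gamma - \delta, \gamma + \delta) \subseteq (\rho + \nu', \rho + \nu)$. This proves $\epsilon$-density of $\freqa^{-1}[\rho + \nu', \rho + \nu]$ on $[N, \infty)$, and since $\epsilon > 0$ was arbitrary, asymptotic density follows. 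The main point to handle is the simultaneous control of density and frequency, which the decomposition $y = nu + t'$ achieves cleanly by separating the role of the bulk $nu$ (carrier of the desired frequency) from the role of the correction $t'$ (witness of the desired density); the hypotheses $\nu \le 1-\rho$ in (i) and $\nu \le \rho$ in (ii) ensure that $\gamma$ can indeed be chosen strictly in $(0,1)$, so that $u$ is a genuine tileable with $p_0, q_0 \ge 1$.
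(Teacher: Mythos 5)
Your proof is correct and takes essentially the same route as the paper: a bulk tileable carrier ($nu$ here, $kx$ in the paper) with $\alpha$-frequency tuned near a target strictly inside the goal interval, combined with a uniformly bounded tileable correction supplying $\epsilon$-density, and Lemma~\ref{lem:frequency-length-estimate} to bound the resulting drift in frequency. The only cosmetic difference is that you invoke asymptotic density of $\tileable$ as a known fact to produce the correction $t'$, whereas the paper derives the correction set directly by taking a finite $\epsilon/2$-dense subset of the group $\langle\alpha,\beta\rangle$ on $[0,x]$ and shifting it into $\tileable$.
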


\begin{proof}
  We give a proof of \eqref{item:plus-one-minus-rho}, item \eqref{item:minus-rho} is proved
  similarly.  Pick \( \epsilon > 0 \).  We show that \( \freqa^{-1}[\rho + \nu', \rho + \nu] \) is
  \( \epsilon \)-dense in \( [N, \infty) \) for sufficiently large \( N \).  Since
  \( \rho + (\nu + \nu')/2 < 1 \) we may pick \( x \in \tileable \) such that
  \[ \Bigl| \rho + \frac{\nu + \nu'}{2} - \freqa(x) \Bigr| < \zeta \quad \textrm{where } \zeta =
  \frac{\nu - \nu'}{4}. \]
  Since \( \alpha \) and \( \beta \) are rationally independent, the group
  \( \langle \alpha, \beta \rangle \) is dense in \( \mathbb{R} \), so we may pick \( s_{1}, \ldots,
  s_{n} \in \langle \alpha, \beta \rangle\) such that \( \{s_{1}, \ldots, s_{n}\} \) is \(
  \epsilon/2 \)-dense in \( [0, x] \).  Let
  \[ s_{i} = p_{i}\alpha + q_{i}\beta \]
  and set \( m = \max\bigl\{|p_{i}|, |q_{i}| : i \le n \bigr\} \).  The set \( \{ s_{i} + m : i \le n
  \} \) therefore consists of tileable reals and is \( \epsilon/2 \)-dense in \( [m, m + x] \).
  Using Lemma \ref{lem:frequency-length-estimate} we may find \( K \) so big that for all \( k \ge K \)
  \[ \bigl| \freqa(kx + s_{i} + m) - \freqa(kx) \bigr| < \zeta\quad \textrm{for all } i \le n. \]
  Therefore \( \freqa(kx + s_{i} +m) \in [\rho + \nu', \rho + \nu] \) for all \( i \le n \), and the
  set
  \[ \bigl\{kx + s_{i} + m : i \le n, k \ge K\bigr\} \quad \textrm{is \( \epsilon \)-dense in }
  [Kx+m, \infty). \qedhere \]
\end{proof}

\begin{lemma}
  \label{lem:regular-tilings}
  For any \( \cl{\eta} \) there exists an \( \cl{\eta} \)-flexible sequence \( \cl{L} \).
\end{lemma}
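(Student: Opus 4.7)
The plan is to construct $\cl{L}$ by induction on $n$, maintaining the inductive hypothesis that $L_0,\dots,L_n$ have been chosen so that the flexibility condition holds at every index $m\le n$. Adjoining $L_{n+1}$ leaves each set $\bigcap_{k=0}^m SB_{\eta_k}[L_k]$ with $m\le n$ untouched, so no previously secured condition is disturbed. The base case $n=0$ is immediate from Lemma~\ref{lem:flexibitility-of-tileable-reals}: the fixed values $L_0=\beta$ and $\eta_0=1$ force $B_{\eta_0}[L_0]=\{0,\alpha,\beta\}$ and hence $SB_{\eta_0}[L_0]=\tiled$, while $\eta_1\le\min\{\rho,1-\rho\}$ keeps the target frequency intervals inside $[0,1]$.

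For the inductive step, apply the inductive hypothesis at $m=n$ with $\nu=\eta_{n+1}$ and $\nu'=\eta_{n+2}$ to conclude that both
\[ A^{\pm} := \bigcap_{k=0}^n SB_{\eta_k}[L_k] \cap \freqa^{-1}\bigl[\rho\pm\eta_{n+2},\; \rho\pm\eta_{n+1}\bigr] \]
are asymptotically dense in $\mathbb{R}^{>0}$. Fix a small $\delta>0$ and use this density to choose tiled reals $y^{+}, z^{+}\in A^+$ and $y^{-}, z^{-}\in A^-$ such that within each sign the two lengths differ by a positive amount less than $\delta$. Set $L_{n+1}$ larger than all four of these lengths; each block then lies in $B_{\eta_{n+1}}[L_{n+1}]\cap\bigcap_{k=0}^n SB_{\eta_k}[L_k]$, so by Proposition~\ref{prop:properties-of-SB-sets}\eqref{item:SB-closed-sum} every non-empty semigroup combination of $\{y^+, z^+, y^-, z^-\}$ lies in $\bigcap_{k=0}^{n+1}SB_{\eta_k}[L_k]$.

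It remains to verify that for every $0\le\nu'<\nu\le\eta_{n+2}$ the set $\bigcap_{k=0}^{n+1}SB_{\eta_k}[L_k]\cap\freqa^{-1}[\rho-\nu,\rho-\nu']$ (and its sign-symmetric counterpart) is asymptotically dense. Given a large target length $T$ and a target frequency $\gamma$ strictly inside the prescribed interval, run a greedy combination modelled on Lemma~\ref{lem:frequency-boost}: append one block at a time from $\{y^+, z^+, y^-, z^-\}$, using the sign to drive the running frequency towards $\gamma$ and choosing $y$ versus $z$ to drive the running length towards $T$. Lemma~\ref{lem:frequency-length-estimate} guarantees that once the partial sum is long enough each additional block perturbs the running frequency by an arbitrarily small amount, so after a bounded initial phase the frequency stays in $[\rho-\nu,\rho-\nu']$; meanwhile the $y$-versus-$z$ swaps produce per-step length increments that differ by less than $\delta$, so the total length can be driven within $\delta$ of any prescribed $T$. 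The principal obstacle is choreographing length and frequency control simultaneously, and it is handled by choosing $\delta$ much smaller than both $\nu-\nu'$ and the prescribed density tolerance, which decouples the two corrections once the partial sum has grown long enough for Lemma~\ref{lem:frequency-length-estimate} to take effect.
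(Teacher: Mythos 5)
Your base case and inductive framing match the paper's, but the inductive step contains a genuine gap in the density argument. You fix $\delta>0$, choose four blocks $y^{\pm},z^{\pm}$ whose within-sign length differences are less than $\delta$, set $L_{n+1}$ larger than all four, and then argue by a greedy construction that the target set is $\delta$-dense. But the conclusion you need is \emph{asymptotic} density of $\bigcap_{k=0}^{n+1}SB_{\eta_k}[L_k]\cap\freqa^{-1}[\rho+\nu',\rho+\nu]$, i.e., $\epsilon$-density in some $[N(\epsilon),\infty)$ for \emph{every} $\epsilon>0$. Once $L_{n+1}$ is fixed, your construction permits only combinations of a finite stock of blocks (those below $L_{n+1}$), and the lengths achievable from your four blocks sit inside a coset of the subgroup generated by their pairwise differences $y^{+}-z^{+}$, $y^{-}-z^{-}$, etc. Nothing in your argument rules out that this subgroup is discrete with generator $g>0$ (the increments being smaller than $\delta$ does \emph{not} make $\gcd(y^{+}-z^{+},y^{-}-z^{-})$ small), in which case no choice of $n$ can make your sums $\epsilon$-dense for $\epsilon<g$. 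This is precisely the degeneracy that the paper's Lemma \ref{lem:generating-elements-of-An} and Lemma \ref{lem:delta-density} are designed to rule out, via the Euclidean-type gcd reduction combined with a pigeonhole argument guaranteeing many close elements of $\mathcal{A}_{M_1}$.

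A second, related problem is the simultaneous length/frequency control. Lemma \ref{lem:frequency-boost}, which you model the greedy on, only steers frequency; it has no length target and offers no guarantee that you can hit a prescribed $T$ while keeping frequency in a prescribed sub-interval. Your appeal to Lemma \ref{lem:frequency-length-estimate} shows that \emph{late} blocks barely perturb the running frequency, which is fine for the frequency boost, but it does not show that the achievable lengths over admissible sign/$y$-vs-$z$ sequences become $\epsilon$-dense — that is exactly the content of Lemma \ref{lem:main-induction-lemma}, which the paper invokes at this point and which you do not. The paper sidesteps the four-block restriction by choosing a single $d$ near $N$ and taking for $R$ the entire $\epsilon/6$-dense set $\bigcap_{k\le n}SB_{\eta_k}[L_k]\cap\mathcal{U}_1(d)\cap\freqa^{-1}[\rho-\eta_{n+1},\rho+\eta_{n+1}]$ (which still fits below the fixed $L_{n+1}$), and then runs the propagation-of-freedom machinery to obtain the joint $\delta$-density with frequency control. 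To repair your argument you would essentially have to reconstruct that machinery; as written, the step from ``increments $<\delta$'' to ``total length can be driven within $\delta$ of any prescribed $T$'' is not justified, and even if it were, it would deliver only a single fixed $\delta$, not asymptotic density.
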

\begin{proof}
  The sequence \( (L_{n})_{n=0}^{\infty} \) is constructed inductively, we ensure at the
  \( n^{\mathrm{th}} \) stage of the construction that
  \[ \bigcap_{k=0}^{n} SB_{\eta_{k}}[L_{k}] \cap \freqa^{-1}[\rho-\nu,\rho-\nu'] \quad \textrm{ and
  } \quad \bigcap_{k=0}^{n} SB_{\eta_{k}}[L_{k}] \cap \freqa^{-1}[\rho+\nu',\rho+\nu]\]
  are asymptotically dense in \( \mathbb{R} \) for any choice of
  \( 0 \le \nu' < \nu \le \eta_{n+1} \).  The base
  case \( n = 0 \) is covered by Lemma \ref{lem:flexibitility-of-tileable-reals}.

  Suppose we have constructed \( (L_{k})_{k=0}^{n} \).  By the inductive assumption we may choose
  \( N \in \mathbb{R} \), such that both
  \begin{equation}
    \label{eq:sb-dense}
    \bigcap_{k=0}^{n} SB_{\eta_{k}}[L_{k}] \cap \freqa^{-1}\bigl[\rho - \eta_{n+1}, \rho -
    \eta_{n+2}]  
    \textrm{ and } \bigcap_{k=0}^{n} SB_{\eta_{k}}[L_{k}] \cap \freqa^{-1}[\rho + \eta_{n+2}, \rho +
    \eta_{n+1}]
  \end{equation}
  are \( 1/6 \)-dense in \( [N, \infty) \).  We take 
  \[ L_{n+1} = \max \bigl\{L_{n} + 1, 2(N+2) \bigr\}. \]
  and claim that
  \[ \bigcap_{k=0}^{n+1} SB_{\eta_{k}}[L_{k}] \cap \freqa^{-1}[\rho-\nu,\rho-\nu'] \quad \textrm{
    and } \quad \bigcap_{k=0}^{n+1} SB_{\eta_{k}}[L_{k}] \cap \freqa^{-1}[\rho+\nu',\rho+\nu]\]
  are asymptotically dense in \( \mathbb{R} \) for any choice of
  \( 0 \le \nu' < \nu \le \eta_{n+2} \).  Let \( 0 \le \nu' < \nu \le \eta_{n+2} \) be given and
  pick \( \epsilon \le 1/2 \).  We show that
  \[ \bigcap_{k=0}^{n+1}SB_{\eta_{k}}[L_{k}] \cap \freqa^{-1}[\rho + \nu', \rho + \nu] \]
  is \( \epsilon \)-dense in \( [\widetilde{N}, \infty) \) for sufficiently large \( \widetilde{N}
  \).  The argument for the \( \epsilon \)-density of
  \[ \bigcap_{k=0}^{n+1}SB_{\eta_{k}}[L_{k}] \cap \freqa^{-1}[\rho - \nu, \rho - \nu'] \]
  is similar.

  Let \( d \) be any tileable real satisfying \( N+1 \le d \le N + 2 \) and
  \( d \in \bigcap_{k=0}^{n} SB_{\eta_{k}} [L_{k}] \) which exists by \eqref{eq:sb-dense}; set
  \[ R = \bigcap_{k=0}^{n}SB_{\eta_{k}}[L_{k}] \cap \mathcal{U}_{1}(d) \cap \freqa^{-1}[\rho -
  \eta_{k+1}, \rho + \eta_{k+1}]. \]
  Note that by the choice of \( N \) and \eqref{eq:sb-dense}, both
  \[ R \cap \freqa^{-1}[0,\rho - \eta_{k+2}] \textrm{ and } R \cap \freqa^{-1}[\rho
  + \eta_{k+2},1] \]
  are \( \epsilon/6 \)-dense in \( \mathcal{U}_{\epsilon}(d) \) and
  \( R \subseteq B_{\eta_{k+1}}[L_{k+1}] \).  Pick some \( \tilde{\nu}, \tilde{\nu}' \) such that
  \( 0 \le \nu' < \tilde{\nu}' < \tilde{\nu} < \nu \le \eta_{n+2} \).  By Lemma
  \ref{lem:main-induction-lemma} there is
  \[ M = M_{\Lem \ref{lem:main-induction-lemma}}(d,1,\epsilon, \eta_{n+2}, \tilde{\nu},
  \tilde{\nu}') \] such that for any \( m \ge M \) the set
  \[ \mathcal{A}_{m}\bigl(1,(d)_{1}^{m},(R)_{1}^{m}\bigr) \cap \freqa^{-1}[\rho + \tilde{\nu}', \rho
  + \tilde{\nu}] \]
  is \( \epsilon \)-dense in \( \mathcal{U}_{1/2}(md) \).  Any element of \( \mathcal{A}_{m} \) is a
  sum of elements in \( R \), so by item \ref{item:SB-closed-sum} of
  Proposition \ref{prop:properties-of-SB-sets}, this implies that
  \begin{equation}
    \label{eq:SB-dense-in-md}
    \bigcap_{k=0}^{n+1} SB_{\eta_{k}}[L_{k}] \cap \freqa^{-1}[\rho + \tilde{\nu}', \rho +
    \tilde{\nu}] \textrm{ is \( \epsilon \)-dense in } \mathcal{U}_{1/2}(md).
  \end{equation}
  Take \( \widetilde{N} = (\widetilde{M}+1)d \) where \( \widetilde{M} \ge M \) is so big that
  \begin{equation}
    \label{eq:N-tilde-big}
    \freqa(z) \in [\rho+\tilde{\nu}', \rho + \tilde{\nu}] \implies \freqa(z + x) \in [\rho + \nu',
    \rho + \nu] 
  \end{equation}
  holds for all tileable \( z \ge \widetilde{M}N \) and all tileable \( x \le d \) (cf. Lemma
  \ref{lem:frequency-length-estimate}).

  We show that \( \widetilde{N} \) works.  Let \( z \in [\widetilde{N}, \infty) \) and let
  \( m \ge \widetilde{M} \) be the unique natural such that 
  \[ z \in \bigl[(m+1)d, (m+2)d\bigr). \]
  By the choice of \( N \) and \eqref{eq:sb-dense} there is some
  \[ y \in \bigcap_{k=0}^{n}SB_{\eta_{k}}[L_{k}] \cap \freqa^{-1}[\rho + \eta_{n+2}, \rho +
  \eta_{n+1}] \cap (0, 2N+4) \subseteq B_{\eta_{n+1}}[L_{n+1}]\]
  such that \( |z - y - md| < 1/6 \) for some \( m \ge M \), so
  \( z - y \in \mathcal{U}_{1/2}(md) \).  Finally, since
  \( \bigcap_{k=0}^{n+1}SB_{k}[L_{k}] \cap \freqa^{-1}[\rho +\tilde{\nu}', \rho+\tilde{\nu}] \) is
  \( \epsilon \)-dense in \( \mathcal{U}_{1/2}(md) \) by \eqref{eq:SB-dense-in-md} there is some
  \[ x \in \bigcap_{k=0}^{n+1}SB_{k}[L_{k}] \cap \freqa^{-1}[\rho +\tilde{\nu}',
  \rho+\tilde{\nu}]\]
  such that \( |z - y - x| < \epsilon \).  Using \eqref{eq:N-tilde-big} we conclude that
  \[ y + x \in \bigcap_{k=0}^{n+1} SB_{\eta_{k}}[L_{k}] \cap \freqa^{-1}[\rho + \nu', \rho +
  \nu].  \]
  Since \( z \) was arbitrary, the lemma follows.
\end{proof}

\begin{definition}
  \label{def:eta-L-tiled}
  We say that \( z \in \tiled \) is \( (\cl{\eta}, \cl{L}) \)-tiled if \( z \in
  SB_{\eta_{n}}[L_{n}] \) whenever \( z \ge L_{n} \).  Note that if \( \cl{L}' \) is such that \(
  L_{n}' \ge L_{n} \) for all \( n \), then any \( z \) which is \( (\cl{\eta}, \cl{L}) \)-tiled is
  also \( (\cl{\eta}, \cl{L}') \)-tiled.
\end{definition}

For any given \( z \) the property of being \( (\cl{\eta}, \cl{L}) \)-tiled depends only on a finite
segment of \( \cl{L} \), because the condition is vacuous for \( n \) such that \( L_{n} > z \).  On
the other hand we may refer to a family of tiled reals as being \( (\cl{\eta}, \cl{L}) \)-tiled,
meaning that each member of the family is \( (\cl{\eta}, \cl{L}) \)-tiled, and in that case all the
elements of the sequence \( \cl{L} \) are substantial as soon as the family has arbitrarily large
tiled reals.

\begin{lemma}
  \label{lem:tiled-sum}
  Let \( \cl{\eta} \), \( \cl{L} \) be given.  For any \( m_{0} \in \mathbb{N} \), any
  \( n_{0} \in \mathbb{N} \), \( n_{0} \ge 2 \), and any \( K \in \mathbb{R}^{>0} \) there is
  \( \cl{L}' = \cl{L}'_{\Lem \ref{lem:tiled-sum}}(\cl{\eta}, \cl{L}, m_{0}, n_{0}, K) \) such that
  \begin{enumerate}[(i)]
  \item \( L_{k}' = L_{k} \) for \( k \le m_{0} \);
  \item \( L_{k}' \ge L_{k} \) for \( k \in \mathbb{N} \);
  \end{enumerate}
  and for any \( z_{1}, \ldots, z_{n_{0}}, y_{1}, \ldots, y_{n_{0}-1} \in \tiled  \), satisfying
  \begin{itemize}
  \item   \( y_{i} \le K \);
  \item \( z_{i}, y_{i} \in SB_{\eta_{k}}[L_{k}] \) for all \( k \le m_{0} \);
  \item \( z_{i} \) are \( (\cl{\eta}, \cl{L}) \)-tiled;
  \end{itemize}
  the sum
  \[ J = z_{1} + y_{1} + z_{2} + y_{2} + \cdots + z_{n_{0}-1} + y_{n_{0}-1} + z_{n_{0}} \]
  is \( (\cl{\eta}, \cl{L}') \)-tiled and \( J \in SB_{\eta_{k}}[L_{k}'] \) for all \( k \le m_{0} \).
\end{lemma}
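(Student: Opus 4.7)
The plan is to set $L_k' = L_k$ for $k \le m_0$ and to define $L_k'$ recursively — and much larger than $L_k$ — for $k > m_0$. For $k \le m_0$ the requirement $J \in SB_{\eta_k}[L_k']$ is immediate: each summand $z_i$ and $y_i$ lies in $SB_{\eta_k}[L_k]$ by hypothesis, and this set is closed under sums by Proposition~\ref{prop:properties-of-SB-sets}\eqref{item:SB-closed-sum}, so $J \in SB_{\eta_k}[L_k] = SB_{\eta_k}[L_k']$; this also takes care of the $(\cl{\eta}, \cl{L}')$-tiled condition for these indices. The substantive content of the lemma is therefore the $(\cl{\eta},\cl{L}')$-tiled condition at indices $k > m_0$: namely, whenever $J \ge L_k'$, that $J \in SB_{\eta_k}[L_k']$.

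For such $k$, I would pick an auxiliary index $N = N(k) > k$ with $\eta_N$ much smaller than $\eta_k$ — so that $R := (\eta_{m_0} - \eta_k)/(\eta_k - \eta_N)$ is a fixed finite constant — and choose $L_k'$ so large that $J \ge L_k'$ forces at least one $z_i$ to satisfy $z_i \ge L_N$. Call such a $z_i$ \emph{large}; by the $(\cl{\eta},\cl{L})$-tiled hypothesis it decomposes as $z_i = w_1 + \cdots + w_{p_i}$ with $w_\ell \in B_{\eta_N}[L_N]$, whose pieces have length at most $L_N$ and $\alpha$-frequency within $\eta_N$ of $\rho$. The remaining \emph{filler} terms — small $z_i$'s together with all $y_j$'s — aggregate into stretches $T_j$ lying in $SB_{\eta_{m_0}}[L_{m_0}]$, with total mass $\sum_j |T_j| \le n_0 L_N + (n_0-1)K =: F_0$.

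My decomposition of $J$ into $B_{\eta_k}[L_k']$-pieces keeps the interior $w_\ell$'s of each large $z_i$ as standalone blocks — already in $B_{\eta_N}[L_N] \subseteq B_{\eta_k}[L_k']$ since $\eta_N < \eta_k$ and $L_N \le L_k'$ — while each filler stretch $T_j$ is merged with adjacent boundary $w$-pieces of its neighbouring large $z$'s into a single composite block. Lemma~\ref{lem:frequency-length-estimate} quantifies the merging: appending at least $\approx R \cdot |T_j|$ of neighbouring good $w$-mass to $T_j$ pulls the composite's $\alpha$-frequency within $\eta_k$ of $\rho$, and the block-length bound is met by taking $L_k'$ of order $(R+1)F_0$ plus a buffer.

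The main obstacle is \emph{local} feasibility: a single large $z_{i_\ell}$ that only barely exceeds $L_N$ may not carry enough $w$-mass to cover the absorption demand of both its adjacent filler stretches. I would handle this by splitting each $T_j$ at one of its $SB_{\eta_{m_0}}[L_{m_0}]$-decomposition boundaries and sharing the absorption between its two large neighbours, so the per-neighbour demand is bounded. A global supply-versus-demand count — total capacity $\sum_\ell z_{i_\ell} \ge J - F_0 \ge L_k' - F_0$ against total demand at most $R \sum_j |T_j| \le R F_0$ — renders the matching feasible once $L_k' \ge (R+1)F_0$ with slack, and taking $N$ sufficiently large ensures that $L_N$ itself is big enough to cover the per-neighbour load. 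The recursive construction of $L_k'$ for $k > m_0$ can be arranged so that all these bounds hold simultaneously for every $k$, producing the required $\cl{L}'$.
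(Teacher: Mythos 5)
Your decomposition strategy for a single index $k > m_0$ — split $J$ into \emph{large} $z_i$'s (those $\ge L_N$ for some auxiliary large $N$) decomposed into $B_{\eta_N}[L_N]$-pieces, group the remaining terms into \emph{filler} stretches, and absorb each filler stretch into adjacent boundary pieces of its large neighbours — is essentially the same underlying mechanism the paper uses, and your handling of $k \le m_0$ via closure of $SB_{\eta_k}[L_k]$ under sums is correct. However, the key structural difference is that the paper never attempts to do this for general $n_0$ at once. It proves the lemma first for $n_0 = 2$ (where $J = z_1 + y_1 + z_2$, exactly one summand must dominate, and there is no matching problem) and then handles $n_0 > 2$ by a simple induction that peels off $y_{n_0-1} + z_{n_0}$ and applies the $n_0 = 2$ case to the already-tiled prefix $J'$. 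Your attempt to handle all $n_0$ simultaneously runs into a genuine gap that the inductive route avoids.

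The gap is in the feasibility of your matching. A filler stretch that begins at the left boundary of $J$ (say $T = z_1 + y_1$ with $z_1$ small) has only one large neighbour, so it cannot be split between two sides; it must be absorbed entirely by that single neighbour. If that neighbour $z_2$ is barely large — $z_2 \approx L_N$ — while $z_1 \approx L_N$ as well, then the demand $R \cdot |T| \approx R(L_N + K)$ exceeds the supply $L_N$ whenever the constant $R = (\eta_{m_0} - \eta_k)/(\eta_k - \eta_N)$ exceeds roughly $1$, which is the typical situation. Your remark that ``taking $N$ sufficiently large ensures that $L_N$ itself is big enough to cover the per-neighbour load'' is circular: the per-neighbour load scales linearly with $L_N$, since small $z_i$'s are only bounded by $L_N$, so raising $L_N$ does not improve the supply/demand ratio. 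The global count $L_k' \ge (R+1)F_0$ guarantees aggregate supply exceeds aggregate demand, but a large $z_i$ far away (say a huge $z_{n_0}$ on the far right) cannot help absorb a filler stretch that is not adjacent to it, so aggregate bookkeeping is insufficient. And this scenario genuinely arises: a single enormous $z_{n_0}$ with $z_1, \ldots, z_{n_0 - 1}$ all roughly $L_N$ has $J \ge L_k'$ while your matching for the leftmost filler stretch fails. One could try to patch this by dynamically reclassifying inadequate ``large'' $z_i$'s as filler and merging, but that introduces its own termination and bookkeeping issues and is not what you wrote. The paper's peel-off induction sidesteps all of this: at each stage there is exactly one pair $y + z$ to absorb, and the single dominant summand (forced by $J \ge L_k'$) always has enough tail to do so.
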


Here is a more verbose explanation of the statement.  The case \( n_{0} = 3 \) is shown in Figure
\ref{fig:small-y-cant-change-frequency-by-much}.
\begin{figure}[htb]
  \centering
  \begin{tikzpicture}
    \foreach \x in {0, 3, 4.7, 6.5, 8.0, 10.2} {
      \filldraw (\x, 0) circle (1pt);
    }
    \draw (0,0) -- (3,0);
    \draw (4.7,0) -- (6.5,0);
    \draw (8,0) -- (10.2,0);
    \draw (1.5,0.2) node {\( z_{1} \)};
    \draw (5.6,0.2) node {\( z_{2} \)};
    \draw (9.1,0.2) node {\( z_{3} \)};
    \draw (3.85,0) node {\( y_{1} \)};
    \draw (7.25,0) node {\( y_{2} \)};
  \end{tikzpicture}
  \caption{}
  \label{fig:small-y-cant-change-frequency-by-much}
\end{figure}
Numbers \( y_{1} \) and \( y_{2} \) are known to be bounded from above by \( K \), but \( z_{i} \)'s
may be arbitrarily large.  We know that \( z_{i} \) and \( y_{j} \) are all elements of
\( SB_{\eta_{k}}[L_{k}] \) for \( k \le m_{0} \), so their sum \( J \) is also an element of
\( SB_{\eta_{k}}[L_{k}] \).  We assume that \( z_{i} \)'s are \( (\cl{\eta}, \cl{L}) \)-flexible,
i.e., if \( z_{i} \) is particularly long, \( z_{i} \ge L_{n} \), then it can be partitioned into
pieces of size at most \( L_{n} \) each having \( \alpha \)-frequency \( \eta_{n} \)-close to
\( \rho \).  The sum is not necessarily \( (\cl{\eta}, \cl{L}) \)-flexible, but since
\( n_{0} \) is fixed, for the sum to be large, at least one of the terms \( z_{i} \) has to be large,
and the lemma claims that all tiled reals \( J \) of such form will necessarily be
\( (\cl{\eta}, \cl{L}') \)-flexible with respect to a larger sequence \( \cl{L}' \).

\begin{proof}
  We start with the case \( n_{0} = 2 \), and therefore \( J = z_{1} + y_{1} + z_{2} \).  For
  \( i \le m_{0} \) we put \( L_{i}' = L_{i} \) and define \( L_{k}' \) by induction as follows.
  Suppose \( L_{k-1}' \) has been defined.  Let \( n_{1} \ge k \) be so large that
  \begin{itemize}
  \item \( L_{n_{1}} \ge L_{k-1}' \);
  \item for any tileable \( x \ge L_{n_{1}} \) and any tileable \( \tilde{z}
    \le K + L_{k} \) if \( x \) is \( \eta_{n_{1}} \)-close to \( \rho \), then
     \begin{equation}
    \label{eq:freq-small-change}
    | \freqa(x + \tilde{z}) - \rho | \le \eta_{k}.
  \end{equation}
    The possibility to choose such \( n_{1} \) is based on Lemma
    \ref{lem:frequency-length-estimate}.
  \end{itemize}
 
  Set \( L_{k}' = 4L_{n_{1}} + K \).

  We now check that the sequence \( \cl{L}' \) constructed this way satisfies the conclusions of the
  lemma for \( n_{0} = 2 \).  Suppose we are given \( z_{1} \), \( y_{1} \), and \( z_{2} \).  Since
  \( z_{1}, y_{1}, z_{2} \in SB_{\eta_{k}}[L_{k}] \), \( k \le m_{0} \), it is immediate to see that
  \( J \in SB_{\eta_{k}}[L_{k}'] \) for all \( k \le m_{0} \).  We show that for \( k > m_{0} \) one
  has \( J \in SB_{\eta_{k}}[L_{k}'] \) whenever \( J \ge L_{k}' \).  If \( J \ge L_{k}' \), then
  \( z_{1} \ge 2L_{n_{1}} \) or \( z_{2} \ge 2L_{n_{1}} \).  Suppose for definiteness that
  \( z_{1} \ge 2L_{n_{1}} \).  Since \( z_{1} \) is \( (\cl{\eta},\cl{L}) \)-tiled this implies that
  \( z_{1} = \sum_{i=1}^{r_{1}} x_{i} \) with \( x_{i} \in B_{\eta_{n_{1}}}[L_{n_{1}}] \).  Note
  that \( x_{i} \in B_{\eta_{k}}[L_{k}'] \) for all \( i \), because \( L_{n_{1}} \le L_{k}' \) and
  \( k \le n_{1} \).  We now have two cases.

  \textbf{Case 1:} \( z_{2} < L_{k} \).  In this case let \( j \) be the largest index \( j \le
  r_{1} \) such that
  \[ x_{j} + x_{j+1} + \cdots + x_{r_{1}} \ge L_{n_{1}}. \]
  and note that since \( x_{i} \le L_{n_{1}} \) we have to have
  \[ L_{n_{1}} \le x_{j} + x_{j+1} + \cdots + x_{r_{1}} \le 2L_{n_{1}}. \]
  Since \( 2L_{n_{1}} + K + L_{k} \le L_{k}' \), using \( y_{1} + z_{2} \le K + L_{k} \),
  \[
  x_{j} + \cdots + x_{r_{1}} \in SB_{\eta_{n_{1}}}[L_{n_{1}}], \]
  and equation \eqref{eq:freq-small-change} we get
  \[ x_{j} + x_{j+1} + \cdots + x_{r_{1}} + y_{1} + z_{2} \in B_{\eta_{k}}[L_{k}'] \]
  and therefore \( J \in SB_{\eta_{k}}[L_{k}'] \) is witnessed by the decomposition
  \[ J = x_{1} + x_{2} + \cdots + x_{j-1} + (x_{j} + x_{j+1} + \cdots + x_{r_{1}} + y_{1} + z_{1}) \]
  in which every summand is an element of \( B_{\eta_{k}}[L_{k}'] \).  

  \textbf{Case 2:} \( z_{2} \ge L_{k} \).  There is a decomposition
  \( z_{2} = \sum_{i=1}^{r_{2}}x_{i}' \) with \( x_{i}' \in B_{\eta_{k}}[L_{k}] \) as \( z_{2} \) is
  \( (\cl{\eta}, \cl{L}) \)-tiled.  We therefore have \( x_{i}' \in B_{\eta_{k}}[L_{k}'] \) and if,
  as in previous case, \( j \le r_{1} \) is the largest index such that
  \[ x_{j} + x_{j+1} + \cdots + x_{r_{1}} \ge L_{n_{1}},  \]
  then
  \(  x_{j} + x_{j+1} + \cdots + x_{r_{1}} + y_{1} \in B_{\eta_{k}}[L_{k}'] \) by
  \eqref{eq:freq-small-change}.  Thus
  \[ J = x_{1} + x_{2} + \cdots + x_{j-1} + (x_{j} + x_{j+1} + \cdots + x_{r_{1}} + y_{1}) + x_{1}'
  + \cdots + x_{r_{2}}' \]
  with each summand being an element of \( B_{\eta_{k}}[L_{k}'] \).  This proves the lemma for
  \( n_{0} = 2 \).

  For \( n_{0} > 2 \) the lemma follows easily by induction. Suppose the lemma has been proved for
  \( n_{0} - 1 \) and the sequence \( \cl{L}' \) has been constructed.  Apply this lemma to
  \( \cl{L}' \) and \( n_{0} = 2 \) to get
  \( \cl{L}'' = \cl{L}'_{\Lem \ref{lem:tiled-sum}}(\cl{\eta}, \cl{L}', m_{0}, 2, K)
  \).  We claim that \( \cl{L}'' \) works for \( n_{0} \).  Indeed, suppose \( z_{i}, y_{i}
  \) are given.  By inductive assumption
  \[ J' = z_{1} + y_{1} + z_{2} + y_{2} + \cdots + y_{n_{0}-2} + z_{n_{0}-1}\]
  is \( (\cl{\eta}, \cl{L}') \)-tiled.  Note that \( z_{n_{0}} \) by assumption is \( (\cl{\eta},
  \cl{L}) \)-tiled and is therefore also \( (\cl{\eta}, \cl{L}') \)-tiled.  By the choice of \(
  \cl{L}'' \), \( J = J' + y_{n_{0} -1} + z_{n_{0}} \) must be \( (\cl{\eta}, \cl{L}'') \)-tiled and
  the lemma follows.
\end{proof}

Note that \( z_{i} = 0 \) is allowed in this lemma.  The case \( z_{n_{0}} = 0 \) is used in the
proof of the next lemma.

\begin{remark}
  \label{rem:improving-lemma-for-other-form-of-J}
  A simple observation is that if \( \cl{L}' \) satisfies the conclusion of Lemma
  \ref{lem:tiled-sum}, then any larger sequence (which also starts with \( L_{k} \),
  \( k \le m_{0} \)) will do so as well.  This implies the following immediate strengthening: given
  \( \cl{\eta} \), \( \cl{L} \), \( m_{0} \), \( n_{0} \), and \( K \) as above there exists
  \( \cl{L}' \), \( L_{k}' = L_{k} \) for \( k \le m_{0} \), such that all elements of the form
  \begin{displaymath}
       z_{1} + y_{1} + z_{2} + y_{2} + \cdots + y_{\tilde{n}-1} + z_{\tilde{n}}, 
  \end{displaymath}
  for \emph{all} \( \tilde{n} \le n_{0} \) are \( (\cl{\eta}, \cl{L}') \)-tiled.
\end{remark}

\begin{definition}
  \label{def:near-rho}
  We say that a tileable \( z \in \tileable \) is \emph{\( N \)-near \( \rho \)}, where \( N \in
  \mathbb{N} \), if for \( z = p\alpha + q\beta \) one of the two possibilities holds:
  \begin{itemize}
  \item \( \freqa(z) \le \rho \) and
    \( \freqa(z + N\alpha) = \frac{p + N_{0}}{p + q + N_{0}} \ge \rho \);
  \item \( \freqa(z) \ge \rho \) and
    \( \freqa(z + N\beta) = \frac{p}{p + q + N_{0}} \le \rho \).
  \end{itemize}
  In plain words, by adding \( N \) tiles of the right type one may flip the \( \alpha \)-frequency
  to the other side of \( \rho \).  Note that if \( x \) is \( N \)-near \( \rho \) and \( y \) is
  \( N' \)-near \( \rho \), then \( x+y \) is \( N+N' \)-near \( \rho \).

  We say that a tileable \( z = p\alpha + q\beta \) is \emph{\( N \)-far} from \( \rho \) if
  \begin{itemize}
  \item \( \freqa(z) \le \rho \) implies \( q \ge N \) and \( \freqa(z - N\beta) \le \rho \);
  \item \( \freqa(z) \ge \rho \) implies \( p \ge N \) and \( \freqa(z - N\alpha) \ge \rho \).
  \end{itemize}

  Note that if \( z \) is \( N \)-far from \( \rho \) and \( y \) is \( N \)-near \( \rho \), then 
  \[ \freqa(z + y) \le \rho \iff \freqa(z) \le \rho. \]
\end{definition}

The next lemma encapsulates the step of induction in Theorem \ref{thm:tiling-sparse-flows}.

\begin{lemma}[Sparse induction step]
  \label{lem:sparse-induction-step}
  Let \( \cl{L} \)  be an
  \( \cl{\eta} \)-flexible sequence. Given \( \epsilon > 0 \), \( m_{0} \in \mathbb{N} \), and
  \( N \in \mathbb{N} \) there exist a real number
  \( K = K_{\Lem \ref{lem:sparse-induction-step}}(\cl{\eta}, \cl{L}, \epsilon, m_{0}, N) \) and a
  sequence
  \( \cl{L}' = \cl{L}'_{\Lem \ref{lem:sparse-induction-step}}(\cl{\eta}, \cl{L}, \epsilon, m_{0}, N)
  \) such that for
  \( N' = N'_{\Lem \ref{lem:sparse-induction-step}}(\cl{\eta}, \cl{L}, \epsilon, m_{0}, N) = \lfloor
  \frac{K+1}{\alpha} \rfloor \) one has: \hfill
  \begin{itemize}
  \item \( L_{k}' = L_{k} \)  for all \( k \le m_{0} \);
  \item \( L_{k}' \ge L_{k} \) for all \( k \);
  \end{itemize}
  for any \( n \ge 2 \), any family \( z_{1}, \ldots, z_{n} \in \tiled \),
  \( y_{1}, \ldots, y_{n-1} \in \mathbb{R}^{>0} \) satisfying
  \begin{itemize}
  \item \( z_{i} \) are \( (\cl{\eta}, \cl{L}) \)-tiled;
  \item \( z_{i} \in SB_{\eta_{k}}[L_{k}] \) for \( k \le m_{0} \);
  \item \( z_{i} \) are \( N \)-near \( \rho \);
  \item \( K/2 \le y_{i} \le K \);
  \end{itemize}
  there are tiled \( \tilde{y}_{1}, \ldots, \tilde{y}_{n-1} \in \tiled \) such that
  \begin{enumerate}[(i)]
  \item\label{item:eps-close-sum}
    \( \Bigl| \sum_{i=1}^{r}( \tilde{y}_{i} - y_{i}) \Bigr| < \epsilon \) for all \( r \le n-1 \);
  \item \( \tilde{y}_{i} \in \bigcap_{j=0}^{m_{0}} SB_{\eta_{j}}[L_{j}] \);
  \item\label{item:y-are-tiled} \( \tilde{y}_{i} \) are
    \( (\cl{\eta}, \cl{L}') \)-tiled
  \end{enumerate}
  and setting
  \begin{displaymath}
    \tilde{J}= z_{1} + \tilde{y}_{1} + z_{2} + \tilde{y}_{2} + \cdots + z_{n-1} + \tilde{y}_{n-1} +
    z_{n}
  \end{displaymath}
  one also has
  \begin{enumerate}[(i)]
    \setcounter{enumi}{3}
  \item\label{item:main-item} \( \tilde{J} \) is \( (\cl{\eta}, \cl{L}') \)-tiled;
  \item\label{item:in-SB-eta-m0-plus-one} \( \tilde{J} \in SB_{\eta_{k}}[L_{k}'] \) for \( k \le
    m_{0} + 1 \);
  \item\label{item:N1-near} \( \tilde{J} \) is \( N' \)-near \( \rho \).
  \end{enumerate}
\end{lemma}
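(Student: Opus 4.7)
The plan is to choose the $\tilde y_i$'s one at a time by a Riemann--rearrangement style argument that simultaneously controls two quantities: the cumulative shift $\sum_{i\le r}(\tilde y_i - y_i)$ and the cumulative $\alpha$-frequency of the partial sum $z_1 + \tilde y_1 + \cdots + z_i$. The $\cl{\eta}$-flexibility of $\cl{L}$ applied with $n=m_0$ and $\nu=\eta_{m_0+1}$ shows that for every sufficiently large $y$, and for every auxiliary $\nu'\in(0,\eta_{m_0+1})$, the two sets
\[
\bigcap_{k=0}^{m_0}SB_{\eta_k}[L_k]\cap\freqa^{-1}[\rho-\eta_{m_0+1},\rho-\nu'] \quad\text{and}\quad
\bigcap_{k=0}^{m_0}SB_{\eta_k}[L_k]\cap\freqa^{-1}[\rho+\nu',\rho+\eta_{m_0+1}]
\]
are asymptotically dense in $\mathbb R^{>0}$. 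Consequently, one can fix $K$ so large that for every $y\in[K/2,K]$ each of these two sets intersects $\mathcal U_{\epsilon/8}(y)$ in points lying on both sides of $y$. This yields a finite menu of admissible replacements $\tilde y$ for every $y_i$, with prescribed ``sign'' of the shift $\tilde y - y$ and prescribed side of $\rho$ for $\freqa(\tilde y)$.

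With $K$ fixed, define $\tilde y_i$ recursively. Having chosen $\tilde y_1,\ldots,\tilde y_{i-1}$, let $s_{i-1}=\sum_{j<i}(\tilde y_j-y_j)$ and let $f_{i-1}=\freqa\bigl(z_1+\tilde y_1+\cdots+z_i\bigr)$. Pick $\tilde y_i$ from the menu so that (a) $\tilde y_i<y_i$ if $s_{i-1}\ge 0$ and $\tilde y_i\ge y_i$ otherwise, and (b) $\freqa(\tilde y_i)>\rho$ if $f_{i-1}\le\rho$ and $\freqa(\tilde y_i)<\rho$ otherwise. Rule (a) immediately yields property \eqref{item:eps-close-sum} since each single step changes the shift by less than $\epsilon/8$ and flips sign whenever the shift is on the wrong side of $0$, so $|s_r|<\epsilon$ for all $r$. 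Rule (b) together with Lemma \ref{lem:frequency-length-estimate} forces the $\alpha$-frequency of any sufficiently long partial sum, and in particular of $\tilde J$ itself, to stay within $\eta_{m_0+1}$ of $\rho$. Properties \eqref{item:y-are-tiled} and the fact that $\tilde y_i\in\bigcap_{j=0}^{m_0}SB_{\eta_j}[L_j]$ are built into the menu.

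To get $\tilde J$ to be $(\cl{\eta},\cl{L}')$-tiled \eqref{item:main-item} and $\tilde J\in SB_{\eta_k}[L_k']$ for $k\le m_0+1$ \eqref{item:in-SB-eta-m0-plus-one}, I would define $\cl{L}'$ as follows. For $k\le m_0$ set $L_k'=L_k$; note that \eqref{item:in-SB-eta-m0-plus-one} for these $k$ is trivial because both the $z_i$'s and the $\tilde y_i$'s already lie in $SB_{\eta_k}[L_k]$ and that semigroup is closed under sums (Proposition \ref{prop:properties-of-SB-sets}). For $k\ge m_0+1$ choose $L_k'$ large enough to absorb one or two $\tilde y$'s into an extremal tile of an $(\cl{\eta},\cl{L})$-decomposition of any $z_i$ with $z_i\ge L_k$, while still keeping sizes bounded by $L_k'$ and frequencies $\eta_k$-close to $\rho$; the concrete estimate is the one provided by Lemma \ref{lem:tiled-sum} together with Remark \ref{rem:improving-lemma-for-other-form-of-J}. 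For $k=m_0+1$ we additionally guarantee $L_{m_0+1}'\ge 2(L_{m_0+1}+K+\epsilon)$, so that a short $\tilde J<L_{m_0+1}'$ is a single tile in $B_{\eta_{m_0+1}}[L_{m_0+1}']$ thanks to the frequency balance established by rule (b). Finally, property \eqref{item:N1-near} follows because the balancing pins $\freqa(\tilde J)$ so close to $\rho$ that the number of ``extra'' $\alpha$- or $\beta$-tiles needed to flip the sign is at most the number of $\alpha$-tiles that fit inside a single $\tilde y_i$-gap, namely $N'=\lfloor(K+1)/\alpha\rfloor$; here the $N$-nearness of the $z_i$'s ensures that frequency ties are broken in a controllable way.

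The main obstacle is producing a single sequence $\cl{L}'$ that works for all $n$, since Lemma \ref{lem:tiled-sum} fixes $n_0$ in advance and the resulting $\cl{L}'$ grows with $n_0$. The way around this is that the $\tilde y_i$'s are not arbitrary bounded interpolants: by construction they all lie in $B_{\eta_{m_0+1}}[L_{m_0+1}']$ and have $\alpha$-frequency $\eta_{m_0+1}$-close to $\rho$. This promotes them to bona fide level-$(m_0+1)$ tiles, so the decomposition of $\tilde J$ needed at level $k\ge m_0+1$ can be assembled \emph{locally}, gluing each $\tilde y_i$ either onto a boundary tile of the $(\cl{\eta},\cl{L})$-decomposition of a large neighboring $z_i$, or onto a contiguous cluster of short $z_i$'s; the two- or three-piece local surgeries are exactly what Lemma \ref{lem:tiled-sum} (in its form for $n_0=3$) controls, and a single application with $n_0=3$ then suffices to choose $\cl{L}'$ once and for all, independently of $n$.
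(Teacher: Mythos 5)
Your first two steps match the paper's: choose $K$ via $\cl{\eta}$-flexibility so that a rich menu of admissible replacements $\tilde y$ exists in $\mathcal U_\epsilon(y)$ for each $y\in[K/2,K]$, and construct $\tilde y_i$ by a Riemann--rearrangement scheme that balances both the cumulative shift and the $\alpha$-frequency. The difficulty you correctly identify --- producing a single $\cl L'$ valid for all $n$ --- is indeed the crux, but your resolution has a gap.

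The claim that a fixed application of Lemma \ref{lem:tiled-sum} with $n_0=3$ suffices does not work. The issue is that the $\tilde y_i$'s are only $\eta_{m_0+1}/2$-close to $\rho$ in frequency; they are ``tiles'' only at level $m_0+1$, not at levels $k>m_0+1$, where membership in $SB_{\eta_k}[L_k']$ requires pieces with frequency $\eta_k$-close to $\rho$. If $k>m_0+1$ is large and a long stretch of the $z_i$'s are all short (say $z_i < L_k$ for $i$ in a long block), then none of those $z_i$'s carries an $(\cl{\eta},\cl L)$-decomposition at level $k$, and your proposed ``gluing $\tilde y_i$ onto a contiguous cluster of short $z_i$'s'' produces a block whose frequency you cannot bound by $\eta_k$ from a surgery of bounded size: a block of $3$ (or any fixed number of) summands $z_i+\tilde y_i$ has frequency controlled only to within $\eta_{m_0+1}$, not $\eta_k$. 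Your local surgeries cannot shrink the frequency error below $\eta_{m_0+1}$ without aggregating more and more consecutive pieces as $k$ grows.

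The paper's proof closes this gap by choosing, for each $k>m_0$, an integer $n(k)$ large enough that any run $\sum_{i=j_1}^{j_2-1}(z_i+\tilde y_i)$ of length $j_2 - j_1 \ge n(k)$ has frequency $\eta_k$-close to $\rho$ (this uses that each partial sum is $2N'$-near $\rho$ together with the lower bound $\tilde y_i\ge K/2-1$ forcing the run to be long). Then Lemma \ref{lem:tiled-sum} is applied with $n_0 = 2n(k)+1$ --- varying with $k$ --- to define $L_k'$, and a long $\tilde J$ is handled by chunking it into consecutive pieces $\tilde J_i$ of between $n(k)$ and $2n(k)$ summands each; each chunk lands in $SB_{\eta_k}[L_k']$ by Lemma \ref{lem:tiled-sum}, and the chunks concatenate. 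Without this $k$-dependent aggregation your construction of $\cl L'$ does not give $\tilde J\in SB_{\eta_k}[L_k']$ for large $k$, so item \eqref{item:main-item} is not established.
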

The set up of this lemma differs from the one of Lemma \ref{lem:tiled-sum} in the following aspects.
Reals \( y_{i} \) are not necessarily tileable.  In the context of Figure
\ref{fig:sparse-induction-figure}, we perturb each \( y_{i} \) into
\( \tilde{y}_{i} \) in a way that results in shifting each \( z_{i} \) by no more than
\( \epsilon \).
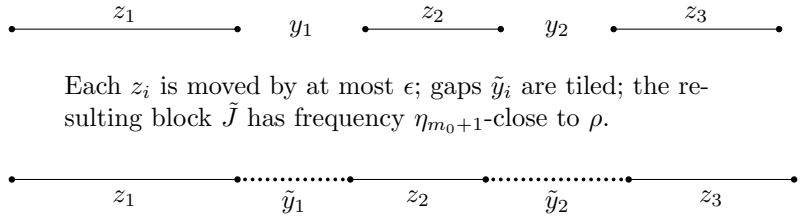
\begin{figure}[h]
  \centering
  \begin{tikzpicture}
    \foreach \x in {0, 3, 4.7, 6.5, 8.0, 10.2} {
      \filldraw (\x, 0) circle (1pt);
    }
    \draw (0,0) -- (3,0);
    \draw (4.7,0) -- (6.5,0);
    \draw (8,0) -- (10.2,0);
    \draw (1.5,0.2) node {\( z_{1} \)};
    \draw (5.6,0.2) node {\( z_{2} \)};
    \draw (9.1,0.2) node {\( z_{3} \)};
    \draw (3.85,0) node {\( y_{1} \)};
    \draw (7.25,0) node {\( y_{2} \)};
    \foreach \x in {0, 3, 4.5, 6.3, 8.2, 10.4} {
      \filldraw (\x, -2) circle (1pt);
    }
    \draw (0,-2) -- (3,-2);
    \draw (4.5,-2) -- (6.3,-2);
    \draw (8.2,-2) -- (10.4,-2);
    \draw (1.5,-2.25) node {\( z_{1} \)};
    \draw (5.4,-2.25) node {\( z_{2} \)};
    \draw (9.3,-2.25) node {\( z_{3} \)};
    \draw (5.2,-1) node[text width=9cm] {Each \( z_{i} \) is moved by at most \( \epsilon \); gaps
      \( \tilde{y}_{i} \) are tiled;  the resulting block \( \tilde{J} \) has frequency \(
      \eta_{m_{0}+1} \)-close to \( \rho \).};
    \foreach \x in {3.1, 3.2, ..., 4.4, 6.4, 6.5, ..., 8.2} {
      \filldraw (\x, -2) circle (0.5pt);
    }
    \draw (3.75,-2.3) node {\( \tilde{y}_{1} \)};
    \draw (7.25,-2.3) node {\( \tilde{y}_{2} \)};
  \end{tikzpicture}
  \caption{Constructing \( \tilde{J} \).}
  \label{fig:sparse-induction-figure}
\end{figure}
Similarly to Lemma \ref{lem:tiled-sum} the sums \( \tilde{J} \) are claimed
to be \( (\cl{\eta}, \cl{L}') \)-tiled, but the crucial difference is that this time we do not fix
the number of summands \( n \), but the assumption on \( z_{i} \)'s is stronger: they are
additionally assumed to be \( N \)-near \( \rho \) for a fixed natural \( N \).  Moreover, each \(
z_{i} \) may only be \( \eta_{m_{0}} \)-close to \( \rho \), while the resulting \( \tilde{J} \) is
at least \( \eta_{m_{0}+1} \)-close to \( \rho \), so the approximation of the \( \alpha
\)-frequency improves.  Also note that \( \tilde{J} \) satisfies assumptions similar to those
imposed on \( z_{i} \) which will allow us to continue the process inductively.

\begin{proof}
  Since \( \cl{L} \) is \( \cl{\eta} \)-flexible, we may take \( K \) so large that both sets of
  \begin{equation}
    \label{eq:y-plusmin-in-SB}
    y^{-}\mkern-12mu, \mkern 8mu y^{+} \in \bigcap_{j=0}^{m_{0}}SB_{\eta_{j}}[L_{j}]
  \end{equation}
  satisfying the following items are \( \epsilon \)-dense in \( [K/2-1, \infty) \):
  \begin{itemize}
  \item \( \freqa(y^{-}) < \rho \), \( \freqa(y^{+}) > \rho \);
  \item \( y^{-} \) and \( y^{+} \) are \( 2N \)-far from \( \rho \).
  \item \( |\freqa(y^{-}) - \rho| < \eta_{m_{0}+1}/2 \) and
    \( |\freqa(y^{+}) - \rho| < \eta_{m_{0}+1}/2 \).
  \end{itemize}
  By enlarging \( K \) if necessary we may assume that \( K \) is so big that
  \begin{equation}
    \label{eq:y-plus-small-x-m0-plus-one-close-to-rho}
    |\freqa(y^{-} + x) - \rho| < \eta_{m_{0}+1}\quad \textrm{ and }\quad |\freqa(y^{+} + x) - \rho| <
  \eta_{m_{0}+1}
  \end{equation}
  holds for all tileable \( x \le 2L_{m_{0}+1} \) and all \( y^{-}, y^{+} \ge K/2 - 1 \) as above. 

  We now describe the construction \( \tilde{y}_{i} \).  Since \( z_{1} \) and \( z_{2} \) are
  \( N \)-near \( \rho \), we may find
  \( \tilde{y}_{1} \in \bigcap_{j=0}^{m_{0}} SB_{\eta_{j}}[L_{j}] \) such that
  \begin{itemize}
  \item \( \tilde{y}_{1} \) is \( 2N \)-far from \( \rho \);
  \item if \( \freqa(z_{1} + z_{2}) \le \rho \) we want \( \freqa(\tilde{y}_{1}) > \rho \); if
    \( \freqa(z_{1} + z_{2}) > \rho \) we require \( \freqa(\tilde{y}_{1}) < \rho \);
  \item \( |y_{1} - \tilde{y}_{1}| < \epsilon \);
  \item \( |\freqa(\tilde{y}_{1}) - \rho| < \eta_{m_{0}+1}/2 \).
  \end{itemize}
  Note that \( z_{1}+\tilde{y}_{1}+z_{2} \) is necessarily \( N' \)-near \( \rho \).  We now
  construct \( \tilde{y}_{k} \) inductively; if \( \tilde{y}_{1}, \ldots, \tilde{y}_{k-1} \) have
  been constructed, we may find \( \tilde{y}_{k} \in \bigcap_{j=0}^{m_{0}}SB_{\eta_{j}}[L_{j}] \)
  such that
  \begin{itemize}
  \item \( |\tilde{y}_{k} - y_{k}| < \epsilon \);
  \item \( |\freqa(\tilde{y}_{k}) - \rho| < \eta_{m_{0}+1}/2 \);
  \item \( \tilde{y}_{k} \ge y_{k} \) if \( \sum_{j=1}^{k-1}\tilde{y}_{j} \le \sum_{j=1}^{k-1}y_{j} \) and
    \( \tilde{y}_{k} < y_{k} \) if \( \sum_{j=1}^{k-1}\tilde{y}_{j} > \sum_{j=1}^{k-1}y_{j} \);
  \item if 
    \[ \freqa\Bigl( \sum_{j=1}^{k-1}(z_{j} + \tilde{y}_{j}) + z_{k} + z_{k+1} \Bigr) \le \rho, \]
    then we pick \( \freqa(\tilde{y}_{k}) > \rho \);
    otherwise we ensure \( \freqa(\tilde{y}_{k}) < \rho \).
  \end{itemize}
  We still need to construct the sequence \( \cl{L}' \), but note that items
  \eqref{item:eps-close-sum} and \eqref{item:N1-near} do not depend on \( \cl{L}' \) and are
  satisfied by the construction of \( \tilde{y}_{i} \).  Moreover, item \eqref{item:y-are-tiled}
  will be trivially satisfied once we ensure that \( L_{m_{0}+1}' > K+1 \ge \tilde{y}_{i} \).  Also,
  it is easy to see that \eqref{item:in-SB-eta-m0-plus-one} is automatic once
  \( L_{m_{0}+1}' > 2L_{m_{0}+1} + K+1 \).  Indeed, \( \tilde{J} \in SB_{\eta_{j}}[L_{j}'] \) for
  \( j \le m_{0} \) since \( z_{i}, \tilde{y}_{i} \in SB_{\eta_{j}}[L_{j}'] \).  By the assumptions
  on the choice of \( \tilde{y}_{i} \), we have
  \( | \freqa(\tilde{y}_{i}) - \rho| < \eta_{m_{0}+1} \).  If \( z_{i} \ge L_{m_{0}+1} \), then
  \( z_{i} \in SB_{\eta_{m_{0}+1}}[L_{m_{0}+1}] \) because \( z_{i} \) is assumed to be
  \( (\cl{\eta}, \cl{L}) \)-tiled.  If, on the other hand, \( z_{i} < L_{m_{0}+1} \), then
  \( | \freqa(z_{i} + \tilde{y}_{i}) - \rho | < \eta_{m_{0}+1} \) by
  \eqref{eq:y-plus-small-x-m0-plus-one-close-to-rho}.  So, in either case
  \( z_{i} + \tilde{y}_{i} \in SB_{\eta_{m_{0}+1}}[L_{m_{0}+1}'] \) for all \( i \), and also
  \( z_{n-1} + \tilde{y}_{n-1} + z_{n} \in SB_{\eta_{m_{0}+1}}[L_{m_{0}+1}'] \).  This shows that
  \( \tilde{J} \in SB_{\eta_{m_{0}+1}}[L_{m_{0}+1}'] \) as \eqref{item:in-SB-eta-m0-plus-one} claims.

  It therefore remains to construct \( \cl{L}' \) which will ensure satisfaction of
  \eqref{item:main-item}.  For \( k \le m_{0} \) we set \( L_{k}' = L_{k} \) and define \( L_{k}' \)
  for \( k > m_{0} \) inductively.
  
  Note that for any \( 1 \le j_{1} < j_{2} \le n_{0} \) we have
  \( \sum_{i=j_{1}}^{j_{2}} z_{i} + \tilde{y}_{i} \) is \( 2N' \)-near \( \rho \), therefore given
  \( k > m_{0} \) and using \( \tilde{y}_{i} \ge K/2 -1 \) we can find \( n(k) \) such that
  \[ \Bigl| \freqa \bigl( \sum_{i=j_{1}}^{j_{2}-1} z_{i} + \tilde{y}_{i} \bigr) - \rho \Bigr| \le
  \eta_{k} \quad \textrm{whenever } j_{2} - j_{1} \ge n(k).\]
  We apply Lemma \ref{lem:tiled-sum} together with Remark
  \ref{rem:improving-lemma-for-other-form-of-J} for given \( \cl{\eta}, \cl{L}, m_{0} \) and
  \( 2n(k)+1, K+2 \).  Let \( \cl{L}'' \) be the output of this application and set
  \[ L_{m_{0}+1}' = \max\bigl\{ 2L_{m_{0}+1}+K+2, L_{m_{0}+1}''\bigr\} \textrm{ and } L_{k}' =
  \max\{L_{k-1}', L_{k}''\} \textrm{ for } k > m_{0}+1. \]
  We show that for the sequence \( \cl{L}' \) all elements of the form
  \( \tilde{J} \) are \( (\cl{\eta}, \cl{L}') \)-tiled.  Take \( k > m_{0} \) and consider
  \[ \tilde{J} = z_{1} + \tilde{y}_{1} + z_{2} + \tilde{y}_{2} + \cdots + z_{n-1} + \tilde{y}_{n-1}
  + z_{n}. \]
  Assume that \( \tilde{J} \ge L_{k} \), we aim at showing that
  \( \tilde{J} \in SB_{\eta_{k}}[L_{k}'] \).  If \( n \le 2n(k)+1 \), then this follow immediately
  from the choice of \( \cl{L}'' \) give by Lemma \ref{lem:tiled-sum}.  So, we may assume that
  \( n > 2n(k)+1 \).  In this case, we may find
  \[ 1 = j_{0} < j_{1} < j_{2} < \cdots < j_{s-1} < j_{s} = n \]
  such that \( n(k) \le j_{i+1} - j_{i} \le 2n(k) \) for all \( 0 \le i < s \).  Consider
  elements 
  \[ \tilde{J}_{i} = \sum_{l = j_{i}}^{j_{i+1}-1} (z_{l} + \tilde{y}_{l}) \textrm{ for } 0 \le i < s-1 \quad
  \textrm{and} \quad \tilde{J}_{s-1} = \sum_{l = j_{s-1}}^{j_{s}-1} (z_{l} + \tilde{y}_{l}) + z_{n}. \]
  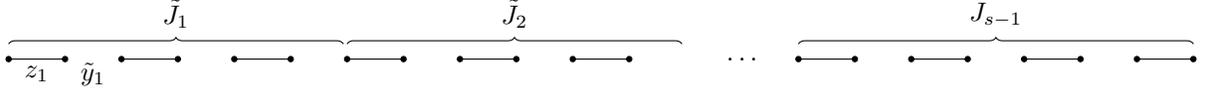
\begin{figure}[htb]
    \centering
    \begin{tikzpicture}
      \foreach \x in {0, 1.5, ..., 7.5, 10.5, 12, ..., 15}{
        \filldraw(\x, 0) circle (1pt);
        \filldraw(\x + 0.75, 0) circle (1pt);
        \draw (\x, 0) -- (\x+0.75,0);
      }
      \draw (0.375, -0.2) node {\( z_{1} \)};
      \draw (1.125, -0.2) node {\( \tilde{y}_{1} \)};
      \draw (9.75,0) node {\( \ldots \)};
      \draw[
      decoration={
        brace,
        raise=0.2cm
      },
      decorate
      ] (0.00,0) -- (4.45,0) node [pos=0.5,anchor=south,yshift=0.3cm]{\( \tilde{J}_{1} \)};
      \draw[
      decoration={
        brace,
        raise=0.2cm
      },
      decorate
      ] (4.5,0) -- (8.95,0) node [pos=0.5,anchor=south,yshift=0.3cm]{\( \tilde{J}_{2} \)};
      \draw[
      decoration={
        brace,
        raise=0.2cm
      },
      decorate
      ] (10.5,0) -- (15.75,0) node [pos=0.5,anchor=south,yshift=0.3cm]{\( \tilde{J}_{s-1} \)};
      
    \end{tikzpicture}
    \caption{Partitioning \( \tilde{J} \) into pieces \( \tilde{J}_{i} \).}
    \label{fig:partition-into-Ji-pieces}
  \end{figure}
  In this notation we have
  \[ \tilde{J} = \tilde{J}_{0} + \tilde{J}_{1} + \cdots + \tilde{J}_{s-2} + \tilde{J}_{s-1}. \]
  By the choice of \( n(k) \), each element in the sum above is \( \eta_{k} \)-close to \( \rho \).
  We claim that all \( \tilde{J}_{i} \) are elements of \( SB_{\eta_{k}}[L_{k}'] \) implying that so
  is \( \tilde{J} \).  If \( \tilde{J}_{i} < L_{k}' \), then
  \( \tilde{J}_{i} \in B_{\eta_{k}}[L_{k}'] \) and we are done.  Otherwise, Lemma
  \ref{lem:tiled-sum} (together with Remark \ref{rem:improving-lemma-for-other-form-of-J})
  applies\footnote{For an application of Lemma \ref{lem:tiled-sum} to \( \tilde{J}_{i} \),
    \( i < s-1 \), we add an extra \( \tilde{z}_{j_{i+1}} = 0 \) at the end.} and by the choice of
  \( L_{k}' \) we get \( \tilde{J}_{i} \in SB_{\eta_{k}}[L_{k}'] \).
\end{proof}

\section{Regular cross sections of sparse flows}
\label{sec:rudolphs-method}

We now prove the main theorem under the additional assumption that the flow is sparse.

\begin{theorem}[Regular cross sections of sparse flows]
  \label{thm:tiling-sparse-flows}
  Let \( \mff \) be a free sparse Borel flow on a standard Borel space \( \Omega \).  There is a
  Borel \( \{\alpha, \beta\} \)-regular cross section \( \mathcal{C} \) such that moreover for
  any \( \eta > 0 \) there exists \( N(\eta) \) such that for all \( x \in \mathcal{C} \) and
  \( n \ge N(\eta) \)
  \begin{equation}
    \label{eq:uniform-density-sparse}
    \biggl|\,\rho - \frac{1}{n}
    \sum_{k=0}^{n-1}\chi_{\mathcal{C}_{\alpha}}\bigl(\phi_{\mathcal{C}}^{k}(x)\bigr)\,\biggr| <
    \eta.
  \end{equation}
\end{theorem}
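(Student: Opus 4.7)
The plan is to execute the sparse/co-sparse construction outlined in Subsection \ref{sec:sparse-case}, using Lemma \ref{lem:at-least-two-classes-Kn} to produce the initial scaffolding cross section and Lemma \ref{lem:sparse-induction-step} to power the inductive step. First I would fix a summable sequence of shifts, say $\epsilon_{n} = 2^{-n-2}$, and a strictly decreasing sequence $\cl{\eta} = (\eta_{n})$ with $\eta_{0} = 1$ and $\eta_{1} \le \min\{\rho, 1-\rho\}$, say $\eta_{n} = \min\{\rho, 1-\rho\}/2^{n}$. Apply Lemma \ref{lem:regular-tilings} to obtain an $\cl{\eta}$-flexible sequence $\cl{L}^{(0)}$. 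Now inductively define reals $K_{n}$, natural numbers $N_{n}$, and flexible sequences $\cl{L}^{(n)}$ by setting $K_{0} = L^{(0)}_{0}$, $N_{0} = 0$, and, given $K_{n}$, $N_{n}$, $\cl{L}^{(n)}$, letting
\[ K_{n+1} = \max\bigl\{K_{n} + 2{\textstyle\sum_{k}\epsilon_{k}},\, K_{\Lem\ref{lem:sparse-induction-step}}(\cl{\eta}, \cl{L}^{(n)}, \epsilon_{n+1}, n, N_{n})\bigr\}, \]
together with $\cl{L}^{(n+1)} = \cl{L}'_{\Lem\ref{lem:sparse-induction-step}}(\cl{\eta}, \cl{L}^{(n)}, \epsilon_{n+1}, n, N_{n})$ and $N_{n+1} = N'_{\Lem\ref{lem:sparse-induction-step}}(\cl{\eta}, \cl{L}^{(n)}, \epsilon_{n+1}, n, N_{n})$. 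Then apply Lemma \ref{lem:at-least-two-classes-Kn} with $\epsilon = \sum_{k}\epsilon_{k}$ and the sequence $(K_{n})$ to obtain a sparse cross section $\mathcal{C}_{0}$ satisfying items \eqref{item:lessK0-is-trivial}--\eqref{item:gaps-are-close-to-average}.

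Next I would construct a sequence of cross sections $\mathcal{C}_{n}$ together with Borel bijections between $\mathcal{C}_{0}$ and $\mathcal{C}_{n}$ (tracking which point is which) such that, at every stage $n$, every $\oer[\alpha,\beta]{\mathcal{C}_{n}}$-class corresponds to a $\oer[\le K_{n}]{\mathcal{C}_{0}}$-class, is $(\cl{\eta}, \cl{L}^{(n)})$-tiled as a tiled real, lies in $\bigcap_{k=0}^{n}SB_{\eta_{k}}[L^{(n)}_{k}]$, and is $N_{n}$-near $\rho$. Passing from $\mathcal{C}_{n}$ to $\mathcal{C}_{n+1}$ is done class-by-class over the equivalence relation $\oer[\le K_{n+1}]{\mathcal{C}_{n}}$: each such class is a finite alternating arrangement $z_{1}, y_{1}, z_{2}, y_{2}, \ldots, z_{s}$, where the $z_{i}$ are the $\oer[\alpha,\beta]{\mathcal{C}_{n}}$-subclasses (tiled reals by induction) and the $y_{i} \in (K_{n}, K_{n+1}]$ are the large gaps between them, which by Lemma \ref{lem:at-least-two-classes-Kn}\eqref{item:gaps-are-close-to-average} satisfy $K_{n+1}/2 \le y_{i} \le K_{n+1}$ up to an $\epsilon$-adjustment. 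Lemma \ref{lem:sparse-induction-step} then delivers tiled $\tilde{y}_{i}$'s with $|\sum_{i=1}^{r}(\tilde{y}_{i}-y_{i})| < \epsilon_{n+1}$, producing an $\oer[\alpha,\beta]{\mathcal{C}_{n+1}}$-class $\tilde{J} = z_{1} + \tilde{y}_{1} + \cdots + z_{s}$ inheriting all the invariants for stage $n+1$. The construction is carried out in a Borel way (choosing $\tilde{y}_{i}$ by some Borel selector), perturbing each original point of $\mathcal{C}_{0}$ by a cumulative amount bounded by $\sum_{k=1}^{n+1}\epsilon_{k}$, and inserting the new tile endpoints of $\tilde{y}_{i}$.

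Finally, each point of $\mathcal{C}_{0}$ converges along the flow direction to a limit position since the perturbations are summable, and the newly inserted tile points (once introduced at some stage) are likewise only moved by a summable tail thereafter, so the limit cross section $\mathcal{C}_{\infty}$ is well-defined and Borel. By Lemma \ref{lem:at-least-two-classes-Kn}\eqref{item:gaps-are-close-to-average} together with the buffer $2\sum_{k}\epsilon_{k}$ built into $K_{n+1}$, no perturbation at stage $n+1$ ever merges or splits a $\oer[\le K_{n}]{}$-class, so the combinatorial picture of equivalence classes stabilizes at each finite level; in the limit each orbit is exhausted by a nested union of $\oer[\alpha,\beta]{\mathcal{C}_{\infty}}$-classes, hence $\mathcal{C}_{\infty}$ is $\{\alpha,\beta\}$-regular. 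For the uniform frequency estimate, the endpoints of the $\oer[\alpha,\beta]{\mathcal{C}_{n}}$-classes form a bounded sub cross section $\mathcal{D}_{n} \subseteq \mathcal{C}_{\infty}$ whose gaps, being the lengths of $\tilde{J}$'s, have $\alpha$-frequency at most $\eta_{n+1}$-away from $\rho$; Lemma \ref{lem:equivalent-formulation-of-regularity} then yields \eqref{eq:uniform-density-sparse}.

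The main obstacle is the simultaneous bookkeeping of all the parameters across steps: one needs $K_{n+1}$ chosen \emph{after} knowing $\cl{L}^{(n)}$ and $N_{n}$ (as Lemma \ref{lem:sparse-induction-step} requires), yet large enough that the interval cross section produced by Lemma \ref{lem:at-least-two-classes-Kn} still has gaps in the right range and each $\oer[\le K_{n+1}]{}$-class has at least two $\oer[\le K_{n}]{}$-subclasses (so that the inductive hypotheses $z_{i} \in SB_{\eta_{k}}[L^{(n)}_{k}]$ and the ``$N_{n}$-near $\rho$'' condition propagate); in addition, the margin $2\sum_{k}\epsilon_{k}$ built into $K_{n+1}-K_{n}$ must genuinely prevent the accumulated shifts of later stages from ever reclassifying a gap as belonging to a different $\oer[\le K_{m}]{}$ level. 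Once the parameters are threaded correctly, the algebraic content is exactly what Lemmas \ref{lem:sparse-induction-step} and \ref{lem:at-least-two-classes-Kn} were designed to supply, and the verification of regularity and uniform frequency at the limit is straightforward from Lemma \ref{lem:equivalent-formulation-of-regularity}.
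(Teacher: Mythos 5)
Your proposal follows the same overall strategy as the paper's proof: fix summable shifts $\epsilon_n$ and frequencies $\eta_n$, apply Lemma~\ref{lem:regular-tilings} to get $\cl{L}^{(0)}$, thread $K_n$, $N_n$, $\cl{L}^{(n)}$ through Lemma~\ref{lem:sparse-induction-step}, build a scaffolding cross section via Lemma~\ref{lem:at-least-two-classes-Kn}, run the inductive perturb-and-tile step class-by-class over $\oer[\le K_{n+1}]{\mathcal{C}_n}$, and pass to the limit. That is essentially the paper's construction, and you correctly identify the parameter-threading as the delicate point. However, there is a genuine gap at the end: the verification of the uniform frequency estimate.

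You claim that ``the endpoints of the $\oer[\alpha,\beta]{\mathcal{C}_{n}}$-classes form a bounded sub cross section $\mathcal{D}_{n}\subseteq\mathcal{C}_{\infty}$,'' and you feed this into Lemma~\ref{lem:equivalent-formulation-of-regularity}\eqref{item:sub-cross-rho-gaps}. This is false: the gaps of this endpoint set include the lengths of the rank-$n$ blocks (which correspond to $\oer[\le K_n]{\mathcal{C}_0}$-classes and are finite but have no uniform upper bound across orbits), as well as the spans between adjacent rank-$n$ blocks (which, since $\mathcal{C}_0$ is sparse, are bi-infinitely unbounded on each orbit). So the hypothesis $\rgap[\mathcal{D}_{\eta}]\le M(\eta)$ in Lemma~\ref{lem:equivalent-formulation-of-regularity}\eqref{item:sub-cross-rho-gaps} fails and the lemma does not apply. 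This is exactly what the $(\cl{\eta},\cl{L})$-tiled bookkeeping is for: you correctly carry it as an inductive invariant but then do not use it. The paper's fix is to build, for each $j$, auxiliary sub cross sections $\mathcal{D}_j^{(k)}\subseteq\mathcal{C}_k$ ($k\ge j$) that partition each rank-$k$ block into pieces of length at most $L_j^{(j)}$ from $B_{\eta_j}[L_j^{(j)}]$ (this is possible precisely because each block lies in $SB_{\eta_j}[L_j^{(j)}]$), extend these coherently across stages, and take $\mathcal{D}_j=\bigcup_{k\ge j}(\mathcal{D}_j^{(k)}+H_k)$. Only after this refinement are the gaps uniformly bounded, and Lemma~\ref{lem:equivalent-formulation-of-regularity} then applies. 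Your argument for $\{\alpha,\beta\}$-regularity of $\mathcal{C}_\infty$ itself is fine; it is the ``moreover'' part that needs this additional witness.

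There are also some quantitative slips that, while fixable, would break the construction as written. The buffer $K_{n+1}\ge K_n+2\sum_k\epsilon_k$ is not sufficient: if Lemma~\ref{lem:at-least-two-classes-Kn} is invoked with $\epsilon=\sum_k\epsilon_k$, then a gap is initially $\epsilon$-close to $\frac{K_n+K_{n+1}}{2}$ and subsequently perturbed by up to $2\sum_k\epsilon_k$ more, so one needs $K_{n+1}-K_n>6\sum_k\epsilon_k$ (roughly) to keep each gap inside the interval $(K_n,K_{n+1}]$ throughout; the paper's $K_{n+1}\ge K_n+4$ with $\sum_n\epsilon_n\le 1/3$ and $\epsilon=1$ in Lemma~\ref{lem:at-least-two-classes-Kn} is calibrated for this. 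Likewise $K_0$ must be taken large enough (not merely $L_0^{(0)}=\beta$) for the first application of Lemma~\ref{lem:sparse-induction-step} to go through, and $N_0$ should be $1$ (not $0$) so the trivial $z_i=0$ are $N_0$-near $\rho$ by convention; the example sequence $\eta_n=\min\{\rho,1-\rho\}/2^n$ also violates the agreed normalization $\eta_0=1$.
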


\begin{proof}
  Let \( \epsilon_{n} = 2^{-n} \cdot \frac{\min\{\alpha,1\}}{3} \), \( n \ge 1 \), which will serve
  as bounds on the size of jumps at step \( n \), and let \( \eta_{n} \) be any strictly decreasing
  positive sequence converging to zero such that \( \eta_{0} = 1 \) and
  \( \eta_{1} < \min\{\rho, 1-\rho\} \).  Note that 
  \[ \sum_{n=1}^{\infty} \epsilon_{n} \le \alpha/3, \]
  so if we start with two points which are at least \( \alpha \) apart, and if each one is moved by at
  most \( \sum \epsilon_{n} \), no two points will be ``glued together.''

  Based on Lemma \ref{lem:sparse-induction-step}, we construct reals \( K_{n} \), naturals
  \( N_{n} \), and sequences \( \cl{L}^{(n)} \), \( n \in \mathbb{N} \), as follows.  To start, pick
  any \( \cl{\eta} \)-flexible sequence \( \cl{L}^{(0)} \), which exists by Lemma
  \ref{lem:regular-tilings}, pick \( K_{0} \ge 4 \cdot \max\{1,\beta\} \) such that
  \( SB_{\eta_{0}}[L_{0}^{(0)}] \) is \( \epsilon_{1} \)-dense in \( [K_{0} - 2, \infty) \) and put
  \( N_{0} = 1 \).  Define
  \begin{displaymath}
    \begin{aligned}
      K_{n+1} &= \max\bigl\{ K_{\Lem \ref{lem:sparse-induction-step}} (\cl{\eta}, \cl{L}^{(n)}, \epsilon_{n+1},
      n, N_{n}),\, K_{n} + 4\bigr\},\\
      \cl{L}^{(n+1)} & = \cl{L}'_{\Lem \ref{lem:sparse-induction-step}}(\cl{\eta},
      \cl{L}^{(n)}, \epsilon_{n+1}, n, N_{n}),\\
      N_{n+1} &= N'_{\Lem \ref{lem:sparse-induction-step}}(\cl{\eta},
      \cl{L}^{(n)}, \epsilon_{n+1}, n, N_{n}).\\
    \end{aligned}
  \end{displaymath}
  Note that by construction \( L_{k}^{(n)} = L_{k}^{(n+1)} \) for all \( k \le n \).  We construct
  Borel cross sections \( \mathcal{C}_{n} \), and Borel function
  \( h_{n+1} :\mathcal{C}_{n} \to (-\epsilon_{n},\epsilon_{n}) \), which will represent shifts of
  points in \( \mathcal{C}_{n} \).

  We start with an application of Lemma \ref{lem:at-least-two-classes-Kn} which gives a sparse
  cross section \( \mathcal{C}_{0} \) such that
  \begin{enumerate}[(i)]
  \item \( \rgap[\mathcal{C}_{0}](x) > K_{0} \) for all \( x \in \mathcal{C}_{0} \).
  \item Each \( \oer[\le K_{n+1}]{\mathcal{C}_{0}} \)-class consists of at least two \( \oer[\le
    K_{n}]{\mathcal{C}_{0}} \)-classes for all \( n \in \mathbb{N} \).
  \item\label{item:gap-size-around-average} Distance between adjacent
    \( \oer[\le K_{n}]{\mathcal{C}_{0}} \)-classes within a given
    \( \oer[\le K_{n+1}]{\mathcal{C}_{0}} \)-class is \( 1 \)-close to \( (K_{n+1} + K_{n})/2 \).
    Since \( K_{n+1} \ge K_{n} + 4 \), and since \( \sum \epsilon_{n} \le 1/3 \), even if each point
    of \( \mathcal{C}_{0} \) is perturbed by at most \( \sum \epsilon_{n} \), no two
    \( \oer[\le K_{n}]{\mathcal{C}_{0}} \)-classes will be ``glued'', nor any
    \( \oer[\le K_{n}]{\mathcal{C}_{0}} \)-class will ``split'' into two or more classes.  This
    guarantees that during our construction the structure of \( \oer[\le K_{n}]{} \)-classes will
    remain intact.
  \end{enumerate}

  {\bf Step 1:} Constructing \( \mathcal{C}_{1} \).  Consider an \( \oer[\le K_{1}]{\mathcal{C}_{0}}
  \)-class.  It consists of a number of points \( x_{1} < x_{2} < \cdots < x_{n} \) such that 
  \[ K_{0} < \dist(x_{i}, x_{i+1}) \le K_{1} \quad \textrm{for all } i < n.\]
  As was explained in Subsection \ref{sec:sparse-case}, we shall move each \( x_{i} \) by at most \(
  \epsilon_{1} \).  This is done via an application of Lemma \ref{lem:sparse-induction-step}.
  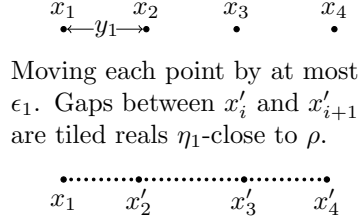
\begin{figure}[ht]
    \centering
    \begin{tikzpicture}
      \foreach \p/\x in {1/0, 2/1.1, 3/2.3, 4/3.6} {
        \filldraw (\x, 0) circle (1pt);
        \draw (\x, 0.25) node {\( x_{\p} \)};
      }
      \draw (0.575,0) node {\( y_{1} \)};
      \draw[very thin, ->] (0.7,0) -- (1.05,0);
      \draw[very thin, <-] (0.05,0) -- (0.4,0);
      \draw (1.8,-1) node[text width=5cm] {
        Moving each point by at most \( \epsilon_{1} \).
        Gaps between \( x_{i}' \) and \( x_{i+1}' \) are tiled
        reals \( \eta_{1} \)-close to \( \rho \).
    };
    \filldraw (0, -2) circle (1pt);
    \draw (0, -2.3) node {\( x_{1} \)};
    \foreach \p/\x in {2/1.0, 3/2.4, 4/3.5} {
      \filldraw (\x, -2) circle (1pt);
      \draw (\x, -2.3) node {\( x'_{\p} \)};
    }
    \foreach \x in {0.0, 0.1, ..., 3.5} {
      \filldraw (\x,-2) circle (0.5pt);
    }      
    \end{tikzpicture}
    \caption{Constructing \( \mathcal{C}_{1} \).  A single
      \( \mathsf{E}^{\le K_{1}}_{\mathcal{C}_{0}} \)-class is shown.}
    \label{fig:constructing-C1}
  \end{figure}
  In the context of this lemma, \( y_{i} = \dist(x_{i}, x_{i+1}) \) correspond to sizes of the
  gaps, and \( z_{i} = 0 \) correspond to sizes of \( \oer[\le K_{0}]{\mathcal{C}_{0}} \)-classes,
  which are single points, thus have zero length and are \( N_{0} \)-near \( \rho \).  By property
  \eqref{item:gap-size-around-average} of the cross section \( \mathcal{C}_{0} \),
  \[ \frac{K_{1}}{2} \le \frac{K_{0}+K_{1}}{2} - 1 \le y_{i} \le \frac{K_{0}+K_{1}}{2} + 1 \le K_{1}
  \quad \textrm{for all } i < n, \]
  and also assumptions on \( z_{i} \) in Lemma \ref{lem:sparse-induction-step} are trivially
  satisfied, so this lemma, and we move as it prescribes each
  \( x_{i} \) to \( x_{i}' \) and tile the gaps \( \dist(x_{i}', x_{i}') \).  The process is
  depicted in Figure \ref{fig:constructing-C1}.  Since the argument of Lemma
  \ref{lem:sparse-induction-step} describes an effective algorithm of performing such a tiling, this
  procedure can be run in a Borel way over all \( \oer[\le K_{1}]{\mathcal{C}_{0}} \)-classes
  resulting in a cross section \( \mathcal{C}_{1} \).

  To summarize, \( \mathcal{C}_{1} \) has points of two kinds.  It contains points from
  \( \mathcal{C}_{0} \) shifted by at most \( \epsilon_{1} \); the shift is given by a Borel
  function \( h_{1} : \mathcal{C}_{0} \to (-\epsilon_{1}, \epsilon_{1}) \) such that
  \[ \mathcal{C}_{0} + h_{1} := \bigl\{x + h_{1}(x) : x \in \mathcal{C}_{0}\bigr\} \subseteq
  \mathcal{C}_{1}. \]
  All other points in \( \mathcal{C}_{1} \) were added during ``tiling the gaps'' procedure inside
  \( \oer[\le K_{1}]{\mathcal{C}_{1}} \)-classes.  Note that \( \oer[\alpha,\beta]{\mathcal{C}_{1}}
  \)-classes are in one-to-one correspondence with \( \oer[\le K_{1}]{\mathcal{C}_{0}} \)-classes.

  The conclusion of Lemma \ref{lem:sparse-induction-step} guarantees the following:
  \begin{itemize}
  \item Any such class is necessarily \( (\cl{\eta}, \cl{L}^{(1)}) \)-tiled (by
    \ref{lem:sparse-induction-step}\eqref{item:main-item}).
  \item \( \oer[\alpha,\beta]{\mathcal{C}_{1}} \)-classes are \( N_{1} \)-near \( \rho \) (by
    \ref{lem:sparse-induction-step}\eqref{item:N1-near}).
  \item Every \( \oer[\alpha,\beta]{\mathcal{C}_{1}} \)-class is in
    \( SB_{\eta_{i}}[L_{i}^{(1)}] \) for \( i = 0,1 \) (by
    \ref{lem:sparse-induction-step}\eqref{item:in-SB-eta-m0-plus-one}); and is, in particular, \(
    \eta_{1} \)-close to \( \rho \).
  \end{itemize}
  First of all, note that these items form the set of with the assumptions on elements \( z_{i} \)
  in Lemma \ref{lem:sparse-induction-step}.  During the next step of the construction, when
  \( \mathcal{C}_{2} \) will be defined, \( \oer[\alpha,\beta]{\mathcal{C}_{1}} \)-classes will play
  the role of elements \( z_{i} \) in that lemma.  Also, for the last item we need to fix a witness,
  i.e., we may fix a Borel sub cross section \( \mathcal{D}_{1}^{(1)} \subseteq \mathcal{C}_{1} \)
  such that
  \begin{itemize}
  \item \( \rgap[\mathcal{D}_{1}^{(1)}](x) \in B_{\eta_{1}}[L_{1}^{(1)}] \) for all \( x \in
    \mathcal{D}_{1}^{(1)} \) such that \( x \, \oer[\alpha,\beta]{\mathcal{C}_{1}}\,
    \phi_{\mathcal{D}_{1}^{(1)}}(x) \);
  \item \( \min[x]_{\oer[\alpha,\beta]{\mathcal{C}_{1}}} \in \mathcal{D}_{1}^{(1)} \) and
    \( \max[x]_{\oer[\alpha,\beta]{\mathcal{C}_{1}}} \in \mathcal{D}_{1}^{(1)} \) for all
    \( x \in \mathcal{C}_{1} \).
  \end{itemize}
  Geometrically \( \mathcal{D}_{1}^{(1)} \) gives a partition of each
  \( \oer[\alpha,\beta]{\mathcal{C}_{1}} \)-class into pieces of length at most
  \( L_{1}^{(1)} \) each having \( \alpha \)-frequency \( \eta_{1} \)-close to \( \rho \).
  This finishes the first step of the construction.

  {\bf Step 2:} Constructing \( \mathcal{C}_{2} \).  Consider the relation
  \( \oer[\le K_{2}]{\mathcal{C}_{1}} \).  Each \( \oer[\le K_{2}]{\mathcal{C}_{1}} \)-class
  consists of at least two \( \oer[\alpha,\beta]{\mathcal{C}_{1}} \)-classes with gaps \( y_{i} \)
  between them being \( 2 \)-close\footnote{It was \( 1 \)-close in \( \mathcal{C}_{0} \), but we
    have moved points during the construction of \( \mathcal{C}_{1} \) by \( \epsilon_{1} \).} to
  \( (K_{2} + K_{1})/2 \) (see Figure \ref{fig:constructing-C2}).
  \begin{figure}[ht]
  \centering
  \begin{tikzpicture}
    \foreach \x in {0, 1.6, 3.3, 5.0, 6.6, 7.6} { \filldraw (\x, 0) circle (1pt); }
    \foreach \x in {0.0, 0.1, ..., 1.6,3.3, 3.4, ...,5.0,6.6,6.7, ...,7.6} {
      \filldraw (\x, 0) circle (0.5pt);
    }
    \draw (0.8, 0.2) node {\( z_{1} \)};
    \draw (4.15, 0.2) node {\( z_{2} \)};
    \draw (7.1, 0.2) node {\( z_{3} \)};
    \draw (2.45, 0) node {\( y_{1} \)};
    \draw (5.8, 0) node {\( y_{2} \)};
    \draw (3.8, -1) node[text width=7cm] {
      Moving each \( \oer[\alpha,\beta]{\mathcal{C}_{1}} \)-class \( z_{i} \) by at
      most \( \epsilon_{2} \).
      Gaps are tiled by reals from \( \bigcap_{j=0}^{1} SB_{\eta_{j}}[L_{j}^{(1)}] \).
    };
    \foreach \x in {0, 1.6, 3.2, 4.9, 6.8, 7.8} { \filldraw (\x, -2) circle (1pt); }
    \foreach \x in {0.0, 0.1, ..., 7.8} {\filldraw (\x, -2) circle (0.5pt);}
    \draw (2.4,-2.3) node {\( \tilde{y}_{1} \)};
    \draw (5.85,-2.3) node {\( \tilde{y}_{2} \)};
  \end{tikzpicture}

  \caption{Constructing \( \mathcal{C}_{2} \).}
  \label{fig:constructing-C2}
\end{figure}
  Note that
  \( K_{2} \ge K_{1} + 4 \) implies
  \[ K_{2}/2 \le \frac{K_{1} + K_{2}}{2} - 2 \le y_{i} \le \frac{K_{1} + K_{2}}{2} + 2 \le K_{2}. \]
  We set \( z_{i} \)'s to be the tiled reals that correspond to
  \( \oer[\alpha,\beta]{\mathcal{C}_{1}} \)-classes.  By the choice of \( K_{2} \),
  \( \cl{L}^{(2)} \), and \( N_{2} \) given by Lemma \ref{lem:sparse-induction-step},
  one may move each
  \( \oer[\alpha,\beta]{\mathcal{C}_{1}} \)-class by at most \( \epsilon_{2} \) and tile the gaps by
  tiled reals from \( \bigcap_{j=0}^{1}SB_{\eta_{j}}[L_{j}^{(1)}] \) as
  shown in Figure \ref{fig:constructing-C2}.  This defines the cross sections \( \mathcal{C}_{2} \)
  which consists of shifted points from \( \mathcal{C}_{1} \) and newly added points in between
  \( \oer[\alpha,\beta]{\mathcal{C}_{1}} \)-classes.  Let
  \( h_{2} : \mathcal{C}_{1} \to (-\epsilon_{2}, \epsilon_{2}) \) be the shift function:
  \[ \mathcal{C}_{1} + h_{2} := \bigl\{x + h_{2}(x) : x \in \mathcal{C}_{1}\bigr\} \subseteq
  \mathcal{C}_{2}. \]
  Note that \( h_{2} \) is constant on \( \oer[\alpha,\beta]{\mathcal{C}_{1}} \)-classes.

  Again, the conclusion of Lemma \ref{lem:sparse-induction-step} ensures that each
  \( \oer[\alpha,\beta]{\mathcal{C}_{2}} \)-class satisfies the necessary conditions for another
  round of application of the same lemma.  Moreover, it ensures that each
  \( \oer[\alpha,\beta]{\mathcal{C}_{2}} \)-class is an element of \( SB_{\eta_{j}}[L_{j}^{(2)}] \) for
  \( j \le 2 \).  We now define sub cross sections
  \( \mathcal{D}_{j}^{(2)} \subseteq \mathcal{C}_{2} \), \( j = 1,2 \) as follows.  For \( j=1 \) we
  start with \( \mathcal{D}_{1}^{(1)} \) which partitions
  \( \oer[\alpha,\beta]{\mathcal{C}_{1}} \)-classes.  The cross section \( \mathcal{D}_{1}^{(2)} \)
  will consist of points of two types.  First it will contain the copy of
  \( \mathcal{D}_{1}^{(1)} \) inside \( \mathcal{C}_{2} \), i.e., it will have points
  \[ \mathcal{D}_{1}^{(1)} + h_{2} := \Bigl\{ x + h_{2}(x) : x \in \mathcal{D}_{1}^{(1)} \Bigr\}. \]
  Recall that in the conclusion of Lemma \ref{lem:sparse-induction-step} tiled reals \( \tilde{y}_{i} \)
  are taken from \( SB_{\eta_{j}}[L'_{j}] \) for \( j \le m_{0} \), i.e., in our case the gaps
  between the \( \oer[\alpha,\beta]{\mathcal{C}_{1}} \)-classes inside \( \mathcal{C}_{2} \) are
  elements of \( SB_{\eta_{1}}[L^{(1)}_{1}] \), hence each such gap can be partitioned into pieces
  \( B_{\eta_{1}}[L_{1}^{(1)}] \).  We include all such partition points into \(
  \mathcal{D}_{1}^{(2)} \) .  This results in \( \mathcal{D}_{1}^{(2)} \) partitioning each \(
  \oer[\alpha,\beta]{\mathcal{C}_{2}} \)-classes into pieces of size at most \( L_{1}^{(1)} \)
  each having \( \alpha \)-frequency \( \eta_{1} \)-close to \( \rho \).  Note that (modulo the \(
  h_{2} \)-shift) \( \mathcal{D}_{1}^{(2)} \) extends \( \mathcal{D}_{1}^{(1)} \).

  The remaining sub cross section \( \mathcal{D}_{2}^{(2)} \) is defined based on
  \ref{lem:sparse-induction-step}\eqref{item:in-SB-eta-m0-plus-one}.  More precisely, each
  \( \oer[\alpha,\beta]{\mathcal{C}_{2}} \)-class is guaranteed to be an element of
  \( SB_{\eta_{2}}[L^{(2)}_{2}] \), and we let \( \mathcal{D}_{2}^{(2)} \) to represent a partition
  of each such class into pieces of size at most \( L_{2}^{(2)} \) each having
  \( \alpha \)-frequency \( \eta_{2} \)-close to \( \rho \).  This concludes the second step of the
  construction.

  {\bf Step \( k + 1 \):} Constructing \( \mathcal{C}_{k+1} \) and \( \mathcal{D}_{j}^{(k+1)} \),
  \( j \le k + 1\) from \( \mathcal{C}_{k} \) and \( \mathcal{D}_{j}^{(k)} \), \( j \le k \).  The
  construction continues in a similar fashion as in step 2, and produces a cross section
  \( \mathcal{C}_{k+1} \) and a Borel shift function
  \( h_{k + 1} : \mathcal{C}_{k} \to (-\epsilon_{k+1}, \epsilon_{k+1}) \) such that
  \[ \mathcal{C}_{k} + h_{k+1} := \bigl\{x + h_{k+1}(x) : x \in \mathcal{C}_{k}\bigr\} \subseteq
  \mathcal{C}_{k+1}. \]
  The shift function is constant on \( \oer[\alpha,\beta]{\mathcal{C}_{k}} \)-classes.  These data
  is produced via the construction of Lemma \ref{lem:sparse-induction-step} for the parameters
  \( \epsilon = \epsilon_{k+1} \), \( m_{0} = k \), \( N = N_{k} \), \( \cl{L} = \cl{L}^{(k)} \).
  Specifically \( m_{0} = k \) ensures \( L^{(k+1)}_{j} = L^{(k)}_{j} \) for \( j \le k \), and the
  gaps between \( \oer[\alpha,\beta]{\mathcal{C}_{k}} \)-classes inside
  \( \oer[\alpha,\beta]{\mathcal{C}_{k+1}} \)-classes represent tiled reals from
  \( \bigcap_{j=0}^{m_{0}} SB_{\eta_{j}}[L^{(k)}_{j}] \).  This allows us to ``extend'' each
  \( \mathcal{D}_{j}^{(k)} \) to \( \mathcal{D}_{j}^{(k+1)} \), \( j \le k \).  Finally
  \( \mathcal{D}_{k+1}^{(k+1)} \) is constructed based on the conclusion
  \ref{lem:sparse-induction-step}\eqref{item:in-SB-eta-m0-plus-one} that each
  \( \oer[\alpha,\beta]{\mathcal{C}_{k+1}} \)-class is an element of
  \( SB_{\eta_{m_{0}+1}}[L^{(k+1)}_{k+1}] \).  Sub cross sections \( \mathcal{D}_{j}^{(k+1)} \) will
  satisfy:
  \begin{itemize}
  \item \( \rgap[\mathcal{D}_{j}^{(k+1)}](x) \in B_{\eta_{j}}[L_{j}^{(k+1)}] \) for all \( x \in
    \mathcal{D}_{j}^{(k+1)} \) such that \( x \, \oer[\alpha,\beta]{\mathcal{C}_{k+1}}\,
    \phi_{\mathcal{D}_{j}^{(k+1)}}(x) \);
  \item \( \min[x]_{\oer[\alpha,\beta]{\mathcal{C}_{k+1}}} \in \mathcal{D}_{j}^{(k+1)} \) and
    \( \max[x]_{\oer[\alpha,\beta]{\mathcal{C}_{k+1}}} \in \mathcal{D}_{j}^{(k+1)} \) for all
    \( x \in \mathcal{C}_{k+1} \) and \( j \le k+1 \).
  \end{itemize}

  \medskip

  Once cross sections \( \mathcal{C}_{n} \) have been constructed, the desired 
  section \( \mathcal{C} \) is defined to be the ``limit'' of \( \mathcal{C}_{n} \).  More formally,
  let \( f_{n,n+1} : \mathcal{C}_{n} \to \mathcal{C}_{n+1} \) be given by
  \( f_{n,n+1}(x) = x + h_{n+1}(x) \), set 
  \[ f_{m,n} = f_{n-1,n} \circ f_{n-2,n-1} \circ \cdots \circ f_{m+1, m+2} \circ f_{m,m+1}\]
  to be the embedding \( \mathcal{C}_{m} \to \mathcal{C}_{n} \) for \( m \le n \) with the natural
  agreement that \( f_{m,m} \) is the identity map.  Define
  \[ H_{n} : \mathcal{C}_{n} \to \Bigl(-\mkern-12mu \sum_{k=n+1}^{\infty}\mkern-8mu \epsilon_{k},
  \sum_{k=n+1}^{\infty}\mkern-8mu \epsilon_{k}\Bigr)\] to be given by
  \[ H_{n}(x) = \sum_{k=n}^{\infty}h_{k+1}\bigl(f_{n,k}(x)\big). \] 
  The function \( H_{n} \) is just the ``total shift'' of each point in \( \mathcal{C}_{n} \).
  Note that 
  \[ \mathcal{C}_{m} + H_{m} \subseteq \mathcal{C}_{n} + H_{n} \quad \textrm{for all } m \le n.\]
  The limit cross section \( \mathcal{C} \) is defined to be the (increasing) union
  \[ \mathcal{C} = \bigcup_{k} \bigl( \mathcal{C}_{k} + H_{k} \bigr) := \bigcup_{k} \bigl\{\, x +
  H_{k}(x) : x \in \mathcal{C}_{k}\,\bigr\}. \]
  It is immediate from the construction that \( \mathcal{C} \) is an \( \{\alpha,\beta\} \)-regular
  cross section.

  The moreover part of the theorem follows from the properties of cross sections
  \( \mathcal{D}_{j}^{(k)} \).  First, we set for \( j \ge 1 \)
  \[ \mathcal{D}_{j} = \bigcup_{k \ge j} \bigl( \mathcal{D}_{j}^{(k)} + H_{k} \bigr).\]
  For each \( j \) the cross section \( \mathcal{D}_{j} \) partitions \( \mathcal{C} \) into pieces
  of size at most \( L_{j}^{(j)} \) each piece having \( \alpha \)-frequency \( \eta_{j} \)-close to
  \( \rho \).  Since \( \eta_{j} \to 0 \) as \( j \to \infty \), Lemma
  \ref{lem:equivalent-formulation-of-regularity} implies the moreover part of the theorem. 
\end{proof}

\section{Battle for every last orbit}
\label{sec:tiling-general-flows}

In this section we finally prove that a free Borel flow can always be \( \{\alpha,\beta\} \)-tiled.

\begin{theorem}[Main Theorem]
  \label{thm:tiling-general-flows}
  Let \( \mff \) be a free Borel flow on a standard Borel space \( \Omega \).  There is an
  \( \{\alpha, \beta\} \)-regular cross section \( \mathcal{C} \) such that moreover
  for any \( \eta > 0 \) there exists
    \( N(\eta) \) such that for all \( x \in \mathcal{C} \) and
    \( n \ge N(\eta) \)
    \begin{equation}
      \label{eq:main-uniform-density}
     \biggl|\,\rho - \frac{1}{n}
    \sum_{k=0}^{n-1}\chi_{\mathcal{C}_{\alpha}}\bigl(\phi_{\mathcal{C}}^{k}(x)\bigr)\,\biggr| < \eta.
    \end{equation}
\end{theorem}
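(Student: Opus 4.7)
The plan is to reduce to Theorem~\ref{thm:tiling-sparse-flows} via the trichotomy outlined in Subsection~\ref{sec:big-picture}. It suffices to construct a Borel cross section $\mathcal{C}$ whose equivalence classes under $\oer[\alpha,\beta]{\mathcal{C}}$ are arbitrarily large within every orbit. Given such a $\mathcal{C}$, the phase space splits into three Borel invariant pieces: orbits on which a single $\oer[\alpha,\beta]{\mathcal{C}}$-class exhausts $\mathcal{C}$, where $\mathcal{C}$ is already $\{\alpha,\beta\}$-regular; orbits containing a one-sided infinite $\oer[\alpha,\beta]{\mathcal{C}}$-class, whose restriction is smooth because the finite endpoint of such a class gives a Borel transversal; and the remaining orbits, for which
\[
\mathcal{C}' \;=\; \bigl\{\,x \in \mathcal{C} : x = \min [x]_{\oer[\alpha,\beta]{\mathcal{C}}}\text{ or }x = \max [x]_{\oer[\alpha,\beta]{\mathcal{C}}}\,\bigr\}
\]
is automatically a sparse cross section. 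On the first part nothing is to be done; on the second a regular cross section is built by hand; on the third, Theorem~\ref{thm:tiling-sparse-flows} applied to $\mff$ restricted to that invariant subset delivers the required regular cross section together with uniform $\alpha$-density $\rho$.

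The cross section $\mathcal{C}$ is produced as a ``limit'' of a sequence $\mathcal{C}_0, \mathcal{C}_1, \ldots$ obtained by cumulative shifts $H_n$, exactly as in the proof of Theorem~\ref{thm:tiling-sparse-flows}. Fix $\epsilon_n = 2^{-n}\min\{\alpha,1\}/3$, a decreasing $\cl{\eta} \to 0$ with $\eta_0 = 1$ and $\eta_1 \le \min\{\rho,1-\rho\}$, and an $\cl{\eta}$-flexible $\cl{L}^{(0)}$ from Lemma~\ref{lem:regular-tilings}. Start with a bounded-gap $\mathcal{C}_0$ given by Corollary~\ref{cor:wagh-gap-bounds}, refined by Lemma~\ref{lem:at-least-two-classes-Kn} so that the eventual perturbations cannot merge or split prescribed coarse classes. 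Call a point of $\mathcal{C}_n$ a \emph{rank-$n$ block} if it is the rightmost (or leftmost) endpoint of an $\oer[\alpha,\beta]{\mathcal{C}_n}$-class created at step $n$; lower-rank blocks and leftover isolated points continue to sit inside rank-$n$ blocks. At step $n+1$ one selects, Borel-measurably within each orbit, pairs $(A,B)$ of adjacent rank-$n$ blocks chosen sufficiently far apart so that $B$ contains a number of rank-$(n-1)$ sub-blocks exceeding the threshold $M_{\Lem~\ref{lem:main-induction-lemma}}(D,\epsilon_n,\epsilon_{n+1},\eta_n,\eta_{n+1},\eta_{n+2})$, and then applies Lemma~\ref{lem:main-induction-lemma} to the sequence of gaps $(d_k)$ between those sub-blocks with
\[
R_k \;=\; \textstyle\bigcap_{j \le n-1} SB_{\eta_j}\bigl[L_j^{(n-1)}\bigr] \cap \mathcal{U}_{\epsilon_n}(d_k) \cap \tileable.
\]
Propagation of freedom provides admissible shifts of each sub-block by at most $\epsilon_n$ (recursively down through lower ranks by $\epsilon_{k+1}$ for rank-$k$ sub-sub-blocks) whose net effect on $B$ as a whole is within $\epsilon_{n+1}$, and simultaneously turns the gap between $A$ and $B$ into a tileable real with $\alpha$-frequency in a prescribed sub-interval of $[\rho-\eta_{n+1},\rho+\eta_{n+1}]$. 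Tiling that gap with elements of $\bigcap_{j \le n} SB_{\eta_j}[L_j^{(n)}]$ fuses $A$ and $B$ into a rank-$(n+1)$ block which is $(\cl{\eta},\cl{L}^{(n+1)})$-tiled and $N_{n+1}$-near $\rho$, reinstating the inductive hypothesis.

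Because each point's successive jumps are bounded by $\epsilon_{k+1}, \epsilon_{k+2}, \ldots$ for an increasing sequence of indices, $\sum_k \epsilon_k < \infty$ guarantees convergence of the cumulative shifts $H_n$, and $\mathcal{C} = \bigcup_n (\mathcal{C}_n + H_n)$ is a genuine Borel cross section. Greedy pairing within each orbit ensures that every sufficiently large rank-$n$ block is eventually incorporated into a rank-$(n+1)$ block, so $\oer[\alpha,\beta]{\mathcal{C}}$-classes of arbitrarily large rank occur in every orbit, as needed for the reduction. On the part of $\Omega$ where $\mathcal{C}$ is $\{\alpha,\beta\}$-regular, uniform convergence~\eqref{eq:main-uniform-density} is witnessed exactly as in the proof of Theorem~\ref{thm:tiling-sparse-flows}: one accumulates sub cross sections $\mathcal{D}_j = \bigcup_{k \ge j}(\mathcal{D}_j^{(k)} + H_k)$ whose gaps lie in $B_{\eta_j}[L_j^{(j)}]$, and applies Lemma~\ref{lem:equivalent-formulation-of-regularity}. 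The sparse and smooth pieces are handled separately, as indicated above.

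The main obstacle is the inductive bookkeeping: one must verify at every stage that the selected pair $(A,B)$ admits Lemma~\ref{lem:main-induction-lemma} with the small parameters $\delta = \epsilon_{n+1}$ and $\eta = \eta_n$, that is, that $B$ contains sufficiently many rank-$(n-1)$ sub-blocks whose admissible-shift sets $R_k$ are $\epsilon_n/6$-dense in the required frequency ranges. This is guaranteed by carrying along the four-tier flexibility data $(\cl{\eta},\cl{L}^{(n)},N_n,K_n)$ analogous to the sparse case and by choosing $\mathcal{C}_0$ via Lemma~\ref{lem:at-least-two-classes-Kn} so that $\oer[\le K_{n+1}]{}$-classes always properly refine $\oer[\le K_n]{}$-classes. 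Borelness of the pairing and of the admissible shifts follows from the effective nature of the procedures in Lemmata~\ref{lem:main-induction-lemma} and~\ref{lem:sparse-induction-step} combined with Luzin--Novikov uniformization, exactly as in the sparse argument.
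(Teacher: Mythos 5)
Your high-level strategy is the paper's: construct a cross section with arbitrarily large $\oer[\alpha,\beta]{}$-blocks in every orbit, invoke the three-way trichotomy (already-regular / smooth / sparse), and apply Theorem~\ref{thm:tiling-sparse-flows} on the sparse piece. But several steps in the inductive construction are confused or borrowed from the wrong half of the paper, and at least one is a genuine error.

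The clearest problem is your use of Lemma~\ref{lem:at-least-two-classes-Kn} to refine $\mathcal{C}_0$. That lemma is stated only for \emph{sparse} flows, and its proof depends on Lemma~\ref{lem:arbitrarily-large-sparse-two-classes}, which fails without sparsity: one cannot insert the $B(K_0,\ldots,K_n)$ blocks into large gaps if there are no large gaps. The whole point of the co-sparse argument is that sparsity is unavailable, so this step is illegal. The paper's proof simply takes $\mathcal{C}_0$ from Corollary~\ref{cor:wagh-gap-bounds} with $\rgap[\mathcal{C}_0] \in [K_0+1,K_0+2]$; the stability under cumulative perturbation by $\sum \epsilon_n < 1/3$ is automatic from uniform bounds on the gaps, and no nested $\oer[\le K_n]{}$-class structure is needed or even meaningful here.

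Two further descriptive slips point to a misunderstanding of how the propagation of freedom is threaded through the ranks. You ask that $B$ ``contain a number of rank-$(n-1)$ sub-blocks exceeding the threshold,'' but the propagation of freedom at step $n+1$ is run over the rank-$(n-1)$ blocks lying \emph{between} the adjacent rank-$n$ blocks $A$ and $B$; their count $N_n \ge M_n$ is fixed by the pair-spacing rule of step $n$, not by anything inside $B$ (whose interior is already an $\{\alpha,\beta\}$-regular segment that must move rigidly). Likewise, your admissible-shift sets $R_k = \bigcap_{j\le n-1} SB_{\eta_j}[L_j^{(n-1)}] \cap \mathcal{U}_{\epsilon_n}(d_k) \cap \tileable$ import the $(\cl{\eta},\cl{L})$-tiled machinery of Section~\ref{sec:flex-part}, which is the \emph{sparse} bookkeeping. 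In the co-sparse argument the crucial property of $R_k$ is recursive consistency: each admissible shift of a rank-$n$ block must decompose as a sum of admissible shifts of the lower-rank blocks inside the gap, which is exactly why the paper defines $R$ as (a suitable restriction of) $\mathcal{A}_n\bigl(\epsilon,(d_k),(R_k)\bigr)$ and then carries these sets forward. The $SB$ intersections do not record that compatibility and would not by themselves guarantee that a step-$(n+1)$ shift of $B$ can be realized while keeping all lower-rank shifts within their $\epsilon_k$-budgets. For the same reason, Lemma~\ref{lem:sparse-induction-step} does not figure in this half of the proof and should not be cited for Borelness.

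None of this spoils the reduction scheme, which you have right, but as written the inductive construction would not run, so the proof as given has a real gap.
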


\begin{proof}
  Let \( \mff \) be a free Borel flow on a standard Borel space \( \Omega \).  Not unlike the proof
  of Theorem \ref{thm:tiling-sparse-flows} we fix sequence 
  \[ (\epsilon_{n})_{n=1}^{\infty} = \Bigl(2^{-n} \cdot
  \frac{\min\{\alpha,1\}}{3}\Bigr)_{n=1}^{\infty} \]
  and \( (\eta_{n})_{n=0}^{\infty} \) is any strictly decreasing positive sequence converging to
  zero such that \( \eta_{0} = 1 \) and \( \eta_{1} < \min\{\rho, 1-\rho\} \).
  We also let \( \nu_{n}' = \eta_{n+1} + \frac{\eta_{n} - \eta_{n+1}}{3} \) and
  \( \nu_{n} = \eta_{n+1} + 2\frac{\eta_{n}-\eta_{n+1}}{3} \).  In our argument we shall need to
  take an interval strictly inside \( [\eta_{n+1}, \eta_{n}] \) and we are going to use
  \( [\nu'_{n}, \nu_{n}] \) for this purpose since 
  \[ \eta_{n+1} < \nu'_{n} < \nu_{n} < \eta_{n}. \]
  Similarly to the proof of Theorem \ref{thm:tiling-sparse-flows}, \( \epsilon_{n} \) controls the
  maximum shift of points, the total shift will therefore be bounded by
  \( \sum \epsilon_{n=1}^{\infty} \le 1/3 \).  We construct a sequence of Borel cross sections \(
  \mathcal{C}_{n} \) and the desired cross section \( \mathcal{C} \) will be defined as the
  ``limit'' of this sequence.

  Pick \( K_{0} \) so large that
  \begin{itemize}
  \item For any \( x \ge K_{0} \) sets \( \freqa^{-1}[\rho-\eta_{1},\rho-\eta_{2}] \) and
    \( \freqa^{-1}[\rho+\eta_{2},\rho+\eta_{1}] \) are \( \epsilon_{1}/6 \)-dense in
    \( \mathcal{U}_{\epsilon_{1}}(x) \).  The possibility to choose such \( K_{0} \) is guaranteed
    by Lemma \ref{lem:flexibitility-of-tileable-reals}.
    \item To avoid some possible collapses, we also make sure that \( K_{0} \) is large compared to \(
    1, \alpha \), and \( \beta \).  For instance, \( K_{0} > 2 \max\{1, \beta\} \) will work.
  \end{itemize}
  We let \( \mathcal{C}_{0} \) to be a cross section such that
  \( \rgap[\mathcal{C}_{0}](x) \in [K_{0}+1, K_{0}+2] \) for all \( x \in \mathcal{C}_{0} \), which
  exists by Corollary \ref{cor:wagh-gap-bounds}.  Each
  point will be shifted by at most \( \sum \epsilon_{n} \), therefore the distance between
  \( \mathcal{C}_{0} \)-neighbors will always be somewhere between \( K_{0} \) and \( K_{0} + 3 \).

  We now construct \( \mathcal{C}_{1} \).  For this we take a large natural number \( N_{1} \),
  precise bounds on which to be provided later.  In short, \( \mathcal{C}_{1} \) is constructed by
  selecting pairs of points in \( \mathcal{C}_{0} \) with at least \( N_{1} \)-many points between
  any two pairs, moving the right point within each pair by at most \( \epsilon_{1} \) so as to make
  the gap tileable, and adding points to tile the gaps, see Figure \ref{fig:Construction-C1}.
  \begin{figure}[ht]
    \centering
    \begin{tikzpicture}[scale=2]
      \foreach \x in {0,1,...,8}{
        \filldraw (\x,0) circle (0.75pt);
      }
      \draw (0.8,-0.2) rectangle (2.2,0.2);
      \draw (6.8,-0.2) rectangle (8.2,0.2);
      \draw[decoration={
        brace,
        raise=0.5cm
      },decorate]
      (2.8,0) -- (6.2,0) node[pos=0.5, anchor=south,yshift=0.6cm]
      {\( N_{1} \le \# \textrm{Points} \le 2N_{1}+1 \)};
      \draw (4,-0.6) node[text width=8cm]
      {Right points within each pair are shifted by \( < \epsilon_{1} \),
        and  gaps are tiled.};
      \foreach \x in {0,1,3,4,5,6,7}{
        \filldraw (\x,-1.2) circle (0.75pt);
      }
      \filldraw (2.2,-1.2) circle (0.75pt);
      \filldraw (7.8,-1.2) circle (0.75pt);
      \foreach \x in {1.2,1.4,...,2.0} {
        \filldraw (\x, -1.2) circle (0.5pt);
      }
      \foreach \x in {7.2,7.4,7.6} {
        \filldraw (\x, -1.2) circle (0.5pt);
      }
      \foreach \x in {1.6,7.4} {
        \draw (\x,-1.4) node {rank \( 1 \) block};
      }
    \end{tikzpicture}
    \caption{Constructing \( \mathcal{C}_{1} \) from \( \mathcal{C}_{0} \).}
    \label{fig:Construction-C1}
  \end{figure}
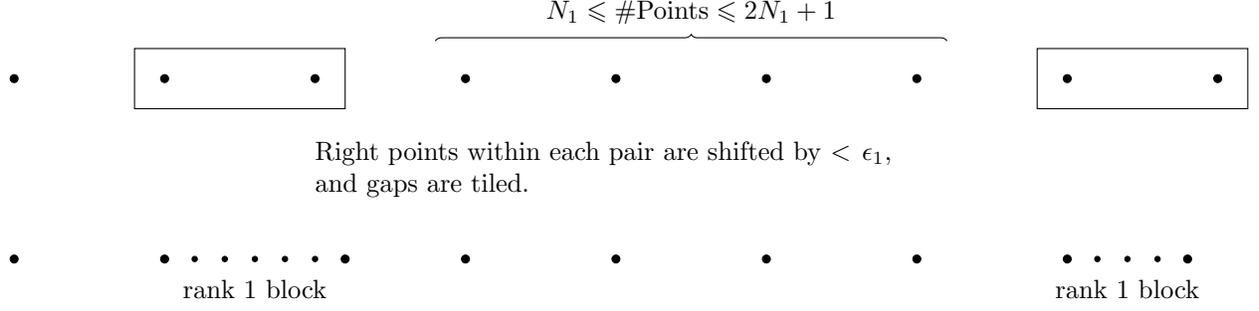

  In symbols, we select a sub cross section \( \mathcal{P} \subseteq \mathcal{C}_{0} \) such that
  \begin{itemize}
  \item \( \mathcal{P} \) consists of pairs: for any \( x \in \mathcal{P} \) exactly one of the
    two things happens \textemdash{} either \(
    \phi_{\mathcal{C}_{0}}(x) \in \mathcal{P} \) or \( \phi^{-1}_{\mathcal{C}_{0}}(x) \in \mathcal{P} \).
  \item Between any two pairs there are at least \( N_{1} \), at most \( 2N_{1}+1 \) points from
    \( \mathcal{C}_{0} \): if \( x \in \mathcal{P} \) is the right point of a pair, then
    \( \phi_{\mathcal{P}}(x) = \phi_{\mathcal{C}_{0}}^{n(x)}(x) \) with
    \( N_{1} \le n(x) \le 2N_{1}+1 \).
  \end{itemize}
  Given such \( \mathcal{P} \), we shift the right point of each pair to make the distance to
  the left one tileable.  That is for each right point \( x \in \mathcal{P} \) we find
  \( h_{1}(x) \in (-\epsilon_{1}, \epsilon_{1}) \) such that
  \[ \dist\bigl(\phi^{-1}_{\mathcal{C}_{0}}(x), x + h_{1}(x)\bigr) \in \freqa^{-1}[\rho - \eta_{1},
  \rho + \eta_{1}]. \]
  Such an \( h_{1} \) can be found by the choice of \( K_{0} \). 
  It is convenient to extend \( h_{1} \) to a function on \( \mathcal{C}_{0} \) by declaring
  \( h_{1}(x) = 0 \) whenever \( x \) is not a right point of a pair in \( \mathcal{P} \).  And we
  can make sure that the function \( h_{1} : \mathcal{\mathcal{C}}_{0} \to (-\epsilon_{1},
  \epsilon_{1}) \) is Borel.

  Consider the relation \( \oer[\alpha,\beta]{\mathcal{C}_{1}} \) on \( \mathcal{C}_{1} \).
  Equivalence classes of \( \oer[\alpha,\beta]{\mathcal{C}_{1}} \) are of two types: some of them
  consist of a single point, which is necessarily an element of \( \mathcal{C}_{0} \); others
  consist of two points from \( \mathcal{C}_{0} \) (one of which may be shifted) together with newly
  added points in the midst.  We shall refer to the latter as {\it blocks of rank \( 1 \)}.
  Isolated points from \( \mathcal{C}_{1} \) are in this sense {\it blocks of rank \( 0 \)}, see
  Figure \ref{fig:Construction-C1}.  Note that each block of rank \( 1 \) in \( \mathcal{C}_{1} \)
  corresponds to a tiled real of \( \alpha \)-frequency \( \eta_{1} \)-close to \( \rho \).

  This finishes the definition of \( \mathcal{C}_{1} \), but we still owe the reader a bound on
  \( N_{1} \).  For this we let \( D_{1} = K_{0} + 3 \), it will serve as an upper bound on the
  distance between points in \( \mathcal{C}_{1} \).  Let
  \( M_{1} = M_{\Lem \ref{lem:main-induction-lemma}}(D_{1}, \epsilon_{2}/6, \eta_{2}, \nu_{2},
  \nu_{2}') \).  {\it Our first requirement is \( N_{1} \ge M_{1} \).}  There will be one more largeness
  assumptions, but let us elaborate on this condition first.

  Take a segment between rank \( 1 \) blocks in \( \mathcal{C}_{1} \) as shown in Figure
  \ref{fig:prop-freedom}.  Let \( d_{1}, \ldots, d_{n} \) denote the gaps inside this segment, where
  \( N_{1} \le n \le 2N_{1} + 1 \),  and note that \( d_{k} \ge K_{0} \).  Let
  \[ R_{k} = \mathcal{T} \cap \mathcal{U}_{\epsilon_{1}}(d_{k}) \cap \freqa^{-1}[\rho-\eta_{1},
  \rho+\eta_{1}] \]
  denote the set of admissible shifts between the \( k \)th and \( k-1 \)th points when the
  former is allowed to be shifted by at most \( \epsilon_{1} \).
  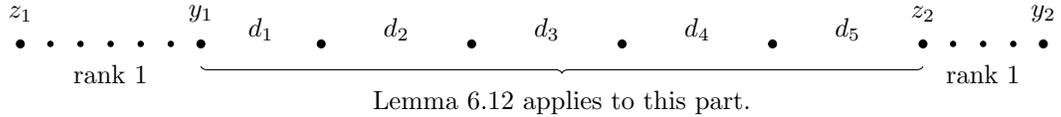
\begin{figure}[ht]
    \centering
    \begin{tikzpicture}[scale=2]
      \foreach \x/\pind in {1/-1,2.2/0,3/1,4/2,5/3,6/4,7/5,7.8/6}{
        \filldraw (\x,0) circle (0.75pt);
        \ifnumgreater{\pind}{0}{
          \ifnumgreater{6}{\pind}{
            \draw (\x/2 + \lastp/2,0.1) node {\( d_{\pind} \)};
          }{}
        }{}
        \xdef\lastp{\x}
      }
      \foreach \x in {1.2,1.4,...,2.0} {
        \filldraw (\x, 0) circle (0.5pt);
      }
      \foreach \x in {7.2,7.4,7.6} {
        \filldraw (\x, 0) circle (0.5pt);
      }
      \draw[decoration={
        brace,
        mirror,
        raise=0.3cm
      },decorate]
      (2.2,0) -- (7,0) node[pos=0.5, anchor=north, yshift=-0.5cm] {Lemma
        \ref{lem:main-induction-lemma} applies to this part.};
      \draw (1,0.2) node {\( z_{1} \)}
      (2.2,0.2) node {\( y_{1} \)}
      (7,0.2) node {\( z_{2} \)}
      (7.8,0.2) node {\( y_{2} \)};
      \draw (1.6,-0.2) node {rank \( 1 \)};
      \draw (7.4,-0.2) node {rank \( 1 \)};
    \end{tikzpicture}
    \caption{Propagation of freedom principle.}
    \label{fig:prop-freedom}
  \end{figure}
  Since \( n \ge M_{1} \), Lemma \ref{lem:main-induction-lemma}, applied relative to
  \( (d_{k}) \) and \( (R_{k}) \), implies that both
  \[ \mathcal{A}_{n}\bigl(\epsilon_{1}, (d_{k}), (R_{k}) \bigr) \cap
  \freqa^{-1}[\rho-\nu_{2},\rho-\nu_{2}'] \quad \textrm{and} \quad
  \mathcal{A}_{n}\bigl(\epsilon_{1}, (d_{k}), (R_{k}) \bigr) \cap
  \freqa^{-1}[\rho+\nu_{2}',\rho+\nu_{2}] \]
  are \( \epsilon_{2}/6 \)-dense in
  \( \mathcal{U}_{\epsilon_{1}/2}(\sum_{k} d_{k}) = \mathcal{U}_{\epsilon_{2}}(\sum_{k} d_{k}) \).
  In the notation of Figure \ref{fig:prop-freedom}, it means that one can move each rank \( 0 \)
  point by at most \( \epsilon_{1} \) and the rank \( 1 \) block \( [z_{2},y_{2}] \) by at most
  \( \epsilon_{2} \) so as to make all the gaps tileable.  Moreover, one has many possible ways of
  doing this with additional control on the \( \alpha \)-frequencies.  Set
  \[R = \mathcal{A}_{n}\bigl(\epsilon_{1}, (d_{k}), (R_{k}) \bigr) \cap
  \freqa^{-1}[\rho-\eta_{2},\rho + \eta_{2}] \cap \mathcal{U}_{\epsilon_{2}}\bigl( \sum_{k} d_{k}
  \bigr). \]
  This set will correspond to admissible ways of shifting rank \( 1 \) blocks during the next
  construction step. 

  It is perhaps helpful to list some properties of \( R \).
  \begin{itemize}
  \item \( R \subseteq \mathcal{U}_{\epsilon_{2}}\bigl( \sum_{k=1}^{n} d_{k} \bigr) \).
  \item Both \( R \cap \freqa^{-1}[\rho-\nu_{2},\rho-\nu_{2}'] \) and
    \( R \cap \freqa^{-1}[\rho+\nu_{2}',\rho+\nu_{2}] \) are \( \epsilon_{2}/6 \)-dense in
    \( \mathcal{U}_{\epsilon_{2}}\bigl( \sum_{k=1}^{n} d_{k} \bigr) \).
  \item \( R \) agrees with \( R_{k} \), \( k=1,\ldots, n \), in the sense that each \( a \in R \)
    can be decomposed as \( a = \sum_{k=1}^{n}b_{k} \) with \( b_{k} \in R_{k} \).
  \end{itemize}
  It is the last item that is particularly specific to this argument compared to the one in
  Theorem \ref{thm:tiling-sparse-flows}.  When dealing with sparse flows, we did not have any rank
  \( 0 \) blocks between rank \( 1 \) blocks, therefore to get such an \( R \) we only needed to
  know that the gap from \( y_{1} \) to \( z_{2} \) is large enough.  But now, we must consider
  only shifts that agree with blocks of lower rank in the midst.

  Note also that the above list of properties for \( R \) are precisely what Lemma
  \ref{lem:main-induction-lemma} is designed for.  Here is a tiny problem though.  The way Lemma
  \ref{lem:main-induction-lemma} is stated, it does not apply to blocks of points, so we shall
  need to pick representatives from each block.  These will be the left points of rank \( 1 \)
  blocks, e.g., points \( z_{1} \) and \( z_{2} \) in Figure \ref{fig:prop-freedom}.  Admissible
  tilings will therefore be given by \( \dist(z_{1},y_{1}) + R \) rather than just \( R \).  More
  formally, we
  define \( \mathcal{D}_{1} \subseteq \mathcal{C}_{1} \) to consists of left endpoints of rank \(
  1 \) blocks in \( \mathcal{C}_{1} \).  Note that the gaps in \( \mathcal{D}_{1} \) are bounded,
  \[ \dist(z_{1},z_{2}) \le \dist(z_{1},y_{1}) + (2N_{1}+2)D_{1} \le (2N_{1}+3)D_{1}. \]
  We set \( D_{2} = (2N_{1}+3)D_{1} \).  For an
  adjacent pair \( z_{1}, z_{2} \in \mathcal{D}_{1} \) we have the set of \emph{admissible shifts}
  \[ R(z_{1},z_{2}) = \dist(z_{1},y_{1}) + R. \]
  Lemma \ref{lem:main-induction-lemma} will be applied to segments of \( \mathcal{D}_{1} \).

  While adding \( \dist(z_{1},y_{1}) \) does not ruin the density part, and
  \( \dist(z_{1},y_{1}) + R \) is still \( \epsilon_{2}/6 \)-dense in
  \( \mathcal{U}_{\epsilon_{2}}\bigl(\dist(z_{1},z_{2}) \bigr) \), it may slightly alter the
  \( \alpha \)-frequencies of elements from \( R \).  This is where we use \( \nu_{2} \) and
  \( \nu_{2}' \).  Recall that they were selected to satisfy
  \( \eta_{3} < \nu_{2} < \nu_{2}' < \eta_{2} \).  So, if \( N_{1} \) is large enough (cf. Lemma
  \ref{lem:frequency-length-estimate}), then the \( \alpha \)-frequency of
  \( \dist(z_{1},y_{1}) + a \) differs from the \( \alpha \)-frequency of \( a \) by at most
  \( \min\{\eta_{2}-\nu_{2}, \nu_{2}'-\eta_{3}\} \) for any \( a \in R \).  In fact, since rank
  \( 2 \) interval will include both \( [z_{1},y_{1}] \) and \( [z_{2},y_{2}] \) segments, we want
  the combined influence of \( \dist(z_{1},y_{1}) \) and \( \dist(z_{2},y_{2}) \) on the
  \( \alpha \)-frequency of any \( a \in R \) to be small.  This brings us to the second largeness
  assumption on \( N_{1} \).  {\it The natural \( N_{1} \) is so large that for any \( a \in R \)
    and any tileable \( d \le 2D_{1} \) one has}
  \[ \bigl|\freqa(a + d) - \freqa(a)\bigr| < \min\{\eta_{2}-\nu_{2}, \nu_{2}'-\eta_{3}\}. \]
  Under this assumptions the set \( R(z_{1},z_{2}) \) satisfies:
  \begin{itemize}
  \item
    \( R(z_{1},z_{2}) \subseteq \mathcal{U}_{\epsilon_{2}}\bigl( \dist(z_{1},z_{2}) \bigr)
    \).
  \item Both \( R(z_{1},z_{2}) \cap \freqa^{-1}[\rho-\eta_{2},\rho-\eta_{3}] \) and
    \( R(z_{1},z_{2}) \cap \freqa^{-1}[\rho+\eta_{3},\rho+\eta_{2}] \) are \(
    \epsilon_{2}/6 \)-dense in \( \mathcal{U}_{\epsilon_{2}}\bigl(\dist(z_{1},z_{2}) \bigr) \).
  \item \( R(z_{1},z_{2}) \) agrees with sets \( R_{k} \) in the sense that each \( a \in
    R(z_{1},z_{2}) \) is of the form \( a = \dist(z_{1},y_{1}) + \sum_{k=1}^{n}b_{k} \) for
    some \( b_{k} \in R_{k} \).
  \end{itemize}
  It is the sets \( R(z_{1},z_{2}) \) that will play the role of \( R_{k} \) in the
  application of Lemma \ref{lem:main-induction-lemma} during the next stage, when we construct \(
  \mathcal{C}_{2} \).  The process depicted in Figure \ref{fig:Construction-C1} is run in Borel way
  over all orbits of the flow.

  We are now officially done with the second step of our construction, which can now be continued in
  a very similar fashion.  A cross section \( \mathcal{C}_{2} \) is constructed by selecting
  adjacent\footnote{Adjacent among rank \( 1 \) intervals, between such a pair there is, of course,
    a whole bunch of rank \( 0 \) points. } pairs of rank \( 1 \) blocks, shifting the right block
  in each pair by at most \( \epsilon_{2} \) according to some element of \( R(z_{1},z_{2}) \) and
  tiling the gaps.  Each \( \oer[\alpha,\beta]{\mathcal{C}_{2}} \)-class obtained this way is called
  a rank \( 2 \) block.  The pairs are selected in such a way that between any two of them there are
  at least \( N_{2} \), at most \( 2N_{2} + 1 \) rank \( 1 \) blocks, for large enough
  \( N_{2} \in \mathbb{N} \).  The process is depicted in Figure \ref{fig:constructing-C2-general}.
  \begin{figure}[ht]
    \centering
    \begin{tikzpicture}[scale=0.89]
      \foreach \x in {0,1,...,18}{
        \filldraw (\x,0) circle (1pt);
      }
      \foreach \x in {0.1,0.2,...,0.9,4.1,4.2,...,4.9,7.1,7.2,...,7.9,
        10.1,10.2,...,10.9,14.1,14.2,...,14.9,17.1,17.2,...,17.9}{
        \filldraw (\x,0) circle (0.3pt);
      }
      \draw (-0.2,-0.3) rectangle (5.2,0.3)
      (13.8,-0.3) rectangle (18.2,0.3);
      \draw (9,-1.5) node {Moving right blocks within each pair by \( < \epsilon_{2} \),
        points between blocks by \( <
        \epsilon_{1}\), and tiling the gaps.};
      \foreach \x in {0,1,2.2,3.1,3.9,4.9,6,7,...,15, 15.9,16.9,17.9}{
        \filldraw (\x,-3) circle (1pt);
      }
      \foreach \x in {0.1,0.2,...,4.9, 7.1,7.2,...,7.9,
        10.1,10.2,...,10.9,14, 14.1,...,17.9}{
        \filldraw (\x,-3) circle (0.3pt);
      }
      \draw (2.45,-3.4) node {rank \( 2 \) block}
      (15.95,-3.4) node {rank \( 2 \) block};
      \draw (7.5,-2.7) node {rank \( 1 \)}
      (10.5,-2.7) node {rank \( 1 \)};
      \draw[decoration={
        brace,
        mirror,
        raise=0.4cm
      },decorate] (5.5,-3) -- (13.5,-3) node[pos=0.5,anchor=north,yshift=-0.5cm] {\( N_{2} \le \#\
        \textrm{rank \( 1 \) blocks} \le 2N_2 +1 \)};
    \end{tikzpicture}
    \caption{Constructing \( \mathcal{C}_{2} \) out of \( \mathcal{C}_{1} \).}
    \label{fig:constructing-C2-general}
  \end{figure}
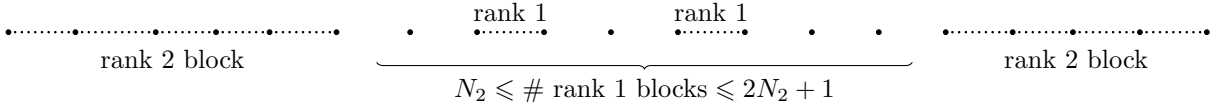
  The function
  \( h_{2} : \mathcal{C}_{1} \to (-\epsilon_{1}, \epsilon_{1}) \)
  represents shifts of points.  Note that if \( x \in \mathcal{C}_{1} \) is a member of a rank
  \( 1 \) block, then \( h_{2}(x) \in (-\epsilon_{2}, \epsilon_{2}) \).  The number \( N_{2} \) is
  assumed to be large in the same sense as \( N_{1} \):
  \begin{itemize}
  \item First of all
    \( N_{2} \ge M_{2} = M_{\Lem \ref{lem:main-induction-lemma}}(D_{2}, \epsilon_{3}/6, \eta_{3},
    \nu_{3}, \nu_{3}') \).  This by the same principle implies that given a pair of neighboring rank
    \( 2 \) blocks the right one can be shifted by at most \( \epsilon_{3} \), rank \( 1 \) blocks
    in between by no more than \( \epsilon_{2} \), and points of rank \( 0 \) by \( \epsilon_{1} \),
    and all this can be done in such a way that gaps become tileable and the \( \alpha \)-frequency
    of the whole segment becomes \( \eta_{2} \)-close to \( \rho \).  To each pair of adjacent rank
    \( 2 \) blocks we therefore may associate a set \( R \) of admissible shifts.
  \item Secondly, \( N_{2} \) is so large that the contribution of \( \alpha \)-frequency of a rank
    \( 2 \) block to the \( \alpha \)-frequency of any element in \( R \) is so small that the
    frequency of their sum is still in \( [\rho-\eta_{3}, \rho+\eta_{3}] \).
  \end{itemize}

  At the end of the day, we construct cross sections \( \mathcal{C}_{n} \), sub sections
  \( \mathcal{D}_{n} \subseteq \mathcal{C}_{n} \), and shift maps
  \( h_{n+1} : \mathcal{C}_{n} \to (-\epsilon_{1}, \epsilon_{1}) \) such that
  \begin{enumerate}[(i)]
  \item   \( x + h_{n+1}(x) \in \mathcal{C}_{n+1} \) for all \( x \in \mathcal{C}_{n} \).
  \item\label{item:constant-on-classes} \( h_{n+1} \) is constant on
    \( \oer[\alpha,\beta]{\mathcal{C}_{n}} \)-classes.
  \item\label{item:rank-k-epsilon-k-small} If \( x \in \mathcal{C}_{n} \) belongs to a rank \( k \)
    block, then \( h_{n+1}(x) \in (-\epsilon_{k+1}, \epsilon_{k+1}) \).
  \item\label{item:if-shift-max-rank} If \( h_{n+1}(x) \ne 0 \), then \( x + h_{n+1}(x) \) belongs
    to a rank \( n+1 \) block in \( \mathcal{C}_{n+1} \).
  \item Each orbit in \( \mathcal{C}_{n} \) has blocks of rank \( n \), and \( \mathcal{D}_{n}
    \) consists of left endpoints of rank \( n \) blocks.
  \item Cross sections \( \mathcal{D}_{n} \) have bounded gaps: \( \rgap[\mathcal{D}_{n}](x) \le
    D_{n} \) for all \( x \in \mathcal{D}_{n}  \).
  \item To every pair of adjacent points in \( \mathcal{D}_{n} \) we associate a set of admissible
    shifts which consists of tileable reals \( \eta_{n} \)-close to \( \rho \).
  \end{enumerate}

  Let \( f_{n,n+1} : \mathcal{C}_{n} \to \mathcal{C}_{n+1} \) be given by
  \( f_{n,n+1}(x) = x + h_{n+1}(x) \), set 
  \[ f_{m,n} = f_{n-1,n} \circ f_{n-2,n-1} \circ \cdots \circ f_{m,m+1}\]
  to be the embedding \( \mathcal{C}_{m} \to \mathcal{C}_{n} \) for \( m \le n \) with the natural
  agreement that \( f_{m,m} \) is the identity map.  With this notations we may define
  \[ H_{n} : \mathcal{C}_{n} \to \Bigl(-\sum_{k=1}^{\infty} \epsilon_{k},\sum_{k=1}^{\infty}
  \epsilon_{k}\Bigr) \quad \textrm{to be} \quad H_{n}(x) =
  \sum_{k=n}^{\infty}h_{k+1}\bigl(f_{n,k}(x)\big). \]
  Despite the fact that each \( h_{n+1} \) can for some points of \( \mathcal{C}_{n} \) be as large
  as \( \epsilon_{1} \), the sum converges.  This follows from items
  \eqref{item:rank-k-epsilon-k-small} and \eqref{item:if-shift-max-rank} above.

  The limit cross section \( \mathcal{C} \) is defined to be the union
  \[ \mathcal{C} = \bigcup_{n} \bigl\{\, x + H_{n}(x) : x \in \mathcal{C}_{n} \,\bigr\}. \]
  Note that this union is increasing by \eqref{item:constant-on-classes}.  It is easy to see that
  the \( \{\alpha,\beta\} \)-chain relation \( \oer[\alpha,\beta]{\mathcal{C}} \) on
  \( \mathcal{C} \) is the union of (shifted) \( \oer[\alpha,\beta]{\mathcal{C}_{n}} \)
  relations: \( x \, \oer[\alpha,\beta]{\mathcal{C}} \, y \) if and only if there is \( n \) such
  that \( \bigl( x-H_{n}(x) \bigr)\, \oer[\alpha,\beta]{\mathcal{C}_{n}}\, \bigl( y-H_{n}(y) \bigr) \).

  For \( \mathcal{C} \) to be an \( \{\alpha,\beta\} \)-regular cross section, the relation
  \( \oer[\alpha,\beta]{\mathcal{C}} \) must coincide with \( \oer{\mathcal{C}} \), which is not
  necessarily the case.  But \( \mathcal{C} \) has arbitrarily large regular blocks, which as we
  noted earlier in Subsection \ref{sec:big-picture} is enough.  We remind the reader, that all
  orbits fall into three categories.
  \begin{itemize}
  \item On some orbits \( \mathcal{C} \) is \( \{\alpha,\beta\} \)-regular, these
    are precisely the orbits on which \( \oer[\alpha,\beta]{\mathcal{C}} \)
    consists of a single equivalence class.
  \item On other orbits \( \oer[\alpha,\beta]{\mathcal{C}} \) may have at least
    two classes, one of which is infinite.
  \item Finally, all \( \oer[\alpha,\beta]{\mathcal{C}} \)-classes are finite one some
    orbits. 
  \end{itemize}
  The decomposition of the space into orbits of these types is Borel, and we may deal with each of
  them separately.  The restriction of the flow on the second type of orbits is smooth, since we
  may select finite endpoints of infinite classes for a Borel transversal.  The restriction of the
  flow \( \mff \) of the orbits of the third type is sparse.  The sparse cross sections is
  given by the endpoints of \( \oer[\alpha,\beta]{\mathcal{C}} \)-classes.  We may
  therefore employ Theorem \ref{thm:tiling-sparse-flows} and build the required cross section
  \( \mathcal{C} \) on this part of the space.  It is in this
  sense that our argument here is complementary to the one of Theorem
  \ref{thm:tiling-sparse-flows} \textemdash{} where the current method fails, the one from
  \ref{thm:tiling-sparse-flows} succeeds.

  We are finally left with the orbits on which \( \mathcal{C} \) is indeed an
  \( \{\alpha,\beta\} \)-regular cross section.  There is no loss in generality to assume that all
  the orbit are of this sort.  We need to verify \eqref{eq:main-uniform-density}.  During the
  construction of \( \mathcal{C}_{n} \), we also defined sub sections
  \( \mathcal{D}_{n} \subseteq \mathcal{C}_{n} \), which consisted of left endpoints of blocks of
  rank \( n \), as well as reals \( D_{n} \) such that all the gaps in \( \mathcal{D}_{n} \) are
  bounded by \( D_{n} \).  Let \( \widetilde{\mathcal{D}}_{n} \) be the sub section of
  \( \mathcal{C} \) that corresponds to \( \mathcal{D}_{n} \),
  \[ \widetilde{\mathcal{D}}_{n} = \{\, x +H_{n}(x) : x \in \mathcal{D}_{n} \,\}. \]
  Since gaps in \( \widetilde{\mathcal{D}}_{n} \) are still bounded, in view of of Lemma
  \ref{lem:equivalent-formulation-of-regularity} it is enough to show that all the gaps in
  \( \widetilde{\mathcal{D}}_{n} \) have \( \alpha \)-frequencies \( \eta_{n} \)-close to
  \( \rho \): 
  \[ \bigl| \freqa\bigl( \rgap[\widetilde{\mathcal{D}}_{n}](x) \bigr) - \rho \bigr| < \eta_{n}. \]
  Indeed, by construction for any adjacent \( z_{1}, z_{2} \in \mathcal{D}_{n} \), we selected a
  family \( R(z_{1}, z_{2}) \) of admissible shifts such that any \( a \in R(z_{1},z_{2}) \) has
  \( \alpha \)-frequency \( \eta_{n} \)-close to \( \rho \).  Since for any \( m \ge n \), shifts at
  stage \( m \) agree with admissible shifts at stage \( n \), it means that whenever
  \( z_{1},z_{2} \) are shifted, the \( \dist(z_{1},z_{2}) \) becomes tileable with
  \( \alpha \)-frequency \( \eta_{n} \)-close to \( \rho \).  Finally, since we are working on the
  part of the space, where all orbits of \( \mathcal{C} \) are \( \{\alpha,\beta\} \)-regular, it
  follows that all adjacent \( z_{1},z_{2} \) are tiled at some stage of the construction, 
  and the theorem follows.
\end{proof}

\section{Lebesgue orbit equivalence}
\label{sec:lebesg-orbit-equiv}

In this closing section we would like to offer an application of our Main Theorem to the
classification of Borel flows up to Lebesgue orbit equivalence.  But first we need to do some
preliminary work.

\subsection{Compressible equivalence relations}
\label{sec:compr-equiv-relat}

Let \( \oer{} \) be a countable Borel equivalence relation on a standard Borel space \( X \).  A
\emph{partial full group} of \( \oer{} \) is the set of all partial Borel injections which act within
\( \oer{} \)-classes:
\[ \fgr{\oer{}} = \bigl\{\, f : A \to B\, \bigm|\, A, B \subseteq X \textrm{ are Borel, } f \textrm{
  is a Borel bijection and } x\, \oer{}\, f(x) \textrm{ for all } x,y \in A \,\bigr\}. \]
For Borel \( A, B \subseteq X \) we let \( A \sim B \) to denote existence of a bijection
\( f : A \to B \), \( f \in \fgr{\oer{}} \).  Recall that \( \oer{} \) is said to be
\emph{compressible} if either of the following two equivalent conditions is satisfied:
\begin{itemize}
\item There exist a Borel set \( A \subseteq X \) such that \( X \sim A \) and \( X \setminus A \)
  intersects each \( \oer{} \)-class.
\item There are Borel \( A_{i} \subseteq X \), \( i \in \mathbb{N} \), such that \( X \sim A_{i} \)
  for all \( i \) and \( A_{i} \cap A_{j} = \es \) for \( i \ne j \).
\end{itemize}
A celebrated theorem of Nadkarni \cite{nadkarni_existence_1990} and Becker--Kechris 
\cite{becker_descriptive_1996}  gives a powerful characterization of compressible relations.
\begin{theorem-nn}[Becker--Kechris--Nadkarni]
  An aperiodic\footnote{An equivalence relation is aperiodic if each equivalence class is
    infinite.} countable Borel equivalence relation \( \oer{} \) is compressible if and only if it
  does not admit a finite invariant measure.
\end{theorem-nn}

\begin{definition}
  A subset \( A \subseteq X \) is said to be \( \oer{} \)-\emph{syndetic} (or just \emph{syndetic}
  when \( \oer{} \) is understood) if there exist \( n \in \mathbb{N} \) and Borel sets \( A_{i}
  \subseteq X \), \( 1 \le i \le n \), such that \( A \sim A_{i} \) and 
  \[ X = \bigcup_{i = 1}^{n} A_{i}. \]
\end{definition}
Here is a typical example of a syndetic set.  Suppose \( \oer{} \) is the orbit equivalence relation
of an aperiodic Borel automorphism \( T : X \to X \), \( \oer{} = \oer[T]{X} \).  Suppose
\( A \subseteq X \) has bounded gaps in the sense that there is \( N \in \mathbb{N} \) such that for
each \( x \in A \) there are \( 1 \le i,j \le N \) satisfying \( T^{i}(x) \in A \) and
\( T^{-j}(x) \in A \).  If \( A \) intersects each orbit of \( T \) then \( A \) is necessarily
\( \oer{} \)-syndetic as we may take \( A_{i} = T^{i}|_{A} \) for \( 0 \le i \le N \).

The concept of syndeticity allows for a convenient reformulation of compressibility.
\begin{proposition}
  \label{prop:syndetic-reformulation-of-compressibility}
  Let \( \oer{} \) be an aperiodic countable Borel equivalence relation on a standard Borel space
  \( X \).  It is compressible if and only if \( A \sim B \) for any two Borel syndetic
  sets \( A, B \subseteq X \).
\end{proposition}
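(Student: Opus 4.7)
My plan is to handle the two directions via the Becker--Kechris--Nadkarni theorem together with Borel Schröder--Bernstein for $\fgr{\oer{}}$, the Slaman--Steel theorem that realises any aperiodic countable Borel equivalence relation as $\oer[T]{}$ for a single fixed-point-free Borel automorphism $T$, and the standard Kakutani-style correspondence between finite invariant measures on $X$ and on a bounded-gap complete section.

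For $(\Rightarrow)$, assume $\oer{}$ is compressible; it suffices to show $A \sim X$ for every syndetic $A$, since then $A \sim X \sim B$. Given a witness $X = \bigcup_{i=1}^{n} A_{i}$ with Borel bijections $f_{i} \colon A \to A_{i}$ in $\fgr{\oer{}}$, the assignment $x \mapsto (f_{i(x)}^{-1}(x), i(x))$, where $i(x) = \min\{i : x \in A_{i}\}$, is a Borel injection of $X$ into the disjoint union $A \sqcup \cdots \sqcup A$ of $n$ copies of $A$, inside the natural extension of $\fgr{\oer{}}$ to that union. Next I claim that $\oer{}|_A$ itself is compressible: syndeticity gives a uniformly bounded return time to the complete section $A$, so Kakutani induction turns any finite $\oer{}|_A$-invariant measure on $A$ into a finite $\oer{}$-invariant measure on $X$, which compressibility of $\oer{}$ rules out. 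Applying BKN to $\oer{}|_A$ and unpacking the definition produces pairwise disjoint $B_{1}, B_{2}, \ldots \subseteq A$ with each $B_{k} \sim A$, which yields a Borel injection from the $n$-fold disjoint union of $A$ back into $A$. Composing the two injections gives a Borel injection $X \to A$ in $\fgr{\oer{}}$, and together with the inclusion $A \hookrightarrow X$ the Borel Schröder--Bernstein theorem delivers $A \sim X$.

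For $(\Leftarrow)$, I argue by contraposition. Assume $\oer{}$ is not compressible; BKN then supplies a finite $\oer{}$-invariant probability $\mu$ on $X$, and passing to an ergodic component I may assume $\mu$ is ergodic. Every $f \in \fgr{\oer{}}$ preserves $\mu$, so $A \sim B$ forces $\mu(A) = \mu(B)$. By Slaman--Steel, $\oer{} = \oer[T]{X}$ for some fixed-point-free Borel automorphism $T \colon X \to X$, which necessarily preserves $\mu$. Pick a Borel $B_{0}$ with $0 < \mu(B_{0}) < 1$ and set $B = B_{0} \setminus T^{-1}(B_{0})$: by construction $B \cap T(B) = \es$, and $\mu(B) > 0$, since otherwise $T(B_{0}) \subseteq B_{0}$ modulo null sets, which combined with $T$-invariance of $\mu$ would force $B_{0}$ to be $T$-invariant modulo null and thus of $\mu$-measure $0$ or $1$, contradicting the choice of $B_{0}$. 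Let $A = X \setminus B$; since $T(A) = X \setminus T(B) \supseteq B$, one has $X = A \cup T(A)$, so $A$ is syndetic via the bijections $\mathrm{id}_{A}$ and $T|_{A}$. But then $A$ and the trivially syndetic set $X$ are syndetic with $\mu(A) = 1 - \mu(B) < 1 = \mu(X)$, producing two syndetic sets of different $\mu$-measure and contradicting the hypothesis.

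The main technical hurdle is the transfer of compressibility from $\oer{}$ to $\oer{}|_A$ in the forward direction: this step genuinely uses syndeticity of $A$ rather than mere completeness, because it is precisely the bounded return time that allows Kakutani induction to produce a \emph{finite} invariant measure on $X$ from a finite invariant measure on $A$. Everything else reduces to standard manipulations with Borel Schröder--Bernstein, ergodic decomposition, and the Slaman--Steel automorphism.
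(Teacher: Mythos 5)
Your forward direction is essentially the paper's proof: transfer compressibility from $\oer{}$ to $\oer{A}$ by pushing forward a hypothetical finite $\oer{A}$-invariant measure along the witnesses $f_i$ to contradict Becker--Kechris--Nadkarni, then build the injection $X \to A$ and finish by Schr\"oder--Bernstein inside $\fgr{\oer{}}$. The only cosmetic difference is that you factor the injection through the $n$-fold disjoint union of $A$, whereas the paper writes $h(x) = \tau_i\bigl(f_i^{-1}(x)\bigr)$ directly; the content is identical.

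Your backward direction takes a genuinely different route from the paper's. The paper assumes the hypothesis and directly constructs, via Feldman--Moore and a greedy disjointification, a syndetic $B$ together with $\theta \in \fgr{\oer{}}$ satisfying $X = B \sqcup \theta(B)$ (after a smooth correction), so that $X \sim B$ immediately witnesses compressibility. You instead argue by contraposition through a finite invariant measure, trying to exhibit two syndetic sets of different $\mu$-measure. That measure-theoretic route can work, but as written it contains a genuine gap. The claim that every aperiodic countable Borel equivalence relation is $\oer[T]{X}$ for a single Borel automorphism $T$ is false: this is exactly the assertion that the relation is hyperfinite (by Slaman--Steel), which fails already for $E_\infty$. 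What \emph{is} true, by standard marker lemmas, is that there is an aperiodic Borel automorphism $T$ with $\oer[T]{X} \subseteq \oer{}$ --- and this weaker fact is all you need for syndeticity, since $\fgr{\oer[T]{X}} \subseteq \fgr{\oer{}}$. But once $\oer[T]{X}$ is only a subrelation, the step ``$\mu(B) = 0$ forces $B_0$ to be $T$-invariant mod null, contradicting ergodicity'' also breaks: $\mu$ is $\oer{}$-ergodic, not $\oer[T]{X}$-ergodic, so a $T$-invariant set can have measure strictly between $0$ and $1$. To repair the argument, drop $B_0$ altogether and instead take a marker sub cross section $\mathcal{D}$ of $T$ with gaps in $\{2,3\}$ (the paper's Proposition~\ref{prop:marker-lemma-Z-actions} with $d=2$). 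Then $\mathcal{D} \cap T(\mathcal{D}) = \es$, so $\mu(\mathcal{D}) \le 1/2$, while $X = \mathcal{D} \cup T(\mathcal{D}) \cup T^2(\mathcal{D})$ forces $\mu(\mathcal{D}) \ge 1/3 > 0$; putting $A = X \setminus \mathcal{D}$, the identity on $A$ and $T|_A$ witness $X = A \cup T(A)$, so $A$ is $\oer{}$-syndetic with $0 < \mu(A) < 1 = \mu(X)$, yielding the contradiction without any appeal to ergodicity of $\mu$ relative to $T$.
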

\begin{proof}
  We begin by proving necessity.  Suppose \( \oer{} \) is compressible.  Since the set \( X \)
  itself is obviously syndetic and \( \sim \) is a symmetric and transitive relation, it is enough
  to show that \( A \sim X \) holds for any Borel syndetic set \( A \subseteq X \).  Pick a syndetic
  set \( A \subseteq X \) and let
  \( \oer{A} \) be the restriction of \( \oer{} \) onto \( A \):
  \[ \oer{A} = \oer{} \cap (A \times A). \]
  Our first goal is to show that \( \oer{A} \) is compressible.  Let \( f_{i} \in \fgr{\oer{}} \),
  \( 1 \le i \le n \), \( \dom(f_{i}) = A \), be such that \( X = \bigcup_{i=1}^{n} f_{i}(A) \).
  Set \( \tilde{A}_{1} = A \) and define \( \tilde{A}_{i} \) for \( i \le n \) inductively by setting
  \[ \tilde{A}_{i+1} = f_{i+1}^{-1}\Bigl( X \setminus \bigcup_{k=1}^{i} f_{k}(A) \Bigr). \]
  Sets \( \tilde{A}_{i} \subseteq A \) correspond to the partition of \( X \):
  \[ X = \bigsqcup_{i=1}^{n} f_{i}(\tilde{A}_{i}). \]
  If \( \mu \) is a finite \( \oer{A} \)-invariant measure on \( A \), then
  \( \sum_{i=1}^{n} (f_{i})_{*} \mu_{i} \) is a finite \( \oer{} \)-invariant measure on \( X \),
  where \( \mu_{i} = \mu|_{\tilde{A}_{i}} \).  By Becker--Kechris--Nadkarni Theorem the
  compressibility of \( \oer{} \) implies the compressibility of \( \oer{A} \).

  We may therefore pick Borel injections \( \tau_{i} \in \fgr{\oer{A}} \), \( i \in \mathbb{N} \),
  such that \( \tau_{i}(A) \cap \tau_{j}(A) = \es \) for \( i \ne j \).  Let \( h : X \to A \) be
  given by
  \[ h(x) = \tau_{i}\bigl( f_{i}^{-1}(x) \bigr) \quad \textrm{for } x \in f_{i}(\tilde{A}_{i}). \]
  Note that the map \( h : X \to A  \) is injective and witnesses \( X \sim h(X) \).
  On the other hand the identity map \( \mathrm{id} : A \to X \) is an injection in the other direction.
  The usual Schr\"oder--Bernstein argument produces a bijection \( \theta : A \to X \), \( \theta \in
  \fgr{\oer{}}\), thus proving \( A \sim X \) as required.

  It remains to show sufficiency.  Since \( X \) is necessarily syndetic, it is enough to construct
  a syndetic set \( B \subseteq X \) such that \( X \setminus B \) intersects each
  \( \oer{} \)-class; any \( f \in \fgr{\oer{}} \) satisfying \( f(X) = B \) will then witness
  compressibility of \( \oer{} \).  By the proof of Feldman--Moore Theorem
  \cite{feldman_ergodic_1977} we may pick a countable family \( h_{i} \in \fgr{\oer{}} \),
  \( i \in \mathbb{N} \), and Borel \( B_{i} \subseteq X \) such that \( \dom(h_{i}) = B_{i} \),
  \( B_{i} \cap h_{i}(B_{i}) = \es \), and
  \[ x\, \oer{}\, y \iff x = y \textrm{ or } \Bigl(\exists n\ x \in B_{n} \textrm{ and } h_{n}(x) =
  y\Bigr).\]
  Define inductively sets \( \tilde{B}_{n} \) by setting \( \tilde{B}_{0} = B_{0} \) and
  \[ \tilde{B}_{n+1} = \Bigl\{ x \in B_{n+1} : x, h_{n+1}(x) \not \in \bigcup_{i \le n}
  \bigl(\tilde{B}_{i} \cup h_{i}(\tilde{B}_{i}) \bigr)\Bigr\}. \]
  Let \( B = \bigcup_{i} \tilde{B}_{i} \) and set \( \theta : B \to X \) to be given by \( \theta(x) =
  h_{n}(x) \), where \( n \) is such that \( x \in \tilde{B}_{n} \) (note that such \( n \) is
  necessarily unique).  The map \( \theta \in \fgr{\oer{}} \), sets \( B \) and \( \theta(B) \) are
  disjoint and moreover, for each \( x \in X \) the set \( [x]_{\oer{}} \setminus \bigl( B \cup
  \theta(B) \bigr) \) has cardinality at most \( 1 \), for if 
  \[ y_{1}, y_{2} \in [x]_{\oer{}} \setminus \bigl( B \cup \theta(B) \bigr) \textrm{ are
    distinct}, \]
  then \( y_{1} \in \tilde{B}_{n} \) for \( n \) such that \( h_{n}(y_{1}) = y_{2} \), contradicting
  the definition of \( B \).  The restriction of \( \oer{} \) onto the set of \( x \in X \) where
  \[ \bigl| [x]_{\oer{}} \setminus \bigl( B \cup \theta(B) \bigr) \bigr| = 1 \]
  is smooth and we may modify the set \( B \) and the map \( \theta \) on this part to
  ensure that \( X = B \sqcup \theta(B) \).  The set \( B \) is thus \( \oer{} \)-syndetic and \( X
  \setminus B \) intersects every \( \oer{} \)-class as desired.
\end{proof}

\subsection{Matching equidense sets}
\label{sec:match-equid-sets}

In the previous subsection we worked in the context of countable Borel equivalence relations.  We
now turn to a more specific situation of orbit equivalence relations that arise from the actions of
\( \mathbb{Z} \).  In this case each orbit naturally inherits the linear order from \( \mathbb{Z} \).

Let \( T : X \to X \) be an aperiodic automorphism.  We say that a subset \( A \subseteq X \) has
\emph{uniform frequency} \( \rho \in [0,1] \) if for every \( \eta > 0 \) there exists \( N \) such that
for all \( n \ge N \) and all \( x \in X \)
\[ \bigl| \rho - \frac{1}{n}\sum_{k=0}^{n-1} \chi_{A}\bigl(T^{k}(x)\bigr) \bigr| < \eta. \]
Note that when \( \rho > 0 \), any set \( A \subseteq X \) of uniform frequency \( \rho \) is
necessarily \( \oer[T]{X} \)-syndetic.  Note also that the Main Theorem constructs an \(
\{\alpha,\beta\} \)-regular cross section for which sets of \( \alpha \)- and \(
\beta \)-points have uniform frequency \( \rho \) and \( 1-\rho \) respectively.  

\begin{theorem}
  \label{thm:frequency-rho-can-map}
  Let \( T : X \to X \) be an aperiodic Borel automorphism.  If \( A, B \subseteq X \) are sets of
  the same uniform frequency \( \rho \in (0,1] \), then \( A \sim B \).
\end{theorem}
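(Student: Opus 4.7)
The proof follows the compressible/incompressible dichotomy that organizes much of this paper. Since $\rho > 0$, both $A$ and $B$ are $\oer[T]{X}$-syndetic: uniform frequency with any $\eta \in (0, \rho)$ produces an $N$ such that every window of $N$ consecutive orbit points meets both $A$ and $B$, so the iterates $T^i|_A$, $|i| \le N$, witness $X = \bigcup_{|i| \le N} T^i(A)$, and likewise for $B$. Syndeticity of $A$ and $B$ is inherited by their intersections with any $\oer[T]{X}$-invariant Borel subset.

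I then invoke Becker--Kechris--Nadkarni to decompose $X$ into a Borel $\oer[T]{X}$-invariant partition $X = X_c \sqcup X_i$, where $\oer[T]{X_c}$ is compressible while every Borel $\oer[T]{X_i}$-invariant subset of $X_i$ carries a finite invariant measure. It suffices to establish $A \sim B$ on each piece. On $X_c$ this is immediate from Proposition \ref{prop:syndetic-reformulation-of-compressibility}, applied to the compressible restriction $\oer[T]{X_c}$, since $A \cap X_c$ and $B \cap X_c$ are both $\oer[T]{X_c}$-syndetic; the proposition delivers $A \cap X_c \sim B \cap X_c$ at once.

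On $X_i$ the argument is ergodic-theoretic. For any $\oer[T]{X_i}$-invariant probability measure $\mu$, uniform convergence of the Birkhoff averages $\frac{1}{n}\sum_{k=0}^{n-1}\chi_A \circ T^k$ to $\rho$ combined with $T$-invariance of $\mu$ (via dominated convergence) forces $\mu(A) = \rho$, and similarly $\mu(B) = \rho$; hence $\mu(A) = \mu(B)$ for every invariant probability measure on $X_i$. The main obstacle is then converting this ``equal measure'' condition into a genuine set equivalence $A \cap X_i \sim B \cap X_i$, a Borel version of Hopf's equivalence theorem. I plan to prove it directly by a marker-and-marriage construction: Proposition \ref{prop:marker-lemma-Z-actions} produces a sequence of increasingly sparse Borel sub-cross sections partitioning each orbit into long finite blocks, and within each block uniform frequency forces $|A \cap \mathrm{block}|$ and $|B \cap \mathrm{block}|$ to differ by at most a small fraction of the block length, so a block-wise matching pairs up $A$-points with $B$-points in the same block and leaves only a small Borel residue of unmatched points. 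Iterating on the residuals with finer markers whose relative discrepancies tend uniformly to zero produces in the limit a Borel bijection in $\fgr{\oer[T]{X_i}}$ realizing $A \cap X_i \sim B \cap X_i$, completing the proof.
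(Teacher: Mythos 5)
Your overall strategy — split the space into a compressible piece, where Proposition~\ref{prop:syndetic-reformulation-of-compressibility} does everything, and an ``incompressible'' piece, where all invariant measures assign equal mass to \( A \) and \( B \) — is in the right spirit and is indeed the ``baby DJK'' pattern the introduction alludes to. But there are two genuine gaps, and the second one is fatal as written.

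First, the decomposition \( X = X_{c} \sqcup X_{i} \) you want, with the property that every non-empty invariant Borel subset of \( X_{i} \) carries an invariant probability measure, does not follow directly from the quoted Becker--Kechris--Nadkarni theorem. That theorem is a dichotomy for the whole relation; extracting a maximal compressible invariant Borel set requires a separate argument, which you do not supply.

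Second, and more seriously, the marker-and-marriage iteration on \( X_{i} \) does not terminate with a total bijection. At each stage you match all but a small fraction of the \( A \)- and \( B \)-points inside each block and push the excess into the next, coarser level. After infinitely many rounds the set of forever-unmatched points has density zero within every orbit, hence \( \mu \)-measure zero for every invariant \( \mu \); but density zero does not mean empty. A single stray point per orbit is invisible to this count. To conclude the residue vanishes on \( X_{i} \) you would have to argue that the residue's saturation, being \( \mu \)-null for all invariant \( \mu \), is compressible and therefore, sitting inside \( X_{i} \), is empty — which is precisely the compressibility argument your decomposition was supposed to avoid, and which in turn depends on the strong maximality property of \( X_{c} \) that was not established. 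So the upfront split buys you nothing: you still need the compressibility step to close the loop.

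The paper's proof bypasses both issues. It defines a single greedy partial injection \( \tilde\theta \in \fgr{\oer[T]{X}} \) on all of \( X \) at once, by sending \( x \in A \) to \( T^{k}(x) \in B \) for the least \( k \ge 0 \) for which the target is still free. Birkhoff's theorem gives \( \mu(A) = \rho = \mu(B) \) for every invariant \( \mu \), a recurrence argument shows the residue \( A \setminus A_{\infty} \) is \( \mu \)-null for every invariant \( \mu \), hence the saturation \( Z \) of the residuals is compressible by Becker--Kechris--Nadkarni, and Proposition~\ref{prop:syndetic-reformulation-of-compressibility} on \( Z \) supplies the missing piece \( \bar\theta \). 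This handles both ``halves'' of your dichotomy in one pass, with no marker construction and no need for a maximal compressible decomposition.
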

\begin{proof}
  Define inductively sets \( A_{k} \) and \( B_{k} \), \( k \in \mathbb{N} \), by the formula
  \begin{displaymath}
    \begin{aligned}
      A_{k} &= \bigl\{x \in A \setminus \bigcup_{i < k} A_{i} : T^{k}(x) \in B \setminus \bigcup_{i
        < k} B_{i }\bigr\},\\
      B_{k} &= T^{k}(A_{k}).
    \end{aligned}
  \end{displaymath}
  Note that sets \( A_{k} \) are pairwise disjoint and \( A_{k} \subseteq A \) for all \( k \).
  Similarly, for the sets \( B_{k} \).  Set \( A_{\infty} = \bigcup_{k} A_{k} \) and \( B_{\infty} =
  \bigcup_{k} B_{k}\). Define \( \tilde{\theta} : A_{\infty} \to B_{\infty} \) by \( \tilde{\theta}(x) =
  T^{k}(x) \) whenever \( x \in A_{k} \).

  We claim that \( \mu(A \setminus A_{\infty}) = 0 \) for any \( T \)-invariant probability measure
  \( \mu \) on \( X \).  Indeed, let \( \mu \) be such a measure which we furthermore assume
  ergodic.  By Birkhoff's Ergodic Theorem \( \mu(A) = \rho = \mu(B) \).  Since
  \( \mu(A_{\infty}) = \mu(B_{\infty}) \), we get
  \( \mu(A \setminus A_{\infty}) = \mu(B \setminus B_{\infty}) \).  Suppose
  \( \mu(A \setminus A_{\infty}) > 0 \).  By ergodicity
  \( \mu\bigl(\bigcup_{k} T^{k}(A \setminus A_{\infty})\bigr) = 1\), and therefore
  \[ \Bigl(\bigcup_{k}T^{k}(A \setminus A_{\infty})\Bigr) \cap (B\setminus B_{\infty}) \ne \es. \]
  If \( x \in A \setminus A_{\infty} \) is such that \( T^{k}(x) \in B \setminus B_{\infty} \) for
  some \( k \in \mathbb{N} \), then \( x \in A_{k} \) contradicting \( x \not \in A_{\infty} \).  We
  conclude that \( \mu(A \setminus A_{\infty}) = 0 \).  By ergodic decomposition the same is true
  for all (not necessarily ergodic) invariant probability measures \( \mu \) on \( X \).

  Set \( Z = [A \setminus A_{\infty}]_{\oer[T]{X}} \cup [B \setminus B_{\infty}]_{\oer[T]{X}} \).  By
  above \( \mu(Z) = 0 \) for any \( T \)-invariant measure, and so the
  restriction of \( T \) onto \( Z \) is compressible.  Proposition
  \ref{prop:syndetic-reformulation-of-compressibility} applies and produces a map
  \( \bar{\theta} \in \fgr{T|_{Z}} \) such that \( \bar{\theta}(A \cap Z) = B \cap Z \).  The
  map \( \theta \) defined by
  \begin{displaymath}
    \theta(x) =
    \begin{cases}
      \tilde{\theta}(x) & \textrm{when } x \in A \setminus Z,\\
      \bar{\theta}(x)   & \textrm{when } x \in A \cap Z,\\
    \end{cases}
  \end{displaymath}
  witnesses \( A \sim B \).
\end{proof}

\subsection{Lebesgue orbit equivalence}
\label{sec:lebesg-orbit-equiv-subsec}

Recall that an \emph{orbit equivalence} between two group actions \( \Gamma_{1} \acts X_{1} \)
and \( \Gamma_{2} \acts X_{2} \) is a Borel bijection \( \phi : X_{1} \to X_{2} \) which sends
orbits onto orbits: \( \phi \bigl( \orbit[\Gamma_{1}](x) \bigr) =
\orbit[\Gamma_{2}]\bigl(\phi(x)\bigr) \) for all \( x \in X_{1} \).  In the language of equivalence
relations, two actions are orbit equivalent if and only if the orbit equivalence relations they
generate are isomorphic.  Actions of \( \mathbb{Z} \) give rise to \emph{hyperfinite} equivalence
relations, i.e., equivalence relations which can be written as an increasing union of finite
equivalence relations.  Hyperfinite relations have been classified by Dougherty, Jackson, and
Kechris in \cite{dougherty_structure_1994}.  The important part of their classification is as
follows.

\begin{theorem-nn}[Dougherty--Jackson--Kechris]
  \label{thm:djk}
  Non-smooth aperiodic hyperfinite equivalence relations are isomorphic if and only if they admit
  the same number of invariant probability ergodic measures. 
\end{theorem-nn}

When one considers actions of non-discrete groups, e.g., Borel
flows, the number of invariant ergodic probability measures is no longer an invariant of orbit
equivalence.  When actions under consideration are free, one can overcome this obstacle by
considering the notion of Lebesgue orbit equivalence.  For simplicity of the presentation we
restrict ourselves to the case of Borel flows.  For rigorous proofs of the following statements and
for a more general treatment see \cite{slutsky_lebesgue_2015}.

Let \( \mathfrak{F} \) be a free Borel flow on a standard Borel space \( X \).  Recall from
Subsection \ref{sec:cross sections-flows} that any orbit of the
action can be endowed with a copy of the Lebesgue measure:  for \( A \subseteq X \) let \(
\lambda_{x}(A) = \lambda(\{r \in \mathbb{R} : x + r \in A\}) \).  It is immediate to see that \(
\lambda_{x} = \lambda_{y} \)  for all \( y \in \orbit(x) \), i.e., the measure \( \lambda_{x} \)
depends only on the orbit of \( x \), but not on the \( x \) itself.

Suppose \( \mathfrak{F}_{1} \) and \( \mathfrak{F}_{2} \) are free Borel flows on \( X_{1} \) and
\( X_{2} \) respectively, and let \( \phi : X_{1}\to X_{2} \) be an orbit equivalence between these
flows.  We say that \( \phi \) is a \emph{Lebesgue orbit equivalence} (LOE for short) if \( \phi \)
is a Lebesgue measure preserving map when restricted onto any orbit:
\( \phi_{*} \lambda_{x} = \lambda_{\phi(x)} \).  Any LOE map preserves the number of invariant
ergodic probability measures.

In \cite{slutsky_lebesgue_2015} the analog of DJK classification has been proved for free Borel
actions of \( \mathbb{R}^{n} \).  Here we would like to show how the Main Theorem gives a simple
proof of this classification for the particular case of flows.  Recall the notation from Subsection
\ref{sec:invariant-measures}, where \( \invm(\mathfrak{F}) \) denotes the set of invariant ergodic
probability measures for the flow \( \mathfrak{F} \).

\begin{theorem}
  \label{thm:djk-for-flows}
  Let \( \mathfrak{F_{1}} \) and \( \mathfrak{F_{2}} \) be free non-smooth Borel flows.  These flows
  are LOE if and only if
  \[ \bigl| \invm(\mathfrak{F}_{1}) \bigr| = \bigl| \invm(\mathfrak{F}_{2}) \bigr|. \]
\end{theorem}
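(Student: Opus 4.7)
The ``only if'' direction is routine: if \( \Theta : X_1 \to X_2 \) is an LOE, then \( \mu \mapsto \Theta_*\mu \) is a bijection \( \invm(\mathfrak{F}_1) \to \invm(\mathfrak{F}_2) \), since Lebesgue-preservation on orbits transfers flow invariance.

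For the converse direction, fix rationally independent \( \alpha, \beta > 0 \) and \( \rho \in (0,1) \) and apply Theorem \ref{thm:tiling-general-flows} to each \( \mathfrak{F}_i \) to obtain an \( \{\alpha,\beta\} \)-regular cross section \( \mathcal{C}_i \subset X_i \) in which \( \mathcal{C}_{i,\alpha} \) has uniform frequency \( \rho \). Let \( \phi_i := \phi_{\mathcal{C}_i} \). These are aperiodic Borel automorphisms whose orbit equivalence relations are hyperfinite and non-smooth (a Borel transversal for \( \phi_i \) on \( \mathcal{C}_i \) would at once be a Borel transversal for \( \mathfrak{F}_i \), contradicting non-smoothness of the flow). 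Theorem \ref{thm:correspondence-between-invariant-measures} transfers the hypothesis: \( |\invm(\phi_1)| = |\invm(\phi_2)| \). The Dougherty--Jackson--Kechris classification then supplies an orbit equivalence \( \theta_0 : \mathcal{C}_1 \to \mathcal{C}_2 \).

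The next step is to modify \( \theta_0 \) so that it sends labels to labels. Set \( A := \theta_0(\mathcal{C}_{1,\alpha}) \). Since an orbit equivalence intertwines the orbit equivalence relations, and thereby their full groups and invariant measures, \( (\theta_0^{-1})_* \) bijects \( \invm(\phi_2) \) with \( \invm(\phi_1) \) while preserving ergodicity. Applying Birkhoff to the uniformly \( \rho \)-frequent set \( \mathcal{C}_{1,\alpha} \) gives \( \mu(A) = (\theta_0^{-1})_*\mu(\mathcal{C}_{1,\alpha}) = \rho \) for every \( \mu \in \invm(\phi_2) \), and the same reasoning gives \( \mu(\mathcal{C}_{2,\alpha}) = \rho \). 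Moreover, uniform frequency bounds the gaps of \( \mathcal{C}_{i,\alpha} \) in \( \mathcal{C}_i \), so \( \mathcal{C}_{i,\alpha} \) is \( \oer{\mathcal{C}_i} \)-syndetic; syndeticity being an invariant of the orbit equivalence relation, \( A \) is \( \oer{\mathcal{C}_2} \)-syndetic as well. I would now invoke a minor generalization of Theorem \ref{thm:frequency-rho-can-map}: it suffices that both sets be syndetic and agree in mass on every invariant ergodic probability measure. Its proof replicates the original verbatim---uniform frequency enters there solely to deliver \( \mu(A) = \mu(B) \) via Birkhoff, which we now hypothesize directly; the only additional bookkeeping is noting that the restriction of a syndetic set to a \( T \)-invariant Borel subset is syndetic for the restricted relation, which follows by restricting the covering partial bijections. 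This yields \( A \sim \mathcal{C}_{2,\alpha} \) in \( \fgr{\oer{\mathcal{C}_2}} \), and the same argument gives \( \mathcal{C}_2 \setminus A \sim \mathcal{C}_{2,\beta} \). Gluing the two matchings into a single \( \sigma \in [\oer{\mathcal{C}_2}] \) and setting \( \theta := \sigma \circ \theta_0 \) produces a label-preserving orbit equivalence \( \theta : \mathcal{C}_1 \to \mathcal{C}_2 \).

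Finally, lift \( \theta \) to an LOE of the flows through the fibred-product picture \( X_i = \mathcal{C}_i \times \rgap[\mathcal{C}_i] \). Each \( p \in X_1 \) has a unique representation \( p = x + t \) with \( x \in \mathcal{C}_1 \) and \( 0 \le t < \rgap[\mathcal{C}_1](x) \); set \( \Theta(p) := \theta(x) + t \), which is well-defined since label preservation forces \( \rgap[\mathcal{C}_1](x) = \rgap[\mathcal{C}_2](\theta(x)) \). The map \( \Theta \) is a Borel bijection (the slabs \( [y, \phi_2(y)] \) for \( y \in \mathcal{C}_2 \) partition \( X_2 \), and each is hit by exactly one slab of \( X_1 \) under \( \Theta \)), respects the flow orbits (since \( \theta \) does), and on every orbit is a concatenation of translational identifications between matching \( \alpha \)- and \( \beta \)-intervals---hence Lebesgue-preserving there. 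The main technical obstacle is the generalization of Theorem \ref{thm:frequency-rho-can-map} used in the previous paragraph; all other ingredients are immediate consequences of the Main Theorem and the Becker--Kechris--Nadkarni/Dougherty--Jackson--Kechris machinery.
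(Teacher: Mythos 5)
Your proof is correct, and it takes a genuinely different route through the combinatorics than the paper does.  The paper applies Dougherty--Jackson--Kechris to the automorphisms induced on the \( \alpha \)\emph{-parts} \( \mathcal{C}^{i}_{\alpha} \) (which are themselves cross sections of the flows, since \( \mathcal{C}^{i}_{\alpha} \) has bounded gaps in \( \mathcal{C}^{i} \)), and then extends the resulting orbit equivalence \( \psi : \mathcal{C}^{1}_{\alpha} \to \mathcal{C}^{2}_{\alpha} \) to the full cross sections via matchings \( \theta^{i} : \mathcal{C}^{i}_{\alpha} \to \mathcal{C}^{i}_{\beta} \) supplied by Theorem~\ref{thm:frequency-rho-can-map}; it fixes \( \rho = 1/2 \) precisely so that both \( \mathcal{C}^{i}_{\alpha} \) and \( \mathcal{C}^{i}_{\beta} \) carry uniform frequency \( 1/2 \) and that theorem applies verbatim.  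You instead apply DJK to the automorphisms on the \emph{full} cross sections \( \mathcal{C}_{i} \) and then repair the label-ignoring orbit equivalence \( \theta_{0} \) by matching \( \theta_{0}(\mathcal{C}_{1,\alpha}) \) with \( \mathcal{C}_{2,\alpha} \) inside \( \mathcal{C}_{2} \).  This costs you a slight strengthening of Theorem~\ref{thm:frequency-rho-can-map}---syndeticity plus agreement of \( \mu \)-mass for every \( \mu \in \invm(T) \) in place of uniform frequency---and you correctly isolate exactly what changes: Birkhoff is replaced by the hypothesis, and one checks that restricting a syndetic set to an invariant Borel subset stays syndetic, both of which are routine.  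What each approach buys: the paper gets by with Theorem~\ref{thm:frequency-rho-can-map} as stated at the price of specializing \( \rho = 1/2 \); you keep general \( \rho \) at the price of the mild generalization.  Your observations that an orbit equivalence transports invariant ergodic measures bijectively and preserves syndeticity (both being intrinsic to the full group) are sound, and the lift to an LOE at the end matches the paper's argument.
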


\begin{proof}
  As we mentioned earlier, \( \implies \) is an easy direction and the reader is referred to Section
  \( 4 \) of \cite{slutsky_lebesgue_2015}.  Suppose \( \mathfrak{F}_{1} \) and
  \( \mathfrak{F}_{2} \) have the same number of invariant measures.  By the Main Theorem each flow
  can be represented as a flow under a two-valued function, and moreover one may assume that gaps of
  each type occur with uniform frequency \( 1/2 \) within every orbit.  More formally, we may pick Borel
  \( \{\alpha, \beta\} \)-regular cross sections \( \mathcal{C}^{1} \subseteq X_{1} \) and
  \( \mathcal{C}^{2} \subseteq X_{2} \), \( \alpha \) and \( \beta \) are some positive
  rationally independent reals, say \( 1 \) and \( \sqrt{2} \), such that both
  \( \mathcal{C}^{i}_{\alpha} \) and \( \mathcal{C}^{i}_{\beta} \) occur with uniform frequency \( 1/2 \)
  within every orbit, \( i = 1, 2 \), where
  \[ \mathcal{C}_{\alpha}^{i} = \bigl\{ x \in \mathcal{C}^{i} : \rgap[\mathcal{C}^{i}](x) =
  \alpha\bigr\}. \]
  By Theorem \ref{thm:frequency-rho-can-map}, there are Borel bijections
  \( \theta^{i} : \mathcal{C}^{i}_{\alpha} \to \mathcal{C}^{i}_{\beta} \) which preserve the orbit
  equivalence relation: \( \theta^{i} \in \fgr{\oer{\mff_{i}}} \).

  Note that \( \mathcal{C}_{\alpha}^{i} \) have to have bounded gaps and in view of Theorem
  \ref{thm:correspondence-between-invariant-measures} the induced automorphisms
  \( \phi_{\mathcal{C}^{1}_{\alpha}} \) and \( \phi_{\mathcal{C}^{2}_{\alpha}} \) have the same
  number of invariant ergodic probability measure.  Therefore by the DJK classification there
  exists a Borel orbit equivalence
  \( \psi : \mathcal{C}^{1}_{\alpha} \to \mathcal{C}^{2}_{\alpha} \) between \(
  \oer{\mathcal{C}_{\alpha}^{1}} \) and \( \oer{\mathcal{C}_{\alpha}^{2}} \).  Maps \( \theta^{i} \) allow us
  to extend \( \psi \) to a an orbit equivalence \( \mathcal{C}^{1} \to \mathcal{C}^{2} \) by
  setting
  \[ \psi\bigl(\theta^{1}(x)\bigr) = \theta^{2}\bigl(\psi(x)\bigr)\quad \textrm{for all } x \in
  \mathcal{C}^{1}_{\alpha}.\]

  The map \( \psi : \mathcal{C}^{1} \to \mathcal{C}^{2} \) sends \( \alpha \)-points to \( \alpha
  \)-points and also \( \beta \)-points to \( \beta \)-points.  This allows us to extend it linearly
  to a LOE \( \psi : X_{1} \to X_{2} \) by setting
  \[ \psi(x + r) = \psi(x) + r \]
  where \( r \in [0,\alpha) \) if \( x \in \mathcal{C}^{1}_{\alpha} \) and \( r \in [0, \beta) \)
  whenever \( x \in \mathcal{C}^{1}_{\beta} \).  The map \( \psi \) when restricted onto any orbit
  is a piecewise translation with countably many pieces and is therefore obviously Lebesgue measure
  preserving.
\end{proof}

The construction of LOE presented in \cite{slutsky_lebesgue_2015} is based on
a similar idea on a sparse piece, but uses a different construction on a compressible part,
resulting in a LOE  which is a piecewise translation with countably many pieces, but
sizes of pieces may be arbitrarily small on some orbits.  The argument presented above has the
advantage of constructing LOE with all translation pieces being of size \( \alpha \) or
\( \beta \).

\bibliographystyle{alpha}
\bibliography{refs}

\begin{thebibliography}{Amb41}

\bibitem[AK42]{ambrose_structure_1942}
Warren Ambrose and Shizuo Kakutani.
\newblock Structure and continuity of measurable flows.
\newblock {\em Duke Mathematical Journal}, 9:25--42, 1942.

\bibitem[Amb41]{ambrose_representation_1941}
Warren Ambrose.
\newblock Representation of ergodic flows.
\newblock {\em Annals of Mathematics. Second Series}, 42:723--739, 1941.

\bibitem[BK96]{becker_descriptive_1996}
Howard Becker and Alexander~S. Kechris.
\newblock {\em The descriptive set theory of {Polish} group actions}, volume
  232 of {\em London {Mathematical} {Society} {Lecture} {Note} {Series}}.
\newblock Cambridge University Press, Cambridge, 1996.

\bibitem[DJK94]{dougherty_structure_1994}
Randall Dougherty, Steve Jackson, and Alexander~S. Kechris.
\newblock The structure of hyperfinite {Borel} equivalence relations.
\newblock {\em Transactions of the American Mathematical Society},
  341(1):193--225, 1994.

\bibitem[FM77]{feldman_ergodic_1977}
Jacob Feldman and Calvin~C. Moore.
\newblock Ergodic equivalence relations, cohomology, and von {Neumann}
  algebras. {I}.
\newblock {\em Transactions of the American Mathematical Society},
  234(2):289--324, 1977.

\bibitem[GJ15]{gao_countable_2015}
Su~Gao and Steve Jackson.
\newblock Countable abelian group actions and hyperfinite equivalence
  relations.
\newblock {\em Invent. Math.}, 201(1):309--383, 2015.

\bibitem[Kec92]{kechris_countable_1992}
Alexander~S. Kechris.
\newblock Countable sections for locally compact group actions.
\newblock {\em Ergodic Theory and Dynamical Systems}, 12(2):283--295, 1992.

\bibitem[Kec95]{kechris_classical_1995}
Alexander~S. Kechris.
\newblock {\em Classical descriptive set theory}, volume 156 of {\em Graduate
  {Texts} in {Mathematics}}.
\newblock Springer-Verlag, New York, 1995.

\bibitem[Kre76]{krengel_rudolphs_1976}
Ulrich Krengel.
\newblock On {Rudolph}'s representation of aperiodic flows.
\newblock {\em Ann. Inst. H. Poincar\'e Sect. B (N.S.)}, 12(4):319--338, 1976.

\bibitem[Mil07]{miller_dynamics_2007}
Benjamin~D. Miller.
\newblock Lecture notes in {Math223D:} {Borel} dynamics.
\newblock
  \url{https://dl.dropboxusercontent.com/u/47430894/Web/otherwork/boreldynamics.pdf},
  2007.

\bibitem[Nad90]{nadkarni_existence_1990}
Mahendra~G. Nadkarni.
\newblock On the existence of a finite invariant measure.
\newblock {\em Indian Academy of Sciences. Proceedings. Mathematical Sciences},
  100(3):203--220, 1990.

\bibitem[Nad98]{nadkarni_basic_1998}
Mahendra~G. Nadkarni.
\newblock {\em Basic ergodic theory}.
\newblock Birkh\"auser {Advanced} {Texts}: {Basler} {Lehrb\"ucher}.
  [{Birkh\"auser} {Advanced} {Texts}: {Basel} {Textbooks}]. Birkh\"auser
  Verlag, Basel, second edition, 1998.

\bibitem[NW]{nadkarni_wagh_????}
Mahendra~G. Nadkarni and Vivek~M. Wagh.
\newblock A measurefree version of {Rudolph's} theorem on flows.
\newblock Unpublished.

\bibitem[OW87]{ornstein_entropy_1987}
Donald~S. Ornstein and Benjamin Weiss.
\newblock Entropy and isomorphism theorems for actions of amenable groups.
\newblock {\em Journal d'Analyse Math\'ematique}, 48:1--141, 1987.

\bibitem[Rud76]{rudolph_two-valued_1976}
Daniel Rudolph.
\newblock A two-valued step coding for ergodic flows.
\newblock {\em Mathematische Zeitschrift}, 150(3):201--220, 1976.

\bibitem[Slu15]{slutsky_lebesgue_2015}
Konstantin Slutsky.
\newblock Lebesgue orbit equivalence of multidimensional {Borel} flows.
\newblock \url{http://arxiv.org/abs/1504.00958}, April 2015.
\newblock arXiv: 1504.00958.

\bibitem[SS13]{schneider_locally_2013}
Scott Schneider and Brandon Seward.
\newblock Locally nilpotent groups and hyperfinite equivalence relations.
\newblock \url{http://arxiv.org/abs/1308.5853}, August 2013.
\newblock arXiv: 1308.5853.

\bibitem[Wag88]{wagh_descriptive_1988}
Vivek~M. Wagh.
\newblock A descriptive version of {Ambrose}'s representation theorem for
  flows.
\newblock {\em Indian Academy of Sciences. Proceedings. Mathematical Sciences},
  98(2-3):101--108, 1988.

\end{thebibliography}

\end{document}